\definecolor{lavender}{rgb}{0.4,0,1.0}
\definecolor{LightPink}{rgb}{1.0,0.784,1.0}
\definecolor{DarkPink}{rgb}{1.0,0,0.627}
\definecolor{Pink}{rgb}{1.0,0,1}
\definecolor{MyGreen}{rgb}{0,0.784,0} 
\definecolor{DarkRed}{rgb}{0.35,0,0}
\definecolor{FaintRed}{rgb}{1,0.64,0.64}
\definecolor{MyOrange}{rgb}{1,0.7,0}
\crefname{conjecture}{Conjecture}{Conjectures}
\newtheorem{theorem}{Theorem}[section]
\newtheorem{proposition}[theorem]{Proposition}
\newtheorem{corollary}[theorem]{Corollary}
\newtheorem{lemma}[theorem]{Lemma}
\theoremstyle{definition}
\newtheorem{definition}[theorem]{Definition}
\newtheorem{remark}[theorem]{Remark}
\newtheorem{example}[theorem]{Example}
\newcommand{\CC}{\mathbb{C}}
\newcommand{\RR}{\mathbb{R}}
\newcommand{\ZZ}{\mathbb{Z}}
\newcommand{\cA}{\mathcal{A}}
\newcommand{\cC}{\mathcal{C}}
\newcommand{\cD}{\mathcal{D}}
\newcommand{\cF}{\mathcal{F}}
\newcommand{\cO}{\mathcal{O}}
\newcommand{\cP}{\mathcal{P}}
\newcommand{\cQ}{\mathcal{Q}}
\newcommand{\cT}{\mathcal{T}}
\newcommand{\cU}{\mathcal{U}}
\newcommand{\tA}{\widetilde{A}}
\newcommand{\fD}{\mathfrak{D}}
\newcommand{\sT}{\mathscr{T}}
\newcommand{\redbullet}{{\color{DarkRed}\bullet}} 
\newcommand{\syl}{\mathrm{syl}}
\newcommand{\po}{\sqsubseteq}
\newcommand{\TIBIT}{\mathsf{CTam}_{\mathsf{Tree}}}
\newcommand{\ATIBIT}{\mathsf{ATam}_{\mathsf{Tree}}}
\newcommand{\coATIBIT}{\mathsf{ATam}^{\mathsf{Tree}}}
\newcommand{\spine}{\mathrm{spine}}
\newcommand{\Krew}{\mathrm{Krew}} 
\renewcommand{\Row}{\mathrm{Row}}
\newcommand{\pr}{\mathrm{par}}
\newcommand{\CDO}{\mathsf{CDyer}}
\newcommand{\DO}{\mathsf{Dyer}}
\newcommand{\flip}{\mathrm{flip}}
\newcommand{\ATam}{\mathsf{ATam}}
\newcommand{\CTam}{\mathsf{CTam}}
\newcommand{\T}{v}
\newcommand{\Inv}{\mathrm{Inv}}
\newcommand{\pispine}{\pi_{\mathrm{spine}}} 
\newcommand{\Tors}{\mathsf{Tors}} 
\newcommand{\JIrr}{\mathrm{JIrr}}
\newcommand{\MIrr}{\mathrm{MIrr}}
\renewcommand{\j}{\mathfrak{j}} 
\newcommand{\Po}{\mathbf{P}}
\newcommand{\Pob}{\mathbf{P}^{\redbullet}}
\newcommand{\Cat}{\mathrm{Cat}}
\newcommand{\NC}{\mathrm{NC}} 
\newcommand{\TINC}{\mathrm{TINC}} 
\newcommand{\uw}{\mathcal{R}}
\newcommand{\Orn}{\mathrm{Orn}}
\newcommand{\SSS}{\mathfrak{S}}
\newcommand{\Ext}{\mathrm{Ext}}
\newcommand{\Hom}{\mathrm{Hom}}
\newcommand{\cone}{\mathrm{cone}}
\newcommand{\tbe}{\widetilde{\mathbf{e}}}
\newcommand{\bg}{\mathbf{g}}
\newcommand{\varpiv}{\varpi}
\newcommand{\omm}{\boldsymbol{\omega}}
\newcommand{\dfn}[1]{\textcolor{blue}{\emph{#1}}}
\begin{document}

\title[]{The Affine Tamari Lattice}
\subjclass[2010]{}

\author[]{Grant Barkley}
\address[]{Department of Mathematics, Harvard University, Cambridge, MA 02138, USA}
\email{gbarkley@math.harvard.edu}

\author[]{Colin Defant}
\address[]{Department of Mathematics, Harvard University, Cambridge, MA 02138, USA}
\email{colindefant@gmail.com}

\begin{abstract} 
Given a fixed integer $n\geq 2$, we construct two new finite lattices that we call the \emph{cyclic Tamari lattice} and the \emph{affine Tamari lattice}. The cyclic Tamari lattice is a sublattice and a quotient lattice of the \emph{cyclic Dyer lattice}, which is the infinite lattice of \emph{translation-invariant total orders} under containment of inversion sets. The affine Tamari lattice is a quotient of the \emph{Dyer lattice}, which in turn is a quotient of the cyclic Dyer lattice and is isomorphic to the collection of biclosed sets of the root system of type $\widetilde{A}_{n-1}$ under inclusion. We provide numerous combinatorial and algebraic descriptions of these lattices using translation-invariant total orders, translation-invariant binary in-ordered trees, noncrossing arc diagrams, torsion classes, triangulations, and translation-invariant noncrossing partitions. The cardinalities of the cyclic and affine Tamari lattices are the Catalan numbers of types $B_n$ and $D_n$, respectively. We show that these lattices are self-dual and semidistributive, and we describe their decompositions coming from the Fundamental Theorem of Finite Semidistributive Lattices. We also show that the rowmotion operators on these lattices have well-behaved orbit structures, which we describe via the cyclic sieving phenomenon. Our new combinatorial framework allows us to prove that the lengths of maximal green sequences for the completed path algebra of the oriented $n$-cycle are precisely the integers in the interval $[2n-1,\binom{n+1}{2}]$. 
\end{abstract}

\maketitle

\section{Introduction}\label{sec:intro}

\begin{figure}[ht]
\begin{center}{\includegraphics[height=14.5cm]{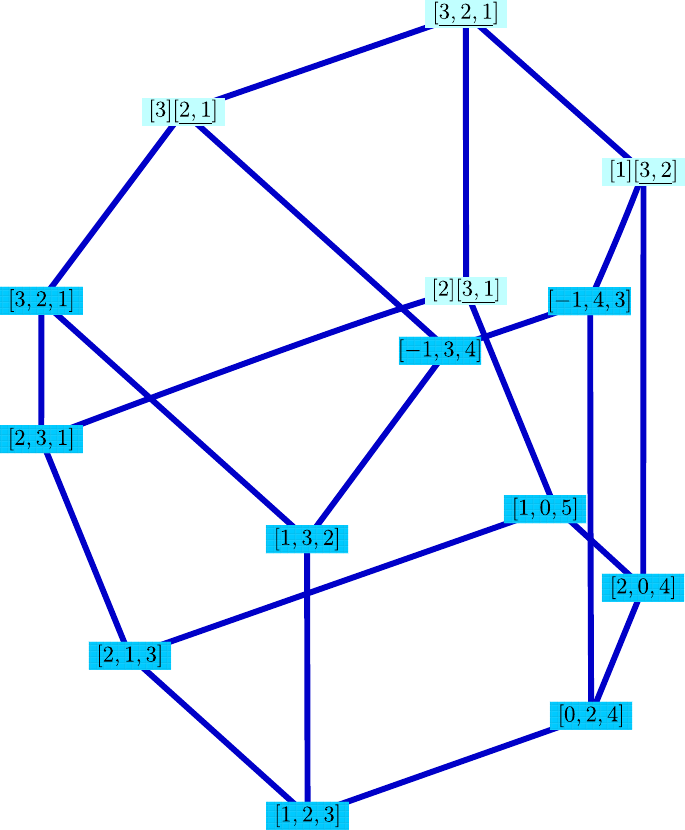}}
\end{center}
\caption{The Hasse diagram of the affine Tamari lattice for $n=3$, with each element represented as a window notation of a $312$-avoiding translation-invariant total order. The darker elements are $312$-avoiding affine permutations; the subposet that they induce is not a lattice.} 
\label{fig:intro_polytope} 
\end{figure}

The \emph{Tamari lattice}, introduced by Tamari in 1962 \cite{Tamari}, is a fundamental object with several remarkable properties and numerous incarnations in combinatorics, polyhedral geometry, quiver representation theory, cluster algebras, and category theory \cite{vonBellUnifying,BousquetRep,CeballosGeometry,DefantLin,MPS,PP,PrevilleViennot,ReadingCambrian,Rognerud,WhyTheFuss}. It arises naturally as a quotient of the weak order on the symmetric group $\SSS_n$ via the \emph{sylvester congruence}. It is also the sublattice of the weak order induced by the set of $312$-avoiding permutations. 

It is natural to try defining an affine analogue of the Tamari lattice using the weak order on the affine symmetric group $\widetilde \SSS_n$. One possible approach is to consider the subposet induced by $312$-avoiding affine permutations (as defined in \cite{Crites}). However, the resulting poset lacks many desirable properties. Most notably, neither the weak order on $\widetilde \SSS_n$ nor the induced subposet of $312$-avoiding affine permutations is a lattice (they are, however, meet-semilattices). In order to rectify these issues, we turn to the \emph{Dyer order}, a partial order on biclosed sets of roots of $\widetilde \SSS_n$ introduced by Dyer in \cite{Dyer1,Dyer2} and other unpublished works. Barkley and Speyer \cite{BarkleySpeyerCombinatorics,BarkleySpeyerLattice}, settling a conjecture of Dyer, proved that the Dyer order is a lattice.\footnote{In the article \cite{BarkleySpeyerLattice}, the Dyer order was called the \emph{extended weak order}.}  In this article, we introduce the \emph{affine Tamari lattice}, a finite lattice that we obtain naturally as a lattice quotient of the Dyer lattice. In fact, we will obtain the affine Tamari lattice as a quotient of another new finite lattice that we call the \emph{cyclic Tamari lattice}, which is a quotient of a lattice called the \emph{cyclic Dyer lattice}. 

Our primary goal is to begin developing the theory of cyclic and affine Tamari lattices, which appears to be quite rich. Let us present an outline of our article, highlighting our main contributions. 

\cref{sec:preliminaries} presents necessary background on lattices. 

In \cref{sec:TITOs}, we define \emph{translation-invariant total orders} (TITOs) and the Dyer order. The \emph{cyclic Dyer lattice} is the set of TITOs under the Dyer order, while the \emph{Dyer lattice} is the set of \emph{real TITOs} under the Dyer order. We define the \emph{cyclic Tamari lattice} and the \emph{affine Tamari lattice} as the subposets of the cyclic Dyer lattice and the Dyer lattice, respectively, consisting of \emph{$312$-avoiding TITOs}.  

In \cref{sec:trees}, we introduce \emph{translation-invariant binary in-ordered trees} (TIBITs) and a natural partial order on them that we also call the \emph{Dyer order}. By considering \emph{translation-invariant linear extensions} (TILEs) of TIBITs, we construct an isomorphism between the cyclic Tamari lattice and the poset of TIBITs under the Dyer order; as a by-product, we also find that the cyclic Tamari lattice is self-dual (\cref{cor:isomorphism}). We then use TILEs to define a lattice congruence on the cyclic Dyer lattice that we call the \emph{cyclic sylvester congruence}. This allows us to show that the cyclic Tamari lattice is indeed a lattice and is in fact a quotient of the cyclic Dyer lattice (\cref{thm:cyclic_quotient}). In \cref{thm:cyclic_sublattice}, we prove that the cyclic Tamari lattice is also a sublattice of the cyclic Dyer lattice. We then use TIBITs to prove that the affine Tamari lattice is a quotient (but not a sublattice) of the cyclic Tamari lattice. Because the cyclic Dyer lattice is semidistributive, we deduce that the cyclic and affine Tamari lattices are as well. The antiautomorphism of the cyclic Tamari lattice that we construct permutes fibers of the quotient map that we construct from the cyclic Tamari lattice to the affine Tamari lattice, so we deduce that the affine Tamari lattice is also self-dual. 

Reading \cite{ReadingNoncrossing} interpreted the join-irreducible elements of the Tamari lattice graphically as \emph{arcs}, and he showed that canonical join representations of elements of the Tamari lattice correspond to certain \emph{noncrossing arc diagrams}. In \cref{sec:Arcs_FTFSDL}, we describe a similar story for the cyclic and affine Tamari lattices using arcs drawn in an annulus. We also provide an explicit description of the decompositions of the cyclic and affine Tamari lattices coming from the Fundamental Theorem of Finite Semidistributive Lattices \cite{FTFSDL}. As a corollary, we find that the sizes of the cyclic and affine Tamari lattices are given by the Catalan numbers of types $B_n$ and $D_n$, respectively (\cref{cor:Catalan}). 

\cref{sec:quivers} provides a representation-theoretic interpretation for the cyclic Tamari lattice and the affine Tamari lattice. We define two finite-dimensional algebras called the \emph{cyclic Tamari algebra} and the \emph{affine Tamari algebra}. The cyclic Tamari algebra is the quotient of the path algebra of an oriented $n$-cycle quiver by the paths of length $n$. The affine Tamari algebra is the quotient of the path algebra of an oriented $n$-cycle quiver by the paths of length $n-1$.  We show that their lattices of torsion classes are isomorphic to the cyclic Tamari lattice and the affine Tamari lattice, respectively.

In \cref{sec:triangulations}, we consider a standard model for the exchange graph of the cluster algebra of type $D_n$ using triangulations of a punctured $n$-gon. Using $\tau$-tilting theory, we provide two different descriptions of an isomorphism from the Hasse diagram of the affine Tamari lattice to this exchange graph. The first description computes the s$\tau$-tilting module generating a torsion class. We describe this process combinatorially in terms of collections of arcs called \emph{affine arc torsion classes}. The second description is geometric, making use of the stability fan (equivalently, the {\bf g}-vector fan) of the affine Tamari algebra, though it can also be understood purely combinatorially in terms of TIBITs.   

A maximal chain in the cyclic (respectively, affine) Tamari lattice is the same thing as a maximal green sequence for the cyclic (respectively, affine) Tamari algebra. In \cref{sec:green} (specifically, \cref{thm:max_chains}), we use our novel combinatorial models to prove that the maximum length of a maximal chains in the cyclic (respectively, affine) Tamari lattice is $\binom{n+1}{2}$ (respectively, $\binom{n+1}{2}-1$). By known results, this implies that the lengths of maximal green sequences in the cyclic (respectively, affine) Tamari algebra are the integers in the interval $[2n-1,\binom{n+1}{2}]$ (respectively, $[2n-2,\binom{n+1}{2}-1]$). In the case of the affine Tamari lattice, this yields a new proof of a result due to Apruzzese and Igusa. Our result is new for the cyclic Tamari algebra. In fact, our result implies that the lengths of maximal green sequences for the completed path algebra of the oriented $n$-cycle are the integers in the interval $[2n-1,\binom{n+1}{2}]$, which is also new.   

Each finite semidistributive lattice $L$ has an associated invertible operator $\Row\colon L\to L$ called \emph{rowmotion}. \cref{sec:rowmotion} considers the dynamics of rowmotion on the cyclic and affine Tamari lattices. We find that rowmotion on the cyclic Tamari lattice is dynamically equivalent to the Kreweras complement operator on noncrossing partitions of type~$B$. Thus, by invoking a result of Armstrong, Stump, and Thomas about Kreweras complement \cite{AST}, we are able to describe the orbit structure of rowmotion on the cyclic Tamari lattice via the cyclic sieving phenomenon (\cref{thm:CSP_cyclic}). Rowmotion on the affine Tamari lattice is a bit more complicated, but we can still describe its orbit structure via the cyclic sieving phenomenon (\cref{thm:CSP_affine}). As far as we are aware, the cyclic and affine Tamari lattices are the first examples of semidistributive lattices that are not trim yet still have nice rowmotion dynamics (see \cref{rem:trim}). 

Finally, \cref{sec:future} collects several suggestions for future work.

\section{Preliminaries}\label{sec:preliminaries}  

\subsection{Lattices}\label{subsec:lattices} 

Let $P$ be a poset with partial order relation $\leq$. For $x,y\in P$ with $x\leq y$, the \dfn{interval} between $x$ and $y$ is the set $[x,y]=\{z\in P\mid x\leq z\leq y\}$. If $[x,y]$ has cardinality $2$, then we say $y$ \dfn{covers} $x$ and write $x\lessdot y$. In this case, we also say that $x\leq y$ is a \dfn{cover relation}. 

A \dfn{lattice} is a poset $L$ such that any two elements $x,y\in L$ have a meet (i.e., greatest lower bound), which is denoted $x\wedge y$, and a join (i.e., least upper bound), which is denoted $x\vee y$. If $L$ is a lattice and $X$ is a finite subset of $L$, then $X$ has a meet $\bigwedge X$ and a join $\bigvee X$. A \dfn{complete lattice} is a poset $L$ such that every (possibly infinite) subset $X$ of $L$ has a meet $\bigwedge X$ and a join $\bigvee X$. Every finite lattice is automatically a complete lattice. Thus, when referring to finite complete lattices, we will often drop the adjective \emph{complete}. 

Suppose $L$ and $L'$ are complete lattices. A map $\varphi\colon L\to L'$ is a \dfn{homomorphism} (of complete lattices) if for every $X\subseteq L$, we have $\bigwedge\varphi(X)=\varphi\left(\bigwedge X\right)$ and $\bigvee\varphi(X)=\varphi\left(\bigvee X\right)$. We say $L'$ is a \dfn{quotient} of $L$ if there is a surjective homomorphism from $L$ to $L'$. 

Suppose $\equiv$ is an equivalence relation on $L$ whose equivalence classes are intervals. For $x\in L$, let $\pi_\equiv^\downarrow(x)$ and $\pi_\equiv^\uparrow(x)$ be the minimum and maximum elements, respectively, of the equivalence class containing $x$. Let $L'=\{\pi_\equiv^\downarrow(x)\mid x\in L\}$ be the set of minimum elements of equivalence classes. If $L'$ is a complete lattice and the map $\pi_\equiv^\downarrow\colon L\to L'$ is a complete lattice homomorphism, then we say $\equiv$ is a \dfn{complete lattice congruence}. We will need the following useful proposition.\footnote{This result is stated in \cite{ReadingBook} for finite lattices, but the same proof works \emph{mutatis mutandis} for arbitrary complete lattices.} 

\begin{proposition}[{\cite[Proposition~9-5.2]{ReadingBook}}]\label{prop:9-5.2}
An equivalence relation $\equiv$ on a complete lattice $L$ is a complete lattice congruence if and only if each equivalence class of $\equiv$ is an interval and the maps $\pi_\equiv^\downarrow,\pi_\equiv^\uparrow\colon L\to L$ are order-preserving. 
\end{proposition}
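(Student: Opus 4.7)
The plan is to prove the two directions of the biconditional separately, treating the interval condition in the forward direction as essentially free (it is built into the paper's definition of a complete lattice congruence) and doing the main work in the backward direction.

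For the forward direction, I would assume $\equiv$ is a complete lattice congruence and deduce order-preservation of both projection maps. Since $\pi_\equiv^\downarrow\colon L\to L'$ is a complete lattice homomorphism, it preserves binary meets, so $x\leq y$ gives $\pi_\equiv^\downarrow(x)=\pi_\equiv^\downarrow(x\wedge y)=\pi_\equiv^\downarrow(x)\wedge\pi_\equiv^\downarrow(y)$, whence $\pi_\equiv^\downarrow(x)\leq\pi_\equiv^\downarrow(y)$. For $\pi_\equiv^\uparrow$, I would use that $\pi_\equiv^\downarrow$ preserves joins and is constant on equivalence classes: if $x\leq y$, then $\pi_\equiv^\downarrow(\pi_\equiv^\uparrow(x)\vee y)=\pi_\equiv^\downarrow(x)\vee\pi_\equiv^\downarrow(y)=\pi_\equiv^\downarrow(y)$, so $\pi_\equiv^\uparrow(x)\vee y$ lies in the equivalence class $[\pi_\equiv^\downarrow(y),\pi_\equiv^\uparrow(y)]$ of $y$, forcing $\pi_\equiv^\uparrow(x)\leq\pi_\equiv^\uparrow(y)$.

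For the backward direction, I would assume the equivalence classes are intervals and both $\pi_\equiv^\downarrow,\pi_\equiv^\uparrow$ are order-preserving. The first step is to record the key absorption property: since $\pi_\equiv^\downarrow$ is idempotent and order-preserving, any $a\in L'$ with $a\leq x$ satisfies $a=\pi_\equiv^\downarrow(a)\leq\pi_\equiv^\downarrow(x)$. Using this, I would show that for any $\{a_i\}\subseteq L'$ the elements $\pi_\equiv^\downarrow\bigl(\bigvee a_i\bigr)$ and $\pi_\equiv^\downarrow\bigl(\bigwedge a_i\bigr)$ serve as the join and meet in $L'$, so $L'$ is a complete lattice.

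The heart of the proof is verifying that $\pi_\equiv^\downarrow$ preserves arbitrary joins: given $\{x_i\}\subseteq L$, set $X=\bigvee x_i$ and $Y=\bigvee\pi_\equiv^\downarrow(x_i)$, so that $Y\leq X$. Since $\pi_\equiv^\downarrow(x_i)\leq Y$, order-preservation of $\pi_\equiv^\uparrow$ together with $\pi_\equiv^\uparrow(\pi_\equiv^\downarrow(x_i))=\pi_\equiv^\uparrow(x_i)$ yields $\pi_\equiv^\uparrow(x_i)\leq\pi_\equiv^\uparrow(Y)$, so $x_i\leq\pi_\equiv^\uparrow(Y)$, hence $X\leq\pi_\equiv^\uparrow(Y)$. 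Then $Y\leq X\leq\pi_\equiv^\uparrow(Y)$ places $X$ in the equivalence class $[\pi_\equiv^\downarrow(Y),\pi_\equiv^\uparrow(Y)]$ of $Y$, giving $\pi_\equiv^\downarrow(X)=\pi_\equiv^\downarrow(Y)=\bigvee^{L'}\pi_\equiv^\downarrow(x_i)$. Preservation of meets is easier: the inequality $\pi_\equiv^\downarrow(\bigwedge x_i)\leq\pi_\equiv^\downarrow(x_i)$ from order-preservation alone shows $\pi_\equiv^\downarrow(\bigwedge x_i)$ is a lower bound in $L'$, and the absorption property matches it with the meet in $L'$. The main obstacle I anticipate is the careful bookkeeping in the join-preservation step, where the role of the $\pi_\equiv^\uparrow$ hypothesis (as opposed to just $\pi_\equiv^\downarrow$) must be extracted; the rest follows from lattice-theoretic manipulations that are standard once the absorption property has been set up.
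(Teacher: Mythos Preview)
The paper does not supply its own proof of this proposition; it is quoted from Reading's book with the footnote that the same argument works \emph{mutatis mutandis} for complete lattices. Your proposal is correct and is precisely that standard argument: the forward direction uses that a homomorphism of (complete) lattices is automatically order-preserving and then bootstraps to $\pi_\equiv^\uparrow$ via the join-preservation of $\pi_\equiv^\downarrow$, while the backward direction uses the interplay of the two order-preserving projections to sandwich $\bigvee x_i$ inside the equivalence class of $\bigvee\pi_\equiv^\downarrow(x_i)$. There is nothing to compare against, and no gaps to flag.
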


Let $L$ be a complete lattice, and let $L'\subseteq L$ be a subposet of $L$ that is also a complete lattice. We say $L'$ is a \dfn{complete sublattice} of $L$ if for every subset $X\subseteq L'$, the meet and join of $X$ in $L$ belong to $L'$. 

\begin{proposition}\label{prop:join-subsemilattice}
Let $\equiv$ be a complete lattice congruence on a complete lattice $L$, and consider ${L'=\{\pi_\equiv^\downarrow(x)\mid x\in L\}}$. For every subset $X\subseteq L'$, the join of $X$ in $L$ is in $L'$. 
\end{proposition}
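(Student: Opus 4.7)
The plan is to show that for any $X \subseteq L'$, if $j$ denotes the join $\bigvee X$ taken in $L$, then $\pi_\equiv^\downarrow(j) = j$, which is exactly the statement $j \in L'$. The two ingredients I would use are (i) the order-preservingness of $\pi_\equiv^\downarrow$ (guaranteed by \cref{prop:9-5.2}, since $\equiv$ is a complete lattice congruence) and (ii) the fact that $\pi_\equiv^\downarrow(y) \leq y$ for all $y \in L$, with equality precisely when $y \in L'$.

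First I would observe that for each $x \in X$ we have $x \leq j$ in $L$, so applying $\pi_\equiv^\downarrow$ gives $\pi_\equiv^\downarrow(x) \leq \pi_\equiv^\downarrow(j)$. But each $x \in X$ already lies in $L'$, so $\pi_\equiv^\downarrow(x) = x$; hence $x \leq \pi_\equiv^\downarrow(j)$ for every $x \in X$. This exhibits $\pi_\equiv^\downarrow(j)$ as an upper bound of $X$ in $L$, and since $j = \bigvee X$ is the least such upper bound, we conclude $j \leq \pi_\equiv^\downarrow(j)$. Combined with the automatic inequality $\pi_\equiv^\downarrow(j) \leq j$ (as $\pi_\equiv^\downarrow(j)$ is the minimum of the equivalence class of $j$), this yields $j = \pi_\equiv^\downarrow(j) \in L'$, as desired.

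There is no serious obstacle here: the argument needs nothing beyond the order-preserving property of $\pi_\equiv^\downarrow$ and the definitions of $L'$ and of a join. It is worth noting what the argument does \emph{not} claim: it does not assert that the join of $X$ computed inside $L'$ (in whatever partial order $L'$ acquires as a subposet) equals the join computed in $L$ a priori; rather, it deduces this equality as a consequence. In particular, the proof does not invoke the hypothesis that $\pi_\equiv^\downarrow$ preserves joins — only that it is monotone — which is why the dual statement for meets fails in general.
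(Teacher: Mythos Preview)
Your proof is correct. The paper itself states this proposition without proof, treating it as a standard fact about complete lattice congruences; your argument supplies exactly the expected verification, using only the monotonicity of $\pi_\equiv^\downarrow$ from \cref{prop:9-5.2} together with the inequality $\pi_\equiv^\downarrow(j)\leq j$.
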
 

In general, the statement analogous to \cref{prop:join-subsemilattice}, but with joins replaced by meets, is false.

Let $L$ be a complete lattice. An element $j\in L$ is \dfn{completely join-irreducible} if every set $X\subseteq L$ satisfying $j=\bigvee X$ contains $j$. Similarly, an element $m\in L$ is \dfn{completely meet-irreducible} if every set ${X\subseteq L}$ satisfying $m=\bigwedge X$ contains $m$. Let $\JIrr(L)$ and $\MIrr(L)$ be the set of completely join-irreducible elements of $L$ and the set of completely meet-irreducible elements of $L$, respectively. We say $L$ is \dfn{completely meet-semidistributive} if for all $x,y\in L$ with $x\leq y$, the set ${\{z\in L\mid z\wedge y=x\}}$ has a maximum element. We say $L$ is \dfn{completely join-semidistributive} if its dual is completely meet-semidistriutive. We say $L$ is \dfn{completely semidistributive} if it is both completely meet-semidistributive and completely join-semidistributive.  

Let $L$ be a finite semidistributive lattice. Each join-irreducible element $j$ of $L$ covers a unique element, which we denote by $j_*$, and each meet-irreducible element $m$ is covered by a unique element, which we denote by $m^*$. For $j\in\JIrr(L)$, let $\kappa(j)=\kappa_L$ be the maximum element of $\{z\in L\mid z\wedge j=j_*\}$. It is known \cite{Barnard,Freese} that $\kappa(j)$ is meet-irreducible and that the map $\kappa=\kappa_L\colon\JIrr(L)\to\MIrr(L)$ is a bijection. For each $m\in\MIrr(L)$, the minimum element of $\{z\in L\mid z\vee m=m^*\}$ is $\kappa^{-1}(m)$. For every edge $x\lessdot y$ in the Hasse diagram of $L$, there is a unique join-irreducible $\j(x\lessdot y)\in\JIrr(L)$ such that 
\[\j(x\lessdot y)\vee x=y\quad\text{and}\quad \kappa(\j(x\lessdot y))\wedge y=x.\] We imagine labeling the edge $x\lessdot y$ with $\j(x\lessdot y)$. This provides a canonical way to label the edges of the Hasse diagram of $L$ with elements of $\JIrr(L)$. For $u\in L$, let 
\[\cD(u)=\{\j(x\lessdot u)\mid x\lessdot u\}\quad\text{and}\quad\cU(u)=\{\j(u\lessdot y)\mid u\lessdot y\}.\] The sets $\cD(u)$ and $\cU(u)$ are called the \dfn{canonical join representation} of $u$ and the \dfn{canonical meet representation} of $u$, respectively. It is known that 
\[u=\bigvee\cD(u)=\bigwedge\kappa(\cU(u)).\] 
There is a unique bijection $\Row\colon L\to L$, called the \dfn{rowmotion} operator, that satisfies 
\[\cD(u)=\cU(\Row(u))\] for all $u\in L$ \cite{Barnard}.

The Fundamental Theorem of Finite Semidistributive Lattices (FTFSDL) \cite{FTFSDL} gives a description of a finite semidistributive lattice $L$ in terms of its meet-irreducibles and join-irreducibles. The \dfn{FTFSDL factorization} of $L$ is a triple ($\twoheadrightarrow,\hookrightarrow,\to$) of relations on $\JIrr(L)$. The relation $\twoheadrightarrow$ is defined so that $j_1 \twoheadrightarrow j_2$ if and only if $j_1\geq j_2$. The relation $\hookrightarrow$ is defined so that $j_1 \hookrightarrow j_2$ if and only if $\kappa(j_1)\geq \kappa(j_2)$. The relation $\to$ is defined so that $j_1 \to j_2$ if and only if $j_1\not\leq \kappa(j_2)$. The lattice $L$ can be recovered from the data $(\JIrr(L), \twoheadrightarrow,\hookrightarrow,\to)$ as the lattice of \dfn{maximal orthogonal pairs} of $\JIrr(L)$. A maximal orthogonal pair is a pair $(T,F)$ of subsets of $\JIrr(L)$ such that \[F=\{r\in \JIrr(L) \mid \text{there does not exist }\ell\in T\text{ satisfying }\ell\to r\}\] and \[T=\{\ell\in \JIrr(L) \mid \text{there does not exist } r \in F\text{ satisfying }\ell\to r\}.\] The collection of maximal orthogonal pairs is partially ordered so that $(T,F)\leq T',F'$ if and only if $T\subseteq T'$ (equivalently, $F\supseteq F'$). The map \[x\mapsto(\{j\in \JIrr(L)\mid j\leq x\}, \kappa^{-1}\{m\in \MIrr(L) \mid m\geq x\})\] is an isomorphism from $L$ to the poset of maximal orthogonal pairs; the inverse of this isomorphism sends a maximal orthogonal pair $(T,F)$ to $\bigvee T$. In particular, two finite semidistributive lattices are isomorphic if and only if they have isomorphic FTFSDL factorizations. 
We have $x\to z$ if and only if there exists a $y$ with $x\twoheadrightarrow y \hookrightarrow z$. We have $x\twoheadrightarrow y$ if and only if for all $y\to z$, we also have $x\to z$. We have $x\hookrightarrow y$ if and only if for all $z\to x$, we also have $z\to y$. Therefore, the relations $\twoheadrightarrow$ and $\hookrightarrow$ determine the relation $\to$ and vice versa.

\subsection{Enumeration}

The $N$-th \dfn{Catalan number} (i.e., the Catalan number of type $A_{N-1}$) is \[\Cat_{A_{N-1}}=\frac{1}{N+1}\binom{2N}{N}.\] We will also need the Catalan numbers of types $B_N$ and $D_N$, which are 
\begin{equation}\label{eq:Catalan_BD}
\Cat_{B_N}=\binom{2N}{N}\quad\text{and}\quad \Cat_{D_N}=\frac{3N-2}{N}\binom{2N-2}{N-1}.
\end{equation}
Let 
\[[N]_q=\frac{q^N-1}{q-1}\quad\text{and}\quad[N]_q!=[N]_q[N-1]_q\cdots[1]_q.\]
A $q$-analogue of the $N$-th Catalan number is \[\Cat_{A_{N-1}}(q)=\prod_{i=1}^{N-1}\frac{[N+i+1]_q}{[i+1]_q};\] note that $\Cat_{A_{N-1}}(1)=\Cat_{A_{N-1}}$. A $q$-analogue of the Catalan number of type $B_N$ is \[\Cat_{B_{N}}(q)=\prod_{i=1}^{N}\frac{[2N+2i]_q}{[2i]_q};\] note that $\Cat_{B_N}(1)=\Cat_{B_N}$.

\section{TITOs}\label{sec:TITOs} 
\subsection{The Cyclic and Affine Dyer Lattices}
Let us fix once and for all an integer $n\geq 2$. 

Recall that a \dfn{total order} on a set $Y$ is a partial order $\preceq$ on $Y$ such that for all $a,b\in Y$, either $a\preceq b$ or $b\preceq a$. Given a total order $\preceq$ on $Y$, we say a subset $I\subseteq Y$ is \dfn{order-convex} if $a\preceq b\preceq c$ and $a,c\in I$ imply that $b\in I$. As before, for $a,c\in Y$ with $a\preceq c$, we let $[a,c]=\{b\in Y\mid a\preceq b\preceq c\}$. 

\begin{definition}
A \dfn{translation-invariant total order} (TITO) is a total order $\preceq$ on $\ZZ$ such that
\[ a \preceq b \text{ if and only if } a + n \preceq b + n \]
for all $a,b\in \ZZ$.
\end{definition}

A \dfn{block} of a TITO $\preceq$ is an order-convex subset $I$ of $\ZZ$ with no minimal or maximal element such that for all $a,b\in I$, the interval $[a,b]$ is finite. Equivalently, a block of $\preceq$ is an interval that is order-isomorphic to the usual ordering on $\ZZ$. The \dfn{size} of a block $I$ is the number of distinct residue classes of integers modulo $n$ appearing in $I$. A \dfn{window} for $I$ consists of $k$ consecutive elements $a_1,\ldots,a_k$ of $I$, where $k$ is the size of $I$. If $a_1\preceq a_1+n$, then we call $I$ a \dfn{waxing block} and denote its window as $[a_1,\ldots,a_k]$; if $a_1\succeq a_1+n$, then we call $I$ a \dfn{waning block} and denote its window as $[\underline{a_1,\ldots,a_k}]$ (note that the definitions of waxing and waning do not depend on the element $a_1$ chosen to start the window). A \dfn{window notation} of $\preceq$ consists of one window for each block, listed from left to right in the order they appear in $\preceq$. We say a TITO $\preceq$ is \dfn{compact} if it does not have two consecutive waxing blocks.

\begin{example}\label{ex:TITO}
Let $n=5$, and consider the TITO $\preceq$ defined so that 
\[ \cdots \preceq 11 \preceq 6 \preceq 1 \preceq \cdots \preceq \cdots \preceq 3 \preceq 2 \preceq 8 \preceq 7 \preceq \cdots\preceq \cdots \preceq 10 \preceq  4\preceq 5 \preceq -1 \preceq \cdots. \]
This TITO has three blocks; the first and third are waning, while the second is waxing. Hence, $\preceq$ is compact. The TITO $\preceq$ has many window notations, which depend on where we start the windows. For example, three window notations representing this TITO are \[[\underline{1}][3,2][\underline{4,5}],\quad [\underline{1}][2,8][\underline{10,4}],\quad\text{and}\quad[\underline{11}][-3,3][\underline{5,-1}].\] 
\end{example}

In general, we are allowed to ``slide'' a window $[a_1,\ldots,a_k]$ of a waxing block one step to the right to obtain a window $[a_2,\ldots,a_k,a_1+n]$ representing the same block; similarly, we can slide a window $[\underline{a_1,\ldots,a_k}]$ of a waning block one step to the right to get the equivalent window $[\underline{a_2,\ldots,a_k,a_1-n}]$. 

If $\preceq$ is a TITO with a unique waxing block and no waning blocks, then there is a unique bijection $\pi\colon\ZZ\to\ZZ$ such that the following conditions hold:
\begin{itemize}
\item For all $i,j\in\ZZ$, we have $i\preceq j$ if and only if $\pi^{-1}(i)\leq\pi^{-1}(j)$ (in $\ZZ$). 
\item We have $\sum_{i=1}^n\pi(i)=\binom{n+1}{2}$. 
\end{itemize}
Such a bijection $\pi$ is called an \dfn{affine permutation}. The set of affine permutations forms a group under composition, which is called the \dfn{affine symmetric group} and is denoted $\widetilde{\mathfrak{S}}_n$. 


Define a \dfn{reflection index} to be a pair $(a,b)$ of integers with $a<b$, considered up to simultaneous translation by multiples of $n$. For example, if $n=5$, then $(1,6)$, $(6,11)$, and $(11,16)$ are all the same reflection index, but $(1,6)$ and $(1,11)$ are different reflection indices. 

An \dfn{inversion} of a TITO $\preceq$ is a reflection index $(a,b)$ such that $a\succeq b$. We write $\Inv(\preceq)$ for the set of inversions of $\preceq$. We say a TITO is \dfn{real} if every block of size $1$ is waxing. Likewise, we say a TITO is \dfn{co-real} if every block of size $1$ is waning. 

\begin{definition}
Let $\CDO$ denote the collection of all TITOs, equipped with the partial order $\leq$ defined so that for all $\preceq_1,\preceq_2$ in $\CDO$, we have $\preceq_1\, \leq\, \preceq_2$ if and only if $\Inv(\preceq_1)\subseteq \Inv(\preceq_2)$. The poset $\CDO$ is called the (type $\tA_{n-1}$) \dfn{cyclic Dyer lattice}. The subposet of $\CDO$ consisting of real TITOs is called the (type $\tA_{n-1}$) \dfn{Dyer lattice} and is denoted $\DO$. 
\end{definition} 

There is a map $\pi^\downarrow_{\DO}\colon\CDO \to \DO$ sending a TITO $\preceq$ to the maximum real TITO bounded above by $\preceq$. Equivalently, $\pi^\downarrow_{\DO}(\preceq)$ is the real TITO obtained from $\preceq$ by reversing all waning blocks of size $1$ so that they become waxing blocks. For instance, if $\preceq$ is the TITO from \cref{ex:TITO}, which has window notation $[\underline{1}][3,2][\underline{4,5}]$, then $\pi^\downarrow_{\DO}(\preceq)$ has window notation $[1][3,2][\underline{4,5}]$. We also define $\pi_{\DO}^\uparrow(\preceq)$ to be the minimum co-real TITO bounded below by $\preceq$. Equivalently, $\pi_{\DO}^\uparrow(\preceq)$ is the co-real TITO obtained from $\preceq$ by reversing all waxing blocks of size $1$ so that they become waning blocks. 

\begin{proposition}[\cite{Barkley}]\label{prop:quotient_Dyer}
    The posets $\CDO$ and $\DO$ are completely semidistributive lattices. Moreover, the map $\pi^\downarrow_{\DO}\colon\CDO\to \DO$ is a surjective homomorphism of complete lattices. 
\end{proposition}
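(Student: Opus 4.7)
The plan is to translate everything into the language of \emph{biclosed sets} of reflection indices and then apply the quotient criterion of \cref{prop:9-5.2} to handle the map $\pi^\downarrow_{\DO}$.

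First, I would identify $\CDO$ with the inclusion poset of biclosed sets. Call a set $B$ of reflection indices \emph{closed} if $(a,b),(b,c)\in B$ with $a<b<c$ implies $(a,c)\in B$, and \emph{biclosed} if both $B$ and its complement are closed. I would show that $\preceq\mapsto\Inv(\preceq)$ is a bijection from TITOs to biclosed sets, with the inverse reconstructing $\preceq$ as the transitive completion of the ``inverted'' relations encoded in $B$ together with the default order on non-inverted reflection indices. Under this bijection, the Dyer order on $\CDO$ corresponds to containment of biclosed sets.

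Next, I would verify that $\CDO$ is a completely semidistributive lattice. Arbitrary intersections of biclosed sets remain biclosed (closedness is immediate; co-closedness takes a short combinatorial argument), so $\CDO$ admits all complete meets, and complete joins then exist as meets of upper bounds. For semidistributivity, I would adapt the techniques of Barkley--Speyer~\cite{BarkleySpeyerLattice}, who proved the analogous result for biclosed sets of positive roots in the ordinary Dyer lattice: for each cover $x\lessdot y$, construct the map $\kappa$ by explicitly enumerating which reflection indices may be added to $\Inv(x)$ without being forced into the meet with $y$.

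Finally, for the quotient map $\pi^\downarrow_{\DO}$, I would apply the complete-lattice version of \cref{prop:9-5.2}. Order-preservation of $\pi^\downarrow_{\DO}$ follows directly from its Galois-style characterization: if $\preceq_1\leq\preceq_2$, then $\pi^\downarrow_{\DO}(\preceq_1)$ is a real TITO with $\pi^\downarrow_{\DO}(\preceq_1)\leq\preceq_2$, so $\pi^\downarrow_{\DO}(\preceq_1)\leq\pi^\downarrow_{\DO}(\preceq_2)$; dually, $\pi^\uparrow_{\DO}$ is order-preserving. The fiber of $\pi^\downarrow_{\DO}$ through $\preceq$ is the interval $[\pi^\downarrow_{\DO}(\preceq),\pi^\uparrow_{\DO}(\preceq)]$, because within a fiber the size-$\geq 2$ blocks are preserved and each size-$1$ block may independently be chosen waxing or waning (so each fiber is in fact a Boolean sublattice). \cref{prop:9-5.2} then guarantees that $\pi^\downarrow_{\DO}$ is a complete lattice homomorphism; surjectivity is immediate since $\pi^\downarrow_{\DO}$ restricts to the identity on $\DO$. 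Semidistributivity of $\DO$ follows because it is a quotient of a completely semidistributive lattice. The main obstacle will be the semidistributivity of $\CDO$: the Barkley--Speyer arguments for $\DO$ rely on the structure of real positive roots, and the cyclic setting introduces the ``imaginary'' reflection indices $(a,a+n)$ coming from multiples of the null root $\delta$, whose uniform treatment in the explicit construction of $\kappa$ requires genuine additional work.
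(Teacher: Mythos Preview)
The paper does not prove this proposition; it simply cites \cite{Barkley}. So there is no in-paper argument to compare against, only the external reference. That said, your proposal contains a genuine error that would prevent it from going through.

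The claim that ``arbitrary intersections of biclosed sets remain biclosed'' is false, and there is no ``short combinatorial argument'' for co-closedness because co-closedness simply does not pass to intersections. Already in the finite symmetric group $\SSS_3$ one sees this: take $B_1=\{(1,2),(1,3)\}$ and $B_2=\{(2,3),(1,3)\}$, both biclosed. Their intersection is $\{(1,3)\}$, whose complement contains $(1,2)$ and $(2,3)$ but not $(1,3)$, so the complement is not closed. Consequently the meet in $\CDO$ is \emph{not} given by intersection of inversion sets, and your route to ``$\CDO$ admits all complete meets'' collapses at the outset. The actual lattice proof in \cite{BarkleySpeyerLattice,Barkley} constructs joins (and meets) by a more delicate closure procedure, and this is where the real work lies; it is not a consequence of a naive intersection argument.

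Your treatment of the quotient map is on firmer ground. Once one already knows $\CDO$ is a complete lattice, the verification via \cref{prop:9-5.2} that you outline (order-preservation of $\pi^\downarrow_{\DO}$ and $\pi^\uparrow_{\DO}$ from their Galois-type characterizations, and the fiber through $\preceq$ being the Boolean interval $[\pi^\downarrow_{\DO}(\preceq),\pi^\uparrow_{\DO}(\preceq)]$) is essentially correct and is in the spirit of the arguments in \cite{Barkley}. You also correctly flag that extending the semidistributivity argument to handle the imaginary reflection indices $(a,a+kn)$ is where the cyclic case diverges from \cite{BarkleySpeyerLattice}. But none of this can begin until the lattice property itself is established by a valid method.
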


A \dfn{wall} of a TITO $\preceq$ is a reflection index $(a,b)$ such that $a\preceq b$ is a cover relation or $b\preceq a$ is a cover relation. We say a wall $(a,b)$ is an \dfn{upper wall} if $a\preceq b$; otherwise, we say it is a \dfn{lower wall}.

Given a wall $(a,b)$ of $\preceq$, there is a unique TITO $\flip_{a,b}(\preceq)$ such that the only reflection index in the symmetric difference $\Inv(\preceq) \triangle \Inv(\flip_{a,b}(\preceq))$ is $(a,b)$. If $(a,b)$ is an upper wall of $\preceq$, then $\flip_{a,b}(\preceq)$ covers $\preceq$ in the cyclic Dyer lattice, and if $(a,b)$ is a lower wall, then $\flip_{a,b}(\preceq)$ is covered by $\preceq$ in the cyclic Dyer lattice. The following converse to this is sometimes called Dyer's Conjecture~A.

\begin{proposition}[\cite{BarkleySpeyerCombinatorics, Barkley}]
    If $\preceq_1$ is covered by $\preceq_2$ in the Dyer lattice or cyclic Dyer lattice, then there is a unique upper wall $(a,b)$ of $\preceq_1$ such that $\flip_{a,b}(\preceq_1) = \preceq_2$.
\end{proposition}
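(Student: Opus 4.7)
The plan is to deduce the proposition from two inputs provided by the cited works: (i) the inversion map $\Inv$ identifies $\CDO$ with the lattice of biclosed sets of reflection indices under inclusion; and (ii) every cover $B_1 \lessdot B_2$ in the lattice of biclosed sets satisfies $|B_2 \setminus B_1| = 1$. The second fact is essentially Dyer's Conjecture~A in type $\widetilde{A}_{n-1}$ and is the main structural input.

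Granting (i) and (ii), in the cyclic case I can write $\Inv(\preceq_2) \setminus \Inv(\preceq_1) = \{(a,b)\}$, and I then need to show that $(a,b)$ is an upper wall of $\preceq_1$ and that $\flip_{a,b}(\preceq_1) = \preceq_2$. Suppose $(a,b)$ were not an upper wall, so there exists an integer $c$ with $a \prec_1 c \prec_1 b$. A short preliminary argument forces $a$ and $b$ to lie in the same waxing block of $\preceq_1$: a shared waning block is impossible, since there $a<b$ would reverse to $b \prec_1 a$, and the ``different blocks'' scenario can be ruled out using biclosedness of $\Inv(\preceq_2) = \Inv(\preceq_1) \cup \{(a,b)\}$ to show that a reflection index $(a,d)$ or $(b,d)$ with $d = a + kn$ some translate would lie in $\Inv(\preceq_2) \setminus \Inv(\preceq_1)$, violating the singleton condition. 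Once $a,b$ share a waxing block, so does $c$, with $a < c < b$ as integers; the reflection indices $(a,c)$ and $(c,b)$ are distinct from $(a,b)$ and are non-inversions of $\preceq_1$, so by the singleton condition they are non-inversions of $\preceq_2$ as well. Coclosure of the biclosed set $\Inv(\preceq_2)$ then forces $(a,b) \notin \Inv(\preceq_2)$, a contradiction. Hence $(a,b)$ is an upper wall of $\preceq_1$; the preceding proposition gives $\preceq_1 \lessdot \flip_{a,b}(\preceq_1)$ in $\CDO$, and since $\Inv(\flip_{a,b}(\preceq_1)) = \Inv(\preceq_2)$, we conclude $\flip_{a,b}(\preceq_1) = \preceq_2$. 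Uniqueness of $(a,b)$ follows from the singleton condition, and the Dyer-lattice case is handled by an analogous argument (or by invoking the quotient map $\pi^\downarrow_{\DO}$ together with the observation that flipping at an upper wall of a real TITO stays in $\DO$).

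The main obstacle is the biclosed chain lemma (ii): the remaining manipulations are local and use only standard biclosedness of TITO inversion sets, but establishing $|\Inv(\preceq_2) \setminus \Inv(\preceq_1)| = 1$ in the first place requires the nontrivial structural result on biclosed sets in affine type $\widetilde{A}_{n-1}$, which is the deep content of Dyer's Conjecture~A as proved in the cited works of Barkley--Speyer and Barkley.
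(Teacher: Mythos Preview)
The paper does not prove this proposition; it is quoted as a result from \cite{BarkleySpeyerCombinatorics, Barkley} (Dyer's Conjecture~A in type $\widetilde A_{n-1}$), with no argument given. So there is nothing to compare against beyond the attribution itself, and you are right that the deep content---that covers in the biclosed/inversion-set lattice differ by a single reflection index---lives entirely in those cited papers.

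Your reduction from the singleton condition to the upper-wall statement has the right shape, but the block detour is both unnecessary and contains an unjustified step. The claim ``once $a,b$ share a waxing block, so does $c$, with $a<c<b$ as integers'' does not follow: in a waxing block the TITO order need not agree with the integer order, so an element $c$ with $a\prec_1 c\prec_1 b$ can perfectly well satisfy $c<a$ or $c>b$. The argument can be made directly without ever mentioning blocks. Suppose $a\prec_1 c\prec_1 b$. For each of the pairs $\{a,c\}$ and $\{c,b\}$, the associated reflection index either differs from $(a,b)$---in which case the relative order of that pair is unchanged in $\preceq_2$---or coincides with $(a,b)$ up to translation, in which case membership of $(a,b)$ in $\Inv(\preceq_2)$ again determines the order of that pair in $\preceq_2$. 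A short case check (at most one of the two pairs can have reflection index equal to $(a,b)$) shows that in every case one obtains a $3$-cycle $x\prec_2 y\prec_2 z\prec_2 x$ in $\preceq_2$, contradicting that $\preceq_2$ is a total order. This replaces your block casework and the coclosure appeal with a direct transitivity argument.
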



Let us record two natural antiautomorphisms of $\CDO$. The first is the map $\Psi_{\leftrightarrow}$ that simply reverses each TITO. More precisely, for every TITO $\preceq$ and for all $a,b\in\ZZ$, we have $a\preceq b$ if and only if $b\,\Psi_{\leftrightarrow}(\preceq)\, a$. The second antiautomorphism, denoted $\Psi_{\updownarrow}$, is induced by the negation map on $\ZZ$. More precisely, for every TITO $\preceq$ and for all $a,b\in\ZZ$, we have $a\preceq b$ if and only if $-a\,\Psi_{\updownarrow}(\preceq)\, -b$. Note that the map $\Psi_{\leftrightarrow}\circ\Psi_{\updownarrow}=\Psi_{\updownarrow}\circ\Psi_{\leftrightarrow}$ is an automorphism of $\CDO$ that restricts to an automorphism of $\DO$.  

\subsection{Pattern-avoiding TITOs}
In this subsection, we introduce our first definitions of the cyclic and affine Tamari lattices, as subposets of the Dyer lattice. 

Let $\preceq$ be a total order on a set $X\subseteq \ZZ$. We say three integers $a<b<c$ in $X$ form a \dfn{$312$-pattern} (respectively, \dfn{$132$-pattern}) in $\preceq$ if $c\preceq a\preceq b$ (respectively, $a\preceq c\preceq b$). Furthermore, we say the integers $a<b<c$ form a \dfn{$\overline{31}2$-pattern} if they form a 312-pattern and $c \preceq a$ is a cover relation. We say $\preceq$ is \dfn{$312$-avoiding} (respectively, \dfn{$\overline{31}2$-avoiding}) if no three integers form a $312$-pattern (respectively, a $\overline{31}2$-pattern) in $\preceq$. Similarly, $\preceq$ is \dfn{$132$-avoiding} (respectively, $\overline{13}2$-avoiding) if no three integers form a $312$-pattern (respectively, a $\overline{13}2$-pattern) in $\preceq$.    


\begin{definition}\label{def:312_Tamaris}
    The \dfn{cyclic Tamari lattice}, denoted $\CTam_{312}$, is the subposet of $\CDO$ consisting of $312$-avoiding TITOs. The \dfn{affine Tamari lattice}, denoted $\ATam_{312}$, is the subposet of $\DO$ consisting of $312$-avoiding real TITOs. 
\end{definition}

We have yet to justify calling $\CTam_{312}$ and $\ATam_{312}$ \emph{lattices}. We will do so in \cref{thm:cyclic_quotient,thm:affine_quotient}. 

In analogy with \cref{def:312_Tamaris}, we write $\CTam_{132}$ and $\ATam_{132}$ for the set of $132$-avoiding TITOs and the set of $132$-avoiding co-real TITOs, respectively, under the Dyer order. We will see in \cref{cor:isomorphism,cor:affine_incarnations} that $\CTam_{132}$ and $\ATam_{312}$ are isomorphic to $\CTam_{312}$ and $\ATam_{132}$, respectively. 

The following lemma will be used to study TITOs using the machinery in \cite{Barkley}, for which compact TITOs play a pivotal role.

\begin{lemma}\label{lem:312iscompact}
    Every $312$-avoiding TITO is compact.
\end{lemma}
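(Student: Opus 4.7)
The plan is to prove the contrapositive: if $\preceq$ is not compact, then $\preceq$ contains a $312$-pattern. So suppose $\preceq$ has two consecutive waxing blocks $B_1$ and $B_2$, with $B_1$ preceding $B_2$ in $\preceq$ (i.e., $a\preceq b$ for every $a\in B_1$ and $b\in B_2$).

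The first step is to record two structural facts about the blocks. First, because a block is translation-invariant as a subset of $\ZZ$ (an immediate consequence of $a\preceq b\iff a+n\preceq b+n$), each of $B_1$ and $B_2$ is a nonempty union of residue classes modulo $n$, so in particular each is unbounded above and below as a subset of $\ZZ$. Second, because $B_1$ is waxing, we have $a\preceq a+n$ for every $a\in B_1$, and similarly for $B_2$. I would pick any $b\in B_2$ and then pick $a\in B_1$; by translation-invariance of $B_1$, I may replace $a$ by $a+kn\in B_1$ for any integer $k$, so I can arrange that $a$ is as large as I wish.

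The key step is to exhibit the $312$-pattern. Setting $x=b$, $y=b+n$, and $z=a$ (with $a\in B_1$ chosen so that $a>b+n$), I obtain three integers with $x<y<z$. The required cyclic order $z\preceq x\preceq y$ follows from: (i) $z=a\in B_1$ lies below every element of $B_2$, in particular $z\preceq b = x$; and (ii) $x=b\preceq b+n=y$ because $B_2$ is waxing. This gives a $312$-pattern in $\preceq$, contradicting the $312$-avoidance assumption, and completing the proof.

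There is no real obstacle here beyond setting up the block notation carefully; the entire argument is a direct construction that exploits the interaction between the translation-invariance of blocks (giving arbitrarily large elements of $B_1$) and the waxing property of $B_2$ (providing the comparison $b\preceq b+n$). The one point to be slightly careful about is ensuring $a$ can be chosen above $b+n$, which is immediate from $a+kn\in B_1$ for every $k\in\ZZ$.
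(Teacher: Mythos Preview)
Your proof is correct and is essentially the same argument as the paper's. Both proofs take the contrapositive, pick an element in each of two waxing blocks $I_1\prec I_2$, exploit that blocks are unions of residue classes modulo $n$ to make the integer comparison work, and use the waxing property of $I_2$ to produce the middle relation in the $312$-pattern; the only cosmetic difference is that the paper fixes $c\in I_1$ and chooses $a\in I_2$ small, while you fix $b\in B_2$ and choose $a\in B_1$ large.
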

\begin{proof}
    Let $\preceq$ be a TITO with two waxing blocks $I_1 \prec I_2$. For any $c\in I_1$ and any small enough $a\in I_2$, the integers $a< a+n < c$ form a $312$-pattern. 
\end{proof}

We note one more fact, which shows that $312$-avoiding TITOs have simpler order type than more general TITOs.

\begin{lemma}\label{lem:two_blocks}
    Let $\preceq$ be a 
    $312$-avoiding TITO. Then $\preceq$ has at most two blocks. If $\preceq$ has two blocks, then the left block is waxing and the right block is waning. If $\preceq$ has a waning block, then the elements of the waning block appear in decreasing order. 
\end{lemma}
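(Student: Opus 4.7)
My plan is to prove the third assertion (the shape of a waning block) first, and then use it together with \cref{lem:312iscompact} to derive the first two. Each step proceeds by exhibiting a forbidden $312$-pattern whenever the TITO has some structural feature that the lemma prohibits. The key flexibility used throughout is that every block is closed under the shift $a \mapsto a \pm n$, so any block element $a$ can be replaced by $a + Mn$ for arbitrary $M \in \ZZ$ while staying in the same block, with integer value pushed to $\pm\infty$.

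\textbf{Shape of a waning block.} Let $I$ be a waning block of $\preceq$ with window $[\underline{a_1, \ldots, a_k}]$, and suppose for contradiction that $a_i < a_{i+1}$ as integers for some $i$. Since $I$ is waning, the element $a_{i+1} + n \in I$ sits one full cycle earlier in $I$ than $a_1$, so in $\preceq$ we have $a_{i+1} + n \prec a_1 \preceq a_i \prec a_{i+1}$. Combined with the integer inequalities $a_i < a_{i+1} < a_{i+1} + n$, the triple $(a_i, a_{i+1}, a_{i+1}+n)$ is a $312$-pattern, a contradiction. Hence $a_1 > a_2 > \cdots > a_k$.

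\textbf{Structural exclusions.} I rule out two further configurations using the same strategy. First, if $I_1 \prec I_2$ are both waning, pick $a_0 \in I_1$ and $b_0 \in I_2$ and shift by multiples of $n$ so that $a_0 < b_0$. For $M$ large enough, the element $a := a_0 + Mn \in I_1$ satisfies $a \prec a_0$ in $\preceq$ (since $I_1$ is waning), while $a > b_0$ as integers; thus $a \prec a_0 \prec b_0$ in $\preceq$ with integer order $a_0 < b_0 < a$ is a $312$-pattern. Second, if $I_1$ is waning and $I_2$ is waxing with $I_1 \prec I_2$, choose $a_0 \in I_1$ and $b_0 \in I_2$ with $a_0 < b_0 < a_0 + n$ (possible because the residues of $I_1$ and $I_2$ mod $n$ are disjoint); then $a_0 + n \in I_1$ gives $a_0 + n \prec a_0 \prec b_0$ in $\preceq$ with integer order $a_0 < b_0 < a_0 + n$, again a $312$-pattern. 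Combining these two exclusions with \cref{lem:312iscompact}, we see that no two consecutive blocks are both waxing, there is at most one waning block in total, and a waning block cannot be immediately followed by a waxing block. The only possibilities for the block sequence are therefore a single block, or a waxing block followed by a waning block. This proves the first two assertions.

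\textbf{Main obstacle.} The only genuine care needed is in each case to verify that, after shifting block elements by multiples of $n$, the three integers chosen really do produce a $312$-pattern (and not, for example, a $132$- or $213$-pattern). The consistent device of using an auxiliary element $a + n$ or $a + Mn$ of the same block to realize an integer of controlled size while remaining in the correct block is what makes each argument go through.
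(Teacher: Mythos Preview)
Your argument is essentially correct and follows the same overall strategy as the paper: first show a waning block is decreasing, then combine with \cref{lem:312iscompact} to determine the block structure. There is one small gap in the first step: proving $a_1>a_2>\cdots>a_k$ for a single chosen window does not yet show that \emph{every} cover relation in the block is a decrease, since the wrap-around relation $a_k\prec a_1-n$ is not covered (and $a_1>\cdots>a_k$ alone does not force $a_k>a_1-n$; e.g.\ $n=3$, $k=2$, $a_1=5$, $a_2=1$). This is easily fixed by noting that your argument applies verbatim to any window, so sliding one step (equivalently, setting $a_{k+1}:=a_1-n$ and running the same argument for $i=k$) handles the remaining cover relation.

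The one place your route differs slightly from the paper is in the structural exclusions. The paper uses the decreasing property of the waning block directly: given any $b$ in a later block, the waning block contains a cover relation $c\prec a$ with $a<b<c$, producing a $\overline{31}2$-pattern in one stroke. You instead exhibit separate $312$-patterns for ``waning then waning'' and ``waning then waxing.'' Both approaches are short; yours has the minor advantage that the structural part does not rely on the decreasing claim, so the two halves of your proof are logically independent.
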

\begin{proof}
Suppose $I$ is a waning block of $\preceq$. If the elements of $I$ do not appear in decreasing order, then there is some cover relation $c\preceq a$ in $I$ with $c-a> n$. But $c\preceq c-n$, so in this case, the integers $a<c-n < c$ form a $\overline{31}2$-pattern, which is a contradiction. This shows that the elements of $I$ appear in decreasing order. We claim that $I$ cannot appear to the left of another block. To see this, suppose instead that there is a block $I'$ to the right of $I$, and let $b\in I'$. Since the elements of $I$ appear in decreasing order, there is in fact a unique cover relation $c\preceq a$ in $I$ with $a<b<c$; this is a contradiction. 

It follows from the preceding paragraph that there is at most one waning block and that, if such a waning block exists, it must appear to the right of all waxing blocks. Therefore, it follows quickly from \Cref{lem:312iscompact} that there must also be at most one waxing block. 
\end{proof}


\section{Binary Trees}\label{sec:trees}
We now aim to construct a poset on certain infinite binary trees that is isomorphic to the cyclic Tamari lattice; this will help us prove that the cyclic Tamari lattice is a sublattice and a quotient of the cyclic Dyer lattice. 

\subsection{Binary Trees} 

\begin{definition}
A \dfn{binary tree} is a (possibly infinite) plane tree $T$ together with the following data. Each node $v$ has most one \dfn{left child} and at most one \dfn{right child}. Each node $v$ is the child (either left or right, but not both) of at most one other node, which is called the \dfn{parent} of $v$ and denoted $\pr(v)$ if it exists. We say that a node is an \dfn{ancestor} of $v$ if it is of the form $\pr^k(v)$ for some integer $k\geq 0$ (in particular, each node is an ancestor of itself). If $x$ is an ancestor of $v$, then we call $v$ a \dfn{descendant} of $x$. Any two nodes must have a common ancestor. 
\end{definition}

In a binary tree $T$, the \dfn{left subtree} of a node $v$ is the (possibly empty) binary tree induced by the descendants of the left child of $v$. Similarly, the \dfn{right subtree} of $v$ is the binary tree induced by the descendants of the right child of $v$.  

We will need to consider three natural total orders on the node set of a binary tree. 
\begin{itemize}
\item The \dfn{in-order} is the unique total order such that for every node $v$, all of the nodes in the left subtree of $v$ precede $v$, which in turn precedes all of the nodes in the right subtree of $v$. 
\item The \dfn{post-order} is the unique total order such that for every node $v$, all of the nodes in the left subtree of $v$ precede all of the nodes in the right subtree of $v$, which in turn precede $v$. 
\item The \dfn{reverse-post-order} is the unique total order such that for every node $v$, all of the nodes in the right subtree of $v$ precede all of the nodes in the left subtree of $v$, which in turn precede $v$. 
\end{itemize}

Let $X\subseteq\ZZ$. A \dfn{binary in-ordered tree} on $X$ is a binary tree $T$ with node set $\{\T_a\mid a\in X\}$ such that for all distinct $a,b\in\ZZ$, we have $a<b$ if and only if $\T_a$ precedes $\T_b$ in the in-order of $T$. 

\begin{definition}\label{def:TIBIT}
A \dfn{translation-invariant binary in-ordered tree} (TIBIT) is a binary in-ordered tree on $\ZZ$ such that for all $a,b\in\ZZ$, the node $\T_a$ is the left (respectively, right) child of $\T_b$ if and only if $\T_{a+n}$ is the left (respectively, right) child of $\T_{b+n}$. Let $\TIBIT$ denote the set of TIBITs. 
\end{definition} 



\cref{fig:one_tree,fig:two_trees} show examples of TIBITs. In each figure, we represent each node $\T_a$ as a circle with the number $a$ inside. \Cref{fig:two_trees} illustrates two different TIBITs whose underlying infinite binary trees are isomorphic. Note that a TIBIT can be recovered uniquely if we know its underlying infinite binary tree (up to isomorphism) and we know the node $\T_a$ for a single integer $a$. Define the \dfn{spine} of a TIBIT $T$ to be the set of nodes that have infinitely many descendants. Equivalently, $\T_a$ is in the spine if and only if $\T_{a+n}$ is an ancestor or descendant of $\T_a$. 

\begin{figure}[ht]
\begin{center}{\includegraphics[height=8.267cm]{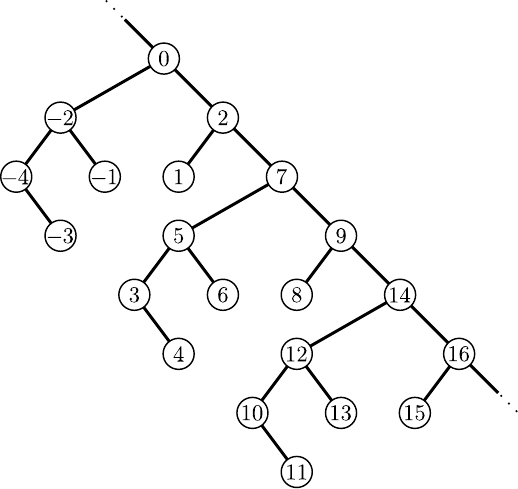}}
\end{center}
\caption{A translation-invariant binary in-ordered tree for $n=7$. } 
\label{fig:one_tree} 
\end{figure} 

\begin{figure}[ht]
\begin{center}{\includegraphics[height=6.267cm]{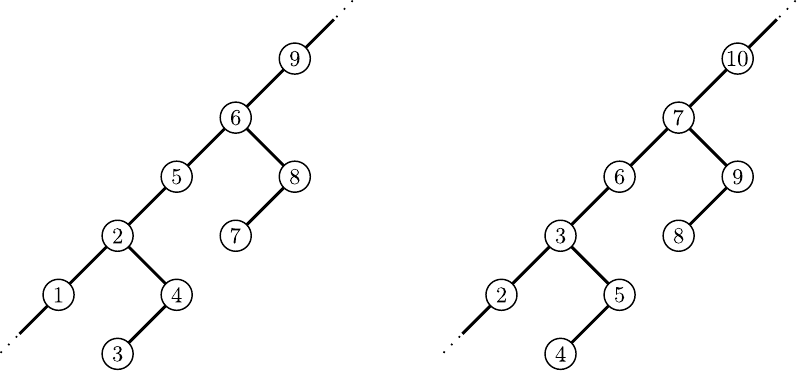}}
\end{center}
\caption{Two different translation-invariant binary in-ordered trees for $n=4$ whose underlying infinite binary trees are isomorphic.} 
\label{fig:two_trees} 
\end{figure}

The next lemma is immediate from the definitions. 

    \begin{lemma}\label{lem:TIBITstructure}
        Let $T$ be a binary in-ordered tree. If a node $\T_b$ has a right child $\T_c$, then the left subtree of $\T_c$ is exactly the set
        \[ \{\T_x \mid  b< x< c \}. \]
        Similarly, if $\T_b$ has a left child $\T_a$, then the right subtree of $\T_a$ is exactly the set
        \[ \{\T_x \mid  a < x < b \}. \]
    \end{lemma}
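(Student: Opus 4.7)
My plan is to unpack the definition of the in-order traversal and then apply the identifying property of a binary in-ordered tree. The second half of the statement follows from the first by swapping the roles of ``left'' and ``right'' throughout, so I will only describe the argument for the case of a right child $\T_c$ of $\T_b$; the reasoning for the left-child case will be completely parallel.

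My first step is to isolate the underlying combinatorial fact about in-order on an arbitrary binary tree: for any node $v$ with right child $w$, the nodes appearing strictly between $v$ and $w$ in the in-order are exactly the nodes of the left subtree of $w$. To establish this, I would apply the recursive description of the in-order (visit the left subtree, then the root, then the right subtree) to the subtree rooted at $v$; the resulting sequence reads (left subtree of $v$), $v$, (left subtree of $w$), $w$, (right subtree of $w$). Since the relative order of nodes inside a subtree is the same whether one reads the global in-order of the whole tree or the in-order of that subtree on its own, the nodes sandwiched strictly between $v$ and $w$ are precisely the nodes of the left subtree of $w$.

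Next I would invoke the defining property of a binary in-ordered tree on a subset $X \subseteq \ZZ$, namely that $\T_a \mapsto a$ sends the in-order bijectively to the natural order on $X$. Specializing the combinatorial fact above to $v = \T_b$ and $w = \T_c$ then identifies the left subtree of $\T_c$ with the set $\{\T_x \mid x \in X,\ b < x < c\}$, which is exactly the desired description.

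The main (and essentially only) obstacle is to formulate the combinatorial fact about in-order precisely enough to apply it; once that is in hand, the lemma is a direct translation through the defining property of a binary in-ordered tree, consistent with the authors' remark that the statement is immediate from the definitions.
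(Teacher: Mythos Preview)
Your argument is correct in spirit and matches the paper's treatment, which simply declares the lemma ``immediate from the definitions'' and gives no proof. One small point worth tightening: after computing the in-order of the subtree rooted at $v$ as $(\text{left subtree of }v),\,v,\,(\text{left subtree of }w),\,w,\,(\text{right subtree of }w)$, you invoke only that the \emph{relative} order inside a subtree agrees with the global in-order, but the conclusion also needs that no node outside the subtree of $v$ can land strictly between $v$ and $w$ in the global in-order. That is, you need that the descendants of $v$ form a contiguous interval in the global in-order, which follows from the defining property of in-order applied along the chain of ancestors of $v$. With that one clause added, the proof is complete.
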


If $T$ is a finite binary tree and $X\subseteq \ZZ$ is equinumerous with the node set of $T$, then it follows from \cref{lem:TIBITstructure} that there is a unique binary in-ordered tree on $X$ that is isomorphic to $T$ as a binary tree. 

\subsection{The Cyclic Tamari Lattice via Binary Trees}\label{sec:CTam_Trees} 

Let $T$ be a binary in-ordered tree on a set $X\subseteq \ZZ$. We can naturally view $T$ as a partial order $\leq_T$ on $\{\T_a\mid a\in\ZZ\}$ such that $\T_a\leq_T \T_b$ if and only if $\T_b$ is an ancestor of $\T_a$. A \dfn{linear extension} of $T$ is a total order $\preceq$ on $X$ such that $a\preceq b$ whenever $\T_a$ is a child of $\T_b$. When $X$ is finite, it is well known that $T$ has a unique $312$-avoiding linear extension, which is obtained by reading the indices of the nodes in post-order. We wish to generalize this fact to the context of TIBITs. 

Define a \dfn{translation-invariant linear extension} (TILE) of a TIBIT $T$ to be a linear extension of $T$ that is also a TITO.\footnote{We will see in \cref{lem:TIBITcone} and its proof that certain cones associated to the TILEs of $T$ literally tile a cone naturally associated to $T$.} Let $\po_T$ and $\po^T$ be the TITOs obtained by reading the indices of the nodes of $T$ in post-order and in reverse-post-order, respectively. For example, if $T$ is the TIBIT shown in \cref{fig:one_tree}, then $\po_T$ has window notation $[1,4,3,6,5][\underline{7,2}]$, while $\po^T$ has window notation $[\underline{6,4,3,5,7,1,2}]$. 

There is a natural involution $\omm$ on the set of TIBITs that acts by flipping a tree across a vertical axis and then negating the indices of the nodes. More precisely, $v_a$ is a left child of $v_b$ in a TIBIT $T$ if and only if $v_{-a}$ is a right child of $v_{-b}$ in $\omm(T)$. Observe that 
\begin{equation}\label{eq:PR}
\po^T=\Psi_{\updownarrow}(\po_{\omm(T)}).
\end{equation}

    \begin{lemma}\label{lem:TIBIT312lift}
        Let $T$ be a TIBIT. Then $\po_T$ is the unique $312$-avoiding TILE of $T$, and $\po^T$ is the unique $132$-avoiding TILE of $T$.  
    \end{lemma}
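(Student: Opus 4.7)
The plan is to prove the two claims in tandem, via a standard post-order argument that I will then transfer to reverse-post-order using the horizontal-reflection involution $\omm$.

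First I would confirm that $\po_T$ is a well-defined TITO and a linear extension of $T$. Even though $T$ is infinite, post-order gives a well-defined total order on $\ZZ$ by declaring $a\prec b$ iff either $\T_a$ is a descendant of $\T_b$, or (when $\T_a,\T_b$ are incomparable in $T$) $\T_a$ lies in the left subtree of the lowest common ancestor $\T_z$ while $\T_b$ lies in the right subtree. Translation invariance of $T$ (\cref{def:TIBIT}) immediately gives translation invariance of this relation, so $\po_T$ is a TITO; descendants preceding ancestors is built into the definition, so it is a linear extension of $T$.

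Next I would show $\po_T$ is $312$-avoiding by a three-case analysis on a hypothetical $312$-pattern $a<b<c$ with $c\preceq_{\po_T} a\preceq_{\po_T} b$. Using \cref{lem:TIBITstructure} to control how in-order interacts with the tree's subtree structure, the cases are: (i) $\T_a,\T_c$ are incomparable in $T$ with LCA $\T_z$ — then $\T_a$ is in the left subtree and $\T_c$ in the right, so $a\prec_{\po_T} c$, contradicting $c\preceq_{\po_T} a$; (ii) $\T_c$ is an ancestor of $\T_a$ — then $a\prec_{\po_T} c$ by the linear-extension property, contradiction; (iii) $\T_a$ is an ancestor of $\T_c$ — since $a<c$, the node $\T_c$ lies in the right subtree of $\T_a$, and \cref{lem:TIBITstructure} forces $\T_b$ (with $a<b<c$) to lie in the right subtree of $\T_a$ as well, so $\T_b$ is a descendant of $\T_a$ and $b\prec_{\po_T} a$, contradicting $a\preceq_{\po_T} b$.

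For uniqueness, I would take any $312$-avoiding TILE $\preceq$ of $T$ and show $\preceq\,=\,\po_T$ by picking two nodes $\T_a,\T_b$ that disagree, say $a\prec_{\po_T} b$ but $b\preceq a$. The linear-extension property rules out $\T_a,\T_b$ being comparable in $T$ (in either direction one would force $\preceq$ and $\po_T$ to agree on the pair). Let $\T_z$ be their LCA; since $a\prec_{\po_T} b$, $\T_a$ lies in its left subtree and $\T_b$ in its right, so $a<z<b$ as integers. Because $\preceq$ is a linear extension, $a\preceq z$, which combined with $b\preceq a$ yields the $312$-pattern $b\preceq a\preceq z$ on the integers $a<z<b$ — contradiction. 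The claim about $\po^T$ then follows immediately from the identity \eqref{eq:PR} $\po^T=\Psi_\updownarrow(\po_{\omm(T)})$, once I note that $\Psi_\updownarrow$ bijects $312$-patterns with $132$-patterns (send $a<b<c$ to $-c<-b<-a$ and check the relations) and that $\omm$ takes TILEs of $T$ to TILEs of $\omm(T)$.

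The main obstacle is step (iii) of the case analysis: one must carefully use \cref{lem:TIBITstructure} to argue that the ``trapped'' node $\T_b$ with $a<b<c$ is forced to be a descendant of $\T_a$. Everything else is either book-keeping or a formal consequence of the symmetry $\omm$.
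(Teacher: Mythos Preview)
Your proposal is correct and follows essentially the same approach as the paper: both establish uniqueness via the lowest-common-ancestor argument (if a $312$-avoiding TILE disagreed with post-order on a pair of incomparable nodes, the LCA supplies the middle value of a forbidden $312$-pattern), and both reduce the $\po^T$ statement to the $\po_T$ statement via \eqref{eq:PR}. The paper packages the uniqueness argument slightly differently---it characterizes, for any $312$-avoiding TILE $\preceq$ and any $a<c$, that $c\preceq a$ holds precisely when $\T_c$ lies in the right subtree of $\T_a$---but this is just a reformulation of your contradiction argument. Your case~(iii) is fine: the needed fact that the descendants of any node form an interval in $\ZZ$ is immediate from the definition of a binary in-ordered tree (the paper sweeps the entire existence half under ``straightforward'').
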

    \begin{proof}
We prove the first statement; the second statement will then follow from \eqref{eq:PR}. It is straightforward to see that the linear extension of $T$ obtained by reading the indices of the nodes in post-order is indeed a $312$-avoiding TILE. Hence, we just need to prove uniqueness. 


Let $\preceq$ be a $312$-avoiding TILE of $T$. Suppose $a<c$; we will show that $c\preceq a$ if and only if $\T_c$ is in the right subtree of $\T_a$. Indeed, if $\T_c$ is in the right subtree of $\T_a$, then $c\preceq a$ by the definition of a linear extension. Conversely,
        assume that $\T_c$ is not in the right subtree of $\T_a$. Then $\T_c$ is not a descendant of $\T_a$ because $a<c$. If $\T_c$ is an ancestor of $\T_a$, then $a\preceq c$, as desired. Otherwise, let $\T_b$ be the lowest common ancestor of $\T_a$ and $\T_c$. Since $a<c$, it must be the case that $\T_a$ is in the left subtree of $\T_b$ and that $\T_c$ is in the right subtree of $\T_b$. In particular, we have that $a<b<c$, $a\preceq b$, and $c\preceq b$. Note that we cannot have $c\preceq a\preceq b$ since $a<b<c$ do not form a $312$-pattern in $\preceq$. It follows that $a\preceq c \preceq b$, as desired. This shows that $\preceq$ must be the TILE obtained by reading the indices of the nodes of $T$ in post-order. 
    \end{proof}


When $X\subseteq\ZZ$ is finite, it is well known that each total order $\preceq$ on $X$ is a linear extension of a unique binary in-ordered tree on $X$, which we denote by $\sT(\preceq)$. In fact, when $X=[n]$, we can naturally identify total orders on $[n]$ with elements of the symmetric group $\SSS_n$, and the map sending a total order $\preceq$ to the unique $312$-avoiding linear extension of $\sT(\preceq)$ is a quotient map from the weak order on $\SSS_n$ to the Tamari lattice. 
    
Let $\preceq$ be a TITO. We aim to construct a TIBIT $\sT(\preceq)$ of which $\preceq$ is a TILE; we will call $\sT(\preceq)$ the \dfn{binary insertion tree} of $\preceq$. Let $I$ be the rightmost block of $\preceq$. If $I$ is waxing, define $\spine(\preceq)$ to be the set of all integers $s\in I$ such that $x\geq s$ whenever $s\preceq x$. If $I$ is waning, define $\spine(\preceq)$ to be the set of all integers $s\in I$ such that $x\leq s$ whenever $s\preceq x$. The spine of $\sT(\preceq)$ will be $\{\T_s\mid s\in\spine(\preceq)\}$. For $s\in\spine(\preceq)$, let $\pr(s)$ be the unique integer such that $s\preceq\pr(s)$ is a cover relation, and let $J_s$ be the set of integers that are strictly between $s$ and $\pr(s)$ in the usual order on $\ZZ$. Let $\preceq_s$ be the restriction of $\preceq$ to $J_s$. As mentioned above, $\preceq_s$ is a linear extension of a unique binary in-ordered tree $\sT(\preceq_s)$ on $J_s$. If $I$ is a waxing block, then for each $s\in\spine(\preceq)$, we make $\T_s$ the left child of $\T_{\pr(s)}$ and make $\sT(\preceq_s)$ the right subtree of $\T_{s}$. If $I$ is a waxing block, then for each $s\in\spine(\preceq)$, we make $\T_s$ the right child of $\T_{\pr(s)}$ and make $\sT(\preceq_s)$ the left subtree of $\T_{s}$.
    
\begin{example}
Let $n=7$, and let $\preceq$ be the TITO with window notation $[4,3,15][\underline{6}][\underline{9,5,7}]$. The rightmost block $I$ of $\preceq$ is waning, so $\spine(\preceq)$ is the set of integers $s\in I$ such that $x\leq s$ whenever $s\preceq x$. Thus, $\spine(\preceq)$ is the set of integers $s$ that are congruent to $0$ or $2$ modulo $7$. We have $\pr(7)=2$, so $J_7=\{3,4,5,6\}$. Now, $\preceq_7$ is the restriction of $\preceq$ to $J_7$, which is the total order $4\preceq_7 3\preceq_7 6\preceq_7 5$. The unique binary in-ordered tree on $J_7$ that has $\preceq_7$ as a linear extension is 
\[\sT(\preceq_7)=\begin{array}{l}\includegraphics[height=2.550cm]{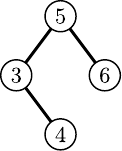}\end{array}\] (as before, $v_a$ is represented by a circle with $a$ inside). On the other hand, $\pr(9)=7$, so we have $J_9=\{8\}$ and \[\sT(\preceq_9)=\begin{array}{l}\includegraphics[height=0.550cm]{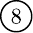}\end{array}.\] The TIBIT $\sT(\preceq)$ is shown in \cref{fig:one_tree}. 
\end{example}

\begin{lemma}\label{lem:insertionextension}
    If $\preceq$ is a TITO, then $\preceq$ is a linear extension of $\sT(\preceq)$. 
\end{lemma}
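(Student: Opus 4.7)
The strategy is to classify every parent--child edge of $\sT(\preceq)$ into three types and check that the child precedes the parent in $\preceq$ in each case. Let $I$ be the rightmost block of $\preceq$. By construction, the edges of $\sT(\preceq)$ fall into: (i) spine edges joining $\T_s$ to $\T_{\pr(s)}$ for $s\in\spine(\preceq)$; (ii) edges internal to some $\sT(\preceq_s)$; and (iii) ``joining'' edges attaching the root of a nonempty $\sT(\preceq_s)$ to $\T_s$.

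The first two types are immediate. For a spine edge, $\pr(s)$ is the immediate successor of $s$ in $\preceq$ restricted to $\spine(\preceq)$, so in particular $s\preceq\pr(s)$. For an edge internal to $\sT(\preceq_s)$, the set $J_s$ is finite, so the standard finite-case fact recalled just before the construction gives that $\preceq_s$ is a linear extension of $\sT(\preceq_s)$; since $\preceq_s$ is the restriction of $\preceq$ to $J_s$, the required inequality transfers to $\preceq$.

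The substantive case is (iii). The root of $\sT(\preceq_s)$ is the $\preceq_s$-maximum of $J_s$, so it suffices to prove that every $y\in J_s$ satisfies $y\preceq s$. The key step is the characterization
\[ \pr(s)=\begin{cases}\min\{x\in\ZZ:s\prec x\} & \text{if } I \text{ is waxing},\\ \max\{x\in\ZZ:s\prec x\} & \text{if } I \text{ is waning},\end{cases}\]
from which the claim is immediate: any $y\in J_s$ lies strictly on the ``wrong side'' of $\pr(s)$ (strictly smaller than $\pr(s)$ in the waxing case, strictly larger in the waning case), so $y\notin\{x:s\prec x\}$ by extremality, i.e., $y\preceq s$. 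To prove the characterization, I would show that the relevant extremum $m$ of $\{x:s\prec x\}$ is itself a spine element (any $x$ with $m\prec x$ also satisfies $s\prec x$, hence lies on the correct side of $m$ by extremality of $m$), and that every element strictly between $s$ and $m$ in $\preceq$ lies on the more extreme side of $m$ and therefore fails the spine condition for $s$. The main obstacle will be unpacking the block structure carefully enough to justify this extremal description, but the key observation is clean: because $I$ is rightmost, every element of $\preceq$ strictly after $s$ lies in $I$, so the extremum is really the extremum of a subword of the window notation.
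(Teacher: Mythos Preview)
Your proposal is correct and takes essentially the same approach as the paper: both classify the edges of $\sT(\preceq)$ into spine edges, edges internal to some $\sT(\preceq_s)$, and the edge attaching the root of $\sT(\preceq_s)$ to $\T_s$, and handle the first two types immediately via the definition and the finite case. For the third type, the paper's contradiction argument (assume $a\prec b$ for the root $b$, show $b\prec\pr(a)$ and then $b\in\spine(\preceq)$, contradicting that $\pr(a)$ is the \emph{next} spine element after $a$) and your direct characterization $\pr(s)=\min\{x\in\ZZ:s\prec x\}$ are two packagings of the same observation; one small wording fix---your phrase ``fails the spine condition for $s$'' should say that the intermediate element $z$ itself fails the spine condition, since $s\prec z$ forces $z\geq m$, hence $z>m$, while $z\prec m$ gives the witness.
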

\begin{proof}
    By definition of $\sT(\preceq)$, the ordering $\preceq$ restricts to a linear extension of the spine of $\sT(\preceq)$ and to a linear extension on each subtree $\sT(\preceq_s)$ for $s\in\spine(\preceq)$. Hence, we just need to check that if $c\in \spine(\preceq)$ and $\T_c$ is an ancestor of $\T_b$ with $\T_b\in \sT(\preceq_a)$, then $b\preceq c$. We have that $a\preceq c$, so we just need to show that $b\preceq a$. We may also assume that $\T_b$ is the right child of $\T_a$.
    
    We assume that $a<b<c$; the case in which $c<b<a$ is similar. We assume to the contrary that $a\preceq b$. We cannot have $\pr(a) \preceq b$ since this would imply that $\pr(a)$ is not in the spine. Therefore, 
    $b\preceq \pr(a)$. We claim that $b\in\spine(\preceq)$. Indeed, if $b\preceq x$, then $a\preceq x$, so $a<x$. But if $a<x<b$, then $x\in \sT(\preceq_a)$, and then $\T_b$ is in the right subtree of $\T_x$, contradicting the fact that $\T_b$ is the right child of $\T_a$. Consequently, $b<x$. We find that $b\in\spine(\preceq)$, which is a contradiction. Thence, $b\preceq a$. 
\end{proof}

\begin{lemma}\label{lem:uniqueTIBIT}
    Let $\preceq$ be a TITO that is a linear extension of a TIBIT $T$. If $b$ and $c$ are integers with $b\preceq c$, then $\T_b$ is a descendant of $\T_c$ in $T$ if and only if there does not exist $x\succ c$ with ${\min_{<}(b,c) < x < \max_{<}(b,c)}$. Consequently, $\sT(\preceq)$ is the unique TIBIT admitting $\preceq$ as a linear extension.
\end{lemma}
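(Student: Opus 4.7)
The plan is to characterize the ancestor--descendant relation of $T$ directly from $\preceq$ via lowest common ancestors. For the forward direction, assume $\T_b$ is a descendant of $\T_c$; the case $b=c$ is vacuous, and by left--right symmetry I may take $b<c$, which places $\T_b$ in the left subtree of $\T_c$. The subtree rooted at $\T_c$ occupies a contiguous interval under the in-order on nodes (namely, the left subtree, then $\T_c$, then the right subtree), so for any $x$ with $b<x<c$ the node $\T_x$ is a proper descendant of $\T_c$. Since $\preceq$ is a linear extension of $T$, this forces $x\preceq c$, which is the contrapositive of the ``only if'' direction.

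For the converse, suppose $\T_b$ is not a descendant of $\T_c$ and $b\neq c$. Then $\T_c$ cannot be a descendant of $\T_b$ either: if it were, we would have $c\preceq b$, which together with the hypothesis $b\preceq c$ would force $b=c$. Let $\T_d$ denote the lowest common ancestor of $\T_b$ and $\T_c$; this exists because any two nodes have a common ancestor and ancestor chains form paths. By the preceding observation, $\T_d\notin\{\T_b,\T_c\}$, so $\T_b$ and $\T_c$ lie in distinct proper subtrees of $\T_d$. The in-order property then forces $d$ to lie strictly between $\min_<(b,c)$ and $\max_<(b,c)$, and since $\T_c$ is a proper descendant of $\T_d$, the linear-extension property yields $c\preceq d$; as $c\neq d$, this upgrades to $c\prec d$. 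Hence $x=d$ is the desired witness.

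For the final assertion, the equivalence just established determines the ancestor--descendant relation of $T$ purely from the data of $\preceq$. This in turn determines the parent function (the parent of a non-root node is the minimum proper ancestor under $\leq_T$), and the in-order property distinguishes between left and right children by the relative order of $a$ and $b$ when $\T_a$ is a child of $\T_b$. Therefore any TIBIT admitting $\preceq$ as a TILE is uniquely determined, and \cref{lem:insertionextension} identifies this tree with $\sT(\preceq)$.

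There is no substantive obstacle in this argument; the only minor care needed is that the tree may be infinite, but the paper's definition of binary tree guarantees that any two nodes have a common ancestor and that ancestor chains are well-defined paths, so the LCA argument goes through unchanged.
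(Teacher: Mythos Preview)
Your proof is correct. The converse direction and the final uniqueness deduction match the paper's argument essentially verbatim (the paper even has a typo ``$x\succ x$'' where you correctly write $d\succ c$, and you make explicit why $\T_c$ cannot be a descendant of $\T_b$, which the paper leaves implicit).

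The only real difference is in the forward direction. You argue directly: the descendants of $\T_c$ form an in-order interval, so if $\T_b$ is among them then every $x$ strictly between $b$ and $c$ indexes a proper descendant of $\T_c$, whence $x\preceq c$. The paper instead proves the contrapositive: assuming some $x\succ c$ with $b<x<c$, it picks $x$ maximal under $\preceq$, then uses an LCA argument to show that both $\T_b$ and $\T_c$ are descendants of $\T_x$, lying in opposite subtrees, so $\T_b$ cannot descend from $\T_c$. Your route is shorter and avoids the maximality trick; the paper's route has the minor advantage of not needing to assert the interval property of descendant sets (which is true and used elsewhere in the paper, but not isolated as a lemma at this point). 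Either way the content is the same.
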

\begin{proof}
We will assume that $b<c$; a similar argument handles the case when $c<b$. 
    
First assume that there exists an integer $x\succ c$ such that $b<x<c$. We may assume that $x$ is chosen maximally with respect to the total order $\preceq$. If $v_x$ and $v_b$ are incomparable in the partial order $\leq_T$, then they have a least common ancestor $v_y$. However, if this is the case, then we have $b<y<x<c$ and $x\prec y$, which contradicts the maximality of $x$. It follows that $v_x$ and $v_b$ are comparable in $\leq_T$; since $\preceq$ is a linear extension of $T$ and $b\preceq x$, the node $v_b$ must be a descendant of $v_x$. A similar argument shows that $v_c$ must be a descendant of $v_x$. Because $T$ is a TIBIT and $b<x<c$, we know that $v_b$ is in the left subtree of $v_x$ and that $v_c$ is in the right subtree of $v_x$. Hence, $v_b$ cannot be a descendant of $v_c$. 

To prove the converse, assume that $v_b$ is not a descendant of $v_c$. Let $v_x$ be the least common ancestor of $v_b$ and $v_c$. Then we have $b<x<c$ and $x\succ x$. 

Now, since a binary in-ordered tree $T'$ is uniquely determined by the partial order $\leq_{T'}$, it follows that $\preceq$ is a linear extension of at most one TIBIT. Therefore, the final statement of the lemma follows from \Cref{lem:insertionextension}. 
\end{proof}

The next proposition is immediate from \cref{lem:TIBIT312lift,lem:insertionextension,lem:uniqueTIBIT}. 

\begin{proposition}\label{prop:312TIBIT_bijection}
The map $\sT$ restricts to a bijection from $\CTam_{312}$ to $\TIBIT$ whose inverse is the map $T\mapsto\,\po_T$. 
\end{proposition}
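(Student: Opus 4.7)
The plan is to combine the three preceding lemmas to check directly that $\sT$ and $T \mapsto \po_T$ are mutually inverse bijections between $\CTam_{312}$ and $\TIBIT$.

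First, I would verify well-definedness on both sides. For any TIBIT $T$, \cref{lem:TIBIT312lift} guarantees that $\po_T$ is a $312$-avoiding TILE, so in particular $\po_T\in\CTam_{312}$. For any $\preceq\in\CTam_{312}$, I need that $\sT(\preceq)\in\TIBIT$; this amounts to verifying that the binary in-ordered tree produced by the insertion construction is translation-invariant. The spine $\spine(\preceq)$ is characterized by a condition on $\preceq$ (using cover relations in the rightmost block) that is invariant under $a\mapsto a+n$, and the finite subtrees $\sT(\preceq_s)$ satisfy $\sT(\preceq_{s+n})=\sT(\preceq_s)+n$ because $J_{s+n}=J_s+n$ and $\preceq$ is translation-invariant. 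Hence $\sT(\preceq)\in\TIBIT$.

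Next, I would show both compositions are the identity. Given $T\in\TIBIT$, \cref{lem:TIBIT312lift} says $\po_T$ is a TILE of $T$, and \cref{lem:uniqueTIBIT} says the unique TIBIT admitting $\po_T$ as a linear extension is $\sT(\po_T)$; therefore $\sT(\po_T)=T$. Conversely, given $\preceq\in\CTam_{312}$, \cref{lem:insertionextension} asserts that $\preceq$ is a TILE of $\sT(\preceq)$. Since $\preceq$ is $312$-avoiding and \cref{lem:TIBIT312lift} identifies $\po_{\sT(\preceq)}$ as the \emph{unique} $312$-avoiding TILE of $\sT(\preceq)$, we conclude $\preceq = \po_{\sT(\preceq)}$.

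The only mildly delicate step is the translation-invariance check in the first paragraph, which is essentially bookkeeping traceable to the translation-invariance of $\preceq$; the rest is a direct composition of the three lemmas, which is why the proposition is claimed to be immediate.
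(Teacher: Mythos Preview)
Your proposal is correct and mirrors the paper's approach exactly: the paper states that the proposition is immediate from \cref{lem:TIBIT312lift,lem:insertionextension,lem:uniqueTIBIT}, and your argument is precisely the unpacking of that claim. The translation-invariance check you flag is indeed routine bookkeeping that the paper leaves implicit in the construction of $\sT$.
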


\cref{prop:312TIBIT_bijection} allows us to transfer the Dyer order from $312$-avoiding TITOs to TIBITs. Thus, given two TIBITs $T_1,T_2$, we write $T_1\leq T_2$ if $\po_{T_1}\,\leq\,\po_{T_2}$ (in the Dyer order). 

\begin{lemma}\label{lem:interval} 
    Let $T$ be a TIBIT. For every TILE $\preceq$ of $T$, we have $\po_T\,\leq\,\preceq\,\leq\,\po^T$ in the Dyer order. 
    \end{lemma}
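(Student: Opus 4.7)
The plan is to give explicit combinatorial descriptions of $\Inv(\po_T)$ and $\Inv(\po^T)$ in terms of the ancestor/descendant structure of $T$, and then deduce both containments of inversion sets directly from the definition of a TILE.

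\textbf{Step 1 (characterizing the extremal inversion sets).} I claim that for a reflection index $(a,b)$ with $a<b$:
\begin{itemize}
\item $(a,b)\in\Inv(\po_T)$ if and only if $\T_b$ is a descendant of $\T_a$ (equivalently, $\T_b$ lies in the right subtree of $\T_a$);
\item $(a,b)\in\Inv(\po^T)$ if and only if $\T_a$ is \emph{not} a descendant of $\T_b$ (equivalently, $\T_a$ does not lie in the left subtree of $\T_b$).
\end{itemize}
Both claims come from a short case analysis on the relationship of $\T_a$ and $\T_b$ in $T$. Because $a<b$ and $T$ is in-ordered, either one of $\T_a,\T_b$ is a descendant of the other, or they are incomparable with some least common ancestor $\T_c$ satisfying $a<c<b$ and with $\T_a$ in the left subtree of $\T_c$ and $\T_b$ in the right subtree. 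In each of these three cases, the definitions of post-order and reverse-post-order immediately determine the relative position of $\T_a$ and $\T_b$, and hence whether $(a,b)$ contributes an inversion.

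\textbf{Step 2 (using the linear extension property).} Let $\preceq$ be a TILE of $T$ and let $(a,b)$ be a reflection index with $a<b$.

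For the lower bound: if $(a,b)\in\Inv(\po_T)$, then by Step 1, $\T_b$ is a descendant of $\T_a$. Since $\preceq$ is a linear extension of the partial order $\leq_T$, we have $b\preceq a$, so $(a,b)\in\Inv(\preceq)$. Thus $\Inv(\po_T)\subseteq\Inv(\preceq)$, i.e., $\po_T\leq\,\preceq$.

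For the upper bound: if $(a,b)\in\Inv(\preceq)$, then $b\preceq a$. The linear extension property forbids $\T_a$ from being a descendant of $\T_b$ (else we would have $a\preceq b$), so by Step 1, $(a,b)\in\Inv(\po^T)$. Thus $\preceq\,\leq\po^T$.

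\textbf{Main obstacle.} The only nontrivial ingredient is the case analysis in Step 1, and specifically keeping track of how the three traversal orders treat a node relative to the two subtrees rooted at it; this is purely routine once the cases are laid out. As an alternative to proving the upper bound from scratch, one can appeal to the symmetry $\po^T=\Psi_{\updownarrow}(\po_{\omm(T)})$ of \eqref{eq:PR}: since $\Psi_{\updownarrow}$ is an antiautomorphism of $\CDO$ and sends TILEs of $T$ to TILEs of $\omm(T)$, the inequality $\po_{\omm(T)}\leq \Psi_{\updownarrow}(\preceq)$ (a consequence of the already-established lower bound applied to $\omm(T)$) yields $\preceq\,\leq\po^T$ upon applying $\Psi_{\updownarrow}$.
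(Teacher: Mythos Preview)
Your proof is correct and follows essentially the same approach as the paper's: identify $\Inv(\po_T)$ with the set of reflection indices $(a,b)$ for which $\T_b$ is a descendant of $\T_a$, then conclude via the linear-extension property. The paper proves only the lower bound (by a short contradiction on the least common ancestor, which is your Case~3) and declares the upper bound ``similar,'' whereas you spell out both characterizations and both bounds explicitly; your alternative derivation of the upper bound via \eqref{eq:PR} is also valid and in the spirit of how the paper handles such dualities elsewhere.
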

\begin{proof}
    We show that $\po_T\,\leq\, \preceq$; the proof that $\preceq\,\leq\, \po^T$ is similar. Let $a<b$ be integers, and assume that $b\po_T a$. We wish to show that $b\preceq a$. In fact, we will show that $\T_b$ is a descendant of $\T_a$. To see this, assume otherwise. Then the lowest common ancestor of $\T_a$ and $\T_b$ is some node $\T_x$, where $x$ is distinct from $a$ and $b$. The node $\T_a$ is in the left subtree of $\T_x$, while the node $\T_b$ is in the right subtree of $\T_x$. Hence, by reading the indices of the nodes of $T$ in post-order, we find that $a\po_T b \po_T x$, which is a contradiction. 
\end{proof} 

An \dfn{inversion} of a TIBIT $T$ is a reflection index $(a,b)$ such that $\T_b$ is a descendant of $\T_a$ in $T$. Likewise, a \dfn{version} of $T$ is a reflection index $(a,b)$ such that $\T_a$ is a descendant of $\T_b$ in $T$. 

\begin{proposition}\label{prop:6equivalent} 
    Let $T_1$ and $T_2$ be two TIBITs. The following are equivalent.
    \begin{enumerate}[\normalfont(a)]
        \item\label{item:a} $T_1\leq T_2$.
        \item\label{item:a'} $\omm(T_2)\leq\omm(T_1)$. 
        \item\label{item:b} Every inversion of $T_1$ is an inversion of $T_2$.
        \item\label{item:c} Every version of $T_2$ is a version of $T_1$.  
        \item\label{item:d} For every TILE $\preceq_2$ of $T_2$, we have $\po_{T_1}\,\leq\,\preceq_2$ in the Dyer order.
        \item\label{item:e} For every TILE $\preceq_1$ of $T_1$, we have $\preceq_1\,\leq\, \po^{T_2}$ in the Dyer order. 
    \end{enumerate}
\end{proposition}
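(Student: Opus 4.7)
The strategy is to establish two chains of equivalences---(a) $\Leftrightarrow$ (b) $\Leftrightarrow$ (d) and (a') $\Leftrightarrow$ (c) $\Leftrightarrow$ (e)---and then bridge them by proving (b) $\Leftrightarrow$ (c). The essential combinatorial input is that, for every TIBIT $T$, the inversions of the TITO $\po_T$ coincide with the inversions of $T$: a reflection index $(a,b)$ with $a < b$ satisfies $b \po_T a$ precisely when $\T_b$ is a descendant of $\T_a$. I will verify this by case analysis on whether $\T_a, \T_b$ are comparable in $T$, using that post-order reads the left subtree, then the right subtree, then the root (the incomparable case uses that the lowest common ancestor $\T_c$ has $a < c < b$, putting $\T_a$ in the left subtree and $\T_b$ in the right subtree of $\T_c$). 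Given this, (a) $\Leftrightarrow$ (b) is immediate from the definition of $T_1 \leq T_2$. For (a) $\Leftrightarrow$ (d), the forward direction follows from \cref{lem:interval}, which gives $\po_{T_2} \leq\, \preceq_2$ for every TILE $\preceq_2$ of $T_2$; the reverse direction follows by specializing $\preceq_2 \,=\, \po_{T_2}$, which is itself a TILE by \cref{lem:TIBIT312lift}.

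For the dual chain, the involution $\omm$ sends an inversion $(a,b)$ of $T$ to the version $(-b,-a)$ of $\omm(T)$, so applying the first chain's equivalences to the pair $\omm(T_1), \omm(T_2)$ yields (a') $\Leftrightarrow$ (c). For (a') $\Leftrightarrow$ (e), equation \eqref{eq:PR} together with the fact that $\Psi_{\updownarrow}$ is an antiautomorphism of $\CDO$ shows that $\po^{T_1} \leq\, \po^{T_2}$ is equivalent to (a'). Combining this with \cref{lem:interval} (which gives $\preceq_1 \,\leq\, \po^{T_1}$ for every TILE $\preceq_1$ of $T_1$) in one direction, and specializing to $\preceq_1 \,=\, \po^{T_1}$ (which is a TILE by \cref{lem:TIBIT312lift}) in the other, establishes the equivalence with (e).

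The main obstacle is bridging the two chains by proving (b) $\Leftrightarrow$ (c). I will show (b) $\Rightarrow$ (c) directly and then obtain (c) $\Rightarrow$ (b) by applying the former to the pair $\omm(T_2), \omm(T_1)$ and invoking the $\omm$-correspondence above. For the forward direction, assume every inversion of $T_1$ is an inversion of $T_2$, and suppose for contradiction there is a version $(a,b)$ of $T_2$, with $a < b$, that is not a version of $T_1$. In $T_1$, either $(a,b)$ is an inversion or $\T_a$ and $\T_b$ are incomparable; the inversion case is ruled out because it would force $(a,b)$ to be an inversion of $T_2$, contradicting that $(a,b)$ is a version of $T_2$. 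So $\T_a$ and $\T_b$ are incomparable in $T_1$, and their lowest common ancestor $\T_c$ satisfies $a < c < b$. Then $\T_b$ descends from $\T_c$ in $T_1$, making $(c,b)$ an inversion of $T_1$ and hence of $T_2$, which places $\T_b$ in the right subtree of $\T_c$ in $T_2$. Since $(a,b)$ is a version of $T_2$ with $a < b$, the node $\T_a$ lies in the left subtree of $\T_b$ in $T_2$, and is therefore also in the right subtree of $\T_c$ in $T_2$. But the in-ordered property forces every node in the right subtree of $\T_c$ to have index exceeding $c$, contradicting $a < c$. This closes the cycle and establishes the six-way equivalence.
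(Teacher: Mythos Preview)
Your proof is correct and follows essentially the same approach as the paper's: both establish the chains (a) $\Leftrightarrow$ (b) $\Leftrightarrow$ (d) and (a') $\Leftrightarrow$ (c) $\Leftrightarrow$ (e) via the identification of inversions of $\po_T$ with inversions of $T$, \cref{lem:interval}, and the relation \eqref{eq:PR}, and then bridge the two chains by proving (b) $\Rightarrow$ (c) with the same lowest-common-ancestor contradiction. The only cosmetic difference is that you deduce (c) $\Rightarrow$ (b) by applying (b) $\Rightarrow$ (c) to $\omm(T_2),\omm(T_1)$, whereas the paper simply notes the argument is symmetric.
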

\begin{proof}
For every TIBIT $T$, the set $\Inv(\po_T)$ of inversions of $\po_T$ is equal to the set of inversions of $T$. This immediately implies the equivalence of items \eqref{item:a} and \eqref{item:b}. Similarly, the inversion set $\Inv(\po_{\omm(T)})$ is equal to the set of pairs $(-b,-a)$ such that $(a,b)$ is a version of $T$. This shows that \eqref{item:a'} is equivalent to \eqref{item:c}. The equivalence of \eqref{item:a} and \eqref{item:d} follows directly from \cref{lem:interval}. Since $\Psi_{\updownarrow}$ is an antiautomorphism of $\CDO$, we know that \eqref{item:a'} holds if and only if $\Psi_{\updownarrow}(\po_{\omm(T_1)})\leq\Psi_{\updownarrow}(\po_{\omm(T_2)})$; by \eqref{eq:PR}, this occurs if and only if $\po^{T_1}\,\leq\,\po^{T_2}$. Using \cref{lem:interval}, we deduce that \eqref{item:a'} and \eqref{item:e} are equivalent. 

To complete the proof, we will show that \eqref{item:b}, and \eqref{item:c} are equivalent. We will prove that \eqref{item:b} implies \eqref{item:c}; the proof of the reverse implication is similar. Suppose \eqref{item:b} holds, and let $a<b$ be such that $v_a\leq_{T_2}v_b$. We want to show that $v_a\leq_{T_1}v_b$. Suppose instead that $v_a\not\leq_{T_1}v_b$. We know by (the contrapositive of) \eqref{item:b} that $v_b\not\leq_{T_1}v_a$, so the least common ancestor of $v_a$ and $v_b$ in $T_1$ is some node $v_x$ with $a<x<b$. Then $v_b\leq_{T_1} v_x$, so we know by \eqref{item:b} that $v_b\leq_{T_2} v_x$. This implies that $v_b$ is in the right subtree of $v_x$ in $T_2$, so $v_a$ must also be in the right subtree of $v_x$ in $T_2$. But this is impossible because $a<x$. 
\end{proof}

%

\begin{figure}[]
\begin{center}{\includegraphics[height=21cm]{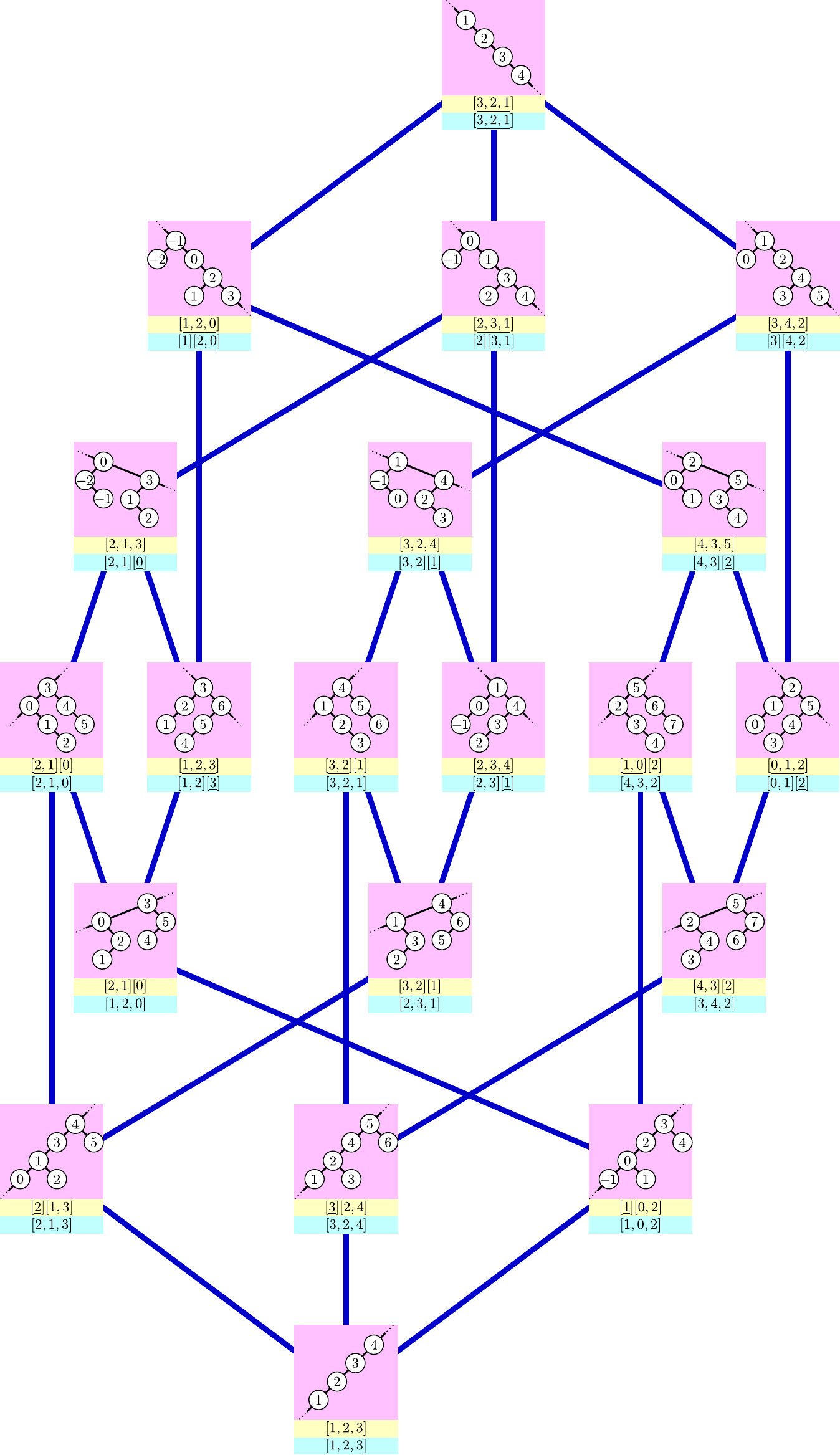}}
\end{center}
\caption{The cyclic Tamari lattice for $n=3$. Each element is represented by a box containing a TIBIT $T$ (on top), the corresponding $312$-avoiding TITO $\po_T$ (on bottom), and the corresponding $132$-avoiding TITO $\po^T$ (in the middle). } 
\label{fig:CTam3_TIBITs} 
\end{figure} 

\begin{corollary}\label{cor:isomorphism}
The posets $\CTam_{312}$, $\CTam_{132}$, and $\TIBIT$ are finite and self-dual, and they are pairwise-isomorphic. 
\end{corollary}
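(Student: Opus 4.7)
The plan is to establish the three claims—pairwise isomorphism, self-duality, and finiteness—using results already developed, with $\TIBIT$ as the central object. First, the isomorphism $\CTam_{312}\cong\TIBIT$ is essentially tautological: \cref{prop:312TIBIT_bijection} provides the bijection $\sT$, and the Dyer order on $\TIBIT$ was defined by transporting the order along $\sT$. Finiteness is then a direct consequence of \cref{lem:two_blocks}: a $312$-avoiding TITO has at most two blocks (one waxing on the left and one waning on the right), the waning block is strictly decreasing (hence determined by its set of residue classes modulo $n$), and the waxing block is an arrangement of the remaining residues, so only finitely many such TITOs exist. By the bijection just discussed, $\TIBIT$ is also finite, and $\CTam_{132}$ is finite by a symmetric argument.

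Next, I would establish self-duality of $\TIBIT$ using the involution $\omm$. The equivalence of items~(a) and~(a') in \cref{prop:6equivalent} says exactly that $T_1\leq T_2$ if and only if $\omm(T_2)\leq\omm(T_1)$, i.e., $\omm$ is an anti-automorphism of $\TIBIT$. Transporting along the isomorphism $\sT$ gives self-duality of $\CTam_{312}$ as well.

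For the isomorphism $\TIBIT\cong\CTam_{132}$, the natural candidate is the map $\Phi\colon T\mapsto\po^T$. By the uniqueness portion of \cref{lem:TIBIT312lift}, $\Phi$ is a bijection from $\TIBIT$ to $\CTam_{132}$. To verify $\Phi$ preserves order, I would use the identity $\po^T=\Psi_{\updownarrow}(\po_{\omm(T)})$ from~\eqref{eq:PR} to factor $\Phi$ as
\[\TIBIT\xrightarrow{\omm}\TIBIT\xrightarrow{T\mapsto\po_T}\CTam_{312}\xrightarrow{\Psi_{\updownarrow}}\CTam_{132}.\]
The first arrow is the anti-automorphism just obtained; the second is the inverse of $\sT$, hence an isomorphism; and the third should be verified to be an anti-isomorphism of posets. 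Since $\Psi_{\updownarrow}$ is already an antiautomorphism of $\CDO$, this reduces to a short check from the definitions that conjugation by negation on $\ZZ$ exchanges $312$-patterns and $132$-patterns. Composing two order-reversing maps with one order-preserving map yields an order-preserving bijection, so $\Phi$ is a poset isomorphism, and self-duality of $\CTam_{132}$ follows by transport of structure. The only conceptually delicate point of the argument is keeping track of which maps in the composition reverse the order; once the variance of $\Psi_{\updownarrow}$ on $312$-patterns versus $132$-patterns is pinned down, everything else is assembled from the previous results.
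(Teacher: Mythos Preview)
Your argument for the isomorphisms and self-duality is correct and is essentially the paper's proof: the paper also derives the isomorphism $\TIBIT\to\CTam_{132}$ from the line in the proof of \cref{prop:6equivalent} showing that item~(a') is equivalent to $\po^{T_1}\leq\po^{T_2}$, which is precisely your factorization $\Psi_{\updownarrow}\circ(T\mapsto\po_T)\circ\omm$, and it obtains self-duality from the equivalence of (a) and (a').

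There is, however, a real gap in your finiteness argument. The sentence ``the waxing block is an arrangement of the remaining residues, so only finitely many such TITOs exist'' is not justified: a waxing block on a fixed set of $k$ residues is determined by a window $[a_1,\ldots,a_k]$ modulo sliding, and the integer representatives $a_i$ are a priori unbounded, so there are infinitely many waxing blocks on any given residue set. For instance, with $n=2$ the windows $[1,2]$, $[1,4]$, and $[2,5]$ all give distinct $312$-avoiding waxing blocks on the full residue set, so even after imposing $312$-avoidance the block is not simply a permutation of residues. You still need an extra step showing that $312$-avoidance bounds the spread of the window entries (equivalently, that there are only finitely many $312$-avoiding affine permutations on a given residue set). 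This is not hard to supply, but it is not a consequence of \cref{lem:two_blocks} alone. The paper sidesteps this by counting TIBITs instead: there are finitely many choices for the spine, and once the spine is fixed, the remaining subtrees are finite binary trees on finite index sets, hence finitely many.
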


\begin{proof}
The Dyer order on $\TIBIT$ was defined precisely so that the map $T\mapsto\,\po_T$ is an isomorphism from $\TIBIT$ to $\CTam_{312}$. \cref{prop:6equivalent} implies that the map $T\mapsto\,\po^T$ is an isomorphism from $\TIBIT$ to $\CTam_{132}$; it also implies that $\omm$ is an antiautomorphism of $\TIBIT$. To see that $\TIBIT$ is finite, note that there are only finitely many ways to choose the spine of a TIBIT; once the spine is chosen, there are only finitely many ways to choose the finite subtrees of nodes in the spine. 
\end{proof}

While we originally defined the \emph{cyclic Tamari lattice} to be $\CTam_{312}$, \cref{cor:isomorphism} allows us to alternatively view $\CTam_{132}$ and $\TIBIT$ as other ``incarnations'' of the cyclic Tamari lattice. In some contexts, when the particular incarnation is not important, we will simply write $\CTam$ for the cyclic Tamari lattice considered as an abstract poset up to isomorphism.  

Define the \dfn{cyclic sylvester congruence} to be the equivalence relation on $\CDO$ in which two TITOs $\preceq_1$ and $\preceq_2$ are equivalent if and only if $\sT(\preceq_1)=\sT(\preceq_2)$. Given a TITO $\preceq$, let ${\pi_\syl^\downarrow(\preceq)=\,\po_{\sT(\preceq)}}$ and $\pi_\syl^\uparrow(\preceq)=\,\po^{\sT(\preceq)}$. 

\begin{theorem}\label{thm:cyclic_quotient} 
    The cyclic Tamari lattice is a finite lattice. Moreover, the cyclic sylvester congruence is a complete lattice congruence, and the map $\sT\colon\CDO\to\TIBIT$ is a surjective homomorphism of complete lattices. 
\end{theorem}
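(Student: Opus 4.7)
The plan is to apply \cref{prop:9-5.2} to the cyclic sylvester congruence on $\CDO$, reducing the claim that this congruence is a complete lattice congruence to verifying (i) that every equivalence class is an interval in $\CDO$ and (ii) that both $\pi_\syl^\downarrow$ and $\pi_\syl^\uparrow$ are order-preserving. Once this is achieved, the quotient poset has minimum-element set $\{\po_T : T \in \TIBIT\} = \CTam_{312}$, which is finite by \cref{cor:isomorphism}; then $\sT$ factors as the quotient map $\pi_\syl^\downarrow\colon\CDO\to\CTam_{312}$ followed by the lattice isomorphism $\po_T\mapsto T$ of \cref{cor:isomorphism}, exhibiting $\sT$ as a surjective homomorphism of complete lattices.

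For (i), I will show that the sylvester class of any TITO $\preceq$ coincides with the closed interval $[\po_{\sT(\preceq)},\po^{\sT(\preceq)}]$ in $\CDO$. The containment of the class in the interval is immediate from \cref{lem:interval}. For the reverse containment, I will verify that $\Inv(\po_T)$ is exactly the inversion set of the TIBIT $T$, while $\Inv(\po^T)$ consists of the inversions of $T$ together with every reflection index $(a,b)$ (with $a<b$) for which $\T_a$ and $\T_b$ are incomparable in $T$. This follows from a case check using that both post-order and reverse-post-order traversals visit children before parents but visit the left and right subtrees of each node in opposite orders. Consequently, any $\preceq$ with $\po_T\leq\preceq\leq\po^T$ has every inversion of $T$ as an inversion and no version of $T$ as an inversion, hence respects every parent-child relation of $T$; thus $\preceq$ is a TILE of $T$, and $\sT(\preceq)=T$ by \cref{lem:uniqueTIBIT}.

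For (ii), I will first prove the stronger statement that $\sT\colon\CDO\to\TIBIT$ is order-preserving. By the equivalence between $T_1\leq T_2$ and ``every inversion of $T_1$ is an inversion of $T_2$'' from \cref{prop:6equivalent}, it suffices to check that if $\preceq_1\leq\preceq_2$ and $a<b$ with $\T_b$ a descendant of $\T_a$ in $\sT(\preceq_1)$, then the same holds in $\sT(\preceq_2)$. By \cref{lem:uniqueTIBIT}, the descendancy is equivalent to $b\preceq_1 a$ together with the nonexistence of any $x$ with $a<x<b$ and $a\prec_1 x$; both conditions assert that certain reflection indices lie in $\Inv(\preceq_1)$, and hence transfer to $\preceq_2$ via $\Inv(\preceq_1)\subseteq\Inv(\preceq_2)$. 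A second application of \cref{lem:uniqueTIBIT} then gives that $\T_b$ is a descendant of $\T_a$ in $\sT(\preceq_2)$. The monotonicity of $\pi_\syl^\downarrow$ and $\pi_\syl^\uparrow$ then follows from the fact that the maps $T\mapsto\po_T$ and $T\mapsto\po^T$ are order-preserving (\cref{cor:isomorphism}).

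With (i) and (ii) in hand, \cref{prop:9-5.2} yields that the cyclic sylvester congruence is a complete lattice congruence, and the theorem follows from the factorization described above. I expect the main obstacle to be the inversion-set case analysis in step (i); the key observation is that the post-order and reverse-post-order traversals treat ancestor-descendant pairs identically but flip the relative order of incomparable pairs, so membership in $[\po_T,\po^T]$ constrains $\preceq$ precisely on the tree-comparable pairs while leaving incomparable pairs free.
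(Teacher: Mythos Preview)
Your proof is correct and follows essentially the same approach as the paper: establish that $\sT$ is order-preserving via \cref{lem:uniqueTIBIT} and \cref{prop:6equivalent}, deduce that $\pi_\syl^\downarrow$ and $\pi_\syl^\uparrow$ are order-preserving, verify that each sylvester class is the interval $[\po_T,\po^T]$, and invoke \cref{prop:9-5.2}. The only difference is cosmetic: you spell out the reverse containment in step~(i) through an explicit description of $\Inv(\po_T)$ and $\Inv(\po^T)$, whereas the paper simply points to \cref{prop:6equivalent} (whose proof already records that $\Inv(\po_T)$ equals the inversion set of $T$, with the dual statement for $\po^T$ following from \eqref{eq:PR}).
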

\begin{proof}
The finiteness was already proven in \cref{cor:isomorphism}. 

Let us show that $\sT$ is order-preserving. Suppose $\preceq_1$ and $\preceq_2$ are TITOs such that $\preceq_1\,\leq\,\preceq_2$ in $\CDO$. We wish to show that $\sT(\preceq_1)\leq\sT(\preceq_2)$. We will use the equivalence of \eqref{item:a} and \eqref{item:b} in \cref{prop:6equivalent}. Suppose $a<b$ are integers such that $v_b\leq_{\sT(\preceq_1)} v_a$; we aim to show that $v_b\leq_{\sT(\preceq_2)} v_a$. Consider an integer $x$ such that $a<x<b$. Since $v_b\leq_{\sT(\preceq_1)} v_a$ (and, therefore, $b\preceq_1 a$), we know by \cref{lem:uniqueTIBIT} that we cannot have $a\preceq_1 x$. This implies that $x\preceq_1 a$, so ${(a,x)\in\Inv(\po_{\sT(\preceq_1)})\subseteq\Inv(\po_{\sT(\preceq_2)})}$. It follows that $x\preceq_2 a$. As $x$ was arbitrary, we can use \cref{lem:uniqueTIBIT} once again to find that $v_b\leq_{\sT(\preceq_2)} v_a$, as desired. 

Now, because $\sT$ is order-preserving, it follows from \cref{prop:6equivalent} that the maps \[{\pi_\syl^\downarrow,\,\pi_\syl^\uparrow\colon\CDO\to\CDO}\] are also order-preserving. Moreover, \cref{prop:6equivalent} tells us that the cyclic sylvester class containing a TITO $\preceq$ is precisely the interval between $\pi_\syl^\downarrow(\preceq)$ and $\pi_\syl^\uparrow(\preceq)$ in $\CDO$. The desired result now follows from \cref{prop:9-5.2}. 
\end{proof} 

\cref{thm:cyclic_quotient} tells us that the cyclic Tamari lattice is a quotient of the cyclic Dyer lattice. It turns out that it is also a sublattice. 

\begin{theorem}\label{thm:cyclic_sublattice} 
The cyclic Tamari lattice $\CTam_{312}$ is a sublattice of the cyclic Dyer lattice $\CDO$. 
\end{theorem}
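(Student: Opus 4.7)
The plan is to verify closure of $\CTam_{312}$ under joins and meets in $\CDO$ separately. Closure under joins will be immediate from \cref{prop:join-subsemilattice}: the cyclic sylvester congruence is a complete lattice congruence by \cref{thm:cyclic_quotient}, and $\CTam_{312}$ is exactly its set of class-minima, so the $\CDO$-join of any subset of $\CTam_{312}$ lies back in $\CTam_{312}$.

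The substantive step is closure under meets. Fix $\preceq_1, \preceq_2 \in \CTam_{312}$ and let $\preceq \,=\, \preceq_1 \wedge_{\CDO} \preceq_2$. My plan is to prove $\Inv(\preceq) = \Inv(\preceq_1) \cap \Inv(\preceq_2)$, from which the $312$-avoidance of $\preceq$ will follow immediately. The inclusion $\Inv(\preceq) \subseteq \Inv(\preceq_1) \cap \Inv(\preceq_2)$ is automatic from $\preceq \leq \preceq_i$. For the reverse, I will use that $\preceq$ has the maximum inversion set among TITOs bounded above by both $\preceq_i$, so it will suffice to exhibit \emph{some} TITO realizing the full intersection as its inversion set.

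The key input, implicit in \cref{prop:quotient_Dyer} via \cite{Barkley}, is the characterization of inversion sets of TITOs as ``biclosed'' subsets of reflection indices: subsets $B$ such that for every reflection-index triple with $p<q<r$, (i) $(p,q), (q,r) \in B$ imply $(p,r) \in B$, and (ii) $(p,r) \in B$ implies $(p,q) \in B$ or $(q,r) \in B$. Condition (i) is trivially preserved by intersection. Condition (ii) is the main obstacle, and it is exactly where $312$-avoidance will enter: if $(p,r) \in \Inv(\preceq_i)$ for each $i$ then $r \preceq_i p$, and the $312$-avoidance of $\preceq_i$ applied to $p<q<r$ forbids the configuration $r \preceq_i p \preceq_i q$, forcing $q \preceq_i p$ and hence $(p,q) \in \Inv(\preceq_1) \cap \Inv(\preceq_2)$. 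Thus (ii) always holds via its first alternative, the intersection is biclosed, and it must equal $\Inv(\preceq)$.

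With $\Inv(\preceq) = \Inv(\preceq_1) \cap \Inv(\preceq_2)$ in hand, the $312$-avoidance of $\preceq$ will follow by essentially the same argument run one more time: any triple $a<b<c$ with $c \preceq a$ gives $(a,c) \in \Inv(\preceq) \subseteq \Inv(\preceq_i)$ for each $i$, so $312$-avoidance of $\preceq_i$ forces $(a,b) \in \Inv(\preceq_i)$ for both $i$, yielding $(a,b) \in \Inv(\preceq)$ and therefore $b \preceq a$, ruling out a $312$-pattern.
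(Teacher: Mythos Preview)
Your proof is correct and takes essentially the same approach as the paper's: both handle joins via \cref{prop:join-subsemilattice} and then, for meets, show that the intersection of the inversion sets is again the inversion set of a $312$-avoiding TITO, with $312$-avoidance supplying exactly the coclosure condition. One minor point: the passage from ``biclosed'' to ``inversion set of some TITO'' is elementary here (define a relation $\preceq^*$ from $B$ and observe that its transitivity amounts precisely to your conditions (i) and (ii)), so you do not actually need the deeper input you attribute to \cite{Barkley}---indeed the paper carries this step out directly as a four-case transitivity check on $\preceq^*$.
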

    
\begin{proof}
Let $X\subseteq\CTam_{312}$; we aim to show that $\bigvee X$ and $\bigwedge X$ are both in $\CTam_{312}$, where we write $\bigvee$ and $\bigwedge$ for the join and meet operations in $\CDO$. We already know by \cref{prop:join-subsemilattice} that $\bigvee X$ is in $\CTam_{312}$, so we just need to consider the meet. 


Define a relation $\preceq^*$ on $\ZZ$ as follows. For all $a\in\ZZ$, we have $a\preceq^* a$. For $a,b\in\ZZ$ with $a<b$, if $(a,b)\in \bigcap_{\preceq\,\in\, X}\Inv(\preceq)$, then $b\preceq^* a$; otherwise, $a\preceq^* b$. Note that $\preceq^*$ is antisymmetric. We claim that $\preceq^*$ is also transitive. Once this claim is established. it will follow immediately that $\preceq^*$ is a $312$-avoiding TITO whose inversion set is exactly $\bigcap_{\preceq\,\in\, X}\Inv(\preceq)$. But then it will follow that $\preceq^*$ must be equal to $\bigwedge X$, so we will deduce that $\bigwedge X\in\CTam_{312}$, as desired. 

To prove the claim, suppose $p,q,r$ are distinct integers such that $p\preceq^* q\preceq^*r$; we will show that $p\preceq^* r$. We consider four cases. 
\medskip

\noindent {\bf Case 1.} Assume $p<q$ and $p<r$. Since $p\preceq^* q$, there exists a $312$-avoiding TITO $\preceq^\#$ in $X$ such that $(p,q)\not\in\Inv(\preceq^\#)$. Thus, $p\preceq^\# q$. If $p<r<q$, then $(r,q)\in\bigcap_{\preceq\,\in\, X}\Inv(\preceq)\subseteq\Inv(\preceq^\#)$, so $p\preceq^\# q\preceq^\# r$. Now assume $p<q<r$. We must have $p\preceq^\# r$ since, otherwise, $p<q<r$ would form a $312$-pattern in $\preceq^\#$. This implies that $(p,r)\not\in\Inv(\preceq)$, so $p\preceq^* r$.  

\medskip 

\noindent {\bf Case 2.} Assume $r<p$ and $r<q$. We need to show that $(r,p)\in \bigcap_{\preceq\,\in\, X}\Inv(\preceq)$. If $r<q<p$, then $(q,p)$ and $(r,q)$ are both inversions of every TITO in $X$, so $(r,p)$ is also an inversion of every TITO in $X$. Now assume $r<p<q$. Consider a TITO $\preceq$ in $X$. We know that $(r,q)\in\Inv(\preceq)$. Since $r<p<q$ cannot form a $312$-pattern in $\preceq$, we must have $(r,p)\in\Inv(\preceq)$.  

\medskip 

\noindent {\bf Case 3.} Assume $q<p<r$. Since $q<r$ and $q\preceq^* r$, there must exist a TITO $\preceq'$ in $X$ such that $q\preceq' r$. Since $q<p$ and $p\preceq^* q$, we know that $(q,p)\in\Inv(\preceq')$. It follows that $p\preceq' r$, so $p\preceq^* r$. 

\medskip 

\noindent {\bf Case 4.} Assume $q<r<p$. Since $q<r$ and $q\preceq^* r$, there exists a TITO $\preceq''$ in $X$ such that $(q,r)\not\in\Inv(\preceq'')$. Since $q<p$ and $p\preceq^* q$, we must have $(q,p)\in\Inv(\preceq'')$. But then $q<r<p$ form a $312$-pattern in $\preceq''$, which is a contradiction. Hence, this case cannot occur. 
\end{proof} 

The elements covered by a TIBIT $T$ in $\TIBIT$ correspond to the lower walls of $\po_T$, which correspond to the right edges in $T$ modulo translation by $n$. Similarly, the elements covering $T$ in $\TIBIT$ correspond to the upper walls of $\po^T$, which correspond to the left edges in $T$ modulo translation by $n$. Since edges of $T$ up to translation are determined by their lower element modulo $n$, this implies the following proposition. 

\begin{proposition}\label{prop:regular_cyclic} 
The Hasse diagram of the cyclic Tamari lattice is an $n$-regular graph. 
\end{proposition}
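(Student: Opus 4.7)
The plan follows the blueprint sketched in the paragraph just before the statement. The central combinatorial claim I need to establish is that, for a TIBIT $T$, the elements of $\TIBIT$ covered by $T$ correspond bijectively to right edges of $T$ modulo translation by $n$, and the elements of $\TIBIT$ covering $T$ correspond bijectively to left edges of $T$ modulo translation by $n$. Given these two bijections, the proposition follows quickly: every node of a TIBIT has a parent (otherwise translation invariance would give a second parentless node, contradicting the condition that any two nodes share a common ancestor), so each residue class modulo $n$ contributes exactly one edge modulo translation, split between left and right types. Summing yields total degree $n$.

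For the lower covers, I would first identify right edges of $T$ modulo translation with lower walls of $\po_T$. If $v_c$ is the right child of $v_a$ (necessarily $a<c$), \cref{lem:TIBITstructure} shows the right subtree of $v_a$ is $\{v_x:a<x\le c\}$; post-order visits this subtree, ending at its root $v_c$, immediately before $v_a$, so $(a,c)$ is a lower wall of $\po_T$. Conversely, any lower wall $(a,c)$ of $\po_T$ with $a<c$ arises this way, as one can check by reading the cover condition through the post-order description. By \cref{prop:quotient_Dyer}, lower walls parametrize the lower covers of $\po_T$ in $\CDO$. Each $\flip_{a,c}(\po_T)$ is itself $312$-avoiding: performing the standard binary-tree right rotation at the edge $v_a\to v_c$ (and at every $n$-translate of this edge simultaneously) produces a new TIBIT $T'$ for which $\po_{T'}=\flip_{a,c}(\po_T)$.

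To transfer this bijection to $\CTam_{312}$, one direction is immediate from \cref{thm:cyclic_sublattice}: each $\flip_{a,c}(\po_T)$ lies in $\CTam_{312}$ and is a lower cover of $\po_T$ in $\CDO$, so it is also a lower cover in $\CTam_{312}$. For the converse, suppose $T'\lessdot T$ in $\CTam_{312}$. Among pairs $(a,c)\in\Inv(T)\setminus\Inv(T')$, I would choose one minimizing the cardinality of the $\po_T$-interval $\{z:c\po_T z\po_T a\}$. A short case analysis on a hypothetical intermediate element $z$ (using that $T$ is $312$-avoiding and $\Inv(T')\subseteq\Inv(T)$) shows this minimum must be $2$, so $(a,c)$ is a lower wall. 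Then $T'\le\flip_{a,c}(T)<T$ in $\CDO$; since $\flip_{a,c}(T)\in\CTam_{312}$, the cover hypothesis in $\CTam_{312}$ forces $T'=\flip_{a,c}(T)$. The upper-cover case is entirely dual, using reverse-post-order, upper walls of $\po^T$, and left edges, and can also be deduced from \eqref{eq:PR} together with the antiautomorphism $\omm$.

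The main obstacle I expect is verifying the right-rotation claim: that flipping a lower wall of $\po_T$ produces the post-order TILE of another TIBIT. One must simultaneously rotate all $n$-translates of the chosen right edge; checking translation invariance and in-order preservation of the result is routine but fiddly, amounting to an equivariant version of the familiar binary-tree right rotation.
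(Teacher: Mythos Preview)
Your proposal is correct and follows exactly the route the paper sketches in the paragraph preceding the proposition: identify right (respectively, left) edges of $T$ modulo translation with lower walls of $\po_T$ (respectively, upper walls of $\po^T$), observe that every node has a parent, and count. The only difference is in justifying that these walls give precisely the covers of $T$ in $\TIBIT$: the paper leans on \cref{thm:cyclic_quotient} together with the general fact that, for a lattice congruence, the lower covers of a bottom element (respectively, upper covers of a top element) in the ambient lattice biject with those in the quotient, whereas you supply a direct argument via equivariant rotations and your minimization trick---both routes are valid, with the paper's being shorter and yours more self-contained.
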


\subsection{The Affine Tamari Lattice via Binary Trees}\label{subsec:Affine_TIBITs}

Our next goal is to interpret the affine Tamari lattice in terms of binary trees and prove that it is a quotient of the cyclic Tamari lattice. To this end, let us say a TIBIT $T$ is \dfn{real} if there does not exist an integer $k$ whose parent in $T$ is $k-n$. Let $\ATIBIT$ denote the subposet of $\TIBIT$ consisting of real TIBITs. The map $T\mapsto\po_T$ is an isomorphism from $\ATIBIT$ to $\ATam_{312}$; thus, we will sometimes refer to $\ATIBIT$ as the \emph{affine Tamari lattice}. Likewise, say $T$ is \dfn{co-real} if there does not exist an integer $k$ whose parent in $T$ is $k+n$, and let $\coATIBIT$ be the subposet of $\TIBIT$ consisting of co-real TITOs. Note that every TIBIT is either real or co-real. Moreover, a TIBIT is both real and co-real if and only if its spine contains more than one residue class modulo $n$. The map $T\mapsto\po^T$ is an isomorphism from $\coATIBIT$ to $\ATam_{132}$. We will soon see that $\coATIBIT$ and $\ATam_{132}$ are also isomorphic to the affine Tamari lattice. 

Let us define a map $\pispine^\downarrow\colon\TIBIT\to\ATIBIT$ as follows. If $T$ is a real TIBIT, simply let ${\pispine^\downarrow(T)=T}$. Now suppose $T$ is a TIBIT that is not real, and consider an integer $k$ whose parent is $k-n$. Then $k+n$ must be the right child of $k$; let $k'$ be the left child of $k$. We can obtain the TIBIT $\pispine^\downarrow(T)$ by ``flipping the spine'' of $T$. More precisely, if $j\not\equiv k\pmod n$, then the children of $j$ in $\pispine^\downarrow(T)$ are the same as the children of $j$ in $T$. On the other hand, if $j=k+mn$ for some integer $m$, then the left and right children of $j$ in $\pispine^\downarrow(T)$ are $j-n$ and $k'+(m+1)n$, respectively. See \cref{fig:pispine}.   

We also have a map $\pispine^\uparrow\colon\TIBIT\to\coATIBIT$ defined as follows. If $T$ is a co-real TIBIT, then $\pispine^\downarrow(T)=T$. Now suppose $T$ is a TIBIT that is not co-real, and consider an integer $k$ whose parent is $k+n$. Then $k-n$ must be the left child of $k$; let $k'$ be the right child of $k$. We obtain $\pispine^\uparrow(T)$ by ``flipping the spine'' of $T$. More precisely, if $j\not\equiv k\pmod n$, then the children of $j$ in $\pispine^\uparrow(T)$ are the same as the children of $j$ in $T$. On the other hand, if $j=k+mn$ for some integer $m$, then the left and right children of $j$ in $\pispine^\uparrow(T)$ are $k'+(m-1)n$ and $j+n$, respectively. See \cref{fig:pispine}. 
Alternatively, we have 
\begin{equation}\label{eq:pispineup}
\pispine^\uparrow=\omm\circ\pispine^\downarrow\circ\omm.
\end{equation} 

\begin{figure}[]
\begin{center}{\includegraphics[height=5.054cm]{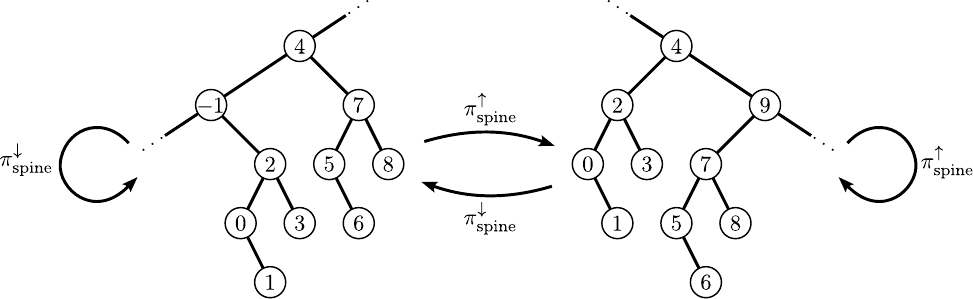}}
\end{center}
\caption{Applying $\pispine^\downarrow$ and $\pispine^\uparrow$ to two TIBITs for $n=5$. } 
\label{fig:pispine} 
\end{figure} 

For any TIBIT $T$, a reflection index $(a,b)$ is an inversion of $\pispine^\downarrow(T)$ if and only if $(a,b)$ is an inversion of $T$ and $a\not\equiv b\pmod n$. Similarly, $(a,b)$ is a version of $\pispine^\uparrow(T)$ if and only if $(a,b)$ is a version of $T$ and $a\not\equiv b\pmod n$. 

\begin{theorem}\label{thm:affine_quotient} 
The affine Tamari lattice is a finite lattice, and ${\pispine^\downarrow\colon\TIBIT\to\ATIBIT}$ is a surjective lattice homomorphism. 
\end{theorem}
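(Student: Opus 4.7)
The plan is to apply \cref{prop:9-5.2} to the equivalence relation $\sim$ on $\TIBIT$ defined by $T_1 \sim T_2 \iff \pispine^\downarrow(T_1) = \pispine^\downarrow(T_2)$, showing that it is a complete lattice congruence on the finite complete lattice $\TIBIT$. The critical inputs are the two characterizations stated just above the theorem: $\Inv(\pispine^\downarrow(T))$ equals the set of non-homogeneous inversions of $T$, and the version set of $\pispine^\uparrow(T)$ equals the set of non-homogeneous versions of $T$. In particular, two TIBITs are $\sim$-equivalent iff they have identical non-homogeneous inversion sets.

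First I would verify that $\pispine^\downarrow$ and $\pispine^\uparrow$ are order-preserving as self-maps of $\TIBIT$. For $\pispine^\downarrow$: if $T_1 \leq T_2$, then by the equivalence of parts (a) and (b) of \cref{prop:6equivalent} we have $\Inv(T_1) \subseteq \Inv(T_2)$; intersecting both sides with the set of non-homogeneous reflection indices and applying the inversion characterization yields $\Inv(\pispine^\downarrow(T_1)) \subseteq \Inv(\pispine^\downarrow(T_2))$, hence $\pispine^\downarrow(T_1) \leq \pispine^\downarrow(T_2)$. The statement for $\pispine^\uparrow$ follows either by the dual argument using versions and the equivalence of (a) and (c) in \cref{prop:6equivalent}, or by invoking the identity $\pispine^\uparrow = \omm \circ \pispine^\downarrow \circ \omm$ from \eqref{eq:pispineup} together with the fact that $\omm$ is an antiautomorphism of $\TIBIT$ (from \cref{cor:isomorphism}). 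The same inversion/version characterizations also give $\pispine^\downarrow(T) \leq T \leq \pispine^\uparrow(T)$ for all $T$, since $\pispine^\downarrow$ only removes homogeneous inversions while $\pispine^\uparrow$ only adds them.

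Next I would show that each $\sim$-class is an interval with minimum $\pispine^\downarrow(T)$ and maximum $\pispine^\uparrow(T)$. Directly from the definitions of the two spine-flipping operations one checks the four composition identities $\pispine^\downarrow \circ \pispine^\downarrow = \pispine^\downarrow$, $\pispine^\uparrow \circ \pispine^\uparrow = \pispine^\uparrow$, $\pispine^\downarrow \circ \pispine^\uparrow = \pispine^\downarrow$, and $\pispine^\uparrow \circ \pispine^\downarrow = \pispine^\uparrow$; each is obtained by unwinding how the orientations of spine residue classes are rewritten. These identities imply $\pispine^\downarrow(T) \sim T \sim \pispine^\uparrow(T)$. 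Conversely, for any $T' \in \TIBIT$ with $\pispine^\downarrow(T) \leq T' \leq \pispine^\uparrow(T)$, applying the order-preserving map $\pispine^\downarrow$ and using the composition identities gives
\[ \pispine^\downarrow(T) \;=\; \pispine^\downarrow(\pispine^\downarrow(T)) \;\leq\; \pispine^\downarrow(T') \;\leq\; \pispine^\downarrow(\pispine^\uparrow(T)) \;=\; \pispine^\downarrow(T), \]
forcing $\pispine^\downarrow(T') = \pispine^\downarrow(T)$ and hence $T' \sim T$. Thus the $\sim$-class of $T$ is exactly the interval $[\pispine^\downarrow(T), \pispine^\uparrow(T)]$.

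With fibers shown to be intervals and both $\pispine^\downarrow, \pispine^\uparrow$ order-preserving, \cref{prop:9-5.2} yields that $\sim$ is a complete lattice congruence on $\TIBIT$. Hence $\ATIBIT = \{\pispine^\downarrow(T) : T \in \TIBIT\}$ is a complete lattice and $\pispine^\downarrow \colon \TIBIT \to \ATIBIT$ is a surjective homomorphism of complete lattices; finiteness is inherited from $\TIBIT$ via \cref{cor:isomorphism}. Transported through the isomorphism $T \mapsto \po_T$, this establishes that $\ATam_{312}$ is a finite lattice, finishing the proof. The principal obstacle is the four composition identities: while morally clear from the "spine-flipping" picture, a rigorous verification requires tracking the tree-level changes imposed by $\pispine^\downarrow$ and $\pispine^\uparrow$ directly, using \cref{lem:TIBITstructure} and translation invariance to confirm that the resulting TIBITs genuinely coincide.
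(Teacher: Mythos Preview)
Your proposal is correct and follows essentially the same route as the paper: define the equivalence relation via $\pispine^\downarrow$, show its classes are intervals with endpoints $\pispine^\downarrow(T)$ and $\pispine^\uparrow(T)$, verify that both maps are order-preserving (using the inversion characterization for $\pispine^\downarrow$ and \eqref{eq:pispineup} plus the antiautomorphism $\omm$ for $\pispine^\uparrow$), and apply \cref{prop:9-5.2}. The only real difference is bookkeeping: the paper observes directly that each class has cardinality $1$ or $2$ (a TIBIT is either both real and co-real, in which case it is fixed by both maps, or exactly one of the two, in which case its class is $\{T,\pispine^\downarrow(T)\}=\{T,\pispine^\uparrow(T)\}$), whereas you instead derive the interval description from the four composition identities and a sandwich argument. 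One small omission in your write-up: you show the interval $[\pispine^\downarrow(T),\pispine^\uparrow(T)]$ is contained in the $\sim$-class but do not explicitly argue the reverse containment; this follows immediately from your identity $\pispine^\uparrow\circ\pispine^\downarrow=\pispine^\uparrow$, since $T'\sim T$ gives $\pispine^\uparrow(T')=\pispine^\uparrow(\pispine^\downarrow(T'))=\pispine^\uparrow(\pispine^\downarrow(T))=\pispine^\uparrow(T)$ and hence $T'\le\pispine^\uparrow(T)$.
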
 

\begin{proof}
Consider the equivalence relation $\equiv_{\mathrm{spine}}$ on $\TIBIT$ in which $T\equiv_{\mathrm{spine}} T'$ if and only if $\pispine^\downarrow(T)=\pispine^\downarrow(T')$. Each equivalence class of this relation is an interval of cardinality $1$ or $2$; furthermore, for each TIBIT $T$, the minimum and maximum elements of the equivalence class containing $T$ are $\pispine^\downarrow(T)$ and $\pispine^\uparrow(T)$, respectively. The image of $\pispine^\downarrow$ is $\ATIBIT$, so by \cref{prop:9-5.2}, we just need to show that $\pispine^\downarrow$ and $\pispine^\uparrow$ are order-preserving. We will prove that $\pispine^\downarrow$ is order-preserving; since $\omm$ is an antiautomorphism of $\TIBIT$ (by \cref{prop:6equivalent}), it will then follow from \eqref{eq:pispineup} that $\pispine^\uparrow$ is also order-preserving. 

Let $T_1,T_2$ be TIBITs with $T_1 \leq T_2$. Suppose $(a,b)$ is an inverison of $\pispine^\downarrow(T_1)$. Then $(a,b)$ is an inversion of $T_1$, so it is also an inversion of $T_2$. Moreover, $a\not\equiv b\pmod n$, so $(a,b)$ is an inversion of $\pispine^\downarrow(T_2)$. This shows that $\pispine^\downarrow(T_1)\leq\pispine^\downarrow(T_2)$, which proves that $\pispine^\downarrow$ is order-preserving. 
\end{proof}

\begin{corollary}\label{cor:affine_incarnations}
The posets $\ATam_{312}$, $\ATam_{132}$, $\ATIBIT$, and $\coATIBIT$ finite and self-dual, and they are pairwise-isomorphic. 
\end{corollary}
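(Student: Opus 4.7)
The plan is to reduce the fourfold statement to easy manipulations involving the maps $\pispine^\downarrow$, $\pispine^\uparrow$, and $\omm$. Finiteness is immediate: $\TIBIT$ is finite by \cref{cor:isomorphism}, so its subposets $\ATIBIT$ and $\coATIBIT$ are finite, and the already-established isomorphisms $\ATIBIT \cong \ATam_{312}$ (via $T \mapsto \po_T$) and $\coATIBIT \cong \ATam_{132}$ (via $T \mapsto \po^T$) carry this over to the remaining two posets.

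To complete the pairwise isomorphisms, I would show that $\pispine^\uparrow$ restricts to an order-isomorphism from $\ATIBIT$ onto $\coATIBIT$, with inverse given by the restriction of $\pispine^\downarrow$ to $\coATIBIT$. Both of these restrictions are order-preserving by (the proof of) \cref{thm:affine_quotient}. To see they are mutually inverse, recall from that same proof that the $\equiv_{\mathrm{spine}}$-class of any TIBIT $T$ is the interval $[\pispine^\downarrow(T), \pispine^\uparrow(T)]$ and has cardinality at most two; therefore, for $T \in \ATIBIT$ we have $T = \pispine^\downarrow(T)$ and $\pispine^\uparrow(T)$ lies in the same fiber, giving $\pispine^\downarrow(\pispine^\uparrow(T)) = T$. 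The symmetric statement holds on $\coATIBIT$.

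For self-duality, I would define $\Phi \colon \ATIBIT \to \ATIBIT$ by $\Phi(T) = \pispine^\downarrow(\omm(T))$. Since $\omm$ is an antiautomorphism of $\TIBIT$ (by \cref{cor:isomorphism}) and $\pispine^\downarrow$ is a complete lattice homomorphism (by \cref{thm:affine_quotient}), the composition $\Phi$ is order-reversing and lands in $\ATIBIT$. Rewriting the identity \eqref{eq:pispineup} in the form $\omm \circ \pispine^\downarrow = \pispine^\uparrow \circ \omm$, I compute
\[ \Phi(\Phi(T)) = \pispine^\downarrow\bigl(\omm(\pispine^\downarrow(\omm(T)))\bigr) = \pispine^\downarrow(\pispine^\uparrow(T)) = T \]
for every $T \in \ATIBIT$, where the last equality uses that $T$ and $\pispine^\uparrow(T)$ lie in the same $\equiv_{\mathrm{spine}}$-fiber. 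Thus $\Phi$ is an involutive order-reversing bijection, hence an antiautomorphism of $\ATIBIT$, and self-duality transfers to the other three posets through the pairwise isomorphisms already established.

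The only step that requires care is the interchange $\omm \circ \pispine^\downarrow = \pispine^\uparrow \circ \omm$, but this is just a rearrangement of \eqref{eq:pispineup}, so I do not anticipate any genuine obstacle; the rest is bookkeeping already contained in the proof of \cref{thm:affine_quotient}.
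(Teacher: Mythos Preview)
Your proposal is correct and follows essentially the same approach as the paper: finiteness and the isomorphisms $\ATIBIT\cong\ATam_{312}$, $\coATIBIT\cong\ATam_{132}$ are already established, the restriction of $\pispine^\downarrow$ (with inverse $\pispine^\uparrow$) gives the isomorphism between $\coATIBIT$ and $\ATIBIT$, and self-duality comes from composing $\omm$ with $\pispine^\downarrow$ using \eqref{eq:pispineup}. Your explicit verification that $\Phi=\pispine^\downarrow\circ\omm$ is an involution is a nice extra detail the paper omits, but it is not strictly needed---any antiisomorphism already establishes self-duality.
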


\begin{proof}
We already know that $\ATam_{312}$ and $\ATIBIT$ are isomorphic and finite and that $\ATam_{132}$ and $\coATIBIT$ are isomorphic. It follows from the proof of \cref{thm:affine_quotient} that $\pispine^\downarrow$ restricts to an isomorphism from $\coATIBIT$ to $\ATIBIT$ whose inverse is the restriction of $\pispine^\uparrow$. The self-duality of $\ATIBIT$ follows from \eqref{eq:pispineup} and \cref{thm:affine_quotient}. 
\end{proof} 

\cref{fig:ATam3_TIBITs} shows the affine Tamari lattice for $n=3$. More accurately, this figure shows the four incarnations of the affine Tamari lattice listed in \cref{cor:affine_incarnations} superimposed on one another so that each element is represented as a $312$-avoiding real TITO, a $132$-avoiding co-real TITO, a real TIBIT, and a co-real TIBIT (the two TIBITs are drawn as one when they are equal). 

\begin{figure}[]
\begin{center}{\includegraphics[height=20.559cm]{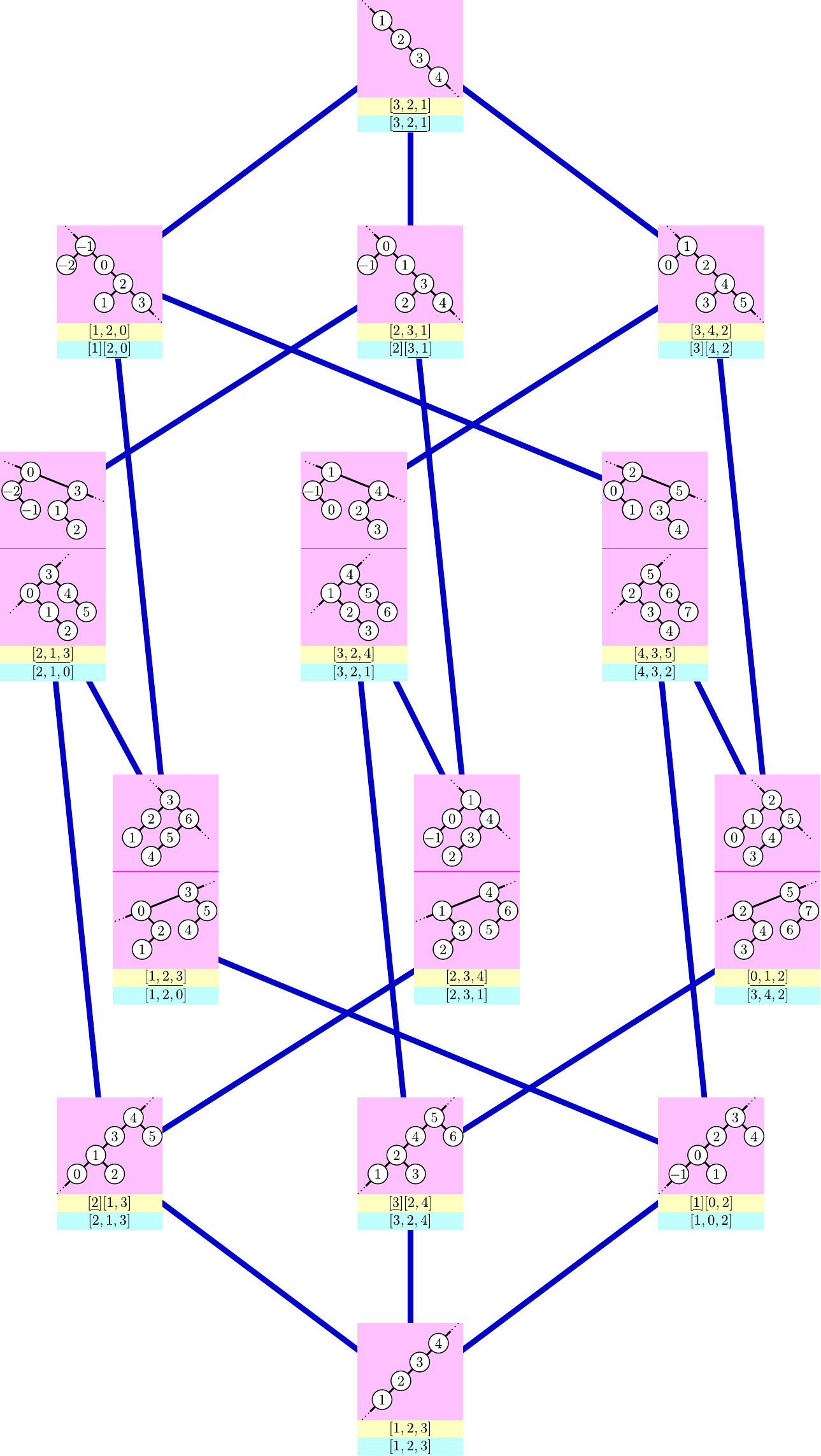}}
\end{center}
\caption{The affine Tamari lattice for $n=3$. Each element is represented by a box containing one or two TIBITs (on top), a $312$-avoiding real TITO (on the bottom), and a $132$-avoiding co-real TITO (in the middle). } 
\label{fig:ATam3_TIBITs} 
\end{figure} 

\begin{remark}
For $n\geq 3$, the affine Tamari lattice $\ATam_{312}$ is \emph{not} a sublattice of the cyclic Tamari lattice $\CTam_{312}$, the cyclic Dyer lattice $\CDO$, or the Dyer lattice $\DO$. For example, suppose $n=3$, and consider the TITOs $\preceq_1$ and $\preceq_2$ with window notations $[1][\underline{2,0}]$ and $[2][\underline{3,1}]$, respectively. In each of the lattices $\CTam_{312}$ and $\CDO$, the meet of $\preceq_1$ and $\preceq_2$ is the TITO with window notation $[1,2][\underline{3}]$, which is not real. The meet of $\preceq_1$ and $\preceq_2$ in $\DO$ is the TITO with window notation $[1,2][3]$, which is not $312$-avoiding. 
\end{remark} 

We end this section with the following companion to \cref{prop:regular_cyclic}. 

\begin{proposition}\label{prop:regular_affine} 
The Hasse diagram of the affine Tamari lattice is an $n$-regular graph. 
\end{proposition}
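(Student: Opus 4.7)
The proof will parallel that of \cref{prop:regular_cyclic}, with additional care to handle the quotient structure $\pispine^\downarrow\colon \CTam \to \ATIBIT$ from \cref{thm:affine_quotient}. Let $T$ be a real TIBIT viewed as an element of $\ATIBIT$. The goal is to produce a natural $n$-element indexing set for the cover relations at $T$, analogous to how \cref{prop:regular_cyclic} uses the $n$ edges of $T$ modulo translation. (Recall that every node of a TIBIT has a unique parent, so in a fundamental domain, $T$ has exactly $n$ edges modulo translation by $n$.)

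For each such edge $e$, consider the wall of $\po_T$ (in the cyclic Dyer lattice) corresponding to $e$ and the resulting cover in $\CTam$: a down-cover when $e$ is a right edge and an up-cover when $e$ is a left edge. By \cref{prop:regular_cyclic} these exhaust the $n$ covers of $T$ in $\CTam$. I will push these covers forward via $\pispine^\downarrow$ (for down-covers) and the dual $\pispine^\uparrow$ (for up-covers) to produce covers in $\ATIBIT$, and I will show that the resulting $\ATIBIT$-covers are distinct and exhaustive.

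In the easy case where $T$ is both real and co-real, the $\equiv_{\mathrm{spine}}$-fiber of $T$ is the singleton $\{T\}$, and I claim each of the $n$ $\CTam$-covers of $T$ yields a distinct cover of $T$ in $\ATIBIT$: the target of the flip at $e$ is either already real (giving an $\ATIBIT$-cover directly) or else its $\pispine^\downarrow$-image is strictly distinct from $T$ (still yielding an $\ATIBIT$-cover). Distinctness follows by tracking how the flip at $e$ modifies $T$ locally. In the harder case where $T$ is real but not co-real, the fiber is $\{T, T^+\}$ with $T \lessdot_{\CTam} T^+ = \pispine^\uparrow(T)$ obtained by flipping the unique spine edge of $T$, so this particular $\CTam$-cover collapses to a trivial cover in $\ATIBIT$. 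The reason the total is still $n$ is that $T^+$ itself, lying in the fiber of $T$, contributes $\ATIBIT$-covers of $T$ via its own $\CTam$-covers other than $T \lessdot T^+$: each non-spine $\CTam$-cover of $T$ pairs with the corresponding non-spine $\CTam$-cover of $T^+$ (they differ only along the spine, so project to the same $\ATIBIT$-cover), while the spine-flip covers of $T$ and $T^+$ interact with the spine in a way that gives one additional $\ATIBIT$-cover not accounted for by $T$'s non-spine edges.

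The main obstacle is the bookkeeping in the second case: one must classify the non-spine $\CTam$-covers of $T$ and $T^+$ into ``paired'' (those yielding the same $\ATIBIT$-cover under $\pispine^\downarrow$) and ``unpaired'' (those changing the spine structure and thereby producing distinct $\ATIBIT$-covers), and verify that the totals balance exactly to $n$ once the spine collapse is accounted for. A cleaner alternative worth attempting is to describe, for each edge $e$ of $T$ mod translation, a canonical $\ATIBIT$-cover of $T$ (taking the minimal real TIBIT reached by traversing the flip at $e$ and then, if necessary, applying $\pispine^\downarrow$ or $\pispine^\uparrow$), then showing this assignment is a bijection onto the covers of $T$ in $\ATIBIT$.
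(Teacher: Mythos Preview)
Your outline is headed in the right direction, but as you yourself flag, the bookkeeping in the second case is not carried out, and your case~1 argument also leaves distinctness and exhaustiveness as assertions rather than verified facts. So as written this is a plan, not a proof.

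The paper's argument sidesteps all of this bookkeeping with a single clean reduction. For any lattice congruence $\equiv$ on a finite lattice $L$, the covers of an element in the quotient are described entirely by what happens at the extreme points of its congruence class: the down-covers of $[x]$ in $L/{\equiv}$ are in bijection with the down-covers of $\pi_\equiv^\downarrow(x)$ in $L$, and the up-covers of $[x]$ are in bijection with the up-covers of $\pi_\equiv^\uparrow(x)$ in $L$. Applying this with $L=\TIBIT$ and $\equiv$ the spine congruence, the total degree of $T$ in $\ATIBIT$ equals
\[
\#\{\text{down-covers of }\pispine^\downarrow(T)\text{ in }\TIBIT\}\;+\;\#\{\text{up-covers of }\pispine^\uparrow(T)\text{ in }\TIBIT\}.
\]
Since $\TIBIT$ is $n$-regular (\cref{prop:regular_cyclic}), this sum equals $n$ if and only if $\pispine^\downarrow(T)$ and $\pispine^\uparrow(T)$ have the \emph{same number of down-covers}, i.e., the same number of right edges modulo translation. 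That last statement is an immediate structural observation: when the two trees differ, the spine is a single residue class $\{v_a\mid a\equiv k\}$; every non-spine node has identical children in both trees, and every spine node has exactly one left child and one right child in each tree.

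So the key idea you are missing is not to try to match individual $\CTam$-covers with $\ATIBIT$-covers, but simply to count down-covers at $\pispine^\downarrow(T)$ and up-covers at $\pispine^\uparrow(T)$ separately and use $n$-regularity of $\CTam$ to convert the problem into showing those two TIBITs have equally many right edges. This replaces your entire case~2 analysis with two sentences.
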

\begin{proof}
By \cref{thm:affine_quotient}, it suffices to show that for each TIBIT $T$, the number of elements covered by $\pispine^\downarrow(T)$ in $\TIBIT$ and the number of elements covering $\pispine^\uparrow(T)$ in $\TIBIT$ sum to $n$. By \cref{prop:regular_cyclic}, we just need to show that the TIBITs $\pispine^\downarrow(T)$ and $\pispine^\uparrow(T)$ cover the same number of elements in $\TIBIT$. This is obvious if $\pispine^\downarrow(T)$ and $\pispine^\uparrow(T)$ are equal, so assume they are not. This assumption implies that the set of nodes in the spine of $\pispine^\downarrow(T)$, which equals the set of nodes in the spine of $\pispine^\uparrow(T)$, is of the form $\{v_{a}\mid a\equiv k\pmod{n}\}$ for some $k$. We must show that $\pispine^\downarrow(T)$ and $\pispine^\uparrow(T)$ have the same number of right edges modulo translation by $n$. This follows from two observations. First, each node $v_b$ for $b\not\equiv k\pmod{n}$ has the same set of children in $\pispine^\downarrow(T)$ as in $\pispine^\uparrow(T)$ (and each such child is either a left child in both trees or a right child in both trees). Second, each node $v_a$ with $a\equiv k\pmod{n}$ has one left child and one right child in each of the trees $\pispine^\downarrow(T)$ and $\pispine^\uparrow(T)$. 
\end{proof}

\section{Arc Diagrams and the Fundamental Theorem of Finite Semidistributive Lattices}\label{sec:Arcs_FTFSDL} 

Consider an annulus whose outer boundary has $n$ marked points numbered $1,\ldots,n$ in clockwise order. We will think of the marked points as representing elements of $\ZZ/n\ZZ$ (so each label really represents a residue class modulo $n$). An \dfn{arc} is a clockwise-directed curve inside the annulus that starts at one marked boundary point and ends at another marked boundary point (which could be the same as the starting point) and does not intersect itself in its relative interior. Arcs are considered up to isotopy, so they are in one-to-one correspondence with reflection indices $(a,b)$ such that $b\leq a+n$. The arc corresponding to a reflection index $(a,b)$, which we denote by $\gamma_{a,b}$, has starting point $a$ and final point $b$. If $b=a+n$, we call $\gamma_{a,b}$ an \dfn{imaginary arc}; otherwise, $\gamma_{a,b}$ is a \dfn{real arc}. A \dfn{noncrossing arc diagram} is a collection of arcs that can be drawn so that they do not cross in their relative interiors. 

Let $\preceq$ be a $312$-avoiding TITO. Recall that a \emph{lower wall} of $\preceq$ is a reflection index $(a,b)$ such that $b\preceq a$ is a cover relation. The \dfn{arc diagram} of $\preceq$, denoted $\cA(\preceq)$, is the set of arcs $\gamma_{a,b}$ such that $(a,b)$ is a lower wall of $\preceq$.

\begin{figure}[ht]
\begin{center}{\includegraphics[height=6.960cm]{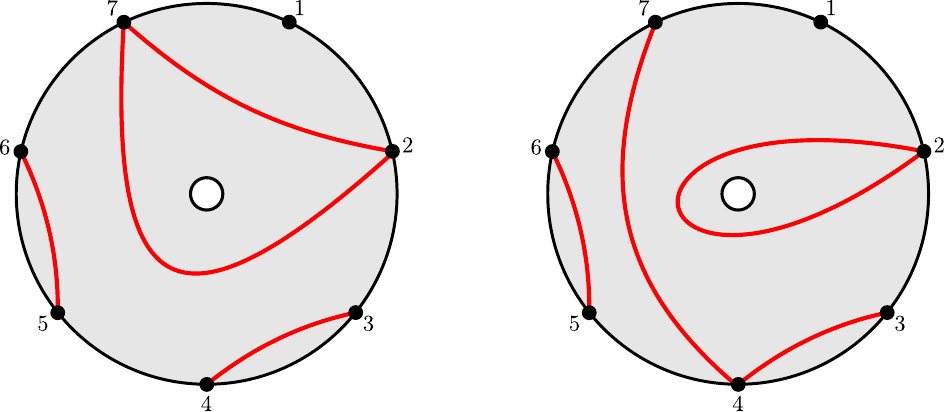}}
\end{center}
\caption{The arc diagrams of the $312$-avoiding TITOs with window notations $[1,4,3,6,5][\underline{7,2}]$ (left) and $[1,6,5,7,4,3][\underline{2}]$ (right).} 
\label{fig:arc_diagrams} 
\end{figure} 

\begin{proposition}\label{lem:arcdiagram312}
    The map $\cA$ is a bijection from the set of $312$-avoiding TITOs to the set of noncrossing arc diagrams. In particular, a $312$-avoiding TITO is determined by its collection of lower walls.
\end{proposition}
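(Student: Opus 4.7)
My approach is to establish bijectivity of $\cA$ in three stages: noncrossingness of the image, injectivity, and surjectivity. The central idea is to use the TIBIT correspondence $\preceq \leftrightarrow \sT(\preceq)$ of \cref{prop:312TIBIT_bijection} and translate the lower-wall data to the tree side. As observed in the discussion preceding \cref{prop:regular_cyclic}, the lower walls of $\po_T$ are exactly the reflection indices $(a,b)$ for which $v_b$ is the right child of $v_a$ in $T$, modulo translation by $n$. Thus $\cA$ amounts to the map sending a TIBIT $T$ to its set of right-child edges, drawn as arcs on the annulus.

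For noncrossingness (Stage 1), I would suppose two lower walls $(a,b)$ and $(c,d)$ of a $312$-avoiding TITO $\preceq$ produce crossing arcs. After translating $(c,d)$, I may assume $a \le c < a+n$, and the crossing condition then forces an interleaving of the cyclic positions of $a,b,c,d$; in the real-arc case this is essentially $a < c < b < d$. Since $b \preceq a$ is a cover relation, no integer may lie strictly between $b$ and $a$ in $\preceq$, so either $c \preceq b$ or $a \preceq c$. A short case analysis using the additional cover relation $d \preceq c$ exhibits three integers $x < y < z$ satisfying $z \preceq x \preceq y$, i.e., a $312$-pattern, contradicting our hypothesis. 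The imaginary-arc subcases are handled analogously, with the winding translated into a shift by $n$ in the indices.

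For injectivity (Stage 2), it suffices, by the TIBIT correspondence, to show that a TIBIT is determined by its set of right-child edges modulo translation. Once the right-child edges are fixed, \cref{lem:TIBITstructure} forces the left subtree of each right-child node $v_b$ (with parent $v_a$) to be $\{v_x : a < x < b\}$; this constraint propagates recursively and, together with translation invariance and in-orderedness, determines every left-child edge and the spine. So the TIBIT, and thus $\preceq$, is recovered uniquely from $\cA(\preceq)$.

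For surjectivity (Stage 3), given a noncrossing arc diagram $D$, I would construct a TIBIT $T_D$ by declaring $v_b$ a right child of $v_a$ for every arc $\gamma_{a,b} \in D$ (extended to all translates), and then inserting the remaining nodes as left children in the unique manner dictated by \cref{lem:TIBITstructure} and in-orderedness. The noncrossing condition on $D$ is used to show that the collection of intervals $\{v_x : a < x < b\}$, which must serve as left subtrees of right children, forms a coherent nested family both within a fundamental domain and across translates, so that the local prescriptions extend to a bona fide translation-invariant binary in-ordered tree. A final verification that $\cA(\po_{T_D}) = D$ completes the argument. I expect Stage 3 to be the main obstacle, since interpreting the noncrossing condition on the annulus---including the interaction between real arcs and imaginary arcs that wind around the hole, and the requirement that any two nodes share a common ancestor---requires careful bookkeeping to guarantee that the construction produces a genuine TIBIT rather than a collection of disconnected fragments.
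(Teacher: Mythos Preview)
Your approach is sound and genuinely different from the paper's. The paper's proof is essentially a citation: it invokes the general theory of arc diagrams for TITOs developed in \cite{Barkley}, observes that the $\overline{31}2$-avoiding TITOs are precisely those whose diagrams (in the sense of \cite{Barkley}) match the restricted notion used here, and then applies \cite[Theorem~4.12]{Barkley}, which says every noncrossing arc diagram comes from a unique \emph{compact} TITO. \Cref{lem:312iscompact} (every $312$-avoiding TITO is compact) closes the loop. So the paper offloads both injectivity and surjectivity to an external source.

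Your route instead stays internal to this paper by passing through the TIBIT bijection of \cref{prop:312TIBIT_bijection} and the observation (used before \cref{prop:regular_cyclic}) that lower walls of $\po_T$ are exactly right-child edges of $T$. This buys self-containedness and a conceptually clean picture---$\cA$ becomes ``record the right-child edges''---at the cost of having to prove by hand that right-child data determines and is determined by a TIBIT. Stages~1 and~2 are routine as you describe them; your Stage~1 case analysis works (the two cases for where $c$ sits relative to the cover $b\preceq a$ each produce a $312$-pattern), and Stage~2 follows from \cref{lem:TIBITstructure} propagated recursively. Stage~3 is where the real content lies, and you correctly flag it: showing that a noncrossing family of ``right-child'' arcs extends to a single connected TIBIT (in particular, that every pair of nodes acquires a common ancestor and that the spine is well defined) requires genuine care, essentially reproving a special case of what \cite{Barkley} packages. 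There is no error in your plan, but be aware that completing Stage~3 rigorously is comparable in effort to what the cited theorem provides.
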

\begin{proof}
    In \cite{Barkley}, every TITO is given an associated arc diagram, using a more general notion of arc diagram than we have defined here. The TITOs with arc diagrams matching our definition are exactly the $\overline{31}2$-avoiding TITOs. Since each $312$-avoiding TITO is in particular $\overline{31}2$-avoiding, the diagram of a $312$-avoiding TITO $\preceq$ is the diagram $\cA(\preceq)$ we define above. In the general context of \cite{Barkley}, taking the diagram of a TITO is a many-to-one map from TITOs to noncrossing arc diagrams. However, \cite[Theorem 4.12]{Barkley} asserts that there is a unique compact TITO with any given noncrossing arc diagram. By \Cref{lem:312iscompact}, every $312$-avoiding TITO is compact, so this finishes the proof.
\end{proof}

For the next corollary, we recall that the Catalan numbers of types $B_n$ and $D_n$ are, respectively, 
\[\mathrm{Cat}_{B_n}=\binom{2n}{n}\quad\text{and}\quad\mathrm{Cat}_{D_n}=\frac{3n-2}{n}\binom{2n-2}{n-1}.\]  

\begin{corollary}\label{cor:Catalan} 
    We have
    \[ |\CTam| = \mathrm{Cat}_{B_n} \quad\text{and}\quad |\ATam| = \mathrm{Cat}_{D_n}. \]
\end{corollary}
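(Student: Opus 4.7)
By \cref{lem:arcdiagram312}, the elements of $\CTam_{312}$ biject with noncrossing arc diagrams in the annulus equipped with $n$ outer marked points, so it suffices to enumerate these diagrams (and the subcollection corresponding to $\ATam_{312}$). First, I would identify the affine sub-family: an imaginary arc $\gamma_{a,a+n}$ is a lower wall of a $312$-avoiding TITO $\preceq$ precisely when $a+n \preceq a$ is a cover relation, which occurs if and only if the residue class of $a$ modulo $n$ forms a waning singleton block of $\preceq$ (if the block has size greater than $1$, an intermediate element obstructs the cover; if the singleton is waxing, the inequality goes the other way). Since real TITOs are exactly those without waning singleton blocks, $\ATam_{312}$ corresponds bijectively to the noncrossing arc diagrams in the annulus that contain no imaginary arcs.

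Next, I would count the noncrossing arc diagrams by constructing a bijection with centrally symmetric noncrossing partitions of the $2n$-cycle $\{1,2,\ldots,2n\}$, which are classically enumerated by $\binom{2n}{n}=\Cat_{B_n}$. The idea is to lift a noncrossing arc diagram on the annulus to a $\ZZ$-periodic noncrossing arc diagram on the universal cover (a strip with $\ZZ$-many marked points), then fold by the subgroup $2n\ZZ\subset\ZZ$ to obtain a noncrossing partition on $2n$ marked points on a circle that is symmetric under the half-turn $i\mapsto i+n\pmod{2n}$. The clockwise orientation and winding data $b-a\in\{1,\ldots,n\}$ of each arc determine (and are determined by) the corresponding symmetric partition. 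This would yield $|\CTam|=\binom{2n}{n}$.

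Finally, under this bijection the absence of imaginary arcs translates into the condition that no block of the centrally symmetric partition contains a pair of antipodal elements of the $2n$-cycle. By a classical identity, centrally symmetric noncrossing partitions of $2n$ elements with no such ``diameter block'' number exactly $\frac{3n-2}{n}\binom{2n-2}{n-1}=\Cat_{D_n}$, giving $|\ATam|=\Cat_{D_n}$.

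The main obstacle is building the lift-and-fold bijection rigorously: the annulus has nontrivial fundamental group, and the arcs are directed with nontrivial winding, so one must track these data through the lift and verify that noncrossingness, translation invariance, and the imaginary-arc condition all transfer correctly in both directions. Once this is established, both identities reduce to the classical type $B$ and type $D$ Catalan enumerations.
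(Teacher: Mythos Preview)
Your approach to $|\CTam|=\Cat_{B_n}$ via the lift-and-fold bijection to centrally symmetric noncrossing partitions of $[2n]$ is sound, and the paper in fact constructs this very bijection later (the composite $\uw\circ(D\mapsto\rho_D)$ in \cref{sec:rowmotion}), though it does not invoke it here; the paper's proof of \cref{cor:Catalan} instead derives the $B_n$ count from the $D_n$ count via the relation $|\CTam|=|\ATam|+n\,\Cat_{A_{n-2}}$.

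The argument for $|\ATam|$, however, has a genuine gap. Your claim that under this bijection ``the absence of imaginary arcs translates into the condition that no block of the centrally symmetric partition contains a pair of antipodal elements'' is false. Take $n=2$ and the diagram $D=\{\gamma_{1,2},\gamma_{2,3}\}$: it contains no imaginary arc (the corresponding $312$-avoiding TITO is $[\underline{2,1}]$, which is real since its unique block has size $2$), yet the associated TINCP has $1\sim 2\sim 3$ and hence consists of the single block $\ZZ$, which folds to the partition $\{\{1,2,3,4\}\}$ of $[4]$---a zero block. More generally, a chain of real arcs can wrap fully around the annulus and produce an infinite block in the TINCP without any individual arc being imaginary. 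Consequently the number of type $B_n$ noncrossing partitions with no zero block is \emph{not} $\Cat_{D_n}$ (for $n=2$ it is $3$, not $4$), so the ``classical identity'' you appeal to does not hold in the stated form. The paper instead counts the imaginary-arc-free diagrams directly: it fixes a cut of the annulus, splits into cases according to whether some arc crosses the cut, and obtains a convolution of type-$A$ Catalan numbers that sums to $\Cat_{D_n}$.
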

\begin{proof}
     Let $\fD$ be the set of noncrossing arc diagrams with no imaginary arcs. It follows from \cref{lem:arcdiagram312} that $|\fD|=|\ATam|$.
    We will repeatedly use the fact that if $i<j$ and $j-i\leq n-1$, then the noncrossing arc diagrams using only arcs of the form $\gamma_{a,b}$ with $i\leq a<b\leq j$ are in natural bijection with noncrossing partitions of the set $\{i,i+1,\ldots,j\}$ (as defined in \cref{subsec:rowmotion_semidistributive}). Thus, the number of such noncrossing arc diagrams is $\mathrm{Cat}_{A_{j-i-1}}$. If $j-i=n-1$, then these noncrossing arc diagrams are also in bijection with noncrossing arc diagrams that use the arc $\gamma_{i-1,i-1+n}$. By \cref{lem:arcdiagram312}, this implies that
    \[ |\CTam|=|\ATam| + n \mathrm{Cat}_{A_{n-2}}, \]
    so if we can show that $|\ATam| = \mathrm{Cat}_{D_n}$, then it will follow that $|\CTam| = \mathrm{Cat}_{B_n}$. Hence, we must show that $|\fD|=\mathrm{Cat}_{D_n}$. Given a noncrossing arc diagram in $\fD$, either there is no arc of the form $\gamma_{i,j}$ with $i\leq n<j$, or there is an arc $\gamma_{j,i+n}$ with $1\leq i<j\leq n$. This implies that \[|\fD|=\mathrm{Cat}_{A_{n-1}} + \sum_{1\leq i < j\leq n}\mathrm{Cat}_{A_{j-i}}\mathrm{Cat}_{A_{n+i-j-2}}, 
    \]
    and it is straightforward to show (e.g., using generating functions) that this last expression is exactly $\mathrm{Cat}_{D_{n}}$.
\end{proof}

We now describe the map $\cA^{-1}$ sending a noncrossing arc diagram to a $312$-avoiding TITO. From the proof of \Cref{lem:arcdiagram312}, it follows that if the union of two noncrossing arc diagrams $D_1,D_2$ is also noncrossing, then $\cA^{-1}(D_1\cup D_2) = \cA^{-1}(D_1)\vee \cA^{-1}(D_2)$. 
One can then work out the value of $\cA^{-1}$ on any diagram from its values on diagrams with only one arc. The TITO $\cA^{-1}(\{\gamma_{a,b}\})$ is the unique join-irreducible element $j_{a,b}$ of $\CTam_{312}$ with lower wall $(a,b)$. If $b<a+n$, then
\[ j_{a,b} \coloneqq [b+1-n,b+2-n,\ldots,\widehat{a},\ldots,b-1,b,a]\] (the hat diacritic represents omission).
If $b=a+n$, then
\[ j_{a,b} \coloneqq [a-n+1,\ldots,a-1][\underline{a}]. \]

\begin{corollary}
    Let $\preceq$ be a 312-avoiding TITO, and let $\cA(\preceq) = \{ \gamma_{a_1,b_1},\ldots,\gamma_{a_k,b_k}\}$. Then
    \[ \preceq\, =\, j_{a_1,b_1} \vee \cdots \vee j_{a_k,b_k}, \]
    and $\{j_{a_1,b_1}, \ldots, j_{a_k,b_k}\}$ is the canonical join representation of $\preceq$.
\end{corollary}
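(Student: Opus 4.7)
The plan is to prove the displayed equality by iterating the identity $\cA^{-1}(D_1 \cup D_2) = \cA^{-1}(D_1) \vee \cA^{-1}(D_2)$ noted before the corollary, and then to recognize $\{j_{a_1,b_1},\ldots,j_{a_k,b_k}\}$ as the canonical join representation of $\preceq$ by matching its elements to the cover relations of $\preceq$ from below.

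For the first equality, I would induct on $k$. The case $k=1$ is the defining property $j_{a,b} = \cA^{-1}(\{\gamma_{a,b}\})$. For the inductive step, observe that any subcollection of the noncrossing diagram $\cA(\preceq)$ is itself noncrossing, so the identity from the preceding remark applies with $D_1 = \{\gamma_{a_1,b_1}\}$ and $D_2 = \{\gamma_{a_2,b_2},\ldots,\gamma_{a_k,b_k}\}$, giving
\[
\preceq \,=\, \cA^{-1}(\cA(\preceq)) \,=\, j_{a_1,b_1} \vee \cA^{-1}(D_2),
\]
and the inductive hypothesis applied to $\cA^{-1}(D_2)$ finishes the argument.

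For the canonical join representation claim, I would invoke the description $\cD(\preceq) = \{\j(\preceq' \lessdot \preceq) : \preceq' \lessdot \preceq\}$ from \cref{subsec:lattices}. The discussion preceding \cref{prop:regular_cyclic} identifies the covers of $\preceq$ in $\CTam_{312}$ with the arcs in $\cA(\preceq)$: concretely, each TITO $\preceq_i := \cA^{-1}(\cA(\preceq) \setminus \{\gamma_{a_i,b_i}\})$ satisfies $\preceq_i \lessdot \preceq$, and every cover of $\preceq$ arises this way. Applying the first part of the corollary to the noncrossing diagram $\cA(\preceq) = \cA(\preceq_i) \cup \{\gamma_{a_i,b_i}\}$ yields $j_{a_i,b_i} \vee \preceq_i = \preceq$.

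The main obstacle is then identifying $\j(\preceq_i \lessdot \preceq)$ with $j_{a_i,b_i}$, since the uniqueness clause recorded in \cref{subsec:lattices} imposes both $j \vee \preceq_i = \preceq$ and $\kappa(j) \wedge \preceq = \preceq_i$. I would close this gap via a counting and uniqueness argument: the covers of $\preceq$ number exactly $k$, so $|\cD(\preceq)| = k$; the elements $j_{a_1,b_1},\ldots,j_{a_k,b_k}$ are pairwise distinct join-irreducibles (they have distinct unique lower walls) forming an irredundant join representation of $\preceq$ (removing any $j_{a_i,b_i}$ drops the join to $\preceq_i \lessdot \preceq$ by the first paragraph). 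Since $\CTam_{312}$ is a finite semidistributive lattice, the canonical join representation of $\preceq$ is the unique such irredundant antichain of join-irreducibles with join $\preceq$; this forces $\{j_{a_1,b_1},\ldots,j_{a_k,b_k}\} = \cD(\preceq)$, and matching via the bijection between covers and arcs pins down $\j(\preceq_i \lessdot \preceq) = j_{a_i,b_i}$ for each $i$.
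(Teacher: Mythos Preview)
Your first paragraph—deriving $\preceq = j_{a_1,b_1}\vee\cdots\vee j_{a_k,b_k}$ by iterating the identity $\cA^{-1}(D_1\cup D_2)=\cA^{-1}(D_1)\vee\cA^{-1}(D_2)$—is correct and is exactly what the paper intends.

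The second part has two genuine gaps. First, the assertion that $\preceq_i:=\cA^{-1}(\cA(\preceq)\setminus\{\gamma_{a_i,b_i}\})$ is covered by $\preceq$ is not what the discussion before \cref{prop:regular_cyclic} says (that passage only gives a \emph{bijection} between lower covers and lower walls), and it is false. For $n=3$ take $\preceq=[\underline{3,2,1}]$, so $\cA(\preceq)=\{\gamma_{1,2},\gamma_{2,3},\gamma_{3,4}\}$. Then $\cA^{-1}(\{\gamma_{2,3},\gamma_{3,4}\})=j_{2,3}\vee j_{3,4}$ is the single–waxing-block TITO $[4,3,2]$, with inversion set $\{(2,3),(3,4),(2,4)\}$; but the $312$-avoiding TITO $[4,3][\underline{2}]$ lies strictly between $[4,3,2]$ and $[\underline{3,2,1}]$, so this is not a cover relation.

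Second, even granting the cover claim, the principle you invoke—that in a finite semidistributive lattice the canonical join representation is the \emph{unique} irredundant antichain of join-irreducibles joining to the element—is false. In $N_5$ (which is semidistributive) with $0<a<c<1$ and $0<b<1$, the top element $1$ has two such antichains of the same size, $\{a,b\}$ and $\{c,b\}$; removing either element of either set drops the join to a lower cover of $1$, yet only $\{a,b\}$ is the canonical join representation. So neither cardinality nor irredundancy nor the ``drops to a cover'' property pins down $\cD(\preceq)$.

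The paper's route is different: it imports from \cite{Barkley} that the arc diagram of a compact TITO already \emph{is} its canonical join representation in $\CDO$, and this passes to $\CTam_{312}$ via \cref{thm:cyclic_quotient,thm:cyclic_sublattice}. If you want a self-contained fix, one workable line is: the starting points $a_1,\ldots,a_k$ are pairwise distinct (each integer is covered by at most one element in a total order), so by \cref{thm:FTFSDL_CTam} the $j_{a_i,b_i}$ form an antichain; then argue that $\j(\preceq'\lessdot\preceq)=j_{a_i,b_i}$ for the actual cover $\preceq'$ obtained from the wall $(a_i,b_i)$ by showing $j_{a_i,b_i}$ is the \emph{minimum} element below $\preceq$ and not below $\preceq'$, using \cref{lem:JIeqinv} and the single-inversion description of $\flip_{a_i,b_i}(\preceq)$ in $\CDO$.
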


We can also give a different description of $\cA^{-1}$ using TIBITs.

\begin{lemma}
    If $D$ is a noncrossing arc diagram, then $\sT\circ \cA^{-1}(D)$ is the unique TIBIT such that for all $a<b$, $\T_b$ is a right child of $\T_a$ if and only if $\gamma_{a,b}$ is in $D$.
\end{lemma}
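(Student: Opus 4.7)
The plan is to translate the condition ``$\T_b$ is a right child of $\T_a$'' into a statement about lower walls of $\po_T$ and then invoke the definition of $\cA$ together with \cref{prop:312TIBIT_bijection}. The heart of the argument is the following observation, essentially the remark immediately preceding \cref{prop:regular_cyclic}: for any TIBIT $T$ and any integers $a<b$, the pair $(a,b)$ is a lower wall of $\po_T$ if and only if $\T_b$ is a right child of $\T_a$ in $T$.

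To verify this observation, I would first show that for $a<b$, the relation $b \po_T a$ holds iff $\T_b$ is a descendant of $\T_a$. Indeed, reading indices of $T$ in post-order, if $\T_b$ were not a descendant of $\T_a$, then either $\T_a$ would be a descendant of $\T_b$ (and so in the left subtree of $\T_b$, placing $\T_a$ before $\T_b$), or the least common ancestor of $\T_a$ and $\T_b$ would put $\T_a$ in its left subtree and $\T_b$ in its right subtree (by the in-order property, since $a<b$), again placing $\T_a$ first. Next, $b \po_T a$ is a cover relation iff $\T_b$ is the immediate post-order predecessor of $\T_a$. By \cref{lem:TIBITstructure}, the descendants of $\T_a$ with index greater than $a$ are exactly the nodes of the right subtree of $\T_a$; this subtree, if nonempty, is traversed in post-order immediately before $\T_a$ and ends at its root, namely the right child of $\T_a$. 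Thus a lower wall $(a,b)$ with $a<b$ exists iff $\T_a$ has a right child, in which case $b$ is the index of that child.

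With this observation in hand, the main statement is immediate. By \cref{prop:312TIBIT_bijection}, the maps $\sT$ and $T\mapsto \po_T$ are mutually inverse, so $\po_{\sT\circ\cA^{-1}(D)} = \cA^{-1}(D)$. By definition of $\cA$, the lower walls of $\cA^{-1}(D)$ are exactly the reflection indices $(a,b)$ with $\gamma_{a,b}\in D$. Combining these with the observation yields the desired right-child characterization. For uniqueness, if $T'$ is any TIBIT satisfying this characterization, then the set of lower walls of $\po_{T'}$ equals the arc set of $D$; by \cref{lem:arcdiagram312}, a $312$-avoiding TITO is determined by its lower walls, so $\po_{T'} = \cA^{-1}(D)$ and hence $T' = \sT\circ \cA^{-1}(D)$. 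The main obstacle is the observation relating lower walls to right children; the rest is routine bookkeeping with bijections.
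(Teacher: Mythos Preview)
Your proof is correct. The paper actually states this lemma without proof, so there is no argument to compare against directly; your approach is exactly the natural one given the tools the paper has already set up. The key observation---that for $a<b$ the pair $(a,b)$ is a lower wall of $\po_T$ if and only if $\T_b$ is the right child of $\T_a$---is precisely the content of the remark immediately preceding \cref{prop:regular_cyclic}, and your justification of it via the post-order traversal is sound (the right subtree is visited immediately before the root, and the last node of any subtree in post-order is its root). The remaining steps, chaining together the bijections from \cref{prop:312TIBIT_bijection} and \cref{lem:arcdiagram312} with the definition of $\cA$, are routine and correctly carried out.
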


We now will describe the lattice factorizations of $\CTam$ and $\ATam$ using the Fundamental Theorem of Finite Semidistributive Lattices. For each of these lattices, we need to describe the bijection $\kappa$ from the set of join-irreducibles to the set of meet-irreducibles, which sends $j$ to the maximum element $z$ satisfying $j\wedge z = j_*$. Because $\ATam$ is a quotient of $\CTam$ (by \cref{thm:affine_quotient}), the map $\kappa_{\ATam}$ is the restriction of $\kappa_{\CTam}$ to the set of join-irreducibles of $\ATam$. Thus, we will slightly abuse notation and denote both of these maps by $\kappa$. We will find it convenient to view the join-irreducibles as elements of $\CTam_{312}$ and the meet-irreducibles as elements of $\CTam_{132}$. There is a unique element $m_{a,b}$ of $\CTam_{132}$ whose unique upper wall is $(a,b)$. If $b<a+n$, let
\[ m_{a,b} \coloneqq [\underline{a+n-1,\ldots,\widehat{b},\ldots,a+1,a,b}], \]
and if $b = a+n$, let
\[ m_{a,b} \coloneqq [a][\underline{a+n-1,\ldots,a+1}]. \]
We then have that $\kappa(j_{a,b}) = m_{a,b}$. Furthermore, $j_{a,b}\leq j_{c,d}$ if and only if (after translating $(c,d)$ by a multiple of $(n,n)$) $a=c$ and $b\leq d$. Similarly, $m_{a,b}\leq m_{c,d}$ if and only if (after translating $(a,b)$ by a multiple of $(n,n)$) $b=d$ and $a\leq c$. We deduce the following.

\begin{theorem}\label{thm:FTFSDL_CTam}
    Let $(a,b)$ and $(c,d)$ be reflection indices such that $b-a\leq n$ and $d-c\leq n$. In the FTFSDL factorization for $\CTam$, we have
    \begin{itemize}
        \item $j_{a,b} \twoheadleftarrow j_{c,d}$ if and only if the integers $a,b,c,d$ can be chosen so that $a=c$ and $b\leq d$; 
        \item $j_{a,b} \hookrightarrow j_{c,d}$ if and only if the integers $a,b,c,d$ can be chosen so that $b=d$ and $a\geq c$;
        \item $j_{a,b} \rightarrow j_{c,d}$ if and only if the integers $a,b,c,d$ can be chosen so that $c\leq a < d \leq b$.
    \end{itemize}
    Furthermore, the restriction of these relations to join-irreducibles in $\ATam$ gives the FTFSDL factorization of $\ATam$. 
\end{theorem}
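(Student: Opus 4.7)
My plan is to obtain bullets~(1) and~(2) essentially immediately from the characterizations of the orders on $\{j_{a,b}\}$ and $\{m_{a,b}\}$ stated in the paragraph preceding the theorem. For (1), I will unfold $j_{a,b}\twoheadleftarrow j_{c,d}$ as $j_{a,b}\leq j_{c,d}$ using the definition of $\twoheadrightarrow$ from \cref{subsec:lattices}; the stated characterization (``$a=c$ and $b\leq d$ after translation'') is then exactly what bullet~(1) claims. For (2), I will unfold $j_{a,b}\hookrightarrow j_{c,d}$ as $\kappa(j_{a,b})\geq\kappa(j_{c,d})$, i.e., $m_{a,b}\geq m_{c,d}$, and quote the analogous characterization of the order on meet-irreducibles.

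For bullet~(3), the main tool will be the identity ``$x\to z$ iff there exists $y$ with $x\twoheadrightarrow y\hookrightarrow z$'' recorded at the end of \cref{subsec:lattices}. Taking $y=j_{p,q}$ and applying bullets~(1) and~(2) already proved, the existence of such a $y$ reduces to finding a reflection index $(p,q)$ that, after appropriate translation, simultaneously admits $p=a,\,q\leq b$ and $q=d,\,p\geq c$. A short congruence argument then forces $p\equiv a$ and $q\equiv d\pmod n$, whence $q-p$ must equal the unique $r^*\in\{1,\dots,n\}$ with $r^*\equiv d-a\pmod n$; the two inequality constraints collapse to the single condition $r^*\leq\min(b-a,\,d-c)$.

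The remaining step is to identify this inequality with the claim that integers $a,b,c,d$ can be chosen so that $c\leq a<d\leq b$. In one direction, given such a choice, $d-a\in\{1,\dots,n\}$ (using $b-a\leq n$), so $d-a=r^*$, and the conditions $c\leq a$ and $d\leq b$ translate respectively into $r^*\leq d-c$ and $r^*\leq b-a$. Conversely, given $r^*\leq\min(b-a,d-c)$, I will fix $(a,b)$ and translate $(c,d)$ to the representative with $d=a+r^*$; then $c=d-(d-c)=a+r^*-(d-c)\leq a$, yielding the desired reps. The step I expect to be most delicate is the imaginary-arc boundary case $r^*=n$, which forces $b-a=d-c=n$ and hence $(a,b)=(c,d)$ as reflection indices; there I will verify that the conclusion $j_{a,b}\to j_{a,b}$ is consistent with the basic fact $j\not\leq\kappa(j)$ for every join-irreducible $j$.

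For the final sentence of the theorem, my plan is to invoke \cref{thm:affine_quotient} (which makes $\ATam$ a lattice quotient of $\CTam$) together with the remark just before the theorem that $\kappa_{\ATam}$ is the restriction of $\kappa_{\CTam}$ to $\JIrr(\ATam)$. Since the three FTFSDL relations $\twoheadrightarrow,\hookrightarrow,\to$ are defined purely in terms of $\leq$ and $\kappa$, they automatically restrict from $\JIrr(\CTam)$ to $\JIrr(\ATam)$, yielding the FTFSDL factorization of $\ATam$; the only remaining bookkeeping is to note that $\JIrr(\ATam)=\{j_{a,b}:b-a<n\}$ (the real arcs), which follows from the arc-diagram description earlier in this section.
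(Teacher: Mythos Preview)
Your proposal is correct and matches the paper's approach. The paper does not give a separate proof of this theorem; it simply writes ``We deduce the following'' after stating the explicit order characterizations $j_{a,b}\leq j_{c,d}\iff a=c,\ b\leq d$ and $m_{a,b}\leq m_{c,d}\iff b=d,\ a\leq c$ and the identity $\kappa(j_{a,b})=m_{a,b}$. Your deduction of bullets~(1) and~(2) from those characterizations, your use of the identity $x\to z\iff\exists\,y\ (x\twoheadrightarrow y\hookrightarrow z)$ from \cref{subsec:lattices} for bullet~(3), and your appeal to the already-noted fact that $\kappa_{\ATam}$ is the restriction of $\kappa_{\CTam}$ for the final sentence are exactly the intended argument, just spelled out in more detail than the paper provides.
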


\begin{figure}[ht]
\begin{center}{\includegraphics[height=3.465cm]{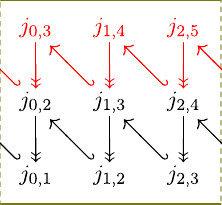}}\qquad\qquad{\includegraphics[height=3.465cm]{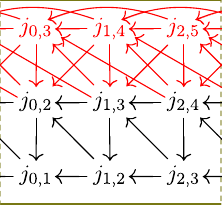}}
\end{center}
\caption{For $n=3$, each image shows a cylinder containing the join-irreducible elements of $\CTam$. The left image shows the non-loop arrows of the form $\twoheadrightarrow$ and $\hookrightarrow$ in the FTFSDL factorization, while the right image shows all non-loop arrows of the form $\to$. Deleting the parts of the images in {\color{red}red} yields the FTFSDL factorization of $\ATam$.}  
\label{fig:FTFSDL1} 
\end{figure} 

According to \cref{thm:FTFSDL_CTam}, the lattice $\CTam$ is isomorphic to the lattice of pairs $(\cT, \cF)$ with ${\cT,\cF\subseteq \mathrm{JIrr}(\CTam)}$ satisfying
\[ \cT = \{j \in \mathrm{JIrr}(\CTam)\mid j\not\to j'\text{ for all }j'\in\cF\}\quad \text{and}\quad \cF = \{j \in \mathrm{JIrr}(\CTam)\mid  j'\not\to j\text{ for all }j'\in\cT\}, \] ordered by containment of left sets (or reverse containment of right sets).  
It remains to describe the pair $(\cT,\cF)$ associated to an element of $\CTam$. 

\begin{lemma}\label{lem:JIeqinv}
Let $\preceq$ be a $312$-avoiding TITO. Then $j_{a,b}\leq\, \preceq$ if and only if $(a,b)\in \Inv(\preceq)$. Similarly, if $\preceq'$ is $132$-avoiding, then $m_{a,b}\geq\, \preceq'$ if and only if $(a,b)\not\in \Inv(\preceq')$.   
\end{lemma}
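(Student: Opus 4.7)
The plan is to dispatch each biconditional by first observing the forward direction from the definitions of walls and then verifying the backward direction by combining an explicit description of the relevant (co)inversion sets with pattern avoidance.

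For the forward directions, $(a,b)$ is the unique lower wall of $j_{a,b}$, so $b\prec a$ in $j_{a,b}$ and hence $(a,b)\in\Inv(j_{a,b})$; since the Dyer order is inversion-set containment, $j_{a,b}\le\preceq$ forces $(a,b)\in\Inv(\preceq)$. Symmetrically, $(a,b)$ is the unique upper wall of $m_{a,b}$, so $(a,b)\notin\Inv(m_{a,b})$, and $\preceq'\le m_{a,b}$ forces $(a,b)\notin\Inv(\preceq')$.

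For the backward direction of the first claim, I assume $b\prec a$ in the $312$-avoiding TITO $\preceq$. A direct inspection of the window notation of $j_{a,b}$ (normalizing reflection indices to have first coordinate $a$) yields $\Inv(j_{a,b})=\{(a,a+k):1\le k\le b-a\}$ when $b<a+n$, and $\Inv(j_{a,a+n})=\{(a,y):y>a\}$ when $b=a+n$. In the first case, for $1\le k<b-a$ the triple $a<a+k<b$ combined with $b\prec a$ is a candidate $312$-pattern, so $312$-avoidance forces $a+k\prec a$. In the second case, translation invariance applied to $a+n\prec a$ gives $a+kn\prec a$ for every $k\ge 1$; then for any $y>a$ I choose $k$ large enough that $a<y<a+kn$ and apply $312$-avoidance on this triple (otherwise $a+kn\prec a\prec y$ is a $312$-pattern) to conclude $y\prec a$.

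The backward direction of the second claim proceeds by the entirely parallel argument with $132$-avoidance in place of $312$-avoidance. Window inspection shows the non-inversions of $m_{a,b}$ are $\{(x,b):a\le x<b\}$ when $b<a+n$ and, after translation-normalization, $\{(x,a):x<a\}$ when $b=a+n$. For $a<x<b$ the triple $a<x<b$ paired with $a\prec b$ is a candidate $132$-pattern, forcing $x\prec b$. For $b=a+n$, translation applied to $a\prec a+n$ gives $a-kn\prec a$ for every $k\ge 1$; for any $x<a$ I choose $k$ with $a-kn<x<a$ and apply $132$-avoidance on the triple $a-kn<x<a$ (otherwise $a-kn\prec a\prec x$ is a $132$-pattern) to conclude $x\prec a$, and the general reflection index $(x,y)$ with $y\equiv a\pmod n$ follows by translation invariance of $\preceq'$. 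Alternatively, this claim can be deduced from the first via the antiautomorphism $\Psi_{\updownarrow}$ of $\CDO$, which interchanges $\CTam_{312}$ with $\CTam_{132}$ and swaps the $j$- and $m$-irreducibles up to sign.

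The main obstacle is the case $b=a+n$: here the irreducibles $j_{a,a+n}$ and $m_{a,a+n}$ each involve a block spanning an entire residue class, so their (co)inversion sets contain infinitely many distinct reflection indices and the single-wall-flip interpretation does not close the argument. The key device is to couple translation invariance with pattern avoidance, propagating the single hypothesized (non)inversion to the full infinite family in one step.
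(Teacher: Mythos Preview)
Your proof is correct and follows the same strategy as the paper's: the forward direction is immediate from inversion containment, and the backward direction verifies $\Inv(j_{a,b})\subseteq\Inv(\preceq)$ directly via $312$-avoidance (with translation invariance handling the infinitely many inversions when $b=a+n$), which is exactly what underlies the paper's assertion that $j_{a,b}$ is the minimal TITO with $(a,a+1),\ldots,(a,b)$ in its inversion set. For the $132$-statement the paper only says ``similar''; your direct argument and your $\Psi_{\updownarrow}$ alternative both work, though note that your non-inversion set $\{(x,a):x<a\}$ for $m_{a,a+n}$ corresponds to the element $[\underline{a+n-1,\ldots,a+1}][a]$ rather than the paper's displayed window $[a][\underline{a+n-1,\ldots,a+1}]$ (the latter contains the $132$-pattern $a<a+1<a+n$ and appears to be a typo).
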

\begin{proof}
    We prove the statement about $312$-avoiding TITOs; the other statement is similar.
    Evidently, if $j_{a,b} \leq\, \preceq$, then $(a,b)\in \Inv(j_{a,b})\subseteq \Inv(\preceq)$. Conversely, assume $(a,b)\in \Inv(\preceq)$. If $a<x<b$, then also $(a,x)\in \Inv(\preceq)$ since otherwise the integers $a<x<b$ would form a $312$-pattern in $\preceq$. The desired result follows by observing that $j_{a,b}$ is the unique minimal TITO with ${(a,a+1),(a,a+2),\ldots, (a,b-1),(a,b)}$ in its inversion set. 
\end{proof}

In the following corollary, recall the maps $\pi_\syl^\downarrow\colon\CDO\to\CTam_{312}$ and $\pi_\syl^\uparrow\colon\CDO\to\CTam_{132}$ associated to the cyclic sylvester congruence, as defined in \cref{sec:CTam_Trees}. 

\begin{corollary}
    Under the FTFSDL for $\CTam_{312}$, the orthogonal pair $(\cT,\cF)$ corresponding to a $312$-avoiding TITO $\preceq$ is given by 
    \[ \cT = \{ j_{a,b} \mid (a,b)\in \Inv(\pi_{\syl}^\downarrow(\preceq)) \} \quad\text{and}\quad \cF = \{ j_{a,b} \mid (a,b) \not\in \Inv(\pi_\syl^{\uparrow}(\preceq)) \}.   \]
Under the FTFSDL for $\ATam_{312}$, the orthogonal pair $(\cT,\cF)$ corresponding to a $312$-avoiding real TITO $\preceq$ is given by 
    \[ \cT = \{ j_{a,b} \mid (a,b)\in \Inv(\pi_{\DO}^\downarrow(\pi_{\syl}^\downarrow(\preceq))) \} \quad\text{and}\quad \cF = \{ j_{a,b} \mid (a,b) \not\in \Inv(\pi_{\DO}^\uparrow(\pi_{\syl}^{\uparrow}(\preceq))) \}.   \]
\end{corollary}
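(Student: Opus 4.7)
The plan is to apply the standard FTFSDL formula and then translate order comparisons into statements about inversion sets using \cref{lem:JIeqinv}. Recall that for any finite semidistributive lattice $L$ and any $u \in L$, the orthogonal pair is $\cT = \{j \in \JIrr(L) \mid j \leq u\}$ and $\cF = \kappa^{-1}(\{m \in \MIrr(L) \mid m \geq u\})$. For the cyclic case, the formula for $\cT$ is immediate: since $\preceq \in \CTam_{312}$, we have $\pi_\syl^\downarrow(\preceq) = \preceq$, and the first half of \cref{lem:JIeqinv} gives $j_{a,b} \leq \preceq$ if and only if $(a,b) \in \Inv(\preceq) = \Inv(\pi_\syl^\downarrow(\preceq))$. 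For $\cF$, I would invoke the isomorphism $\CTam \cong \CTam_{132}$ induced by $T \mapsto \po^T$: under this isomorphism, $\preceq$ corresponds to $\pi_\syl^\uparrow(\preceq) \in \CTam_{132}$, while each $m_{a,b} \in \MIrr(\CTam)$ is identified with its own $132$-avoiding form. Hence $m_{a,b} \geq \preceq$ in $\CTam$ if and only if $\Inv(\pi_\syl^\uparrow(\preceq)) \subseteq \Inv(m_{a,b})$ in $\CDO$, which by the second half of \cref{lem:JIeqinv} is equivalent to $(a,b) \notin \Inv(\pi_\syl^\uparrow(\preceq))$. Combined with $\kappa(j_{a,b}) = m_{a,b}$, this yields the formula for $\cF$.

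For the affine case, I would run the same argument using the identifications $\JIrr(\ATam) = \{j_{a,b} \mid b-a < n\}$ and $\MIrr(\ATam) = \{m_{a,b} \mid b-a < n\}$, together with the fact that $\ATam$ is a subposet of $\CTam$ with the same order relation. Since $\preceq$ is both $312$-avoiding and real, $\pi_\syl^\downarrow$ and $\pi_\DO^\downarrow$ each fix $\preceq$, so the $\cT$ formula reduces to the cyclic one (realness of $\preceq$ moreover forces every inversion to satisfy $b-a < n$, keeping the output inside $\JIrr(\ATam)$). The crux for $\cF$ is the role of $\pi_\DO^\uparrow$: since it acts by reversing waxing size-$1$ blocks, it only toggles inversions $(c,d)$ with $d-c$ a positive multiple of $n$. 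For reflection indices $(a,b)$ with $b-a < n$, the inversion status is therefore unchanged by $\pi_\DO^\uparrow$, so $(a,b) \notin \Inv(\pi_\DO^\uparrow(\pi_\syl^\uparrow(\preceq)))$ if and only if $(a,b) \notin \Inv(\pi_\syl^\uparrow(\preceq))$, which is equivalent to $m_{a,b} \geq \preceq$ in $\ATam$ by the cyclic argument.

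The main obstacle is juggling the three natural representations of abstract elements of $\CTam$ and $\ATam$, namely as $312$-avoiding TITOs, as $132$-avoiding TITOs, and as TIBITs, especially in the $\cF$ argument, where $\preceq$ is given in $312$-avoiding form but the meet-irreducibles $m_{a,b}$ are naturally $132$-avoiding. The technical heart of the affine case is the observation about which inversions $\pi_\DO^\uparrow$ can affect; this is precisely what makes the $\cF$ formula specialize correctly to $\JIrr(\ATam)$ and decouples the affine statement from the cyclic one.
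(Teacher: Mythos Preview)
Your approach is correct and is essentially what the paper has in mind; the corollary is stated without proof, so you are filling in the expected argument via \cref{lem:JIeqinv}. Two small points deserve comment.

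First, the claim that ``realness of $\preceq$ forces every inversion to satisfy $b-a<n$'' is false: any $312$-avoiding real TITO with a waning block of size at least $2$, such as $[\underline{3,2,1}]$ for $n=3$, has inversions $(a,a+n)$. This slip is harmless, however, since the set $\cT$ is implicitly indexed only by $j_{a,b}\in\JIrr(\ATam)$, i.e., by reflection indices with $b-a<n$, so the extra inversions never enter the picture. The appearance of $\pi_{\DO}^\downarrow$ in the formula is indeed redundant on the $\cT$ side (it is the identity on real TITOs) and is included for symmetry with the $\cF$ side.

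Second, your justification ``$\ATam$ is a subposet of $\CTam$ with the same order relation'' for reducing the affine $\cF$ comparison to the cyclic one is slightly imprecise, because $\preceq$ lives in the $312$-incarnation while $m_{a,b}$ lives in the $132$-incarnation, and the isomorphism $\ATam_{312}\to\ATam_{132}$ is not simply the restriction of $\CTam_{312}\to\CTam_{132}$ (it involves $\pispine^\uparrow$). The correct fact is that $\preceq$ is the minimum of its $\equiv_{\mathrm{spine}}$-class while $m_{a,b}$ (for $b-a<n$, being co-real) is the maximum of its class; for any lattice congruence, $[\pi_\equiv^\downarrow(x)]\leq[\pi_\equiv^\uparrow(y)]$ in the quotient if and only if $\pi_\equiv^\downarrow(x)\leq\pi_\equiv^\uparrow(y)$ in the original lattice. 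This gives exactly the equivalence $m_{a,b}\geq\preceq$ in $\ATam$ $\Leftrightarrow$ $m_{a,b}\geq\preceq$ in $\CTam$ that you need.
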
  

Using arcs, we can describe intrinsically the sets $\cT$ that arise as the left sets in orthogonal pairs. 

\begin{definition}
    A \dfn{cyclic arc torsion class} is a set $D$ of arcs such that
    \begin{itemize}
        \item if $\gamma_{a,c}\in D$ and $a<b<c$, then $\gamma_{a,b}\in D$;
        \item if $\gamma_{a,b},\gamma_{b,c}\in D$ and $c-a\leq n$, then $\gamma_{a,c}\in D$.
    \end{itemize}
    An \dfn{affine arc torsion class} is a set $D$ of real arcs such that
    \begin{itemize}
        \item if $\gamma_{a,c}\in D$ and $a<b<c$, then $\gamma_{a,b}\in D$;
        \item if $\gamma_{a,b},\gamma_{b,c}\in D$ and $c-a<n$, then $\gamma_{a,c}\in D$.
    \end{itemize}
\end{definition} 
Given an arbitrary set $D$ of arcs, we denote by $\overline{D}$ the cyclic arc torsion class generated by $D$, which is simply the smallest cyclic arc torsion class containing $D$ (see \cref{fig:arc_torsion_classes}). The following proposition implies that the map $D\mapsto \{j_{a,b} \mid \gamma_{a,b}\in D\}$ is a bijection from the collection of cyclic arc torsion classes to the collection of left sets in orthogonal pairs. One can prove this proposition directly, but we defer to the next section to give an algebraic proof.


\begin{proposition} \label{prop:noncrosstoclass}
    The map $D\mapsto \overline{D}$ is a bijection from the set of noncrossing arc diagrams to the set of cyclic arc torsion classes. If $\preceq$ is a 312-avoiding TITO and $D=\cA(\preceq)$, then 
    \[ \overline{D} =  \{\gamma_{a,b} \mid (a,b)\in \Inv(\preceq) \}. \]
\end{proposition}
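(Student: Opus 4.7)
The plan is to first establish the displayed formula $\overline{D} = \{\gamma_{a,b}\mid (a,b)\in\Inv(\preceq)\}$ for $D = \cA(\preceq)$, and then derive the bijection from it. Write $\cI(\preceq) := \{\gamma_{a,b}\mid (a,b)\in\Inv(\preceq)\}$. First I would verify that $\cI(\preceq)$ is itself a cyclic arc torsion class containing $\cA(\preceq)$. The first axiom is immediate from $312$-avoidance: if $(a,c)\in\Inv(\preceq)$ (so $c\preceq a$) and $a<b<c$, then $a\preceq b$ would produce the $312$-pattern $c\preceq a\preceq b$, forcing $b\preceq a$. The second axiom is just transitivity of $\preceq$. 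Since every lower wall is an inversion, $\cA(\preceq)\subseteq\cI(\preceq)$, and hence $\overline{\cA(\preceq)}\subseteq\cI(\preceq)$.

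The reverse inclusion $\cI(\preceq)\subseteq\overline{\cA(\preceq)}$ is the main content. I would work with the TIBIT $T = \sT(\preceq)$: by \cref{prop:6equivalent}, an inversion $(a,b)$ (with $a<b$) corresponds to $\T_b$ being a descendant of $\T_a$ in $T$, and lower walls correspond to right edges. Induct on the length of the path from $\T_a$ down to $\T_b$. The base case (length one) is immediate, since $\T_b$ is then the right child of $\T_a$. For the inductive step, let $\T_c$ be the child of $\T_a$ that lies on the path; since $a<b$, this is the right child, so $\gamma_{a,c}\in \cA(\preceq)$, and a short translation-invariance argument shows $c\leq a+n$ (otherwise $\T_{c-n}$ would lie in the left subtree of $\T_c$, hence would be a descendant of $\T_a$, while its true parent $\T_{a-n}$ lies outside the subtree of $\T_c$, a contradiction with uniqueness of parents). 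If $b<c$, then axiom~1 applied to $\gamma_{a,c}$ immediately gives $\gamma_{a,b}\in\overline{\cA(\preceq)}$. If $b>c$, then $b-c<n$, and the inductive hypothesis gives $\gamma_{c,b}\in\overline{\cA(\preceq)}$; axiom~2 then combines $\gamma_{a,c}$ and $\gamma_{c,b}$ into $\gamma_{a,b}$.

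With the formula in hand, the bijection is straightforward. For injectivity, if $\overline{D_1}=\overline{D_2}$ and $\preceq_i=\cA^{-1}(D_i)$, then $\Inv(\preceq_1)=\Inv(\preceq_2)$ by the formula, forcing $\preceq_1=\preceq_2$ and hence $D_1=D_2$. For surjectivity, given a cyclic arc torsion class $C$, I would define a relation $\preceq_C$ on $\ZZ$ by declaring $b\preceq_C a$ (for $a<b$ with $b-a\leq n$) iff $\gamma_{a,b}\in C$, extended to all pairs by translation-invariance. The two closure axioms translate precisely into the transitivity and $312$-avoidance of $\preceq_C$, and the inversion set of $\preceq_C$ is $C$ by construction, so $D:=\cA(\preceq_C)$ is a noncrossing arc diagram with $\overline{D}=C$. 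The main obstacle is this surjectivity step: verifying transitivity and $312$-avoidance of $\preceq_C$ for pairs whose separation exceeds $n$ requires invoking translation-invariance and carefully chaining the two closure axioms, and this combinatorial bookkeeping is exactly what the paper's promised algebraic proof via torsion classes in the cyclic Tamari algebra circumvents.
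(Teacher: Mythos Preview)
Your approach is genuinely different from the paper's. The paper never verifies surjectivity directly: it deduces the bijection by composing three bijections already in hand---noncrossing arc diagrams $\leftrightarrow \CTam_{312}$ (via $\cA$ and canonical join representations), $\CTam_{312}\leftrightarrow\CTam_\Tors$ (\cref{prop:torsion_isomorphic}), and $\CTam_\Tors\leftrightarrow$ cyclic arc torsion classes (via $\psi$)---and checks that the composite is $D\mapsto\overline D$ by identifying joins in $\CTam_\Tors$ with torsion-class generation. The formula then drops out of \cref{lem:JIeqinv}. Your route stays entirely inside the combinatorics of TITOs, TIBITs, and arcs, which is more self-contained but requires you to actually finish the case analysis you flag.

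Your TIBIT induction for the inclusion $\cI(\preceq)\subseteq\overline{\cA(\preceq)}$ is correct; the bound $c\le a+n$ on a right edge is justified exactly as you say. One genuine (small) gap in your injectivity step: the formula only pins down inversions $(a,b)$ with $b-a\le n$, since those are the only pairs indexing arcs. You need the extra observation that for a $312$-avoiding TITO, any inversion $(a,b)$ with $b-a>n$ forces $(a,a+n)\in\Inv(\preceq)$ (else $a<a+n<b$ is a $312$-pattern), and conversely $(a,a+n)\in\Inv(\preceq)$ places $a$ in the waning block, whence every $(a,b)$ with $b>a$ is an inversion by \cref{lem:two_blocks}. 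So the small inversions determine $\preceq$, but this should be said. For surjectivity, your construction works once you declare $b\preceq_C a$ (for $b-a>n$) exactly when $\gamma_{a,a+n}\in C$; transitivity and $312$-avoidance then reduce to short case checks chaining the two closure axioms, and this is precisely the bookkeeping the paper's algebraic detour through \cref{sec:quivers} is designed to avoid.
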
 

\begin{figure}[ht]
\begin{center}{\includegraphics[height=6.960cm]{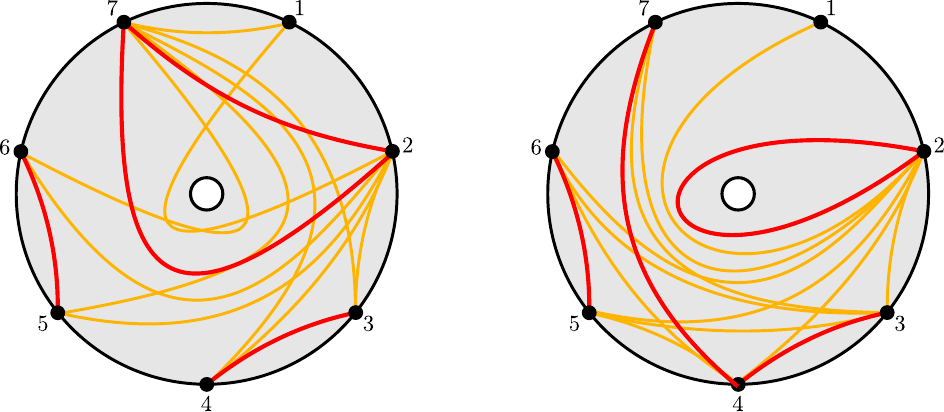}}
\end{center}
\caption{On the left is the affine arc torsion class generated by the noncrossing arc diagram on the left of \cref{fig:arc_diagrams}; it is not a cyclic arc torsion class. On the right is the cyclic arc torsion class generated by the noncrossing arc diagram on the right of \cref{fig:arc_diagrams}; it is not an affine arc torsion class. Each arc torsion class is obtained by adding extra arcs (drawn in {\color{MyOrange}orange}) to the corresponding noncrossing arc diagram.} 
\label{fig:arc_torsion_classes} 
\end{figure} 

\begin{remark}\label{rem:arc_torsion_containment} 
It follows from \cref{prop:noncrosstoclass} that the set of cyclic arc torsion classes, when partially ordered by containment, is isomorphic to $\CTam$. In addition, the map \[\preceq\,\mapsto\{\gamma_{a,b}\mid(a,b)\in\Inv(\preceq),\, b-a<n\}\] is a bijection from $\ATam_{312}$ to the set of affine arc torsion classes. It follows that the set of affine arc torsion classes, when partially ordered by containment, is isomorphic to $\ATam$. 
\end{remark}

\section{Quiver Representations}\label{sec:quivers}

Let $\cQ$ denote the oriented cycle quiver with node set $\ZZ/n\ZZ$ and arrow set ${\{i\to i+1\mid i\in\ZZ/n\ZZ\}}$. That is, 
\[\cQ=\begin{array}{l}\includegraphics[height=2.936cm]{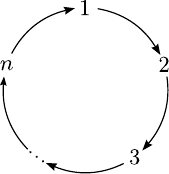}\end{array}.\] 

Given integers $a\leq b$, considered up to simultaneous translation by a multiple of $n$, we define the \dfn{path} $\alpha_{a,b}$ to be the sequence $a\to a+1 \to \cdots \to b-1 \to b$ of edges of $\cQ$. The \dfn{path algebra} of $\cQ$, denoted $\CC[\cQ]$, is a (non-commutative) algebra with linear basis given by the paths in $\cQ$. 
For each $a\in\ZZ/n\ZZ$, there is a \dfn{trivial path} $\alpha_{a,a}$. Multiplication in the path algebra is given by concatenation of paths:
\[ \alpha_{c,d}\cdot \alpha_{a,b} = \begin{cases}
    \alpha_{a,d} & \text{if $b\equiv c\pmod n$}; \\
    0 & \text{if $b\not\equiv c\pmod n$}.
\end{cases} \]

\begin{definition}
    The \dfn{cyclic Tamari algebra} is the quotient algebra
    \[ C_\cQ \coloneqq \CC[\cQ]/(\alpha_{a,a+n} \mid a \in [n]). \]
    The \dfn{affine Tamari algebra} is the quotient algebra
    \[ A_\cQ \coloneqq \CC[\cQ]/(\alpha_{a,a+n-1} \mid a \in [n]). \]
\end{definition}

It is well known that the affine Tamari algebra $A_\cQ$ is a cluster-tilted algebra of type $D$; in particular, the representation theory of $A_\cQ$ is closely tied to the combinatorics of type $D$ cluster algebras. A \dfn{representation} of $\CC[\cQ]$, $C_\cQ$, or $A_\cQ$ is a module that is finite-dimensional over $\CC$. A representation $M$ of $\CC[\cQ]$ is equivalent to the data of a vector space $M^i$ for each $i\in \ZZ/n\ZZ$ and a linear map $L^i:M^i\to M^{i+1}$ for each $i\in\ZZ/n\ZZ$. Moreover, $M$ is a representation of $C_\cQ$ if it additionally satisfies $L^{i-1}\cdots L^{i+1}L^i=0$ for all $i\in \ZZ/n\ZZ$, and $M$ is a representation of $A_\cQ$ if it additionally satisfies $L^{i-2}L^{i-3}\cdots L^{i+1}L^i=0$ for all $i\in \ZZ/n\ZZ$. 

For $a\in[n]$ and $a<b\leq a+n$, let $M_{a,b}$ be the module of $\CC[\cQ]$ that assigns to each vertex $i$ the vector space 
\[M^i_{a,b}=\begin{cases}
    \CC & \text{if $a\leq i\leq b-1$}; \\
    0 & \text{otherwise}
    \end{cases}
    \]
and the identity map $L^i=\text{id}\colon\CC\to\CC$ for all $a\leq i\leq b-2$ (all other maps are necessarily $0$). (These are examples of \emph{string modules}; see, e.g., \cite[Section~2.3.1]{Laking}.) For example, when $n=5$, the module $M_{2,5}$ is 
\[\begin{array}{l}\includegraphics[height=4.597cm]{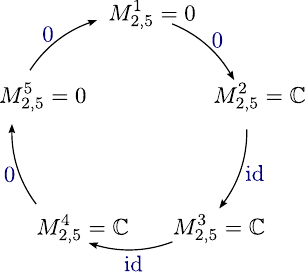}\end{array}.\] 
We also write $M_{a,b}=M_{\gamma_{a,b}}=M_{j_{a,b}}$, where $\gamma_{a,b}$ and $j_{a,b}$ are the arc and the join-irreducible element of $\CTam_{312}$ defined in \cref{sec:Arcs_FTFSDL}. 

Both $C_\cQ$ and $A_\cQ$ are \emph{Nakayama algebras}, which in particular implies that they have finitely many indecomposable representations up to isomorphism (they are \emph{representation-finite}). In fact, indecomposable representations of $C_\cQ$ are precisely the modules isomorphic to $M_{a,b}$ for $a\in[n]$ and $a<b\leq a+n$ \cite{Adachi2,Baur}. The indecomposable modules of $A_\cQ$ are precisely the modules isomorphic to $M_{a,b}$ for $a\in[n]$ and $a<b\leq a+n-1$. For $C_\cQ$ and $A_\cQ$, the indecomposable modules coincide with the \dfn{brick} modules, which are modules $M$ such that $\mathrm{End}(M)$ is a division algebra (since we are working over $\CC$, $\mathrm{End}(M)$ is a division algebra if and only if it is $\CC$). We write \[ \mathrm{Bricks}(C_\cQ) = \{M_{a,b} \mid a<b\leq a+n \} \text{ and } \mathrm{Bricks}(A_\cQ) = \{M_{a,b} \mid a < b < a+n \}. \]

A \dfn{torsion class} is a collection of modules closed under isomorphisms, quotients, and extensions. A set of modules $S$ \dfn{generates} a torsion class $\cT$ if $\cT$ is the minimum torsion class containing $S$. We define $\CTam_{\Tors}$ to be the poset of torsion classes for $C_\cQ$ under inclusion order. Likewise, $\ATam_{\Tors}$ is defined to be the poset of torsion classes for $A_\cQ$ under inclusion order. A torsion class $\cT$ is determined by the isomorphism classes of bricks contained in $\cT$. We thus freely identify $\cT$ with $\cT\cap \mathrm{Bricks}(C_\cQ)$. We recall the following result from \cite[Section~8.2]{FTFSDL}.

\begin{lemma}[\cite{FTFSDL}]\label{lem:torsionftfsdl}
    Let $A$ be a finite-dimensional algebra of finite representation type. Then the poset of torsion classes for $A$ is a semidistributive lattice. The set of join-irreducible torsion classes can be identified with $\mathrm{Bricks}(A)$. Let $M_1,M_2$ be bricks. In the FTFSDL factorization, 
    \begin{itemize}
    \item if $M_1$ is a quotient of $M_2$, then $M_1\twoheadleftarrow M_2$; 
    \item if $M_1$ is a submodule of $M_2$, then $M_1\hookrightarrow M_2$; 
    \item there is a nonzero map from $M_1$ to $M_2$ if and only if $M_1\to M_2$.
    \end{itemize} 
\end{lemma}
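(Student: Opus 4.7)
The plan is to construct the lattice of torsion classes, identify its join-irreducibles, and then verify the FTFSDL relations one by one. First, I would observe that $\mathrm{Tors}(A)$ is a complete lattice with meet given by intersection: an arbitrary intersection of torsion classes is closed under quotients and extensions, and the join of a family $\{\cT_i\}$ is the intersection of all torsion classes containing $\bigcup_i \cT_i$. Because $A$ has finite representation type, there are only finitely many possible subsets of $\mathrm{mod}(A)$ that could be torsion classes, so $\mathrm{Tors}(A)$ is finite. Semidistributivity then follows from $\tau$-tilting theory: for $\tau$-tilting finite algebras, Adachi--Iyama--Reiten and Demonet--Iyama--Reading--Reiten--Thomas established that torsion classes coincide with functorially finite torsion classes and are in bijection with support $\tau$-tilting modules, and the resulting lattice is semidistributive. (Alternatively, one can verify semidistributivity directly from the brick labeling described below.)

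Next I would identify the join-irreducibles. For each brick $M$, let $\mathsf{T}(M)$ denote the smallest torsion class containing $M$; concretely, $\mathsf{T}(M) = \mathrm{Filt}(\mathrm{Fac}(M))$. The central input is the Barnard--Demonet--Iyama--Jasso brick labeling of cover relations: for every cover $\cT_1 \lessdot \cT_2$ in $\mathrm{Tors}(A)$, there is a unique brick $M$, up to isomorphism, such that $\cT_2$ is the smallest torsion class containing $\cT_1 \cup \{M\}$. Using this labeling, I would show that the map $M \mapsto \mathsf{T}(M)$ is a bijection from $\mathrm{Bricks}(A)$ to the set of join-irreducible torsion classes: its covered element $\mathsf{T}(M)_{*}$ is the torsion class obtained by removing $M$ from the brick set of $\mathsf{T}(M)$.

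Finally, I would unpack the three FTFSDL relations. For $\twoheadleftarrow$: if $M_1$ is a quotient of $M_2$, then since $\mathsf{T}(M_2)$ is closed under quotients, $M_1 \in \mathsf{T}(M_2)$, so $\mathsf{T}(M_1) \subseteq \mathsf{T}(M_2)$, as required. For $\hookrightarrow$: the map $\kappa$ can be described via the bijection between torsion classes and torsion-free classes, and $\kappa(\mathsf{T}(M))$ corresponds to the smallest torsion-free class containing $M$. Since torsion-free classes are closed under submodules, a submodule relation $M_1 \subseteq M_2$ forces the torsion-free class of $\mathsf{T}(M_1)$ to lie inside that of $\mathsf{T}(M_2)$, which translates into $\kappa(\mathsf{T}(M_1)) \geq \kappa(\mathsf{T}(M_2))$ in $\mathrm{Tors}(A)$. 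For $\to$: I would use the description $\kappa(\mathsf{T}(M_2)) = \{X \in \mathrm{mod}(A) \mid \mathrm{Hom}(X,M_2)=0\}$, which is well known in this setting for $M_2$ a brick in a $\tau$-tilting finite algebra. Then $M_1 \to M_2$ in the FTFSDL sense, defined as $\mathsf{T}(M_1) \not\subseteq \kappa(\mathsf{T}(M_2))$, is equivalent to $M_1 \notin \kappa(\mathsf{T}(M_2))$, which by definition says $\mathrm{Hom}(M_1,M_2) \neq 0$.

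The main obstacle is the verification of the formula $\kappa(\mathsf{T}(M_2)) = \{X \mid \mathrm{Hom}(X,M_2) = 0\}$, which is what powers the $\to$ relation. One has to show the right-hand side is a torsion class (closure under quotients is immediate, while closure under extensions uses the left-exactness of $\mathrm{Hom}(-,M_2)$ plus a brick argument showing that any extension with a nonzero map must itself have a nonzero map), and that it is the unique maximum torsion class not containing $M_2$. Once this identification is in place, the rest of the FTFSDL bookkeeping is routine.
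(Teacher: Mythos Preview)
The paper does not prove this lemma; it is quoted from \cite{FTFSDL} (specifically Section~8.2 there) as a background result, so there is no in-paper argument to compare against. Your sketch is a correct reconstruction of how the proof runs in that reference and the surrounding literature: finiteness of $\mathrm{Tors}(A)$ from finite representation type, semidistributivity via $\tau$-tilting theory, the brick labeling to identify join-irreducibles with $\mathrm{Bricks}(A)$, and then reading off the three relations from the descriptions $\mathsf{T}(M)=\mathrm{Filt}(\mathrm{Fac}(M))$ and $\kappa(\mathsf{T}(M))={}^{\perp}M$. Your flagging of the formula $\kappa(\mathsf{T}(M))={}^{\perp}M$ as the crux is apt; in the cited source it is obtained by showing that the torsion-free class paired with $\kappa(\mathsf{T}(M))$ is the minimal torsion-free class $\mathsf{F}(M)$ containing $M$, whence $\kappa(\mathsf{T}(M))={}^{\perp}\mathsf{F}(M)={}^{\perp}M$ by left-exactness of $\mathrm{Hom}$. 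One small imprecision: you describe $\kappa(\mathsf{T}(M))$ as ``the unique maximum torsion class not containing $M$,'' but the definition of $\kappa$ requires $z\wedge\mathsf{T}(M)=\mathsf{T}(M)_*$, which is a priori stronger than $M\notin z$; the two conditions coincide here, but that equivalence is part of what needs to be checked.
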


\begin{proposition}\label{prop:torsion_isomorphic}
    Let $j_1,j_2$ be join-irreducible elements of $\CTam$. Then 
    \begin{itemize}
    \item $j_1\twoheadleftarrow j_2$ if and only if $M_{j_1}$ is a quotient of $M_{j_2}$; \item $j_1\hookrightarrow j_2$ if and only if $M_{j_1}$ is a submodule of $M_{j_2}$; 
    \item $j_1\to j_2$ if and only if there is a nonzero map from $M_{j_1}$ to $M_{j_2}$.
    \end{itemize} 
    As a result, $\CTam_{\Tors}$ is isomorphic to $\CTam_{312}$, and $\ATam_{\Tors}$ is isomorphic to $\ATam_{312}$.
\end{proposition}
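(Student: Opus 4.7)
The plan is to verify the three bulleted equivalences directly from the structure of the string modules $M_{a,b}$ and then invoke the uniqueness part of the Fundamental Theorem of Finite Semidistributive Lattices. Set up the bijection $\JIrr(\CTam)\to\mathrm{Bricks}(C_\cQ)$, $j_{a,b}\mapsto M_{a,b}$, which is well defined because both sides are indexed by reflection indices $(a,b)$ with $b\leq a+n$. Under this bijection we will match the FTFSDL factorization of $\CTam$ described in \cref{thm:FTFSDL_CTam} with the factorization of $\CTam_{\Tors}$ coming from \cref{lem:torsionftfsdl}.

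For the submodule and quotient structure, I would argue directly from the description of $M_{a,b}$ as a uniserial string module on the oriented cycle, supported on vertices $a,a+1,\dots,b-1$, with identity maps between consecutive vertices. Because $\cQ$ is an oriented cycle and every nonzero arrow in $M_{c,d}$ is an isomorphism onto its image, the submodule lattice of $M_{c,d}$ is a chain, with the proper nonzero submodules being exactly the modules $M_{k,d}$ for $c<k\leq d$, and the proper nonzero quotients being exactly $M_{c,k}$ for $c\leq k<d$. This gives the biconditionals: $M_{a,b}$ is a quotient of $M_{c,d}$ iff (after translating) $a=c$ and $b\leq d$, and $M_{a,b}$ is a submodule of $M_{c,d}$ iff (after translating) $b=d$ and $a\geq c$. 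Comparing with \cref{thm:FTFSDL_CTam} yields the first two bullets.

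The third bullet is the main technical step, since \cref{lem:torsionftfsdl} already gives $M_1\to M_2 \iff \Hom(M_1,M_2)\neq 0$ and we only need to match this with the $\to$ relation of \cref{thm:FTFSDL_CTam}. Given a nonzero map $f\colon M_{a,b}\to M_{c,d}$, factor it through its image. The image, being an indecomposable quotient of $M_{a,b}$ that is also a submodule of $M_{c,d}$, must be some $M_{a',b'}$ with $a=a'$, $b'\leq b$, $b'=d$, and $a'\geq c$; conversely, whenever $c\leq a<d\leq b$ (up to translation) the obvious composition $M_{a,b}\twoheadrightarrow M_{a,d}\hookrightarrow M_{c,d}$ is nonzero. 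The delicate case to check is the imaginary indices $b=a+n$: since $C_\cQ$ kills all paths of length $n$, the module $M_{a,a+n}$ maps nontrivially to $M_{c,d}$ exactly when $c\leq a<d\leq a+n$, which is the expected range and does not produce spurious maps. This matches the $\to$ relation of \cref{thm:FTFSDL_CTam} exactly.

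With the three bullets established, the bijection $j_{a,b}\mapsto M_{a,b}$ is an isomorphism of FTFSDL factorizations, so by the uniqueness statement in the FTFSDL the two semidistributive lattices $\CTam_{\Tors}$ and $\CTam_{312}\cong\CTam$ are isomorphic. For the affine case, recall that $\mathrm{Bricks}(A_\cQ)=\{M_{a,b}\mid b<a+n\}$, which under the bijection is exactly the set of join-irreducibles of $\ATam$ (the TITOs $j_{a,b}$ with $(a,b)$ a real arc). Since the three FTFSDL relations for $\ATam_{\Tors}$ are the restrictions of those for $\CTam_{\Tors}$, and since $\kappa_{\ATam}$ is the restriction of $\kappa_{\CTam}$ as noted just before \cref{thm:FTFSDL_CTam}, the FTFSDL factorizations of $\ATam_{\Tors}$ and $\ATam_{312}\cong\ATam$ agree, giving the desired isomorphism. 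The main obstacle is the bookkeeping for the Hom computation across the imaginary/real boundary; once the string-module filtration argument is written carefully, everything else is matching notation.
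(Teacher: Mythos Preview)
Your proposal is correct and follows essentially the same route as the paper: verify that the module-theoretic quotient and submodule conditions on the $M_{a,b}$ match the reflection-index conditions for $\twoheadleftarrow$ and $\hookrightarrow$ in \cref{thm:FTFSDL_CTam}, then use that $\to$ is determined by these (equivalently, by the image factorization of a nonzero map through a brick) to match the $\to$ relations, and conclude via FTFSDL uniqueness. The paper is terser---it says ``upon inspection'' for the first two bullets and leans on the general fact that $\to$ determines the factorization---whereas you spell out the uniserial submodule/quotient chains and the image-factorization argument explicitly; your added care about the imaginary case $b=a+n$ is a good sanity check but not a genuinely new ingredient.
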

\begin{proof}
    Upon inspection, we see that the conditions of being a submodule or quotient are equivalent to the conditions on the reflection indices described in \Cref{thm:FTFSDL_CTam}. 

    By \Cref{lem:torsionftfsdl}, the $\to$ relation on $\CTam_{\Tors}$ is given by existence of a nonzero map. Since the FTFSDL factorization is determined by the $\to$ relation, the lattices $\CTam_\Tors$ and $\CTam_{312}$ are isomorphic.  
\end{proof}

Given a collection $D$ of arcs, let \[ \psi(D) = \{ M_\gamma \mid \gamma \in D \}. \] 
In the following proposition, we will use the fact (special to Nakayama algebras like $C_\cQ$ and $A_{\cQ}$) that the submodules of indecomposable modules are also indecomposable.

\begin{proposition} 
    The map $\psi$ restricts to a bijection from the set of cyclic arc torsion classes to the set $\CTam_{\mathsf{Tors}}$ of torsion classes for $C_\cQ$.  
\end{proposition}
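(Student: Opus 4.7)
The plan is to recognize $\psi$, restricted to cyclic arc torsion classes, as the composition of two bijections established earlier: the bijection $D \mapsto\, \preceq_D$ from cyclic arc torsion classes to $\CTam_{312}$ implicit in \cref{prop:noncrosstoclass}, followed by the lattice isomorphism $\CTam_{312} \cong \CTam_\Tors$ from \cref{prop:torsion_isomorphic}. Since both maps are bijections, the only real task is to verify that their composition agrees with $\psi$.

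First I would observe that, given a cyclic arc torsion class $D$, \cref{prop:noncrosstoclass} yields a unique $312$-avoiding TITO $\preceq_D$ with
\[ D = \{\gamma_{a,b} \mid (a,b) \in \Inv(\preceq_D)\}, \]
and by \cref{lem:JIeqinv} this set equals $\{\gamma_{a,b} \mid j_{a,b} \leq\, \preceq_D\}$. Under the FTFSDL-based identification of \cref{prop:torsion_isomorphic}, the element $\preceq_D$ corresponds to the torsion class $\cT_D \in \CTam_\Tors$ whose set of bricks is exactly $\{M_j \mid j \in \JIrr(\CTam_{312}),\ j \leq\, \preceq_D\}$; here I would invoke the general FTFSDL fact that in a finite semidistributive lattice every element is the join of the join-irreducibles below it, together with the identification $M_{j_{a,b}} = M_{a,b}$ built into the definition of the $M_j$. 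Combining these gives
\[ \cT_D = \{M_{a,b} \mid (a,b) \in \Inv(\preceq_D)\} = \psi(D), \]
so in particular $\psi(D)$ is a torsion class, and the map $D \mapsto \psi(D)$ coincides with the composition $D \mapsto\, \preceq_D \mapsto \cT_D$, hence is a bijection from cyclic arc torsion classes to $\CTam_\Tors$.

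I do not anticipate a significant obstacle; the substantive work has already been carried out by \cref{prop:noncrosstoclass}, \cref{prop:torsion_isomorphic}, \cref{lem:JIeqinv}, and the FTFSDL, so the argument reduces to bookkeeping. A more hands-on alternative would verify directly that the two axioms defining a cyclic arc torsion class correspond precisely to closure of $\psi(D)$ under brick quotients of the form $M_{a,b} \twoheadrightarrow M_{a,b'}$ and under brick extensions $0 \to M_{b,c} \to M_{a,c} \to M_{a,b} \to 0$ (these being the only nontrivial quotients and extensions among the indecomposables of the Nakayama algebra $C_\cQ$), then appeal to the fact that a torsion class is determined by its set of bricks; however, the indirect route above is shorter and avoids duplicating the FTFSDL content already packaged in \cref{prop:torsion_isomorphic}.
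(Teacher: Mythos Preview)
Your primary route is circular within this paper's logical structure. Although \cref{prop:noncrosstoclass} is \emph{stated} in \cref{sec:Arcs_FTFSDL}, its proof is explicitly deferred (``we defer to the next section to give an algebraic proof''), and that deferred proof appears immediately \emph{after} the proposition you are trying to prove and relies on it: the argument that $D\mapsto\overline{D}$ is a bijection uses the identification of $\psi(\overline{D})$ with the torsion class generated by $\psi(D)$, which presupposes that $\psi$ already carries cyclic arc torsion classes bijectively to torsion classes. So invoking \cref{prop:noncrosstoclass} here begs the question.

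Your ``hands-on alternative'' is in fact exactly what the paper does, and is the non-circular route. The paper checks directly that $\psi^{-1}$ sends a torsion class to a cyclic arc torsion class (using that $M_{a,b}\twoheadrightarrow M_{a,b'}$ for $a<b'<b$ and the short exact sequence $0\to M_{a,b}\to M_{a,c}\to M_{b,c}\to 0$), and conversely that if $D$ is a cyclic arc torsion class then $\psi(D)$ is closed under brick quotients and under extensions, the latter via an induction on the length of a filtration and the Nakayama-algebra fact that submodules of indecomposables are indecomposable. You correctly identified this as the substance of the argument; you just picked the wrong option as the shorter one, since the ``indirect'' bookkeeping depends on a result not yet available.
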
 
\begin{proof}
    First, we note that $\psi$ is a bijection between sets of arcs and subsets of $\mathrm{Bricks}(C_\cQ)$. We can see that if $S\subseteq \mathrm{Bricks}(C_\cQ)$ is a torsion class, then $\psi^{-1}(S)$ is a cyclic arc torsion class: if $M_{a,c}$ is in $S$ and $a<b<c$, then $M_{a,b}$ is a quotient of $M_{a,c}$, so it is also in $S$. Similarly, if $M_{a,b},M_{b,c}\in S$, then there is a short exact sequence
    \[ 0\to M_{a,b} \to M_{a,c} \to M_{b,c} \to 0,  \]
    so $M_{a,c}$ is also in $S$. We conclude that $\psi^{-1}$ sends torsion classes to cyclic arc torsion classes. 

    Now let $D$ be a cyclic arc torsion class. If $M$ is an indecomposable module that is a quotient of an element $M_{a,c}$ of $\psi(D)$, then $M\cong M_{a,b}$ for some $a<b\leq c$, so in fact $M\in \psi(D)$. Now assume $M=M_{a,c}$ is a brick that has a filtration $0=M^0\subset M^1 \subset \cdots \subset M^k = M$ such that $M^{i+1}/M^i$ is a direct sum of elements of $\psi(D)$. Then $M^1$ is a submodule of $M$, so it coincides with $M_{a,b}$ for some $a<b\leq c$. Similarly, $M/M^1$ is a quotient of $M$, so it must also be indecomposable and hence isomorphic to $M_{b,c}$. By assumption, $M^1\in \psi(D)$, and by induction on the length of the filtration, $M/M^1\in \psi(D)$. Therefore, $\gamma_{a,b},\gamma_{b,c}\in D$, and we conclude that $M\cong M_{a,c}$ is in $\psi(D)$. 
\end{proof}

\begin{proof}[Proof of \Cref{prop:noncrosstoclass}]
    We first show that $D\mapsto \overline{D}$ is a bijection from the set of noncrossing arc diagrams to the set of cyclic arc torsion classes. The join of the set $S=\{ M_\gamma \mid \gamma \in D\}$ in $\CTam_\Tors$ is by definition the torsion class generated by $S$, which is the intersection of all torsion classes containing $S$. This coincides with the image under $\psi$ of the cyclic arc torsion class generated by $D$. 

    Now, if $\preceq$ is a 312-avoiding TITO and $D= \cA(\preceq)$, then by \Cref{lem:JIeqinv}, we have
    \[ \{\gamma_{a,b} \mid (a,b)\in\Inv(\preceq)\} = \{\gamma_{a,b} \mid j_{a,b} \leq\, \preceq \}. \]
    Applying $\psi$, we obtain the set $\{ M_{a,b} \mid j_{a,b} \leq\, \preceq \}$, which is exactly the torsion class associated to $\preceq$, viewed as an element of $\CTam_{\Tors}$. This coincides with $\psi(\overline{D})$, as claimed. 
\end{proof}

\section{Type D Triangulations}\label{sec:triangulations}
In this section, we focus on the affine Tamari lattice. We saw in \cref{prop:torsion_isomorphic} that $\ATam$ is the lattice of torsion classes for the algebra $A_\cQ$, which is a cluster-tilted algebra of type $D_n$. Hence, the Hasse diagram of $\ATam$ coincides with the cluster exchange graph of type $D_n$. The type $D_n$ exchange graph is modeled by type $D_n$ triangulations \cite{LaurenBook}. In this section, we give two isomorphisms between the Hasse diagram of $\ATam$ and the exchange graph of the type $D_n$ cluster algebra. Because of their algebraic interpretations, the two isomorphisms coincide.

Consider a regular $n$-gon $\Po$. Let $\redbullet$ be the point at the center of $\Po$. Let $\Pob$ be the punctured $n$-gon obtained by removing $\redbullet$ from $\Po$. The \dfn{marked points} of $\Pob$ are the $n$ vertices together with the central point $\redbullet$.

\begin{definition}
    A \dfn{tagged arc} is an arc $\mathfrak{a}$ embedded within $\Pob$ without self-intersections, satisfying the following properties:
    \begin{itemize}
        \item The endpoints of $\mathfrak{a}$ are distinct marked points.
        \item If the endpoints of $\mathfrak{a}$ are adjacent points on the boundary of $\Pob$, then $\mathfrak{a}$ must encircle $\redbullet$.
    \end{itemize}
    Tagged arcs with an endpoint at $\redbullet$ are called \dfn{radial}. Radial arcs come in two flavors: \dfn{plain} and \dfn{notched}; notched arcs are indicated by the symbol $\includegraphics[height=0.22cm]{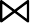}$.

    We say that two tagged arcs $\mathfrak{a},\mathfrak{b}$ are \dfn{compatible} if one of the following conditions holds:
    \begin{itemize}
        \item $\mathfrak{a}$ and $\mathfrak{b}$ are not both radial, and after isotopy fixing endpoints, there is an embedding of $\mathfrak{a}$ and $\mathfrak{b}$ so that they do not intersect except possibly at one endpoint. 
        \item $\mathfrak{a}$ and $\mathfrak{b}$ are both radial, and they have the same endpoints. One of them is plain, while the other is notched. 
        \item $\mathfrak{a}$ and $\mathfrak{b}$ are both radial, and they do not have the same endpoints. They are either both plain or both notched. 
    \end{itemize}
    A (type $D_n$) \dfn{triangulation} is a set of $n$ pairwise-compatible tagged arcs.
\end{definition}

Let us label the edges of $\Pob$ by $1,2,\ldots, n$ in clockwise order. We view these labels as living in $\ZZ/n\ZZ$. For $b\neq a-1$, let $\mathfrak{a}_{a,b}$ be the tagged arc that starts at the point immediately counter-clockwise of $a$ and moves clockwise around $\redbullet$ to the point immediately clockwise of $b$. Let $\mathfrak{a}_{a,a+n-1}$ be the plain radial arc with an endpoint at the vertex between $a-1$ and $a$. Let $\mathfrak{r}_a$ be the notched radial arc ending at the vertex immediately clockwise of $a$. Evidently, there is a bijection between the arcs $\gamma_{a,b}$ from \Cref{sec:Arcs_FTFSDL} with $a<b\leq n-1$ and plain tagged arcs $\mathfrak{a}_{a,b}$ (see \cref{fig:tagged_arcs}). 
We will think of the arc $\gamma_{a,b}$ as living in $\Pob$ by drawing it as an arc from the edge $a$ to the edge $b$. 

\begin{figure}[ht]
\begin{center}{\includegraphics[height=9cm]{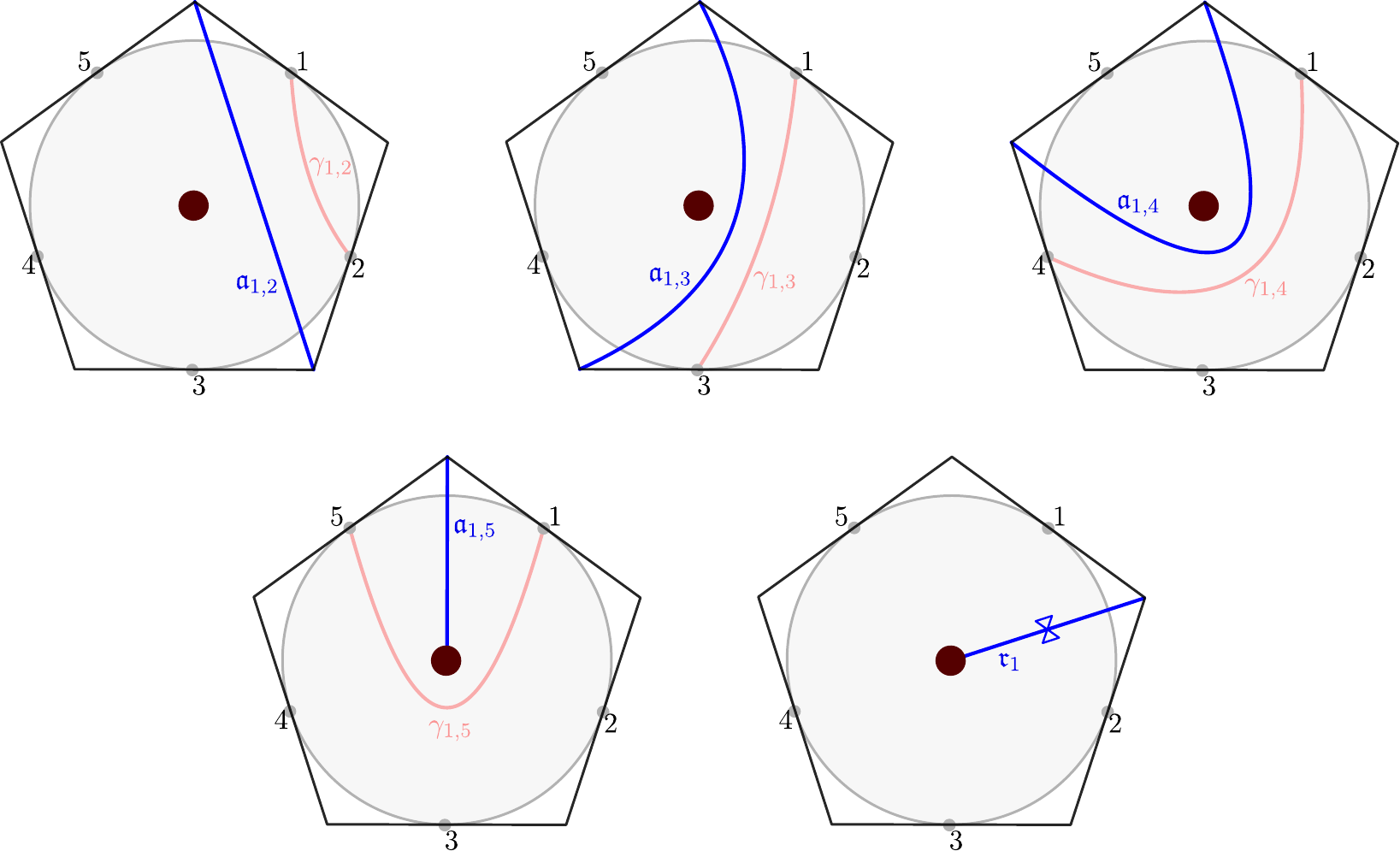}}
\end{center}
\caption{Some of the tagged arcs in $\Pob$ for $n=5$. Each tagged arc ({\color{blue}blue}) corresponds to an annular arc, which is overlaid in {\color{FaintRed}faint red}.} 
\label{fig:tagged_arcs} 
\end{figure} 


\begin{definition}\label{def:triangulation}
    Let $\cT$ be a torsion class in $\ATam_\Tors$. The \dfn{s$\tau$-tilting module} of $\cT$ is the unique $A_\cQ$-representation $M\in \cT$ such that every indecomposable summand of $M$ appears with multiplicity one, and for every representation in $N\in\cT$, we have that $\Ext^1_{A_\cQ}(M,N)=0$ and $N$ is a quotient of $M^{\oplus k}$ for some $k$. The \dfn{triangulation} of $\cT$ is the set of tagged arcs consisting of all $\mathfrak{a}_{a,b}$ such that $M_{a,b}$ is a summand of the s$\tau$-tilting module of $\cT$ and all $\mathfrak{r}_a$ such that the simple module $M_{a,a+1}$ is not a subquotient of any element of $\cT$. 
\end{definition}

Because $A_\cQ$ is a $\tau$-tilting finite algebra, each torsion class contains an s$\tau$-tilting module; this gives a bijection between torsion classes and s$\tau$-tilting modules \cite[Theorem~2.7]{Adachi}. We now indicate how this corresponds with the usual notion of triangulation associated to a cluster in a type $D_n$ cluster algebra. First, there are surface models for the type $D_n$ \emph{cluster category}; e.g. in \cite{Schiffler}. It is known that \emph{cluster-tilting objects} of the cluster category biject with clusters in a type $D_n$ cluster algebra; furthermore, each cluster-tilting object (equivalently, each cluster) has an associated triangulation. Now, \cite[Theorem~4.1]{Adachi} gives a bijection between the cluster-tilting objects in the cluster category and the s$\tau$-tilting modules for $A_\cQ$. Hence each s$\tau$-tilting module has an associated triangulation, which is the triangulation described in \Cref{def:triangulation}. Furthermore, two clusters are related by a mutation if and only if the associated torsion classes are related by an edge in the Hasse diagram of $\ATam_\Tors$, and this occurs if and only if the associated triangulations are related by a \emph{flip move} (i.e., differ by exactly one arc). It follows that the map from $\ATam_\Tors$ to triangulations gives an isomorphism from the Hasse diagram of $\ATam_{\Tors}$ to the exchange graph of the type $D_n$ cluster algebra. We give more direct proofs of this fact in \Cref{lem:triang_is_triang,cor:flipgraph}.

\begin{lemma}\label{lem:triang_is_triang}
    The map sending a torsion class $\cT$ to its triangulation is bijection from $\ATam_\Tors$ to the set of type $D_n$ triangulations.
\end{lemma}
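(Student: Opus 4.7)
The plan is to establish well-definedness and injectivity of the map, then invoke the cardinality count $|\ATam_{\Tors}| = \Cat_{D_n}$ from \cref{cor:Catalan} to obtain surjectivity, since $\Cat_{D_n}$ is the well-known enumeration of type $D_n$ triangulations. Throughout, I would work via the arc-based descriptions of $\ATam_{\Tors}$ provided by \cref{rem:arc_torsion_containment} and \cref{prop:torsion_isomorphic}, which give both sides of the desired bijection a uniform arc-theoretic presentation inside the annulus and $\Pob$.

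For well-definedness, I first verify that the output always has exactly $n$ tagged arcs. If the s$\tau$-tilting module $M$ of $\cT$ has $k$ indecomposable summands, then by standard support $\tau$-tilting theory its support has dimension $k$, so exactly $n - k$ vertices $c$ satisfy $M \cdot e_c = 0$. Because $\cT = \mathrm{Fac}(M)$, these are precisely the $c$ for which $M_{c,c+1}$ is not a subquotient of any element of $\cT$. Thus the triangulation contributes $k$ plain tagged arcs (one per summand) together with $n - k$ notched radial arcs, summing to $n$.

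Pairwise compatibility then splits by radiality. For two non-radial plain arcs $\mathfrak{a}_{a,b}$ and $\mathfrak{a}_{c,d}$, compatibility in $\Pob$ amounts to the annular arcs $\gamma_{a,b}$ and $\gamma_{c,d}$ being noncrossing; this follows from the Ext-orthogonality of summands of $M$ together with the direct computation (for the Nakayama algebra $A_\cQ$) that $\Ext^1_{A_\cQ}(M_{a,b}, M_{c,d}) \neq 0$ exactly when the two arcs cross. When a plain radial $\mathfrak{a}_{a,a+n-1}$ appears, $M_{a,a+n-1}$ is the projective-injective at vertex $a$, and every simple $M_{c,c+1}$ with $c \neq a - 1$ arises as a subquotient of $M_{a,a+n-1} \in \cT$; hence the only notched radial that can coexist with $\mathfrak{a}_{a,a+n-1}$ is $\mathfrak{r}_{a-1}$, which shares its boundary endpoint with $\mathfrak{a}_{a,a+n-1}$ (one plain, one notched) and is therefore compatible. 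All remaining cases reduce to the same annular noncrossing analysis or to the easy fact that any two plain (respectively, notched) radials with distinct endpoints are compatible.

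For injectivity, the triangulation directly records the indecomposable summands of $M$ (via the plain tagged arcs) and the support of $M$ (via the vertices absent from the notched radials); hence the Adachi--Iyama--Reiten bijection between support $\tau$-tilting modules and torsion classes recovers $\cT$. Combining with the cardinality argument then yields bijectivity. The main obstacle is the compatibility analysis involving radial arcs---specifically, establishing that the combined hypotheses ``$M_{a,a+n-1}$ is a summand of $M$'' and ``$M_{c,c+1}$ is not a subquotient of any element of $\cT$'' force $c = a - 1$---which cleanly reduces the problem to the same-endpoint criterion for mixed plain--notched radial compatibility.
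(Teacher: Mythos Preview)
Your proposal is correct and follows essentially the same approach as the paper: verify that the output is a genuine type $D_n$ triangulation by checking pairwise compatibility (using the $\Ext^1$ computation of \cref{lem:ext} for the unnotched--unnotched case and the subquotient criterion for cases involving notched radials), establish injectivity by recovering the s$\tau$-tilting module from the plain arcs, and conclude bijectivity from the cardinality count $|\ATam_\Tors|=\Cat_{D_n}$.

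A few small differences are worth noting. You organize the compatibility analysis by radial/non-radial, whereas the paper organizes it by notched/unnotched; the paper's split is slightly cleaner because the unnotched--unnotched case (including plain radials) is handled uniformly by \cref{lem:ext}, while your ``remaining cases'' clause has to absorb both the non-radial--notched case and the two-plain-radials case. On the other hand, you explicitly verify that the triangulation has exactly $n$ arcs via the support count $k+(n-k)=n$, a point the paper leaves implicit in its appeal to $\tau$-tilting theory. Your identification of the plain-radial/notched-radial interaction (forcing $c=a-1$) as the delicate point is apt, though the paper's organization folds this into the general unnotched--notched case without singling it out.
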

\begin{proof}
    The map sending each torsion class to its triangulation is an injection from $\ATam_\Tors$ to the collection of sets of tagged arcs, since we can recover a torsion class $\cT$ as the torsion class generated by the $s\tau$-tilting module $M$, which we can read off from the unnotched arcs in the triangulation of $\cT$. Since there are $\Cat_{D_n}$ elements of $\ATam_\Tors$ and $\Cat_{D_n}$ type $D_n$ triangulations, it is enough to show that the triangulation of a torsion class $\cT$ is a type $D_n$ triangulation. 

    Let $\mathfrak{a},\mathfrak{b}$ be tagged arcs in the triangulation of $\cT$. We wish to show $\mathfrak{a}$ and $\mathfrak{b}$ are compatible. If $\mathfrak{a},\mathfrak{b}$ are both unnotched, then they correspond to summands $M_{a,b},M_{c,d}$ of the $s\tau$-tilting module $M$. Hence, $\Ext^1(M_{a,b},M_{c,d}) = 0$, 
    so by \Cref{lem:ext}, the integers $a,b,c,d$ cannot be chosen so that $a<c\leq b < d < a+n$. This implies the arcs $\mathfrak{a}_{a,b},\mathfrak{a}_{c,d}$ do not cross, so they are compatible. If instead $\mathfrak{a}=\mathfrak{a}_{a,b}$ and $\mathfrak{b}=\mathfrak{r}_c$, then the arcs are incompatible if and only if we can choose $c$ so that $a\leq c < b$. This occurs if and only if the simple module $M_{c,c+1}$ is a subquotient of $M_{a,b}$, so by the definition of the triangulation, this cannot happen. Finally, if $\mathfrak{a}=\mathfrak{r}_{a}$ and $\mathfrak{b}=\mathfrak{r}_b$, then the arcs are compatible since they are distinct. 
    
\end{proof}

We now give two ways of computing the triangulation associated to a torsion class, using the combinatorics we have developed so far. The first uses affine arc torsion classes, and the second uses TIBITs and linear extensions.

\subsection{Triangulations and Arc Torsion Classes} 
We first provide a correspondence between affine arc torsion classes and triangulations. 
See \cref{fig:triangulation_1} for an illustration. First, we need a computational lemma.

\begin{lemma}\label{lem:ext}
    We have $\Ext^1_{A_\cQ}(M_{a,b},M_{c,d})\neq 0$ if and only if the integers $a,b,c,d$ can be chosen so that $a<c\leq b < d < a+n$.
\end{lemma}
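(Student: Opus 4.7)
The plan is to compute $\Ext^1(M_{a,b}, M_{c,d})$ directly from a projective resolution of $M_{a,b}$, exploiting the fact that $A_\cQ$ is a Nakayama algebra so that all indecomposables are uniserial string modules. Since $A_\cQ$ is the path algebra modulo paths of length $n-1$, the indecomposable projective at vertex $a$ is $P_a = M_{a, a+n-1}$, and the projective cover of $M_{a,b}$ (for $a < b < a+n$) fits into a short exact sequence
\[ 0 \to M_{b, a+n-1} \to P_a \to M_{a,b} \to 0, \]
in which the syzygy $M_{b, a+n-1}$ vanishes precisely when $b = a+n-1$, i.e.\ when $M_{a,b}$ is itself projective. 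Applying $\Hom_{A_\cQ}(-, M_{c,d})$ and using $\Ext^1(P_a, M_{c,d}) = 0$ identifies
\[ \Ext^1(M_{a,b}, M_{c,d}) \;\cong\; \coker\bigl(\Hom(P_a, M_{c,d}) \xrightarrow{\rho} \Hom(M_{b, a+n-1}, M_{c,d})\bigr), \]
where $\rho$ is restriction along the inclusion $M_{b, a+n-1} \hookrightarrow P_a$.

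Next I would invoke the standard description of maps between uniserials: any nonzero $M_{x,y} \to M_{z,w}$ factors as a quotient $M_{x,y} \twoheadrightarrow M_{x,w}$ followed by an inclusion $M_{x,w} \hookrightarrow M_{z,w}$, so this Hom is one-dimensional when one can choose reps with $z \leq x < w \leq y$ and zero otherwise. Specialised to the two Hom spaces above, this yields
\[ \Hom(P_a, M_{c,d}) \neq 0 \;\Longleftrightarrow\; c \leq a < d < a+n, \qquad \Hom(M_{b, a+n-1}, M_{c,d}) \neq 0 \;\Longleftrightarrow\; c \leq b < d < a+n. \]

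The result then follows from a short case analysis on $\rho$. If the target Hom is zero, then $\Ext^1 = 0$ and, correspondingly, the claimed inequalities fail because $c \leq b$ and $d < a+n$ are both necessary. Assume the target is nonzero, so we can choose reps with $c \leq b < d < a+n$; the unique nonzero map is $M_{b, a+n-1} \twoheadrightarrow M_{b,d} \hookrightarrow M_{c,d}$. If additionally $c \leq a$, then $\Hom(P_a, M_{c,d}) \neq 0$ and its unique nonzero element $P_a \twoheadrightarrow M_{a,d} \hookrightarrow M_{c,d}$ restricts along $M_{b,a+n-1}\hookrightarrow P_a$ to precisely this map, so $\rho$ is surjective and $\Ext^1 = 0$. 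If instead $a < c$, then $c \leq a$ fails so $\Hom(P_a, M_{c,d}) = 0$, $\rho$ is the zero map, and $\Ext^1 \neq 0$. Combining the two cases, $\Ext^1(M_{a,b}, M_{c,d}) \neq 0$ iff reps can be chosen with $a < c \leq b < d < a+n$.

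The main technical wrinkle is that reflection indices are defined only up to simultaneous shift by $n$, so one must be careful in the cyclic setting that a hidden ``wraparound'' overlap does not secretly contribute an extra element to $\Hom(P_a, M_{c,d})$ and thus make $\rho$ surjective even when $a < c$. The length bound $d - c \leq n-1$ for $A_\cQ$-indecomposables (equivalent to the defining relation $\alpha_{a, a+n-1}=0$) precludes this, but to make the argument clean I would at the outset fix a representative $a \in \{0,\dots,n-1\}$ and then pin down the unique reps of $b, c, d$ in the windows dictated by $a < b < a+n$ and $c \leq b < c+n$, reducing all of the above inequalities to genuine comparisons of integers.
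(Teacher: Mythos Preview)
Your proof is correct and follows essentially the same approach as the paper: both arguments resolve $M_{a,b}$ projectively and then compute the Hom complex into $M_{c,d}$. The only cosmetic difference is that the paper writes out the resolution one step further as $\cdots \to P_{a+n-1}\to P_b\to P_a\to M_{a,b}\to 0$ and identifies $\Ext^1$ as the first cohomology of $\Hom(P_a,M_{c,d})\to\Hom(P_b,M_{c,d})\to\Hom(P_{a+n-1},M_{c,d})$, whereas you truncate at the syzygy $M_{b,a+n-1}$ and read off $\Ext^1$ as a cokernel; these are literally the same computation, and your version is arguably a bit cleaner since it avoids the cocycle/coboundary bookkeeping.
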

\begin{proof}
    Note that $P_x\coloneqq M_{x,x+n-1}$ is a projective $A_\cQ$-module for each $x$. Hence, there is a projective resolution
    \[ \cdots \to P_{a+n-1} \to P_{b} \to  P_a \to M_{a,b} \to 0. \]
    So our desired Ext group is the first cohomology of the complex
    \[ \Hom(P_a,M_{c,d}) \to \Hom(P_b,M_{c,d}) \to \Hom(P_{a+n-1},M_{c,d}) \to \cdots. \]
    There is a nonzero 1-cocycle if and only if $c\leq b < d\leq a+n-1$. That cocycle is a coboundary if and only if we also have $c\leq a <d$. We conclude that the first cohomology is nonzero if and only if $a<c\leq b < d \leq a+n-1$, as claimed.
\end{proof}

\begin{theorem}
    Let $D$ be an affine arc torsion class. The triangulation associated to the torsion class $\psi(D)$ consists of the following: 
    \begin{itemize}
    \item all tagged arcs $\mathfrak{a}_{a,b}$ such that $\gamma_{a,b}\in D$ and there do not exist $a<b'\leq b<c<a+n$ with $\gamma_{b',c}\in D$; 
    \item all notched radial arcs $\mathfrak{r}_a$ such that there do not exist $a'\leq a<b'$ satisfying $\gamma_{a',b'}\in D$. 
    \end{itemize} 
\end{theorem}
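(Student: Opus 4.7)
The plan is to check both bullets against \cref{def:triangulation} applied to $\cT=\psi(D)$, treating the unnotched tagged arcs and the notched radial arcs separately. A key preliminary observation is that, since $D$ is an affine arc torsion class, the set $\cB=\{M_\gamma\mid\gamma\in D\}$ is closed under quotients (because $\gamma_{a,c}\in D$ forces $\gamma_{a,b}\in D$ for all $a<b<c$, and the nonzero quotients of the uniserial module $M_{a,c}$ are precisely the $M_{a,b}$ for $a<b\leq c$). Consequently, every module in $\cT$ admits a filtration whose successive quotients lie in $\cB$: starting from the generating set $\cB$ and iteratively closing under quotients and extensions to build $\cT$, the induction at each stage preserves the existence of such filtrations (using closure under quotients of $\cB$ in the quotient step, and splicing filtrations in the extension step).

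For the unnotched tagged arcs, I invoke the standard fact from $\tau$-tilting theory that the indecomposable summands of the s$\tau$-tilting module of $\cT$ are precisely the indecomposable Ext-projectives in $\cT$, namely the indecomposables $M\in\cT$ satisfying $\Ext^1_{A_\cQ}(M,N)=0$ for all $N\in\cT$. Using the brick filtration above together with the middle portion
\begin{equation*}
\Ext^1(M_{a,b},N_{i-1})\to\Ext^1(M_{a,b},N_i)\to\Ext^1(M_{a,b},N_i/N_{i-1})
\end{equation*}
of the long exact Ext sequence, Ext-projectivity in $\cT$ reduces, by induction on the filtration length, to the condition $\Ext^1_{A_\cQ}(M_{a,b},M_{c,d})=0$ for every $\gamma_{c,d}\in D$. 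By \cref{lem:ext} (after relabeling $(c,d)$ as $(b',c)$), this vanishing is exactly the absence of a reflection index $(b',c)\in D$ with $a<b'\leq b<c<a+n$. Combined with the membership $M_{a,b}\in\cT$, that is $\gamma_{a,b}\in D$, this yields the first bullet.

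For the notched radial arcs, \cref{def:triangulation} places $\mathfrak{r}_a$ in the triangulation exactly when the simple $M_{a,a+1}$ is not a subquotient of any element of $\cT$. Using the brick filtration and the Jordan--H\"older theorem, $M_{a,a+1}$ is a subquotient of some $N\in\cT$ if and only if it is a composition factor of some brick $M_{a',b'}$ with $\gamma_{a',b'}\in D$. Since $M_{a',b'}$ is uniserial with composition factors $M_{a',a'+1},\ldots,M_{b'-1,b'}$, this happens precisely when the pair $(a',b')$ can be translated so that $a'\leq a<b'$. Negating produces the condition in the second bullet.

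The main technical hurdle is the passage from Ext-vanishing on the brick generators to Ext-vanishing on all of $\cT$; this is subtle because $A_\cQ$ is not hereditary, so one cannot rely on $\Ext^2$ vanishing. The filtration-length induction above circumvents the issue by only ever using the first-order piece of the long exact sequence. As a sanity check, the standard support-$\tau$-tilting dictionary guarantees that the number of s$\tau$-tilting summands plus the number of ``killed'' simples equals $n$, which is consistent both with the $n$-regularity of the Hasse diagram of $\ATam$ from \cref{prop:regular_affine} and with the fact that a type $D_n$ triangulation has exactly $n$ arcs.
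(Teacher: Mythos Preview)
Your proof is correct and follows essentially the same strategy as the paper: both reduce the Ext-projectivity condition to checking $\Ext^1(M_{a,b},M_{c,d})=0$ for all $\gamma_{c,d}\in D$ and then invoke \cref{lem:ext}, and both handle the notched arcs by reducing to composition factors of bricks.

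The one difference worth noting is in how the reduction to bricks is carried out. You build a brick filtration of an arbitrary $N\in\cT$ via the inductive construction of $\cT$ from $\cB$ (closing under quotients and extensions), and then run the long exact sequence along the filtration. The paper instead observes that, since every indecomposable $A_\cQ$-module is already a brick and torsion classes are closed under direct summands, any $N\in\cT$ is simply a \emph{direct sum} of elements of $\cB$; additivity of $\Ext^1$ in the second variable then finishes immediately. Your argument is a touch more elaborate but also more robust: it would survive in a setting where indecomposables and bricks do not coincide, whereas the paper's shortcut is specific to Nakayama algebras like $A_\cQ$. Either way, the substance is the same.
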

\begin{proof}
    First note that the simple module $M_{a,a+1}$ is a subquotient of a module in a torsion class $\cT$ if and only if $S_a$ is a subquotient of a brick in $\cT$. Furthermore, $M_{a,a+1}$ is a subquotient of the brick $M_{a',b'}$ if and only if (possibly after relabeling) $a'\leq a < b$. This proves the description of the notched radial arcs in the triangulation of $\psi(D)$.
    Now we need to check that 
    \[ \bigoplus \{M_{a,b} \mid \gamma_{a,b}\in D,~ \text{there do not exist } a<b'\leq b<c<a+n\text{ with }\gamma_{b',c}\in D\} \]
    is exactly the s$\tau$-tilting module of $\psi(D)$. 
    The s$\tau$-tilting module of $\psi(D)$ is the direct sum of the indecomposable modules $M_{a,b}\in \psi(D)$ such that $\mathrm{Ext}^1(M_{a,b},T)=0$ for all $T\in \psi(D)$. Let $M_{a,b}$ be such an indecomposable module in $\psi(D)$, and suppose instead that there exist $a<b'\leq b<c<a+n$ with $\gamma_{b',c}\in D$. Then $M_{b',c}\in \psi(D)$, and $M_{a,c}\oplus M_{b',b}$ is an extension of $M_{b',c}$ and $M_{a,b}$, so
    $\Ext^1(M_{a,b},M_{b',c})\neq 0$, which is a contradiction. 
    
    Conversely, assume $M_{a,b}$ is an indecomposable module in $\psi(D)$ such that there do not exist $a<b'\leq b<c<a+n$ with $\gamma_{b',c}\in D$. We claim that $\mathrm{Ext}^1(M_{a,b},T)=0$ for each $T\in \psi(D)$.  Indeed, it is enough to check this when $T=M_{c,d}$ is indecomposable, where it follows from \Cref{lem:ext}.
\end{proof}

\begin{figure}[ht]
\begin{center}{\includegraphics[height=4.256cm]{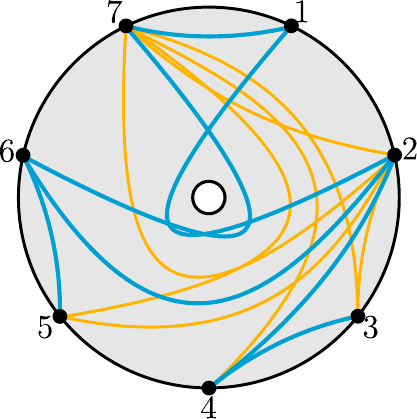}\qquad\qquad\includegraphics[height=4.5cm]{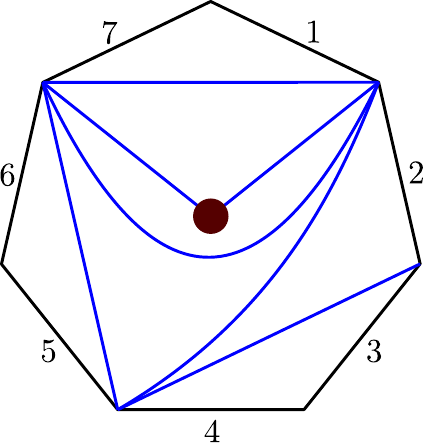}}
\end{center}
\caption{On the left is the affine arc torsion class from the left side of \cref{fig:arc_torsion_classes}. On the right is the corresponding triangulation. We have colored in blue the arcs on the left that correspond to the triangulation arcs on the right.}   
\label{fig:triangulation_1} 
\end{figure} 

\subsection{Triangulations and $\mathbf{g}$-vectors}\label{subsec:g-vectors}
We now give a way to compute the triangulation associated to an element of $\ATam$ using the \emph{stability fan} 
of $A_\cQ$. Given a vector $\theta = (\theta_1,\ldots, \theta_n)\in \RR^n$ and a representation $N$ of $A_\cQ$, let
\[ \langle \theta, N\rangle=\theta_1 \dim(N^1) + \theta_2 \dim(N^2) + \cdots + \theta_n \dim(N^n).  \]

Let $\cT_\theta$ be the set of representations $M$ of $A_\cQ$ such that $\langle \theta,N\rangle\geq 0$ for every quotient $N$ of $M$. According to \cite{BrustleFan}, $\cT_\theta$ is a torsion class of $A_\cQ$. 

\begin{definition}
The \dfn{cone} of a torsion class $\cT\in \ATam_{\Tors}$ is the polyhedral cone
    \[ \operatorname{cone}(\cT) \coloneqq \overline{\{\theta\in \RR^n \mid \cT_\theta = \cT 
    \}}. \]
\end{definition}
The relation between our definition of $\cone(\cT)$ and the definition given in \cite{BrustleFan} is explained in \cite[Lemma~3.12~and~Remark~3.28]{BrustleFan}.

\begin{proposition}[{\cite[Corollary 3.29]{BrustleFan}}]\label{prop:g-vector-fan}
    The cones of the form $\cone(\cT)$ for $\cT\in\ATam_{\Tors}$ form the chambers of a complete fan.
\end{proposition}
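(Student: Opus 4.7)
The plan is to invoke the general theory of stability ($\mathbf{g}$-vector) fans for $\tau$-tilting finite algebras developed in \cite{BrustleFan}; indeed, the proposition is attributed to \cite[Corollary~3.29]{BrustleFan}, so what actually needs to be checked is that the affine Tamari algebra $A_\cQ$ satisfies the required hypothesis. I would first verify that $A_\cQ$ is $\tau$-tilting finite. This is immediate from the discussion in \cref{sec:quivers}: $A_\cQ$ is a Nakayama algebra whose indecomposable modules are exactly the finitely many strings $M_{a,b}$ with $a<b<a+n$, so $A_\cQ$ is representation-finite. Equivalently, by the bijection \cite[Theorem~2.7]{Adachi} between s$\tau$-tilting modules and torsion classes, together with the finiteness $|\ATam_{\Tors}|=\Cat_{D_n}$ (see \cref{cor:Catalan} and \cref{prop:torsion_isomorphic}), $A_\cQ$ has only finitely many s$\tau$-tilting modules, which is the definition of $\tau$-tilting finiteness.

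Given $\tau$-tilting finiteness, \cite[Corollary~3.29]{BrustleFan} directly yields the three properties required of a complete fan: every $\cT \in \ATam_{\Tors}$ is functorially finite and is realized as $\cT_\theta$ for every $\theta$ in the relative interior of $\cone(\cT)$; distinct torsion classes produce full-dimensional polyhedral cones meeting along common faces; and the assignment $\theta \mapsto \cT_\theta$ is defined on all of $\RR^n$ and takes only finitely many values, so the union of the cones $\cone(\cT)$ exhausts $\RR^n$. The only care needed is to match the two candidate descriptions of $\cone(\cT)$---the stability description used above, and the $\mathbf{g}$-vector description used in \cite{BrustleFan}---but this compatibility is already flagged in the paragraph preceding the proposition via the citations to \cite[Lemma~3.12~and~Remark~3.28]{BrustleFan}. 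I do not anticipate any genuine obstacle beyond this routine bookkeeping of conventions.
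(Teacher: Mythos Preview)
Your proposal is correct and matches the paper's approach exactly: the paper provides no proof for this proposition beyond the citation to \cite[Corollary~3.29]{BrustleFan}, treating it as an imported result about $\tau$-tilting finite algebras. Your verification that $A_\cQ$ is $\tau$-tilting finite (via representation-finiteness of Nakayama algebras, as noted in \cref{sec:quivers}) is precisely the hypothesis check needed to invoke that corollary, and your remark about reconciling the stability and $\mathbf{g}$-vector descriptions of $\cone(\cT)$ mirrors the paper's own aside citing \cite[Lemma~3.12~and~Remark~3.28]{BrustleFan}.
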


The complete fan from \cref{prop:g-vector-fan} is called the \dfn{stability fan} or \dfn{$\bg$-vector fan} of $A_\cQ$\footnote{In general, these are two different fans, but they coincide for $\tau$-tilting finite algebras like $A_\cQ$.}. 
To connect the combinatorics of this fan to the combinatorics we have been studying, we will give another perspective on the pairing $\langle \theta, N\rangle$. First, let $\RR^{\oplus \ZZ}$ be the vector space with basis $\{\mathbf{e}_i \mid i\in \ZZ\}$. Consider the $(n+1)$-dimensional quotient vector space $\RR^{\oplus\ZZ}/\langle \mathbf{e}_i-\mathbf{e}_{i+n} -(\mathbf{e}_j - \mathbf{e}_{j+n}) \mid i,j\in\ZZ \rangle$, and let $\widetilde{\mathbf{e}}_i$ be the image of $\mathbf{e}_i$ in this quotient. Let $\widetilde{V}$ be the $n$-dimensional vector subspace spanned by $\{\widetilde{\mathbf{e}}_{i}-\widetilde{\mathbf{e}}_{i+1}\mid i \in \ZZ\}$. There is a basis for $\widetilde{V}$ given by $\widetilde{\mathbf{e}}_1-\widetilde{\mathbf{e}}_2, \widetilde{\mathbf{e}}_2-\widetilde{\mathbf{e}}_3,\ldots,\widetilde{\mathbf{e}}_{n}-\widetilde{\mathbf{e}}_{n+1}$; let $\varpiv_1,\ldots,\varpiv_n$ be the dual basis of $\widetilde{V}^*$. As usual, we treat the indices of $\varpiv_i$ as living in $\ZZ/n\ZZ$, so that $\varpiv_0=\varpiv_n$, etc. The \dfn{dimension vector} of a representation $N$ is \[\underline{\dim}(N)\coloneqq \sum_{i=1}^n \dim(N^i)(\widetilde{\mathbf{e}}_{i}-\widetilde{\mathbf{e}}_{i+1}) \in \widetilde{V}.\] For example, $\underline{\dim}(M_{a,b}) = \widetilde{\mathbf{e}}_a-\widetilde{\mathbf{e}}_b$. Now, the pairing $\langle \theta, N\rangle$ is really the pairing $\langle \theta, \underline{\dim} N\rangle$ coming from the duality between $\widetilde{V}^*$ and $\widetilde{V}$, where $\theta = \theta_1 \varpiv_1 + \cdots + \theta_n\varpiv_n \in \widetilde{V}^*$. 

We now relate these vectors to TITOs. Each $\theta \in \widetilde{V}^*$ determines and is determined by the real numbers $\theta(i)\coloneqq \langle \theta, \widetilde{\mathbf{e}}_i - \widetilde{\mathbf{e}}_0\rangle$ for $i\in\ZZ$. Furthermore, the sequence $(\theta(i))_{i\in \ZZ}$ can be used to build TITOs. 
We say $\theta$ is \dfn{regular} if $\theta(i)\neq \theta(j)$ for all $i\neq j$. If $\theta$ is regular, then we define a total order $\prec_\theta$ on $\ZZ$ so that $i\prec_\theta j$ if and only if $\theta(i) < \theta(j)$. Evidently, $\prec_\theta$ is a TITO, since $\theta(j+n)-\theta(i+n) = \theta(j)-\theta(i)$. 

\begin{remark}
    Not all TITOs are of the form $\prec_\theta$ for some $\theta\in \RR^n$. For example, $[1][\underline{2,0}]$ is not, nor is any other TITO with both a waxing and a waning block. 
\end{remark}

We will also be interested in non-regular $\theta$. We say that $\theta$ is \dfn{compatible} with a TIBIT $T$ if for all nodes $\T_i,\T_j$ of $T$ such that $\T_i\leq_T \T_j$, we have $\theta(i)\leq \theta(j)$. If $\theta$ is regular, then $\theta$ is compatible with $T$ if and only if $\prec_\theta$ is a TILE of $T$.

\begin{lemma}\label{lem:TIBITcone}
    Let $T$ be a TIBIT. Then the set $\{\theta \in \widetilde{V}^* \mid \text{$\theta$ is compatible with $T$}\}$ is an $n$-dimensional polyhedral cone. 
\end{lemma}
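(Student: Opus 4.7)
The plan is to first observe that $\cC_T \coloneqq \{\theta \in \widetilde V^* \mid \theta\text{ is compatible with }T\}$ is cut out by only $n$ linear inequalities and is therefore automatically a polyhedral cone in the $n$-dimensional space $\widetilde V^*$. Indeed, any ancestor-descendant inequality $\theta(i)\leq \theta(j)$ with $\T_i \leq_T \T_j$ follows by transitivity from the edge (parent-child) inequalities $\theta(i) \leq \theta(\pr(i))$, and translation invariance of $\theta$ in $\widetilde V^*$ forces the edge inequality for $i$ to coincide with that for $i+n$, leaving exactly one inequality per residue class in $\ZZ/n\ZZ$. Consequently, $\dim\cC_T = n$ if and only if $\cC_T$ has nonempty interior, which is equivalent to the $n$ strict edge inequalities being simultaneously satisfiable.

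By Gordan's theorem, simultaneous strict feasibility fails if and only if there exist nonnegative scalars $c_0,\ldots,c_{n-1}$, not all zero, with $\sum_{i \in \ZZ/n\ZZ} c_i(\widetilde{\mathbf e}_i - \widetilde{\mathbf e}_{\pr(i)}) = 0$ in $\widetilde V$. Writing $\pr(i) = i' + k_i n$ with $i' \in \{0,\ldots,n-1\}$ and $k_i \in \ZZ$, and using the relation $\widetilde{\mathbf e}_{j+n} - \widetilde{\mathbf e}_j = \widetilde{\mathbf e}_n - \widetilde{\mathbf e}_0$, this vanishing condition decomposes into the two conditions (i) $c_j = \sum_{\pr(i) \equiv j \pmod n} c_i$ for every $j$ (equivalently, $c$ is a nonnegative measure on $\ZZ/n\ZZ$ invariant under the parent-residue map $\overline{\pr}\colon \ZZ/n\ZZ \to \ZZ/n\ZZ$), and (ii) $\sum_i c_i k_i = 0$.

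The remaining task, and the main obstacle, is to rule out such a $c$. Any nonneg $\overline{\pr}$-invariant measure is supported on the cycles of $\overline{\pr}$, since a transient point in the support would force an infinite backward chain in a finite set, contradicting transience. For a cycle $\bar i_1 \to \cdots \to \bar i_K \to \bar i_1$, iterating $\pr$ in $\ZZ$ from a representative $i_1$ produces an ancestor chain in $T$ ending at $\pr^K(i_1) = i_1 + K'n$ with $K' = \sum_a k_{\bar i_a}$; this exhibits $\T_{i_1 + K'n}$ as an ancestor of $\T_{i_1}$, and since no node is its own ancestor, $K' \neq 0$. Translation invariance then furnishes infinitely many descendants $\T_{i_1 - K'n}, \T_{i_1 - 2K'n}, \ldots$ of $\T_{i_1}$, so $\T_{i_1}$ lies in the spine of $T$. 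The key sign analysis is now that the spine, being totally ordered in $\leq_T$ and isomorphic as an ordered set to $\ZZ$, carries a translation-by-$n$ action which is an order-preserving bijection, and therefore a shift of a single fixed sign (positive if the spine is waxing, negative if waning). Hence every cycle shift $K'$ shares this common sign, so for any nonzero nonneg $c$ satisfying (i) the sum $\sum_i c_i k_i$ is strictly signed and in particular nonzero, violating (ii). This contradiction shows $\cC_T$ is $n$-dimensional.
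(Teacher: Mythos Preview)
Your proof is correct but follows a genuinely different path from the paper's. The paper argues constructively: it observes that $T$ always admits a TILE with a single block (either $\po_T$ or $\po^T$, depending on whether the spine consists of left or right children), converts that single-block TILE into an affine permutation $\pi$, and uses $\theta(i)=\pm\pi^{-1}(i)$ to produce an explicit regular $\theta$ compatible with $T$. Your argument is obstruction-theoretic: you reduce compatibility to exactly $n$ parent--child inequalities (a sharper reduction than the paper's, which keeps all ancestor--descendant constraints with bounded index gap), invoke Gordan's theorem to translate full-dimensionality into the nonexistence of a nontrivial nonnegative dependency $\sum c_i(\tbe_i-\tbe_{\pr(i)})=0$, and rule this out by showing that any such $c$ is uniform on the unique $\overline{\pr}$-cycle (the residues in the spine) and hence contributes $\lambda K'$ with $K'\neq 0$ to the $\delta$-coefficient.

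Both routes are short. The paper's construction has the advantage of exhibiting a specific interior point---indeed one of the form $\prec_\theta$, which is the content used elsewhere in the section and implicitly justifies the ``TILE'' terminology. Your duality argument avoids building any auxiliary combinatorial object and isolates precisely why full-dimensionality holds: the spine has a single direction under translation by $n$, so the lone cycle of the parent-residue map has nonzero signed winding. Two small points worth stating explicitly if you write this up: that the spine is totally $\leq_T$-ordered (at most one subtree of any node is infinite, since the other is bounded by the parent as in Lemma~\ref{lem:TIBITstructure}), and that on a finite set any $\overline{\pr}_*$-invariant nonnegative measure is uniform on each cycle---both are true and easy, but you lean on them without proof.
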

\begin{proof}
First observe that $\theta$ is compatible with $T$ if and only if $\theta(i)\leq \theta(j)$ for all $ \T_i \leq_T \T_j$ such that $1\leq i \leq n$ and $|i-j| \leq n$. It follows that $\theta$ is compatible with $T$ if and only if it satisfies a finite list of linear inequalities, so $\{\theta \in \widetilde{V}^* \mid \text{$\theta$ is compatible with $T$}\}$ is a polyhedral cone. It remains to check that the cone is $n$-dimensional.  The only way the cone could fail to be $n$-dimensional is if it is contained in one of its supporting hyperplanes. Because each supporting hyperplane is of the form $\{\theta \in \widetilde{V}^* \mid \theta(i) = \theta(j) \}$, it is enough to show that there is some regular $\theta$ that is compatible with $T$. 

We first note that there is a TILE of $T$ that has exactly one block. Indeed, if the nodes in the spine of $T$ are all left children, then by the definition of post-order, $\po_T$ is a TILE of $T$ with exactly one block. On the other hand, if the nodes in the spine of $T$ are all right children, then by the definition of reverse-post-order, $\po^T$ is a TILE of $T$ with exactly one block. 

Now let $\preceq$ be a TILE of $T$ that has exactly one block. If the unique block of $\preceq$ is waxing, set $\preceq'$ to be $\preceq$ and set $\epsilon=1$; if the block is waning, set $\preceq'$ to be $\Psi_{\leftrightarrow}(\preceq)$ (the reverse of $\preceq$) and set $\epsilon=-1$. Since $\preceq'$ has a unique waxing block and no waning blocks, there is a unique affine permutation $\pi\colon\ZZ\to\ZZ$ such that for all $i,j\in\ZZ$, we have $i\preceq' j$ if and only if $\pi^{-1}(i)\leq\pi^{-1}(j)$. Now we set $\theta$ to be the unique element of $\widetilde{V}^*$ such that $\epsilon(\theta(i) - \theta(0)) = \pi^{-1}(i) - \pi^{-1}(0)$ for all $i\in \ZZ$. We claim that the vector $\theta$ is compatible with $T$; evidently it is regular, so this will finish the proof. To see compatibility, consider nodes $\T_i,\T_j$ of $T$ such that $\T_i \leq_T \T_j$. Then $i\preceq j$, so we have $\epsilon\pi^{-1}(i) < \epsilon\pi^{-1}(j)$, which implies that $\theta(i) < \theta(j)$. The claim follows. 
\end{proof}

\begin{lemma}\label{lem:cone1} 
Let $T$ be a TIBIT, and let $\cT$ be the corresponding torsion class for $A_\cQ$. A vector $\theta\in \widetilde V^*$ is in $\cone(\cT)$ if and only if it is compatible with $\pispine^{\downarrow}(T)$ or $\pispine^\uparrow (T)$. 
\end{lemma}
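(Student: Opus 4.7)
The plan is to establish, via a density argument, that $\cone(\cT)$ is the union of the compatibility cones of $\pispine^{\downarrow}(T)$ and $\pispine^{\uparrow}(T)$. First I would observe that if $\theta$ is regular, then $\theta$ is compatible with a TIBIT $T'$ if and only if $\prec_\theta$ is a TILE of $T'$, which by \cref{lem:uniqueTIBIT} occurs if and only if $T' = \sT(\prec_\theta)$. Thus, each regular $\theta$ lies in the interior of exactly one compatibility cone, and by \cref{lem:TIBITcone} these cones together cover $\widetilde{V}^*$.

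The key computation is to identify the bricks of $\cT_\theta$ for regular $\theta$. An indecomposable $A_\cQ$-module $M_{a,b}$ (with $a<b<a+n$) lies in $\cT_\theta$ if and only if $\langle\theta, M_{a,c}\rangle = \theta(a)-\theta(c)\geq 0$ for every quotient $M_{a,c}$ with $a<c\leq b$. Since $\theta$ is regular, this says that $a$ is the $\prec_\theta$-maximum of $\{a,a+1,\ldots,b\}$, which by \cref{lem:uniqueTIBIT} is equivalent to $\T_b$ being a descendant of $\T_a$ in $T_\theta := \sT(\prec_\theta)$.

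The heart of the proof is to show that for $a<b<a+n$, the relation ``$\T_b$ is a descendant of $\T_a$ in $T_\theta$'' is invariant under replacing $T_\theta$ by $\pispine^{\downarrow}(T_\theta)$. If $T_\theta$ is already real this is trivial; otherwise, $T_\theta$ and $\pispine^{\downarrow}(T_\theta)$ agree on all non-spine nodes (by the definition of $\pispine^{\downarrow}$), and any ancestor of a spine node is itself a spine node, so for non-spine $\T_a$ the subtree rooted at $\T_a$ is identical in both trees. When $\T_a$ is a spine node, $\T_b$ with $a<b<a+n$ lies in the left subtree of $\T_{a+n}$ (the right child of $\T_a$) in $T_\theta$ and in the right subtree of $\T_a$ in $\pispine^{\downarrow}(T_\theta)$ by \cref{lem:TIBITstructure}; in either case, $\T_b$ is a descendant of $\T_a$. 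The remaining case, where $\T_b$ is a spine node but $\T_a$ is not, yields a non-descendant in both trees. Hence, for real reflection indices, the brick sets of $\cT_\theta$ and of the torsion class of $\pispine^{\downarrow}(T_\theta)$ coincide.

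To conclude: the regular $\theta$ with $\cT_\theta = \cT$ are precisely those whose insertion tree $T_\theta$ lies in the fiber of $\pispine^{\downarrow}$ over the real TIBIT corresponding to $\cT$. By the analysis of $\equiv_{\mathrm{spine}}$ in the proof of \cref{thm:affine_quotient}, this fiber equals $\{\pispine^{\downarrow}(T), \pispine^{\uparrow}(T)\}$. Since regular vectors are dense both in $\cone(\cT)$ (an $n$-dimensional chamber of the stability fan) and in the union of the two compatibility cones, taking closures yields the claimed equality. The main obstacle is the case analysis in the third paragraph, verifying that the descendant relation on real reflection indices is insensitive to the spine flip even though the spine itself is reorganized substantially.
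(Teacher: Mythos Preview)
Your approach is correct in spirit and mostly in detail, but it differs from the paper's in one direction, and the final ``taking closures'' step glosses over a genuine subtlety.

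\textbf{Comparison with the paper.} The paper proves the two inclusions asymmetrically. For the direction ``$\theta$ compatible $\Rightarrow \theta\in\cone(\cT)$'' it reduces to regular $\theta$ by density (as you do) and then checks $\cT_\theta=\cT$ directly. But for the direction ``$\cT_\theta=\cT \Rightarrow \theta$ compatible'' the paper argues \emph{directly for arbitrary $\theta$} (not just regular): assuming $\theta$ is not compatible with the appropriate spine-flipped tree, it finds a child--parent pair $\T_b,\T_a$ with $\theta(b)>\theta(a)$ and $b-a<n$, whence $M_{a,b}\in\cT$ but $\langle\theta,M_{a,b}\rangle<0$, contradicting $\cT_\theta=\cT$. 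Your approach instead handles both directions through regular $\theta$ and then closes up. This is more symmetric and leans more heavily on the already-stated fact (preceding \cref{thm:affine_quotient}) that inversions of $T$ and $\pispine^\downarrow(T)$ agree on reflection indices with $a\not\equiv b\pmod n$; the paper's direct argument is more self-contained.

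\textbf{The gap in the closure step.} You have shown that for regular $\theta$, the condition $\cT_\theta=\cT$ is equivalent to $\theta\in K(\cT)$ (writing $K(\cT)$ for the union of the two compatibility cones). This immediately gives $K(\cT)=\overline{K(\cT)\cap\{\text{regular}\}}\subseteq\overline{\{\theta:\cT_\theta=\cT\}}=\cone(\cT)$. But the reverse inclusion does not follow merely from ``regular vectors are dense in $\cone(\cT)$'': you would need that every regular $\theta\in\cone(\cT)$ satisfies $\cT_\theta=\cT$, and nothing so far rules out regular $\theta$ lying in $\cone(\cT)\setminus\{\theta:\cT_\theta=\cT\}$. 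To close this you must invoke \cref{prop:g-vector-fan}: since the $\cone(\cT')$ form a complete fan, a regular $\theta$ in the interior of $\cone(\cT)$ lies in exactly that chamber; on the other hand $\theta\in K(\cT_\theta)\subseteq\cone(\cT_\theta)$ with $\theta$ in the interior of its compatibility cone (regularity makes all defining inequalities strict), forcing $\cT_\theta=\cT$. Once you add this sentence the argument is complete.
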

\begin{proof}
    Suppose first that $\cT_{\theta} = \cT$; we will show that $\theta$ is compatible with $\pispine^\uparrow (T)$ or $\pispine^\downarrow (T)$. Note that $\pispine^\uparrow (T)$ and $\pispine^\downarrow (T)$ are distinct if and only if the spine of $T$ consists of a single residue class modulo $n$. In this case, after replacing $T$ by $\pispine^\uparrow (T)$ or $\pispine^\downarrow (T)$, we may assume that $\theta(b) \leq \theta(a)$ whenever $v_b$ is a descendant of $v_a$ and both are in the spine. Under this assumption, we will show that $\theta$ is compatible with $T$. 
    
    Assume instead that $\theta$ is not compatible with $T$. Then there is some pair of nodes $v_{a}$, $v_{b}$ in $T$ such that $v_{b}$ is a child of $v_{a}$ and $\theta(b)>\theta(a)$. Assume $b>a$; the other case is similar. Then $b-a\leq n$. We cannot have $b-a=n$ since that would imply that $v_a$ and $v_b$ are both in the spine, contradicting our above assumption. Thus, $b-a<n$, and 
    \Cref{rem:arc_torsion_containment}
    implies that $M_{a,b} \in \cT$. But then $\langle \theta, M_{a,b}\rangle = \theta(a)-\theta(b) \geq 0$ by the definition of $\cT_{\theta}$, and this contradicts the fact that $\theta(b)>\theta(a)$. 

    It follows from the preceding argument that every vector in $\overline{\{\theta \in \RR^n \mid \cT_\theta=\cT\}}$ is compatible with $\pispine^\uparrow (T)$ or $\pispine^\downarrow (T)$. It remains to check that if $\theta$ is compatible with $T$, then $\theta\in \cone(\cT)$. Any vector compatible with $T$ is a limit of regular vectors that are compatible with $T$ by \Cref{lem:TIBITcone} and the fact that removing countably many codimension-1 hyperplanes from a codimension-0 polyhedral cone does not change its closure. Therefore, we may assume that $\theta$ is regular. 

    We will show that $\cT_\theta = \cT$. Certainly, we have $\cT\subseteq \cT_\theta$ since each quotient of a module in $\cT$ is in $\cT$ and is, hence, a direct sum of modules of the form $M_{a,b}$ with $(a,b)$ an inversion of $T$ (by \Cref{rem:arc_torsion_containment}). For such modules, $\langle \theta, M_{a,b}\rangle = \theta(a)-\theta(b) \geq 0$ since $v_b$ is a descendant of $v_a$. Conversely, let $M=M_{a,b}$ be a brick in $\cT_\theta$. Then, by the definition of $\cT_\theta$ and regularity of $\theta$, we have $\langle \theta, M_{a,b'}\rangle = \theta(a)-\theta(b') > 0$ for all $a< b' \leq b$.  We wish to show that $M_{a,b}\in \cT$, or equivalently that $(a,b)$ is an inversion of $T$. In other words, we wish to show that $v_b$ is a descendant of $v_a$. Suppose this is not the case, and let $v_c$ be the least common ancestor of $v_a$ and $v_b$. Then $a<c\leq b$, and $v_a$ is a descendant of $v_c$, so compatibility requires that $\theta(a)\leq \theta(c)$. But this contradicts the fact that $\langle \theta, M_{a,c}\rangle > 0$, so we conclude that $M_{a,b}\in \cT$. 
\end{proof}

We wish to describe the polyhedral geometry of $\cone(\cT)$. In order to do so, we will introduce a vector in $\widetilde{V}^* \cong \RR^n$, called a $\mathbf{g}$-vector, associated to each tagged arc. 

\begin{definition}
    The \dfn{$\mathbf{g}$-vector} of a tagged arc is defined as follows:
    \begin{align*}
        \mathbf{g}(\mathfrak{a}_{a,b}) &\coloneqq \begin{cases}
            \varpi_a^\vee - \varpi_b^\vee & \text{if $b-a<n-1$; } \\
            \varpi_a^\vee &\text{if $b-a=n-1$;}
        \end{cases} \\
        \mathbf{g}(\mathfrak{r}_b) &\coloneqq -\varpiv_b.
    \end{align*}
    We write $\mathbf{g}(\ATam)$ for the set of $\mathbf{g}$-vectors of all tagged arcs.
\end{definition}

\begin{proposition}\label{prop:gvecwalls}
    Let $T\in\ATam_{\mathsf{Tree}}$ and $\cT\in\ATam_\Tors$ be a  real TIBIT and its corresponding torsion class, with triangulation arcs $\mathfrak{b}_1, \ldots, \mathfrak{b}_n$. 
    \begin{itemize}
        \item The extremal rays of $\cone(\cT)$ are the rays
        \[  \RR_{\geq 0}\bg(\mathfrak{b}_1),\, \ldots,\, \RR_{\geq 0}\bg(\mathfrak{b}_n). \]

        \item Let $M_{a,b}^\pm \coloneqq \{ \theta\in \RR^n \mid \langle \pm\theta, M_{a,b} \rangle \geq 0 \}.$ Then the facet-defining inequalities of $\cone(\cT)$ are as follows: 
        \begin{itemize}
        \item If $(a,b)$ is a lower wall of $\po_T$, then $\cone(\cT)\subseteq M_{a,b}^+$. 
        \item If $(a,b)$ is an upper wall of $\po^{\pispine^\uparrow T}$, then $\cone(\cT)\subseteq M_{a,b}^-$. 
        \end{itemize}
    \end{itemize}
\end{proposition}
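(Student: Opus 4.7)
The plan is to derive both the ray description and the facet description from \Cref{lem:cone1}, which identifies $\cone(\cT)$ with $\overline{C(T) \cup C(\pispine^\uparrow T)}$, where $C(S) \coloneqq \{\theta \in \widetilde V^* : \theta \text{ is compatible with } S\}$. For any TIBIT $S$, the cone $C(S)$ is polyhedral and is cut out by one inequality $\theta(a) \leq \theta(b)$ per edge $\T_a \leq_S \T_b$ modulo translation by $n$. A right edge of $S$ (with $\T_b$ the right child of $\T_a$, $a<b$) yields $\theta \in M_{a,b}^+$, and a left edge yields $\theta \in M_{a,b}^-$. By the bijections recorded before \Cref{prop:regular_cyclic}, the right edges of $T$ are the lower walls of $\po_T$ and the left edges of $\pispine^\uparrow T$ are the upper walls of $\po^{\pispine^\uparrow T}$, matching the inequalities listed in the proposition.

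For the facet-defining inequalities, I would split into two cases depending on whether $\pispine^\uparrow T = T$. If the spine of $T$ occupies more than one residue class modulo $n$, then $T$ is both real and co-real, so $\pispine^\uparrow T = T$, and $\cone(\cT) = C(T)$ is cut out by all $n$ edge inequalities of $T$. If instead the spine is a single residue class $\{\T_{r+kn}\}_{k\in\ZZ}$, then $T$ and $\pispine^\uparrow T$ share the internal edges of the finite off-spine subtrees, but the unique spine edge is left in $T$ (giving $\theta(r)\leq\theta(r+n)$) and right in $\pispine^\uparrow T$ (giving $\theta(r+n)\leq\theta(r)$), while the unique connecting edge from an off-spine subtree is right in $T$ and left in $\pispine^\uparrow T$ at distinct reflection indices (because the off-spine subtree attaches one step up the spine after flipping). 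The two opposite spine inequalities cover complementary half-spaces, so trichotomy eliminates the spine constraint from $C(T)\cup C(\pispine^\uparrow T)$, leaving exactly the $n$ inequalities described by the proposition. Non-redundancy in both cases follows from \Cref{prop:regular_affine}, which forces $\cone(\cT)$ to have $n$ rays and hence $n$ facets in the simplicial $\bg$-vector fan.

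For the extremal rays, I would invoke the standard consequence of $\tau$-tilting theory for $\tau$-tilting-finite algebras: $\cone(\cT)$ is the nonnegative span of the $\bg$-vectors of the indecomposable summands of the s$\tau$-tilting module of $\cT$, together with the negatives $-\bg(P_b)$ of the indecomposable projectives $P_b=M_{b,b+n-1}$ whose simple top $M_{b,b+1}$ is not a subquotient of any module in $\cT$. The minimal projective resolutions $0\to P_b\to P_a\to M_{a,b}\to 0$ in $A_\cQ$ (valid for $b-a<n-1$) give $\bg(M_{a,b})=\varpi_a-\varpi_b$, and $\bg(M_{a,a+n-1})=\bg(P_a)=\varpi_a$ since $M_{a,a+n-1}=P_a$ is already projective. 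Comparing with \Cref{def:triangulation} identifies the $\bg$-vectors of s$\tau$-tilting summands with those of the unnotched triangulation arcs $\mathfrak{a}_{a,b}$, and the negated projectives with the notched radial arcs $\mathfrak{r}_b$.

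The main obstacle is the single-residue-spine case of the facet description: one must verify that the connecting-edge inequalities of $T$ and of $\pispine^\uparrow T$ are genuinely distinct reflection indices and that the opposite spine inequalities combine via trichotomy to remove the spine constraint without creating or eliminating any other facet. Once this bookkeeping is handled, matching to the triangulation arcs and identifying the extremal rays via the standard $\tau$-tilting dictionary are routine.
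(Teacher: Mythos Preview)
Your proposal is correct, and for the extremal rays it is essentially the paper's own argument: both compute the $\bg$-vectors of the $\tau$-tilting pair via the minimal projective resolutions $0\to P_b\to P_a\to M_{a,b}\to 0$ and match them to the tagged arcs of \Cref{def:triangulation}.

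For the facet description, however, your route is genuinely different from the paper's. The paper does not work through \Cref{lem:cone1} at all here; instead it cites the brick-labeling machinery of \cite{Demonet,BrustleFan} together with \cite[Theorems~5.1 and~8.6]{FTFSDL} to identify the facet-defining hyperplanes of $\cone(\cT)$ with $(\underline{\dim}\,M)^\perp$ as $M$ ranges over the brick labels of the cover relations at $\cT$, and then observes that these brick labels coincide with the join-irreducible labels, hence with the lower walls of $\po_T$ and the upper walls of $\po^{\pispine^\uparrow T}$. The direction of each inequality is then checked using a single regular compatible $\theta$. Your argument, by contrast, reads the inequalities directly off the edges of $T$ and $\pispine^\uparrow T$ via \Cref{lem:cone1}, and handles the single-residue-spine case by the trichotomy computation you outline (which does go through: writing $d=\theta(r+n)-\theta(r)$, one checks $C(T)\cup C(\pispine^\uparrow T)=S\cap M_{k,k'}^+\cap M_{k'-n,k}^-$ exactly as you describe). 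Your approach is more self-contained---it avoids the external brick-label citations---at the cost of the explicit case split and bookkeeping; the paper's approach is shorter once those citations are granted. Your non-redundancy argument via \Cref{prop:regular_affine} (or equivalently via the simpliciality forced by the $n$ extremal rays you establish independently) is sound and not circular.
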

\begin{proof}
    First we check the facet-defining inequalities. The \dfn{brick label} of a cover relation $\cT_1\lessdot \cT_2$ in $\ATam_{\Tors}$ is the unique brick $M\in\cT_2$ such that $\Hom(B,M)=0$ for all $B\in \cT_1$. By \cite[Proposition~4.9]{Demonet}, the brick label of a cover relation coincides with the module described in \cite[Proposition~3.17]{BrustleFan}. Consequently, \cite[Corollary~3.18]{BrustleFan} implies that the facet-defining hyperplanes of $\cone(\cT)$ are $(\underline{\dim} M)^\perp$, as $M$ ranges over the brick labels of the upper and lower covers of $\cT$ in $\ATam_{\Tors}$. It follows from \cite[Theorem 5.1, Theorem 8.6]{FTFSDL} that the brick labeling of a cover relation in $\ATam_\Tors$ coincides with the labeling of cover relations by join-irreducibles described in \cref{subsec:lattices}. 
    Hence, the brick $M_{a,b}$ labels a lower cover of $\cT$ if and only if $\gamma_{a,b}$ is in the unique non-crossing arc diagram $D$ such that $\psi(\overline{D}) = \cT$, which occurs if and only if $(a,b)$ is a lower wall of $\po_T$. Similarly, $M_{a,b}$ labels an upper cover of $\cT$ if and only if $(a,b)$ is an upper wall of $\po^{\pispine^\uparrow (T)}$. We conclude that the list of facet-defining hyperplanes is the one claimed; it remains to check the direction of the inequalities. Consider a regular $\theta\in \RR^n$ compatible with $T$. If $(a,b)$ is a lower wall of $T$, then $\theta(b)< \theta(a)$ by compatibility, so $\langle \theta, M_{a,b} \rangle > 0$ and, therefore, $\theta \in M_{a,b}^+$. Consequently, $\cone(\cT) \subseteq M_{a,b}^+$ whenever $(a,b)$ is a lower wall of $T$. Similarly, $\cone(\cT)\subseteq M_{a,b}^-$ whenever $(a,b)$ is an upper wall of $\pispine^\uparrow (T)$.

    We now check the extremal vectors of $\cone(\cT)$. Let $M$ be the $s\tau$-tilting module associated to $\cT$. By \cite[Theorem 2.2]{BrustleFan}, there is a unique projective module $P$ such that $(M,P)$ is a \emph{$\tau$-tilting pair} $(M,P)$. Furthermore, since $\cone(\cT)$ coincides with the cone $\mathcal{C}_{(M,P)}$ defined in \cite[Section 3]{BrustleFan}, the extremal rays of   $\cone(\cT)$ coincide with the $\mathbf{g}$-vectors of the $\tau$-tilting pair $(M,P)$. Therefore, we need to verify that the $\mathbf{g}$-vectors of $(M,P)$ as defined in \cite[Definition 3.5]{BrustleFan} coincide with $\mathbf{g}(\mathfrak{b}_1),\ldots,\mathbf{g}(\mathfrak{b}_n)$. Indeed, the notched arc $\mathfrak{r}_a$ is in the triangulation if and only if the simple module $M_{a,a+1}$ is not a subquotient of $M$. By definition of a $\tau$-tilting pair, this occurs if and only if the projective module $P_a=M_{a,a+n-1}$ is a summand of $P$, which occurs if and only if $-\varpiv_a$ is a $\mathbf{g}$-vector of $(M,P)$. Now, the unnotched arc $\mathfrak{a}_{a,b}$ is in the triangulation if and only if $M_{a,b}$ is a summand of $M$. Because
    \[ \cdots \to P_{b} \to P_{a} \to M_{a,b} \to 0 \]
    is a minimal projective resolution for $M_{a,b}$, this occurs if and only if $\varpiv_a-\varpiv_b$ is a $\mathbf{g}$-vector of $(M,P)$, as desired.
\end{proof}

Because the stability fan is a fan whose chambers have (by \Cref{prop:gvecwalls}) extremal rays in $\mathbf{g}(\ATam)$,   we get the following corollary. 
\begin{corollary}\label{cor:compatible}
Let $T\in\ATam_{\mathsf{Tree}}$ and $\cT\in \ATam_{\Tors}$ be a real TIBIT and its corresponding torsion class with triangulation arcs $\mathfrak{b}_1,\ldots,\mathfrak{b}_n$. Then 
\[ \mathbf{g}(\ATam)\cap \cone(\cT) = \{\mathbf{g}(\mathfrak{b}_1), \ldots, \mathbf{g}(\mathfrak{b}_n) \}. \]
In particular, a tagged arc $\mathfrak{b}$ is in the triangulation of $\cT$ if and only if $-\mathbf{g}(\mathfrak{b})$ is compatible with $T$.
\end{corollary}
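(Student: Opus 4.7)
The plan is to exploit that, by \cref{prop:g-vector-fan} together with \cref{prop:gvecwalls}, the collection $\{\cone(\cT')\mid \cT'\in\ATam_{\Tors}\}$ forms a complete fan in $\widetilde V^*$ whose maximal cones are simplicial of dimension $n$ and whose rays are spanned by the $\bg$-vectors of tagged arcs. The inclusion $\{\bg(\mathfrak{b}_1),\dots,\bg(\mathfrak{b}_n)\}\subseteq\bg(\ATam)\cap\cone(\cT)$ is immediate from \cref{prop:gvecwalls}, so only the reverse inclusion requires work.

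For the reverse inclusion, suppose $\bg(\mathfrak{b})\in\cone(\cT)$ for some tagged arc $\mathfrak{b}$. Because the fan is complete, the ray $\RR_{\geq 0}\bg(\mathfrak{b})$ lies in some maximal cone $\cone(\cT')$, and by \cref{prop:gvecwalls} it must be an extremal ray of $\cone(\cT')$ arising from a triangulation arc of $\cT'$. The fan axiom then says that $\cone(\cT)\cap\cone(\cT')$ is a common face of both cones, and this intersection contains the ray $\RR_{\geq 0}\bg(\mathfrak{b})$; since that ray is an extremal ray of the simplicial cone $\cone(\cT')$, it is also an extremal ray of the common face, and hence of $\cone(\cT)$. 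Consequently, $\RR_{\geq 0}\bg(\mathfrak{b})=\RR_{\geq 0}\bg(\mathfrak{b}_i)$ for some $i$. The map $\mathfrak{b}\mapsto\RR_{\geq 0}\bg(\mathfrak{b})$ is visibly injective on tagged arcs---non-radial arcs $\mathfrak{a}_{a,b}$ produce differences of two distinct basis vectors, plain radial arcs produce positive basis rays, and notched arcs produce negative basis rays, with no coincidences within or across these families---so $\mathfrak{b}=\mathfrak{b}_i$, as desired.

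For the ``in particular'' statement, combining the first part with \cref{lem:cone1} shows that $\mathfrak{b}$ is in the triangulation of $\cT$ if and only if $\bg(\mathfrak{b})$ is compatible with $\pispine^\downarrow(T)=T$ or with $\pispine^\uparrow(T)$. Converting this to the stated criterion requires bookkeeping the sign/orientation conventions: compatibility orders descendants below ancestors via $\theta$, while the $\bg$-vector $\varpiv_a^\vee-\varpiv_b^\vee$ of $\mathfrak{a}_{a,b}$ is set up to pair positively against the dimension vector of $M_{a,b}$, in which $v_b$ is a descendant of $v_a$ in $T$. The main obstacle I anticipate is confirming that, after this sign flip, compatibility of $-\bg(\mathfrak{b})$ with the real TIBIT $T$ suffices to detect membership in both the $\pispine^\downarrow(T)$ and $\pispine^\uparrow(T)$ pieces of $\cone(\cT)$ simultaneously, so that the two cases of \cref{lem:cone1} collapse into the single criterion stated in the corollary.
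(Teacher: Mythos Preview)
Your argument for the main equality follows the paper's (terse) approach via the fan structure, but there is a genuine gap in one step. You write that $\RR_{\geq 0}\bg(\mathfrak{b})$ ``lies in some maximal cone $\cone(\cT')$, and by \cref{prop:gvecwalls} it must be an extremal ray of $\cone(\cT')$.'' \Cref{prop:gvecwalls} only tells you that the extremal rays of $\cone(\cT')$ are $\bg$-vectors of triangulation arcs of $\cT'$; it does \emph{not} say that an arbitrary $\bg$-vector lying in $\cone(\cT')$ is extremal there. The fix is to choose $\cT'$ more carefully: by \cref{lem:triang_is_triang} the map to type $D_n$ triangulations is a bijection, and every tagged arc lies in some triangulation, so there exists $\cT'$ whose triangulation contains $\mathfrak{b}$. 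For \emph{that} $\cT'$, \cref{prop:gvecwalls} gives that $\RR_{\geq 0}\bg(\mathfrak{b})$ is an extremal ray of $\cone(\cT')$, hence a $1$-cone of the fan, and then your fan-intersection argument with $\cone(\cT)$ goes through verbatim.

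For the ``in particular'' clause you correctly isolate the two obstacles but do not resolve them. The sign is almost certainly a typo in the paper: the statement should read ``$\bg(\mathfrak{b})$ is compatible with $T$,'' as is clear from how the corollary is invoked in the proof of \cref{thm:gveccompatible}. With that correction, the ``if'' direction is immediate from \cref{lem:cone1} (since $T=\pispine^\downarrow(T)$) together with the first part. For ``only if,'' your worry about the $\pispine^\uparrow(T)$ branch is legitimate, but it dissolves once you note that for a \emph{real} $T$ the $\bg$-vectors arising from spine nodes are of the form $-\varpiv_b$ (never $\varpiv_a$), and a direct check shows $\langle -\varpiv_b,\tbe_{k+n}-\tbe_k\rangle>0$, so these vectors satisfy the spine inequality $\theta(k)\leq\theta(k+n)$ required by $T$. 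Thus every triangulation $\bg$-vector is compatible with $T$ itself, not merely with $\pispine^\uparrow(T)$, and the two cases of \cref{lem:cone1} do collapse as needed.
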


\begin{corollary}\label{cor:flipgraph}
    If $\cT_1,\cT_2\in\ATam_\Tors$, then the triangulations of $\cT_1$ and $\cT_2$ are related by a flip move if and only if $\cT_1$ and $\cT_2$ are related by a cover relation.
\end{corollary}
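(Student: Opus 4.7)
The plan is to combine Proposition~\ref{prop:gvecwalls} (which describes the extremal rays and facets of $\cone(\cT)$) with Corollary~\ref{cor:compatible} to transfer the statement about triangulations and cover relations into a statement about chambers of the stability fan. The key observation is that, by Proposition~\ref{prop:gvecwalls}, each cone $\cone(\cT)$ is an $n$-dimensional simplicial cone whose extremal rays are the $\mathbf{g}$-vectors of the triangulation arcs of $\cT$, and whose facet-defining hyperplanes are precisely the brick labels of the cover relations incident to $\cT$ in $\ATam_\Tors$. Moreover, by Proposition~\ref{prop:g-vector-fan}, the cones $\cone(\cT)$ form the chambers of a complete fan.

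For the forward direction, suppose $\cT_1 \lessdot \cT_2$ is a cover relation. The brick label of this cover is some $M_{a,b}$, and by Proposition~\ref{prop:gvecwalls} the hyperplane $M_{a,b}^\perp$ is facet-defining for both $\cone(\cT_1)$ and $\cone(\cT_2)$, with $\cone(\cT_1) \subseteq M_{a,b}^-$ and $\cone(\cT_2) \subseteq M_{a,b}^+$. Since the stability fan is a complete fan and the two chambers lie on opposite sides of the same facet-hyperplane, they must share the corresponding facet. Because the chambers are simplicial with $n$ extremal rays, sharing a facet means sharing exactly $n-1$ extremal rays. By Corollary~\ref{cor:compatible}, these $n-1$ shared rays correspond to $n-1$ shared triangulation arcs, so the triangulations of $\cT_1$ and $\cT_2$ differ by exactly one arc, i.e., are related by a flip.

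For the converse, suppose the triangulations of $\cT_1$ and $\cT_2$ are related by a flip, so they share exactly $n-1$ arcs. By Corollary~\ref{cor:compatible}, the cones $\cone(\cT_1)$ and $\cone(\cT_2)$ share exactly $n-1$ extremal rays. Since both cones are simplicial of dimension $n$, the convex hull of these $n-1$ rays is a common facet of both cones. Since this facet is a facet of $\cone(\cT_1)$, Proposition~\ref{prop:gvecwalls} identifies it with the brick label of either an upper or lower cover of $\cT_1$ in $\ATam_\Tors$; the unique other chamber of the fan sharing this facet must therefore be the cover of $\cT_1$ (or the element covered by $\cT_1$) associated to that brick label, and this chamber is $\cone(\cT_2)$. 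Hence $\cT_1$ and $\cT_2$ are related by a cover relation.

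The main subtlety, and the only nontrivial step, is ensuring that two chambers of a complete simplicial fan sharing $n-1$ extremal rays actually share the corresponding facet (not merely a lower-dimensional face) and lie on opposite sides of it, so that cover relations and flips are in genuine bijection rather than one being coarser than the other. This follows once one notes that the $n-1$ common rays span an $(n-1)$-dimensional face of each cone (by simpliciality) and that a facet in a complete fan is shared by exactly two chambers; no further work is needed beyond the results already cited.
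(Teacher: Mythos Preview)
Your approach is close in spirit to the paper's, which reduces the statement to the equivalence ``$\cT_1$ and $\cT_2$ are cover-related if and only if $\cone(\cT_1)\cap\cone(\cT_2)$ has codimension $1$'' and then invokes \cite[Proposition~4.5]{BrustleFan} for that equivalence. Your argument tries to avoid this citation by working directly with Proposition~\ref{prop:gvecwalls} and Corollary~\ref{cor:compatible}, but it has a genuine gap at exactly this point.

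In the forward direction, you assert that because $\cone(\cT_1)$ and $\cone(\cT_2)$ both have $M_{a,b}^\perp$ as a facet-defining hyperplane and lie on opposite sides of it, ``they must share the corresponding facet.'' This is not a general property of complete fans: a single hyperplane can support many different facets, and two chambers on opposite sides need not be adjacent. (Indeed, distinct cover relations in $\ATam_\Tors$ can share the same brick label $M_{a,b}$, so the hyperplane $M_{a,b}^\perp$ typically contains several facets of the fan.) In the converse direction, you correctly deduce from simpliciality that two flip-related cones share a common facet, but you then assert that the chamber across this facet from $\cone(\cT_1)$ is the cone of the torsion class on the other end of the associated cover relation. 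Proposition~\ref{prop:gvecwalls} only tells you which \emph{hyperplane} the facet lies in and which cover relation of $\cT_1$ that hyperplane records; it does not by itself tell you that the neighboring chamber is $\cone(\cT_3)$ for that cover $\cT_3$. Both gaps are the same missing lemma: that facet-adjacency in the stability fan coincides with cover-relatedness in $\ATam_\Tors$. That is precisely the content of \cite[Proposition~4.5]{BrustleFan}, and the paper's proof simply cites it. To make your argument self-contained you would need either that citation or a direct wall-crossing analysis of $\cT_\theta$ as $\theta$ passes through a generic point of a facet.
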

\begin{proof}
Since two type $D_n$ triangulations are related by a flip move if and only if their intersection has size $n-1$, it is enough to show that two torsion classes $\cT_1,\cT_2$ are related by a cover relation if and only if $\mathbf{g}(\ATam) \cap \cone(\cT_1) \cap \cone(\cT_2)$ has size $n-1$, or equivalently, if and only if $\cone(\cT_1)\cap \cone(\cT_2)$ has codimension 1.  This follows from \cite[Proposition 4.5]{BrustleFan}.
\end{proof}

\begin{remark}\label{inversetranspose}
    \Cref{prop:gvecwalls} shows that there is a duality between $\bg$-vectors and the lower and upper walls of a TIBIT $T$. There is a strengthening of this statement to a duality of integral bases. Define the \dfn{$\mathbf{c}$-matrix} $C$ of $T$ to be the matrix whose column vectors, written in the basis $\widetilde{\mathbf{e}}_1-\widetilde{\mathbf{e}}_2,\ldots,\widetilde{\mathbf{e}}_{n}-\widetilde{\mathbf{e}}_{n+1}$, are the elements of 
    \begin{align*}  \{\widetilde{\mathbf{e}}_a-\widetilde{\mathbf{e}}_b \mid \text{$(a,b)$ is a lower wall of $\po_T$}\}
        \cup \{ \widetilde{\mathbf{e}}_b-\widetilde{\mathbf{e}}_a \mid \text{$(a,b)$ is a upper wall of $\po^T$}\}.\end{align*} 
        Then the inverse transpose matrix $(C^{-1})^\top$ has columns given by the $\bg$-vectors of the triangulation associated to $T$ (in the basis $\varpi_1^\vee,\ldots,\varpi_n^\vee$)  \cite[Theorem 1.2]{Nakanishi}. This gives a way of computing the triangulation given the TIBIT.
\end{remark}

Given a real TIBIT $T\in \ATam_{\mathsf{Tree}}$, we write $\mathbf{g}(T)$ for the set of $\mathbf{g}$-vectors of the triangulation arcs associated to $T$.

\begin{theorem}\label{thm:gveccompatible}
    Let $T\in \ATam_{\mathsf{Tree}}$ be a real TIBIT. For $z\in \ZZ$, let $I_z=\{x\in \ZZ \mid \T_x\leq_T \T_z\}$, and define
    \[\varpiv_{T,z} \coloneqq \begin{cases}
        \varpiv_a - \varpiv_b &\text{if } I_z=\{x\in \ZZ \mid a < x \leq b\}, \\
        \varpiv_a &\text{if } I_z=\{x\in \ZZ \mid a < x\}, \\
        -\varpiv_b &\text{if } I_z=\{x \in \ZZ \mid x \leq b\}. 
    \end{cases} \]
Then 
    \[ \mathbf{g}(T) = \{ \varpiv_{T,z} \mid 1\leq z \leq n\}. \]
\end{theorem}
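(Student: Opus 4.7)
The plan is to apply \Cref{cor:compatible}, which identifies the $\bg$-vectors of the arcs in the triangulation of $\cT$ as exactly those $\bg(\mathfrak{b}) \in \bg(\ATam)$ such that $-\bg(\mathfrak{b})$ is compatible with $T$. The goal reduces to showing (i) each $\varpiv_{T,z}$ lies in $\bg(\ATam)$, (ii) $-\varpiv_{T,z}$ is compatible with $T$, and (iii) the $n$ vectors $\varpiv_{T,z}$ for $z \in [n]$ are pairwise distinct; matching against the $n$-arc triangulation then forces equality.

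First I would verify that the three cases in the definition of $\varpiv_{T,z}$ are exhaustive. Because $T$ is a binary in-ordered tree, \Cref{lem:TIBITstructure} (applied upward along the ancestor chain of $\T_z$) implies that $I_z$ is a convex subset of $\ZZ$, of the form $\{x \in \ZZ \mid m < x < M\}$ with $m \in \ZZ \cup \{-\infty\}$ and $M \in \ZZ \cup \{+\infty\}$, and $m < z < M$. If $\T_z$ is not in the spine, both $m, M$ are finite and we are in case (a). If $\T_z$ is in the spine, translation-invariance rules out $I_z = \ZZ$, while the hypothesis that $T$ is real rules out the purely descending spine configuration that would interchange the sign in the formula. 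This leaves exactly cases (b) and (c).

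Second, in each case I would identify the tagged arc $\mathfrak{b}_z$ with $\bg(\mathfrak{b}_z) = \varpiv_{T,z}$, using \Cref{subsec:g-vectors}. In case (a), $\mathfrak{b}_z$ is the unnotched arc $\mathfrak{a}_{a,b}$ (with $0 < b - a \leq n-1$; when $b-a = n-1$ this is the plain radial case, and the formula must be reconciled using the interpretation of $\varpiv_a - \varpiv_b$ as the $\bg$-vector of the plain radial arc, i.e.\ $\varpiv_a$, via the minimal projective resolution). In case (b), $\mathfrak{b}_z = \mathfrak{a}_{a, a+n-1}$ is plain radial with $\bg = \varpiv_a$. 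In case (c), $\mathfrak{b}_z = \mathfrak{r}_b$ is notched radial with $\bg = -\varpiv_b$.

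Third, I would directly check that $-\varpiv_{T,z}$ is compatible with $T$. For each covering pair $\T_i \leq_T \T_j$ (equivalently $i \in I_j$), the inequality $-\varpiv_{T,z}(i) \leq -\varpiv_{T,z}(j)$ follows by evaluating the piecewise-constant step functions $\varpiv_a(k)$ and $\varpiv_b(k)$ and observing that $\varpiv_{T,z}$ is built so that its pairing changes exactly when $k$ crosses the boundary of $I_z$ (a boundary determined by the descendants of $\T_z$). Once (i)--(ii) hold, \Cref{cor:compatible} gives $\varpiv_{T,z} \in \bg(T)$.

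Finally, distinctness of $\varpiv_{T,1},\ldots,\varpiv_{T,n}$ follows because $I_{z+n} = I_z + n$ by translation-invariance of $T$, and because the residues $a \bmod n$ and $b \bmod n$ appearing in the formula cycle through $[n]$ as $z$ varies. Since $|\bg(T)| = n$ (the triangulation has $n$ arcs by \Cref{lem:triang_is_triang}), the desired equality follows. The main obstacle I expect is the compatibility computation in case (a) when $b - a$ is close to $n$, where the step functions $\varpiv_a(k) - \varpiv_b(k)$ must be carefully bookkept against the tree order; a secondary subtlety is correctly matching the case $b - a = n-1$ of (a) against the plain-radial $\bg$-vector convention.
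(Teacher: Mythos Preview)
Your overall plan matches the paper's: invoke \Cref{cor:compatible}, show each $\varpiv_{T,z}$ is a $\mathbf g$-vector, verify compatibility with $T$, prove the $n$ vectors are distinct, and count. However, the compatibility step has the wrong sign. You propose to check that $-\varpiv_{T,z}$ is compatible with $T$, following the ``in particular'' clause of \Cref{cor:compatible}, but this fails outright. In the finite case $I_z=\{x\mid a<x\le b\}$ one computes $(-\varpiv_{T,z})(i)=\mathbf 1[i\in I_z]$; taking $i=z\in I_z$ and $j=\pr(z)$ (so $v_i\leq_T v_j$ while $j\notin I_z$) gives $(-\varpiv_{T,z})(i)=1>0=(-\varpiv_{T,z})(j)$, violating compatibility. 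What the paper actually checks is that $+\varpiv_{T,z}$ is compatible with $T$: the required inequality then reads ``$v_j\leq_T v_z\Rightarrow v_i\leq_T v_z$'' whenever $v_i\leq_T v_j$, which is immediate from transitivity. The paper uses the displayed equation of \Cref{cor:compatible} (not the ``in particular'' sentence) together with \Cref{lem:cone1}, which say $\mathbf g(\mathfrak b)\in\mathbf g(T)$ iff $\mathbf g(\mathfrak b)$ itself lies in $\cone(\cT)$, i.e., is compatible with $T$ (or with $\pispine^\uparrow(T)$).

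Your distinctness argument is also too loose: the assertion that ``the residues $a\bmod n$ and $b\bmod n$ cycle through $[n]$ as $z$ varies'' is not correct (different $z$ can yield the same $a$, for instance). The paper instead shows that $z$ can be recovered from $\varpiv_{T,z}$: from $\varpiv_a-\varpiv_b$ one reads off $(a,b)$ modulo $n$, and then $z$ is the index of the $\leq_T$-maximal node in $\{v_x\mid a<x\le b\}$. Finally, the ``secondary subtlety'' you flag at $b-a=n-1$ cannot be resolved by reinterpreting $\varpiv_a-\varpiv_b$ as $\varpiv_a$ via projective resolutions; these are genuinely different vectors in $\widetilde V^*$. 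The paper addresses this by bounding $b-a$ (arguing that a finite $I_z$ of size at least $n$ would force $v_z$ into the spine), not by any reconciliation of conventions.
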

\begin{proof}
    First, we note that the three cases for the form of $I_z$ are exhaustive, since $I_z$ is order-convex and is evidently not all of $\ZZ$. 
    We claim that if $I_z = \{x\in \ZZ \mid a<x\leq b \}$, then $b-a< n$. Suppose instead that $b-a\geq n$. Then there exists $x\in\ZZ$ such that $v_x$ and $v_{x+n}$ are both descendants of $v_z$. Say $\pr^k(v_x) = v_z$ and $\pr^{k'}(v_{x+n}) = v_z$; without loss of generality, $k\leq k'$. But then $\pr^k(v_{x+n})=v_{z+n}$ by translation-invariance, so $\pr^{k'-k}(v_{z+n})=v_z$, implying that $v_z$ is in the spine. This contradicts the hypothesis that $v_z$ has finitely many descendants. This proves our claim. We deduce that $\varpiv_{T,z}$ is always a $\mathbf{g}$-vector.

    By \Cref{cor:compatible}, $\mathbf{g}(T)$ is exactly the set of $\mathbf{g}$-vectors that are compatible with $T$. Because there are exactly $n$ elements of $\mathbf{g}(T)$, it is enough to show that $\{ \varpiv_{T,z} \mid 1\leq z \leq n\}$ consists of $n$ distinct elements, each of which is compatible with $T$. 
    
    First, to show distinctness, observe that we can recover the pair $(a,b)$ up to simultaneous translation by $n$ from the vector $\varpiv_a-\varpiv_b$. Then if $\varpiv_{T,z} = \varpiv_a-\varpiv_b$, we can recover the residue class of $z$ modulo $n$ as the residue class of the (index of the) $\leq_T$-maximal element of $\{\T_x \mid a < x \leq b\}$. Since we know $1\leq z \leq n$, we can thus recover $z$. The other cases are similar.

    Now we show that the vector $\theta\coloneq\varpiv_{T,z}$ is compatible with $T$. First consider the case where
    $\theta = \varpiv_a - \varpiv_b$. 
    We have \[\theta(i) = \begin{cases}
        -1 & \text{if $a<i\leq b$;} \\ 
        0 & \text{otherwise}.
    \end{cases}\] Then $\theta$ is compatible with $T$ if and only if for all $i,j\in\ZZ$ with $\T_i \leq_T \T_j$, we have $\theta(i)\leq \theta(j)$. This fails if and only if $\T_i\leq_T \T_j$ and $a<j \leq b$ hold but $a<i\leq b$ does not hold. Because $I_z=\{x\in\ZZ\mid a<x\leq b\}$, this is equivalent to the statement that $\T_i\leq_T \T_j$ and $\T_j\leq_T \T_z$ hold but $\T_i\leq_T \T_z$ does not hold, which is evidently impossible. Hence, $\theta$ is compatible with $T$. 
    
    We also must consider the case where $\theta = -\varpi_b$ so that \[\theta(i) = \begin{cases}
        1 & \text{if $i>b$;} \\ 
        0 & \text{if $i\leq b$}
    \end{cases}\] and the case where $\theta=\varpi_a$ so that \[\theta(i)=\begin{cases}
        -1 & \text{if $i>a$;} \\
        0 & \text{if $i\leq a$.}
    \end{cases}\] Both of these cases are similar to the previous one.
\end{proof} 

\begin{example}
    We now apply \Cref{inversetranspose} and \Cref{thm:gveccompatible} to the running example from \Cref{fig:triangulation_1} with $n=7$. The TIBIT $T$ is shown in \cref{fig:one_tree_shaded}. Window notations of $\po_T$ and $\po^{\pispine^\uparrow(T)}$ are $[1,4,3,6,5][\underline{7,2}]$ and $[\underline{6,4,3,5,7,1,2}]$, respectively. The lower walls of the TITO $\po_T$ are $(0,2),(2,7),(3,4),(5,6)$, and the upper walls of $\po^{\pispine^\uparrow (T)}$ are $(1,2),(3,5),(5,7)$. Hence, the $\mathbf{c}$-matrix of $T$ has column vectors 
    \[ \tbe_0-\tbe_2,\, \tbe_2-\tbe_7,\, \tbe_3-\tbe_4,\, \tbe_5-\tbe_6,\, \tbe_2-\tbe_1,\, \tbe_5-\tbe_3,\, \tbe_7-\tbe_5, \]
    expressed in the basis $\tbe_1-\tbe_2,\ldots,\tbe_7-\tbe_8$. This matrix and its inverse transpose are 
    \[ C 
    = \begin{bmatrix}
        1 & 0 & 0 & 0 & -1 & 0 & 0 \\
        0 & 1 & 0 & 0 & 0 & 0 & 0 \\
        0 & 1 & 1 & 0 & 0 & -1 & 0 \\
        0 & 1 & 0 & 0 & 0 & -1 & 0 \\
        0 & 1 & 0 & 1 & 0 & 0 & -1 \\
        0 & 1 & 0 & 0 & 0 & 0 & -1 \\
        1 & 0 & 0 & 0 & 0 & 0 & 0
    \end{bmatrix}, \qquad (C^{-1})^\top 
     = \begin{bmatrix}
        0 & 0 & 0 & 0 & -1 & 0 & 0 \\
        0 & 1 & 0 & 0 & 0 & 1 & 1 \\
        0 & 0 & 1 & 0 & 0 & 0 & 0 \\
        0 & 0 & -1 & 0 & 0 & -1 & 0 \\
        0 & 0 & 0 & 1 & 0 & 0 & 0 \\
        0 & 0 & 0 & -1 & 0 & 0 & -1 \\
        1 & 0 & 0 & 0 & 1 & 0 & 0
    \end{bmatrix}. \]
    Now we see that the columns of $(C^{-1})^\top$ are the $\mathbf{g}$-vectors \[\varpiv_7,\, \varpiv_2,\, \varpiv_3-\varpiv_4,\, \varpiv_5-\varpiv_6,\, \varpiv_7-\varpiv_8,\, \varpiv_2-\varpiv_4,\, \varpiv_2-\varpiv_6,\]
    so we conclude that the triangulation associated to $T$ has arcs
    \[ \mathfrak{a}_{7,13},\, \mathfrak{a}_{2,8},\, \mathfrak{a}_{3,4},\, \mathfrak{a}_{5,6},\, \mathfrak{a}_{7,8},\, \mathfrak{a}_{2,4},\, \mathfrak{a}_{2,6}. \]
    We can also check that each of these $\mathbf{g}$-vectors is compatible with $T$. In \cref{fig:one_tree_shaded}, we have shaded the nodes indexed by the sets $I_1$ ({\color{MyGreen}green}), $I_{5}$ ({\color{DarkPink}dark pink}), and $I_7$ ({\color{LightPink}light pink}). We have \[I_1=\{x\in\ZZ\mid 0<x\leq 1\},\quad I_{5}=\{x\in\ZZ\mid 2<x\leq 6\},\quad I_{7}=\{x\in\ZZ\mid 2<x\},\] which (by \cref{thm:gveccompatible}) provides another way to see that the vectors $\varpi_0-\varpi_1=\varpi_7-\varpi_8$, $\varpiv_2-\varpiv_6$, and $\varpiv_2$ are in $\mathbf{g}(T)$. 
\end{example} 

\begin{figure}[ht]
\begin{center}{\includegraphics[height=8.443cm]{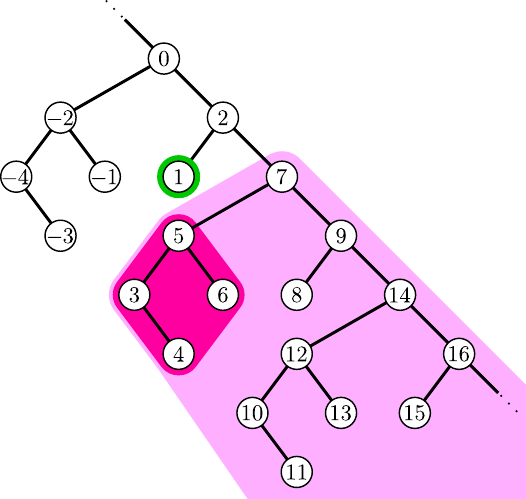}}
\end{center}
\caption{A TIBIT for $n=7$ with three shaded principal order ideals.} 
\label{fig:one_tree_shaded} 
\end{figure} 

\section{Maximal Green Sequences}\label{sec:green}
Let $\Lambda$ be an algebra, and let $\Tors(\Lambda)$ be the poset of torsion classes of representations of $\Lambda$. A \dfn{maximal green sequence} is a finite maximal chain in $\Tors(\Lambda)$. When $\Lambda$ is a cluster-tilted algebra (like $A_\cQ$), maximal green sequences can be identified with a sequence of mutations on the associated quiver. The origin of the name comes from work of Keller, who gave a way of coloring the vertices of the quiver either red or green; a maximal green sequence consists of a sequence of green mutations such that the final quiver has only red vertices. 

Specializing to our context, a maximal green sequence for $C_\cQ$ is a maximal chain in $\CTam_\Tors$, and a maximal green sequence for $A_\cQ$ is a maximal chain in $\ATam_\Tors$. Recall that the \dfn{length} of a chain is $1$ less than the number of elements in the chain. It is known that the maximum length of a maximal green sequence for $A_\cQ$ is $\binom{n+1}{2}-1$ \cite{Apruzzese}. Here we give a new proof of this fact using the combinatorics of $\ATam$. It was also shown by \cite{Garver} that the minimum length of a maximal green sequence for $A_\cQ$ is $2n-2$.\footnote{By \cite{CeballosPilaud}, this is also the diameter of the Hasse diagram of $\ATam$, viewed as an undirected graph.} It then follows from the  \emph{no-gap conjecture} (proven in this case in \cite{Garver2}) that the lengths of maximal green sequences for $A_\cQ$ are exactly the integers in the interval $[2n-2,\binom{n+1}{2}-1]$.
Using this, we give the first proof that the lengths of maximal green sequences for $C_\cQ$ are exactly the integers in the interval $[2n-1, \binom{n+1}{2}]$. 
Previously, it was known by \cite[Corollary~5.17]{GorskyWilliams} that the set of lengths of maximal green sequences for $C_\cQ$ is an interval, but its endpoints were unknown.

By determining the lengths of maximal green sequences for $C_\cQ$, we also determine the lengths for a large class of Nakayama algebras. In any finite dimensional quotient algebra $Q$ of the path algebra $k[\cQ]$ which has $C_\cQ$ as a quotient, the set of bricks of $Q$ coincides with the bricks of $C_\cQ$. This implies that the maximal green sequences of $Q$ have the same lengths as for $C_\cQ$, e.g., by describing maximal green sequences using bricks as in \cite[Theorem~A.3]{KellerSurvey}. By taking inverse limits, this also implies that maximal green sequences for the \dfn{completed path algebra} $k[\widehat{\cQ}]$ have the same lengths as for $C_\cQ$. The completed path algebra is the completion of $k[\cQ]$ at the maximal ideal generated by the arrows of $\cQ$.

Given a finite poset $P$, we let $\Gamma(P)$ denote the set of lengths of maximal chains in $P$. 

\begin{lemma}\label{lem:max_length_differ_by_1} 
For each integer $k$, we have $k\in\Gamma(\ATam)$ if and only if $k+1\in\Gamma(\CTam)$. 
\end{lemma}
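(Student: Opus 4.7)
My strategy is to exploit the lattice quotient $\pispine^\downarrow\colon\CTam\to\ATam$ from \cref{thm:affine_quotient}, whose fibers have size $1$ or $2$. The size-$2$ fibers are pairs $\{T,\pispine^\uparrow(T)\}$ with $T\in\ATIBIT$ whose spine is a single residue class mod $n$; each such pair forms a cover relation in $\CTam$ which I will call a \emph{spine cover}. Spine covers are exactly the covers of $\CTam$ that collapse to equalities under $\pispine^\downarrow$.

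The first step is to establish a dichotomy: for every cover $S\lessdot S'$ in $\ATam$, the interval $[S,S']$ in $\CTam$ is either $\{S,S'\}$ (``Case A'') or $\{S,\pispine^\uparrow(S),S'\}$ (``Case B''). Indeed, any $T\in\CTam$ with $S<T<S'$ must be co-real and not real (a real intermediate would contradict $S\lessdot S'$ in $\ATam$) and must satisfy $\pispine^\downarrow(T)\in\{S,S'\}$ (since $\pispine^\downarrow$ is a lattice quotient, and covers in $\CTam$ between inequivalent classes project to covers of $\ATam$). The case $\pispine^\downarrow(T)=S'$ is ruled out by $T\ge\pispine^\downarrow(T)$; hence $\pispine^\downarrow(T)=S$, which forces $T=\pispine^\uparrow(S)$ and requires $S$'s spine to consist of a single residue class.

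From the dichotomy, every $\ATam$ max chain $c'=(S_0\lessdot\cdots\lessdot S_k)$ lifts uniquely to a $\CTam$ max chain of length $k+b(c')$ by inserting $\pispine^\uparrow(S_i)$ at each Case-B cover, where $b(c')$ denotes the number of Case-B covers. Conversely, every $\CTam$ max chain $c$ projects via $\pispine^\downarrow$ (after deleting the consecutive duplicates that occur at spine covers) to an $\ATam$ max chain of length $|c|-b(c)$, where $b(c)$ is the number of spine covers in $c$. These operations are mutually inverse, giving a bijection between max chains of $\ATam$ and max chains of $\CTam$ whose lengths differ by $b$.

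Under this bijection, $k\in\Gamma(\ATam)$ and $k+1\in\Gamma(\CTam)$ correspond respectively to the existence of an $\ATam$ max chain of length $k$ with exactly one Case-B cover and the existence of a $\CTam$ max chain of length $k+1$ with exactly one spine cover. The main obstacle is to establish this ``exactly one'' property. My plan is a \emph{swap lemma}: from any max chain with $b\neq 1$, one can locally replace the middle element of a three-term subchain $S_{i-1}\lessdot S_i\lessdot S_{i+1}$ with an alternate element $S_i^{*}$, producing another max chain of the same length whose Case-B count has been changed by $\pm 1$. The alternate element $S_i^{*}$ is obtained using the $n$-regularity of the Hasse diagrams (\cref{prop:regular_cyclic,prop:regular_affine}), which produces diamond-shaped rank-$2$ intervals, and the effect on the Case-B count is tracked by monitoring the imaginary inversions $(k,k+n)$ via the inversion-set comparison of \cref{prop:6equivalent}. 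Iterating the swap reduces $b\geq 2$ down to $1$ and raises $b=0$ up to $1$. Formalizing this swap lemma---verifying both existence of the alternate middle element and its effect on $b$ in a case-by-case analysis---will be the main technical challenge.
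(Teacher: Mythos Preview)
Your bijection between maximal chains of $\CTam$ and maximal chains of $\ATam$ is correct, as is the dichotomy for the interval $[S,S']_{\CTam}$ over a cover $S\lessdot S'$ of $\ATam$. But the swap lemma you propose is both unnecessary and, as stated, unjustified.

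The point you are missing is that $b=1$ for \emph{every} maximal chain of $\CTam$, so no swapping is needed. The paper observes that the property ``$\po_T$ has a waning block'' is monotone along $\TIBIT$: it is equivalent to $T$ having an inversion of the form $(a,a+n)$, and inversion sets only grow in the Dyer order. The minimum of $\CTam$ lacks a waning block while the maximum has one, so any maximal chain $T_0\lessdot\cdots\lessdot T_m$ has a \emph{unique} transition index $j$. One then checks that this transition cover is exactly a spine cover. Indeed, a spine cover flips a single-class spine from leftward to rightward, so it is a transition. Conversely, at the transition $T_{j-1}\lessdot T_j$ the TIBIT $T_j$ has a rightward spine; if that spine used two or more residue classes, then rotating any right edge of $T_j$ (spine or not) would yield a TIBIT whose spine is still rightward (either unchanged, if the edge is off the spine, or with one fewer class), so $T_{j-1}$ would still have a waning block---a contradiction. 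Thus $T_j$ has a single-class rightward spine, forcing $\pispine^\downarrow(T_j)=T_{j-1}$. The paper's proof is then immediate: drop $T_j$ to get an $\ATam$-chain of length $m-1$; in the other direction, insert $\pispine^\uparrow(T_i')$ at the unique transition of a given $\ATam$-chain.

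As for the swap lemma itself: beyond being unneeded, the existence of an alternate middle element $S_i^*$ with $S_{i-1}\lessdot S_i^*\lessdot S_{i+1}$ is not guaranteed by $n$-regularity of the Hasse diagram. Rank-$2$ intervals in a semidistributive lattice can be pentagons rather than squares, in which case no such $S_i^*$ exists; the other covers of $S_i$ promised by regularity need not lie in $[S_{i-1},S_{i+1}]$. So this route would require substantially more structural input---input that, once supplied, makes the direct monotonicity argument above available anyway.
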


\begin{proof}
Suppose first that $k+1\in\Gamma(\CTam)$. Then there is a maximal chain ${T_0\lessdot T_1\lessdot\cdots\lessdot T_{k+1}}$ in $\TIBIT$. There is a unique index $j\in[k+1]$ such that the TITOs $\po_{T_j},\ldots,\po_{T_{k+1}}$ have waning blocks while the TITOs $\po_{T_0},\ldots,\po_{T_{j-1}}$ do not have waning blocks. Moreover, we have ${\pispine^\downarrow(T_j)=T_{j-1}}$ and $\pispine^\uparrow(T_{j-1})=T_j$, and \[\pispine^\downarrow(T_0)\lessdot\cdots\lessdot\pispine^\downarrow(T_{j-1})\lessdot\pispine^\downarrow(T_{j+1})\lessdot\cdots\lessdot\pispine^\downarrow(T_{{k+1}})\] is a maximal chain in $\ATIBIT$. Therefore, $k\in\Gamma(\ATam)$. 

To prove the converse, suppose $k\in\Gamma(\ATam)$. Then there is a maximal chain $T_0'\lessdot\cdots\lessdot T_k'$ in $\ATIBIT$. Let $i$ be the largest index such that $\po_{T_i'}$ does not have a waning block. Then \[T_0'\lessdot\cdots\lessdot T_i'\lessdot\pispine^\uparrow(T_i')\lessdot T_{i+1}\lessdot\cdots\lessdot T_k'\] is a maximal chain in $\TIBIT$. Therefore, $k+1\in\Gamma(\CTam)$. 
\end{proof}

Let us introduce another combinatorial model for the affine Tamari lattice that will allow us to study its maximum-length chains. This new model was communicated to us by Andrew Sack, who was inspired by the \emph{ornamentation lattices} introduced in \cite{DefantSack} and studied further in \cite{AjranDefant}.  

As in \cref{sec:quivers}, let $\cQ$ be the oriented cycle quiver with node set $\ZZ/n\ZZ$. An \dfn{ornament} is a nonempty subset of $\ZZ/n\ZZ$ that induces a connected subgraph of $\cQ$. Let $\Orn(\cQ)$ be the set of ornaments. We say an ornament $\mathfrak o$ is \dfn{hung} at a vertex $a\in\ZZ/n\ZZ$ if $a\in\mathfrak o$ and either $a-1\not\in\mathfrak o$ or $\mathfrak o=\ZZ/n\ZZ$. Thus, the ornament $\ZZ/n\ZZ$ is hung at every vertex, while each proper ornament is hung at a unique vertex. An \dfn{ornamentation} is a function $\varrho\colon\ZZ/n\ZZ\to\Orn(\cQ)$ such that 
\begin{itemize}
\item for each $i\in\ZZ/n\ZZ$, the ornament $\varrho(i)$ is hung at $i$;
\item for all $i,j\in\ZZ/n\ZZ$, if $i\in\varrho(j)$, then $\varrho(i)\subseteq\varrho(j)$. 
\end{itemize}
Let $\cO$ be the set of ornamentations endowed with the partial order $\leq$ defined so that $\varrho\leq\varrho'$ if and only if $\varrho(i)\subseteq\varrho'(i)$ for all $i\in\ZZ/n\ZZ$. 

\begin{example}
Suppose $n=4$. The function $\varrho\colon\ZZ/4\ZZ\to\Orn(\cQ)$ given by $\varrho(1)=\{1,2,3\}$, $\varrho(2)=\{2,3\}$, $\varrho(3)=\{3\}$, and $\varrho(4)=\ZZ/4\ZZ$ is an ornamentation. We represent this ornament pictorially as 
\[\begin{array}{l}\includegraphics[height=2.274cm]{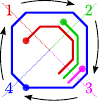}\end{array}.\] In this diagram, for each $i\in\ZZ/4\ZZ$, there is a colored ray $\mathfrak r_i$ starting at the central point and passing through $i$. We represent the ornament $\varrho(i)$ by a piecewise-linear curve with the same color as $\mathfrak r_i$ that crosses the rays $\mathfrak r_j$ for $j\in\varrho(i)$. We represent the ornament $\varrho(4)=\ZZ/4\ZZ$ as a closed loop. 
\end{example}

\begin{proposition}\label{prop:O_ATam}
The poset $\cO$ is isomorphic to $\ATam$. 
\end{proposition}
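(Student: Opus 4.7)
The plan is to construct an explicit order-preserving bijection between $\cO$ and the poset of affine arc torsion classes under containment, which by \cref{rem:arc_torsion_containment} is isomorphic to $\ATam$. The idea is to assign to each affine arc torsion class $D$ the ornamentation recording, for each residue $i$, the longest arc in $D$ starting at $i$. Precisely, for $i \in \{0, \ldots, n-1\}$, set $k_i(D) = \max\{k \ge 0 : \gamma_{i, i+k} \in D\}$ (with $k_i(D) = 0$ if no such arc exists), and define
\[\varrho_D(i) = \{i, i+1, \ldots, i+k_i(D)\} \subseteq \ZZ/n\ZZ.\]
Note that $k_i(D) \le n-1$ since the arcs in an affine arc torsion class satisfy $b - a < n$.

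I would first verify that $\varrho_D$ is an ornamentation. Each $\varrho_D(i)$ is a connected subgraph of $\cQ$, and it is hung at $i$: either $\varrho_D(i) = \ZZ/n\ZZ$ (precisely when $k_i(D) = n-1$), or $i - 1 \notin \varrho_D(i)$. For the closure condition, suppose $j = i + t \in \varrho_D(i)$ with $0 \le t \le k_i(D)$; I need $\varrho_D(j) \subseteq \varrho_D(i)$. If $\varrho_D(i) = \ZZ/n\ZZ$ this is trivial. Otherwise $k_i(D) < n-1$, and iteratively applying the concatenation condition to $\gamma_{i,j}$ and sub-arcs of $\gamma_{j, j+k_j(D)}$ yields $\gamma_{i, j+s} \in D$ whenever $1 \le s \le k_j(D)$ and $t + s < n$. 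Were $t + k_j(D) \ge n$, then taking $s = n-1-t$ would force $k_i(D) \ge n-1$, contradicting the case hypothesis; hence $t + k_j(D) < n$ and $k_i(D) \ge t + k_j(D)$, so $\varrho_D(j) \subseteq \varrho_D(i)$.

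The inverse map sends an ornamentation $\varrho$ to
\[D(\varrho) = \{\gamma_{i, i+k} : i \in \{0, \ldots, n-1\},\ 1 \le k \le |\varrho(i)| - 1\}.\]
The sub-arc closure of $D(\varrho)$ is immediate; for concatenation, given $\gamma_{a,b}, \gamma_{b,c} \in D(\varrho)$ with $c - a < n$, the hypotheses translate to $\bar b \in \varrho(\bar a)$ and $\bar c \in \varrho(\bar b)$, so closure of $\varrho$ yields $\bar c \in \varrho(\bar a)$. Since $c - a < n$ forces $c$ to be the unique representative of $\bar c$ in $\{a+1, \ldots, a + n-1\}$, this gives $\gamma_{a, c} \in D(\varrho)$. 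The two maps are mutually inverse by construction and both preserve containment, furnishing the desired order isomorphism; combined with \cref{rem:arc_torsion_containment} this proves $\cO \cong \ATam$. The main technical subtlety is ruling out the ``wraparound'' scenario in the closure check for $\varrho_D$, which depends crucially on the strict inequality $c - a < n$ that distinguishes affine from cyclic arc torsion classes.
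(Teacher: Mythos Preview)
Your proof is correct and follows the same route as the paper: both establish an order isomorphism between $\cO$ and the poset of affine arc torsion classes under containment, then invoke \cref{rem:arc_torsion_containment}. The paper simply declares that $\varrho\mapsto\{\gamma_{i,j}\mid i\in\ZZ/n\ZZ,\ j\in\varrho(i)\setminus\{i\}\}$ is the isomorphism, whereas you spell out the inverse and verify the ornamentation axioms and the torsion-class axioms explicitly; your careful handling of the wraparound case (ruling out $t+k_j(D)\geq n$) is exactly the substance behind the paper's one-line claim. One cosmetic point: your closure argument invokes $\gamma_{i,j}$ even when $t=0$, which is not an arc, but that case is trivial since then $j=i$ and $\varrho_D(j)=\varrho_D(i)$.
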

\begin{proof}
The map $\varrho\mapsto\{\gamma_{ij} \mid i\in \ZZ/n\ZZ,~j\in\varrho(i)\}$ is an isomorphism from $\cO$ to the poset of affine arc torsion classes ordered by containment. By \cref{rem:arc_torsion_containment}, the latter poset is isomorphic to $\ATam$. 
\end{proof} 

\begin{theorem}\label{thm:max_chains}
The maximum length of a maximal chain in the cyclic Tamari lattice $\CTam$ is $\binom{n+1}{2}$. The maximum length of a maximal chain in the affine Tamari lattice $\ATam$ is $\binom{n+1}{2}-1$. 
\end{theorem}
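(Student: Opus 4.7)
The plan is to first apply \cref{lem:max_length_differ_by_1}, which gives that the two assertions of the theorem are equivalent, so it suffices to prove that the maximum length of a maximal chain in $\ATam$ is $\binom{n+1}{2}-1$. I will work in the ornamentation model $\cO\cong\ATam$ from \cref{prop:O_ATam}, encoding each $\varrho\in\cO$ by the size vector $(k_1,\ldots,k_n)$ with $k_i=|\varrho(i)|-1\in\{0,1,\ldots,n-1\}$. The minimum of $\cO$ corresponds to $(0,\ldots,0)$, the maximum corresponds to $(n-1,\ldots,n-1)$, and each cover relation raises exactly one coordinate $k_i$ to the smallest valid value above it subject to the ornamentation axiom: for $k_i<n-1$, we require $k_{i+l}+l\le k_i$ for all $l\in[0,k_i]$.

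For the lower bound, I construct an explicit chain of length $\binom{n+1}{2}-1$ by sequentially raising the coordinates. Starting from $(0,\ldots,0)$, I raise $k_1$ from $0$ to $n-1$ in $n-1$ unit steps (each step is a valid cover because the axiom is trivially satisfied while $k_2=\cdots=k_n=0$), then raise $k_2$ similarly in $n-1$ unit steps (the final step $k_2=n-2\to n-1$ is valid because the axiom at $k_2$ becomes vacuous). For each $i\in\{3,\ldots,n\}$ in turn, I raise $k_i$ in unit steps from $0$ up to $n-i$, then perform a single jump cover taking $k_i$ directly from $n-i$ to $n-1$; the intermediate values $m\in[n-i+1,n-2]$ are ruled out because the wraparound constraint at $l=n-i+1$ gives $k_1+(n-i+1)=2n-i\le m$, which fails for $m\le n-2$. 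This contributes $n-i+1$ covers per $i\ge 3$, so the total chain length is $2(n-1)+\sum_{i=3}^{n}(n-i+1)=2(n-1)+\binom{n-1}{2}=\binom{n+1}{2}-1$.

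For the upper bound, I observe that each cover has jump size $\Delta\ge 1$ and $\sum\Delta=n(n-1)$, yielding only the crude bound of $n(n-1)$ covers per chain. To sharpen to $\binom{n+1}{2}-1$, I must show that at least $\binom{n-1}{2}$ units of $\Delta$ are absorbed by jumps with $\Delta\ge 2$. The structural fact I will exploit is that a cover raising $k_i$ by exactly $1$ from $m$ to $m+1$ is valid only when $k_{i+m+1}=0$; thus as the downstream coordinates $k_{i+1},k_{i+2},\ldots$ become positive, incremental growth of $k_i$ is blocked and a jump of size at least $2$ is forced. The plan is to assign to each maximal chain a canonical collection of ``blocking residue pairs'' $(i,j)$ indexed by the instances where this obstruction forces a jump, and show that the total excess mass they carry is at least $\binom{n-1}{2}$.

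The hard part will be the upper bound: carrying out the combinatorial bookkeeping to exhibit $\binom{n-1}{2}$ units of forced excess in any maximal chain. The lower bound construction described above together with \cref{lem:max_length_differ_by_1} then complete the proof.
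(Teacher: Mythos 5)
Your lower bound construction is essentially identical to the paper's: the chain $\delta_0^*<\delta_{1,1}<\cdots<\delta_1^*<\delta_{2,1}<\cdots<\delta_n^*$ built from the ornamentations $\delta_j^*$ and $\delta_{j,k}$ is exactly your ``raise each $k_i$ in turn'' construction, translated into the paper's notation. The size-vector encoding, the valid-jump analysis, and the length count $2(n-1)+\binom{n-1}{2}=\binom{n+1}{2}-1$ all check out.

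However, your upper bound is a plan, not a proof, and you say so yourself. The statement that ``at least $\binom{n-1}{2}$ units of $\Delta$ are absorbed by jumps with $\Delta\ge 2$'' is precisely the content of the theorem after the crude count $\sum\Delta=n(n-1)$, so the ``blocking residue pairs'' bookkeeping you describe is the entire difficulty, and you have not carried it out. It is also not obvious that the local obstruction you identify (a unit step from $k_i=m$ to $m+1$ requires $k_{i+m+1}=0$) accounts, without overcounting or undercounting, for exactly the deficit needed; the constraints come both from below (ornaments sitting inside the growing one) and from above (ornaments $\varrho(b)$ with $i\in\varrho(b)$, which must still contain $\varrho(i)$ after the growth), and your sketch only mentions the first kind. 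The paper sidesteps the jump-size accounting entirely: to each cover $\varrho_{r-1}\lessdot\varrho_r$ it assigns the residue $q_r$ (the endpoint of $\varrho_{r-1}(i_r)$), orders the distinct residues $z_1,z_2,\ldots,z_n$ by first appearance in $(q_1,\ldots,q_m)$, and shows each $z_k$ can occur at most $k$ times (with $z_n$ occurring at most $n-1$ times), directly yielding $m\le 1+2+\cdots+(n-1)+(n-1)=\binom{n+1}{2}-1$. You would need to supply an argument at that level of detail for the upper bound before the theorem is proved.
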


\begin{proof}
In light of \cref{lem:max_length_differ_by_1,prop:O_ATam}, it suffices to show that the maximum length of a chain in $\cO$ is $\binom{n+1}{2}-1$. We begin by constructing a chain of this length. 

Given integers $j,k$ satisfying $1\leq j\leq n$ and $1\leq k\leq \min\{n-j,n-2\}$, define ornamentations $\delta_j^*$ and $\delta_{j,k}$ by 
\[\delta_{j}^*(i)=\begin{cases}
     \ZZ/n\ZZ & \text{if $1\leq i\leq j$}; \\
    \{i\} & \text{if $j+1\leq i\leq n$}
\end{cases}\]
and 
\[\delta_{j,k}(i)=\begin{cases}
     \delta_j^*(i) & \text{if $i\neq j$}; \\
    \{j,j+1,\ldots,j+k\} & \text{if $i=j$}.
\end{cases}\]
Also, let $\delta_0^*$ be the minimum element of $\cO$, which is the ornamentation satisfying $\delta_0^*(i)=\{i\}$ for all $i\in\ZZ/n\ZZ$. Note that $\delta_n^*$ is the maximum element of $\cO$. Then 
\[\delta_0^*<\delta_1^*<\cdots<\delta_n^*.\] For $1\leq j\leq n$, we have 
\[\delta_{j-1}^*\lessdot\delta_{j,1}\lessdot\delta_{j,2}\lessdot\cdots\lessdot\delta_{j,\min\{n-j,n-2\}}\lessdot\delta_{j}^*.\] Thus, the set 
\[\{\delta_j^*\mid 0\leq j\leq n\}\cup\{\delta_{j,k}\mid 1\leq j\leq n,\, 1\leq k\leq \min\{n-j,n-1\}\}\] forms a chain of length $\binom{n+1}{2}-1$ in $\cO$. (See \cref{fig:max_chain}.) 

Now let $\varrho_0\lessdot\varrho_1\lessdot\varrho_2\lessdot\cdots\lessdot\varrho_m$ be a maximal chain in $\cO$; we wish to show that $m\leq\binom{n+1}{2}-1$. For each $r\in[m]$, there is a unique $i_r\in\ZZ/n\ZZ$ such that $\varrho_{r-1}(i_r)\neq\varrho_{r}(i_r)$. Moreover, there is a unique $q_r\in\ZZ/n\ZZ$ such that $q_r\in\varrho_{r-1}(i_r)$ and $q_r+1\not\in\varrho_{r-1}(i_r)$; note that $q_r+1\in\varrho_r(i_r)$. 

Define a permutation $z_1,\ldots,z_n$ of the elements of $\ZZ/n\ZZ$ as follows. First, let $z_1=q_1$. For $2\leq k\leq n$, let $z_k$ be the element of $(\ZZ/n\ZZ)\setminus\{z_1,\ldots,z_{k-1}\}$ that appears farthest to the left in the tuple $(q_1,\ldots,q_m)$. By the definition of $q_1$, the ornament $\varrho_1(z_1)$ is not a singleton. Therefore, for every $j\in[m]$, the ornament $\varrho_j(q_1)$ is not a singleton. It follows that $z_1$ appears exactly once in the tuple $(q_1,\ldots,q_m)$ (in the first position). 

If $r\in[m]$ is such that $q_{r}=z_2$, then we must have $i_r\in\{q_1,z_2\}$. It follows that $z_2$ appears at most $2$ times in the tuple $(q_1,\ldots,q_m)$. Similarly, if $r\in[m]$ is such that $q_{r}=z_3$, then we must have $i_r\in\{q_1,q_2,z_3\}$. This implies that $z_3$ appears at most $3$ times in the tuple $(q_1,\ldots,q_m)$. Continuing in this fashion, we find that each $z_k$ can appear at most $k$ times in the tuple $(q_1,\ldots,q_m)$. This shows that $m\leq 1+2+\cdots+n=\binom{n+1}{2}$. To show that $m\leq\binom{n+1}{2}-1$, we just need to show that $z_n$ appears at most $n-1$ times in $(q_1,\ldots,q_m)$. 

Suppose by way of contradiction that $z_n$ appears $n$ times in $(q_1,\ldots,q_m)$. Then there is some ${r\in[m]}$ such that $q_{r}=z_n$ and $i_r=z_n-1$. But then $\ZZ/n\ZZ=\varrho_{r-1}(i_r)\subseteq\varrho_{r-1}(i_r)$, so we have $\varrho_{r-1}(i_r)=\varrho_{r}(i_r)$. This contradicts the definition of $i_r$. 
\end{proof}

\begin{figure}[ht]
\begin{center}{\includegraphics[height=6.388cm]{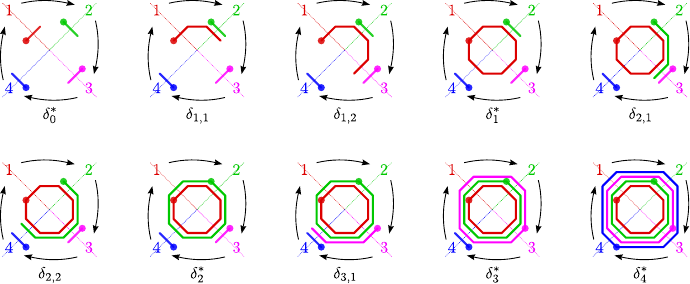}}
\end{center}
\caption{The maximum-length chain of $\cO$ constructed in the proof of \cref{thm:max_chains} for $n=4$.}   
\label{fig:max_chain} 
\end{figure}

\section{Rowmotion}\label{sec:rowmotion} 

\subsection{Dynamics} 

Consider a cyclic group $\cC_\omega=\langle g_\omega\rangle$ of order $\omega$ acting on a finite set $X$. For $F(q)\in\CC[q]$, we say the triple $(X,\cC_\omega,F(q))$ exhibits the \dfn{cyclic sieving phenomenon} if for all $k\in\ZZ$, we have 
\[|\{x\in X\mid g_\omega^k\cdot x=x\}|=F(e^{2\pi ik/\omega}).\] If this is the case, $F(q)$ is called a \dfn{sieving polynomial}; we can view it as a polynomial that encodes the entire orbit structure of the action of $\cC_\omega$. Such a sieving polynomial necessarily exists, so the cyclic sieving phenomenon is only interesting when one can write down the sieving polynomial in some reasonably nice, explicit form.

We say two invertible maps $f\colon X\to X$ and $f'\colon X'\to X'$ are \dfn{dynamically equivalent} if there exists a bijection $\varphi\colon X\to X'$ such that $f'\circ\varphi=\varphi\circ f$. 

\subsection{Rowmotion on Semidistributive Lattices}\label{subsec:rowmotion_semidistributive}

Let $L$ be a finite semidistributive lattice. Recall that $\cD(x)$ and $\cU(x)$ denote the canonical join representation and the canonical meet representation, respectively, of an element $x\in L$. Barnard \cite{Barnard} proved that there is a unique bijection $\Row_L\colon L\to L$ such that $\cD(x)=\cU(\Row_L(x))$ for all $x\in L$; the map $\Row_L$, which we denote simply by $\Row$ when the lattice $L$ is understood, is called \dfn{rowmotion}. Rowmotion on distributive lattices is very well studied, as the Fundamental Theorem of Finite Distributive Lattices allows one to view it combinatorially as an operator on the set of order ideals of a finite poset \cite{AST, HopkinsSurvey, StrikerSurvey, StrikerWilliams}. There are also notable connections between rowmotion on distributive lattices and homological algebra \cite{MTY, IyamaMarczinzik}. That said, there are some non-distributive semidistributive lattices, including Tamari lattices, on which rowmotion has interesting dynamics \cite{Barnard, DefantLin, HopkinsCDE, ThomasWilliams}. Our goal in this section is to completely describe the dynamics of rowmotion on the cyclic Tamari lattice $\CTam$ and the affine Tamari lattice $\ATam$. 

\begin{remark}\label{rem:trim}
In \cite{Thomas}, Thomas introduced the family of \emph{trim lattices}, which serve as a natural generalization of distributive lattices. Thomas and Williams \cite{ThomasWilliams} defined rowmotion on trim lattices, and they explained that for such lattices, rowmotion could be computed ``in slow motion'' (i.e., via a sequence of local \emph{flips}). The lattices $\CTam$ and $\ATam$ are not trim, so their rowmotion operators cannot be computed in slow motion. This makes the nice dynamics of rowmotion on these lattices all the more remarkable. As far as we are aware, these are the first examples of non-trim lattices with well behaved rowmotion dynamics. 
\end{remark} 

A \dfn{set partition} of a totally ordered set $X$ is a collection of nonempty pairwise-disjoint subsets of $X$ whose union is $X$; the subsets in the set partition are called \dfn{blocks}. Associated to a set partition $\rho$ of $X$ is an equivalence relation $\sim_\rho$ on $X$ defined so that $x\sim_\rho x'$ if and only if $x$ and $x'$ are in the same block of $\rho$. Say a set partition $\rho$ is \dfn{coarser} than a set partition $\rho'$ if every block of $\rho'$ is contained in a block of $\rho$. We say $\rho$ is \dfn{noncrossing} if for all $x_1,x_2,x_3,x_4\in X$ satisfying $x_1<x_2<x_3<x_4$, $x_1\sim_\rho x_3$, and $x_2\sim_\rho x_4$, we have $x_1\sim_\rho x_2$.   

\begin{figure}[ht]
\begin{center}{\includegraphics[height=7.35cm]{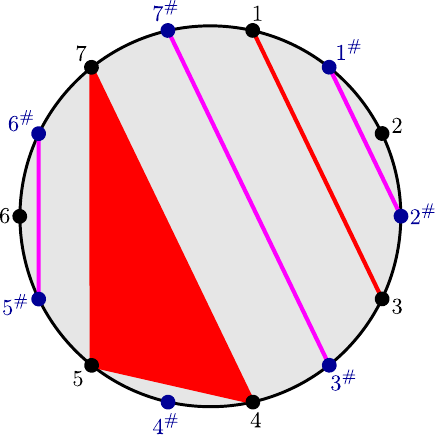}}
\end{center}
\caption{In {\color{red}red} is the type $A_{6}$ noncrossing partition $\{\{1,3\},\{2\},\{4,5,7\},\{6\}\}$, whose Kreweras complement is $\{\{1,2\},\{3,7\},\{4\},\{5,6\}\}$ ({\color{Pink}pink}).}   
\label{fig:type_A_noncrossing} 
\end{figure} 

We will need to consider three different types of noncrossing partitions. First, there are \dfn{type $A_{N-1}$ noncrossing partitions}, which are simply noncrossing set partitions of $[N]$ (with the usual total order on $[N]$). The \dfn{convex hull diagram} of a type $A_{N-1}$ noncrossing partition is obtained by drawing $N$ equally-spaced points numbered $1,\ldots,N$ in clockwise order around the boundary of a disk and then drawing the convex hulls of the blocks (see \cref{fig:type_A_noncrossing}). Second, a \dfn{type $B_N$} noncrossing partition is a type $A_{2N-1}$ noncrossing partition whose convex hull diagram is invariant under $180^\circ$ rotation (see \cref{fig:type_B_noncrossing}). Finally, a \dfn{translation-invariant noncrossing partition} (TINCP) is a noncrossing partition $\rho$ of $\ZZ$ such that for all $a,b\in\ZZ$, we have $a\sim_\rho b$ if and only if $a+n\sim_\rho b+n$ (see \cref{fig:type_B_noncrossing}). Let $\NC(A_{N-1})$, $\NC(B_N)$, and $\TINC(\ZZ)$ be the sets of type $A_{N-1}$ noncrossing partitions, type $B_N$ noncrossing partitions, and TINCPs, respectively. 

\begin{figure}[ht]
\begin{center}{\includegraphics[height=7.5cm]{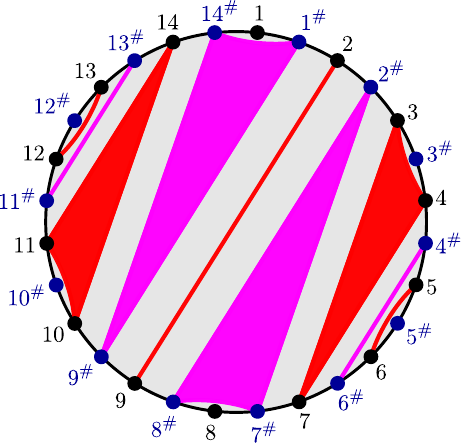}}\\ \vspace{0.8cm}{\includegraphics[width=13.806cm]{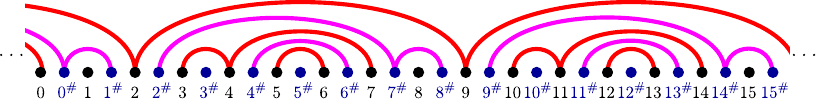}}
\end{center}
\caption{Let $n=7$. On the top is a type $B_{n}$ noncrossing partition $\rho$ ({\color{red}red}) together with $\Krew(\rho)^\#$ ({\color{Pink}pink}). On the bottom is the TINCP $\uw(\rho)$ ({\color{red}red}) together with $\Krew(\uw(\rho))^\#$ ({\color{Pink}pink}). }   
\label{fig:type_B_noncrossing} 
\end{figure} 

There is a bijection $\uw\colon\TINC(\ZZ)\to\NC(B_n)$ that simply reduces numbers modulo $2n$. More precisely, for $Y\subseteq\ZZ$, let $Y/2n\ZZ$ be the subset of $[2n]$ obtained by reducing the elements of $Y$ modulo $2n$ (deleting duplicates). If $\rho$ is a TINCP, then $\uw(\rho)=\{Y/2n\ZZ\mid Y\in\rho\}$. See \cref{fig:type_B_noncrossing}.  

Given a totally ordered set $X$, let $X^\#=\{i^\#:i\in\ZZ\}$ be an order-isomorphic copy of $X$ that is disjoint from $X$, where $i^\#\leq j^\#$ in $X^\#$ if and only if $i\leq j$ in $X$. Given a set partition $\rho$ of $X$, there is a corresponding set partition $\rho^\#$ of $X^\#$ such that $i^\#\sim_{\rho^\#}j^\#$ if and only if $i\sim_\rho j$. We view $X\sqcup X^\#$ as a totally ordered set in which $i<i^\#<j$ whenever $i,j\in X$ satisfy $i<j$. Define the \dfn{Kreweras complement} of a noncrossing partition $\rho$ of $X$, denoted $\Krew(\rho)$, to be the coarsest set partition of $X^\#$ such that $\rho\sqcup\Krew(\rho)^\#$ is a noncrossing partition of $X\sqcup X^\#$. \cref{fig:type_A_noncrossing} illustrates the Kreweras complement of a type $A_6$ noncrossing partition. It is straightforward to see that 
\begin{equation}\label{eq:RKrew}
\uw(\Krew(\rho))=\Krew(\uw(\rho))
\end{equation}
for every TINCP $\rho$; this is illustrated in \cref{fig:type_B_noncrossing}. 

It is known that rowmotion on the Tamari lattice is dynamically equivalent to Kreweras complement on noncrossing partitions of type $A_{n-1}$. We will see that rowmotion on the cyclic Tamari lattice is dynamically equivalent to Kreweras complement on noncrossing partitions of type $B_n$. We would like to thank Nathan Reading for drawing our attention to the relationship between TINCPs and type $B$ combinatorics.

\subsection{Rowmotion on the Cyclic Tamari Lattice} 

Recall from \cref{lem:arcdiagram312} that we have a bijection $\cA$ from the cyclic Tamari lattice to the set of noncrossing arc diagrams. We can ``unwrap'' a noncrossing arc diagram $D$ to obtain a TINCP $\rho_D$. More precisely, $\rho_D$ is the finest TINCP in which $a$ and $b$ are in the same block whenever $\gamma_{a,b}\in D$ (see \cref{fig:noncrossing1}). It is straightforward to see that $D\mapsto \rho_D$ is a bijection from the set of noncrossing arc diagrams to the set of TINCPs. Thus, we obtain a bijection $\cP$ from $\CTam_{312}$ to the set of TINCPs given by $\cP(\preceq)=\rho_{\cA(\preceq)}$. 

\begin{figure}[ht]
\begin{center}{\includegraphics[width=13.806cm]{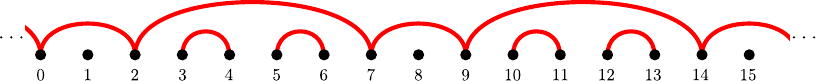}}\\ \vspace{0.8cm}{\includegraphics[width=13.806cm]{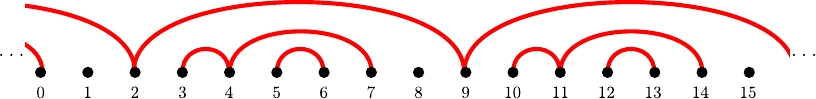}}
\end{center}
\caption{The TINCPs $\rho_D$ and $\rho_{D'}$, where $D$ and $D'$ are the noncrossing arc diagrams on the left and right, respectively, of \cref{fig:arc_diagrams}.}   
\label{fig:noncrossing1} 
\end{figure} 

\begin{proposition}\label{prop:PKrewRow}
For every $\preceq$ in $\CTam_{312}$, we have 
\[\cP(\Row(\preceq))=\Krew(\cP(\preceq)).\]
\end{proposition}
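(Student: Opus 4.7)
The plan is to translate the rowmotion identity $\cU(\Row(\preceq))=\cD(\preceq)$ into a statement about arcs in the annulus, and then match with the geometric definition of Kreweras complement.

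From the results of \cref{sec:Arcs_FTFSDL}, the canonical join representation $\cD(\preceq)$ is already identified with the set of arcs $\cA(\preceq)$ via $j_{a,b}\leftrightarrow \gamma_{a,b}$, and so $\cP(\preceq)=\rho_{\cA(\preceq)}$. The canonical meet representation $\cU(\preceq)$ admits a parallel description: in $\CTam$, each upward cover $\preceq\lessdot\preceq'$ is a flip at an upper wall $(a,b)$ of $\preceq$ and is labeled by the join-irreducible $j_{a,b}$; hence $\cU(\preceq)$ is naturally encoded by the set of upper-wall reflection indices of $\preceq$.

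The key geometric lemma I will prove is the following. Place the integer vertices $\ZZ$ on the outer boundary of the annulus, together with interleaved vertices $\ZZ^\#$ (with $i^\#$ sitting between $i$ and $i+1$). Draw the arcs of $\cA(\preceq)$ between the $\ZZ$-vertices, and draw the arcs corresponding to the upper walls of $\preceq$ between the $\ZZ^\#$-vertices (using the natural identification of reflection indices on the two copies). Then (i) the combined arc collection on $\ZZ\sqcup\ZZ^\#$ is noncrossing, and (ii) the TINCP on $\ZZ^\#$ generated by the upper-wall arcs is precisely $\Krew(\cP(\preceq))$. Geometrically, this says that the upper walls of $\preceq$ fill in exactly the complementary faces between the blocks of $\cP(\preceq)$, which is the defining recipe for the Kreweras complement. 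The proof of this lemma uses the at-most-two-block structure of $312$-avoiding TITOs (\cref{lem:two_blocks}): within each block, consecutive elements in the $\preceq$ order contribute either a lower or an upper wall, and the resulting interlocking pattern of arcs on $\ZZ$ and on $\ZZ^\#$ tiles the annulus into regions that realize the Kreweras construction.

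Granting the key lemma, the proposition follows quickly. By the defining rowmotion identity, the upper walls of $\Row(\preceq)$ coincide as reflection indices with the arcs of $\cA(\preceq)$. Applying the key lemma to $\Row(\preceq)$ therefore yields $\Krew(\cP(\Row(\preceq)))=\rho^\#_{\cA(\preceq)}$, which under the identification $\ZZ^\#\cong\ZZ$ built into the Kreweras operator on $\TINC(\ZZ)$ rearranges to give the claimed $\cP(\Row(\preceq))=\Krew(\cP(\preceq))$. The principal obstacle is proving the key geometric lemma; once this interlocking of upper and lower walls is established, the rest is bookkeeping against the FTFSDL characterization of rowmotion.
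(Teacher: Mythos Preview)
Your outline is close in spirit to the paper's argument, but there are two concrete problems.

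First, your identification of $\cU(\preceq)$ with the upper walls of $\preceq$ itself is not correct. A $312$-avoiding TITO $\preceq$ is the \emph{minimum} $\po_T$ of its sylvester class, and the upward covers of $\preceq$ in $\CTam_{312}$ are labeled by the upper walls of the \emph{maximum} element $\po^T$ of that class, not by the upper walls of $\preceq$ (see the discussion immediately preceding \cref{prop:regular_cyclic}). Equivalently, the labels of upward covers are the left edges of the TIBIT $T=\sT(\preceq)$, whereas your upper-wall arcs of $\preceq=\po_T$ need not coincide with these. This is fixable, and once you reformulate the key lemma as ``left edges and right edges of a TIBIT are Kreweras complements,'' you are close to what the paper actually uses.

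Second, and more seriously, your final deduction has the direction backwards. Applying your key lemma to $\Row(\preceq)$ places $\cA(\Row(\preceq))$ on $\ZZ$ and $\cA(\preceq)$ on $\ZZ^\#$, yielding $\cP(\preceq)=\Krew(\cP(\Row(\preceq)))$, hence $\cP(\Row(\preceq))=\Krew^{-1}(\cP(\preceq))$. Since $\Krew$ is not an involution on $\TINC(\ZZ)$ (indeed $\Krew^2$ is a nontrivial shift), this does not ``rearrange'' to the claimed identity. The paper instead places $\cA(\preceq)$ on $\ZZ$ and $\cA(\Row(\preceq))$ on $\ZZ^\#$; working in the TIBIT $T=\sT(\Row(\preceq))$, the arcs of $\cA(\preceq)$ become left edges of $T$ and those of $\cA(\Row(\preceq))$ become right edges, and a direct tree argument shows they cannot cross. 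Rather than proving your part~(ii) (that one diagram is \emph{exactly} the Kreweras complement of the other), the paper uses the $n$-regularity of the Hasse diagram (\cref{prop:regular_cyclic}) to match cardinalities, which forces $\cP(\Row(\preceq))$ to equal the coarsest noncrossing partition $\Krew(\cP(\preceq))$. This counting shortcut both avoids your orientation issue and removes the need for the vaguer ``two-block tiling'' argument you propose.
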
 
\begin{proof}
Let $\preceq'\,=\cP^{-1}(\Krew(\cP(\preceq)))$. It is straightforward to see that \[|\cA(\preceq')|=n-|\cA(\preceq)|=n-|\cD(\preceq)|.\] In addition, since the Hasse diagram of $\CTam_{312}$ is a regular graph of degree $n$ (by \cref{prop:regular_cyclic}), we have \[|\cD(\preceq)|+|\cD(\Row(\preceq))|=|\cU(\Row(\preceq))|+|\cD(\Row(\preceq))|=n.\] Hence, $|\cA(\preceq')|=|\cD(\Row(\preceq))|$. Thus, to see that $\preceq'\,=\Row(\preceq)$, we just need to show that if $\gamma_{a,b}\in\cA(\preceq)$ and $\gamma_{c,d}\in\cA(\Row(\preceq))$, then the pairs $\{a,b\}\subseteq\ZZ$ and $\{c^\#,d^\#\}\subseteq\ZZ^\#$ do not cross in $\ZZ\sqcup\ZZ^\#$. That is, we need to show that we cannot have $a\leq c<b\leq d$ or $c<a\leq d<b$. To see this, we use the TIBIT $T=\sT(\Row(\preceq))$. Since $\gamma_{a,b}\in\cA(\preceq)$, the node $\T_a$ is a left child of $\T_b$ in $T$. Since $\gamma_{c,d}\in\cA(\Row(\preceq))$, the node $\T_d$ is a right child of $\T_c$ in $T$. If $a\leq c<b\leq d$, then $\T_b$ is in the right subtree of $\T_c$ in $T$, but this forces $\T_a$ to also be in the right subtree of $\T_c$, which is impossible. If $c<a\leq d<b$, then $\T_d$ is in the right subtree of $\T_a$ in $T$, but this forces $\T_c$ to also be in the right subtree of $\T_a$, which is impossible. 
\end{proof} 

\cref{prop:PKrewRow} allows us to give a complete description of the orbit structure of rowmotion on $\CTam$ via the cyclic sieving phenomenon. Indeed, consider the cyclic group $\cC_{4n}=\langle g_{4n}\rangle$ of order $4n$. There is an action of $\cC_{4n}$ on $\NC(B_n)$ given by $g_{4n}\cdot\rho=\Krew(\rho)$. Armstrong, Stump, and Thomas \cite{AST} proved that the triple $(\NC(B_n),\cC_{4n},\Cat_{B_n}(q))$ exhibits the cyclic sieving phenomenon. Together with \cref{prop:PKrewRow} and \eqref{eq:RKrew}, this implies the following theorem. 

\begin{theorem}\label{thm:CSP_cyclic}
There is an action of the cyclic group $\cC_{4n}=\langle g_{4n}\rangle$ on the cyclic Tamari lattice $\CTam$ satisfying ${g_{4n}\cdot x=\Row(x)}$ for all $x\in\CTam$. Moreover, the triple 
\[\left(\CTam,\cC_{4n},\Cat_{B_n}(q)\right)\] exhibits the cyclic sieving phenomenon. 
\end{theorem}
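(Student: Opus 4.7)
The plan is to transport the type $B_n$ cyclic sieving result of Armstrong--Stump--Thomas straight through the composite bijection
\[
\Phi \;:=\; \uw \circ \cP \colon \CTam \longrightarrow \NC(B_n).
\]
Since $\cP$ is a bijection (by \cref{lem:arcdiagram312} together with the observation that $D\mapsto \rho_D$ is a bijection between noncrossing arc diagrams and TINCPs) and $\uw$ is a bijection by definition, so is $\Phi$.

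The core of the argument is to verify the intertwining identity $\Phi \circ \Row = \Krew \circ \Phi$. This will follow by directly chaining the two facts already in hand: \cref{prop:PKrewRow} gives $\cP \circ \Row = \Krew \circ \cP$, and equation~\eqref{eq:RKrew} gives $\uw \circ \Krew = \Krew \circ \uw$. Composing, for any $\preceq\,\in \CTam_{312}$,
\[
\Phi(\Row(\preceq)) = \uw(\cP(\Row(\preceq))) = \uw(\Krew(\cP(\preceq))) = \Krew(\uw(\cP(\preceq))) = \Krew(\Phi(\preceq)).
\]

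With the intertwining established, the rest is formal. The Armstrong--Stump--Thomas theorem \cite{AST} implies in particular that $\Krew$ has order dividing $4n$ on $\NC(B_n)$, so $g_{4n}\cdot \rho := \Krew(\rho)$ defines an action of $\cC_{4n}$ on $\NC(B_n)$; pulling this action back along $\Phi$ produces a well-defined action of $\cC_{4n}$ on $\CTam$ in which $g_{4n}$ acts as $\Row$ (and in particular $\Row^{4n}=\mathrm{id}$ on $\CTam$). For the cyclic sieving statement, note that the intertwining identity iterates to $\Phi\circ \Row^k = \Krew^k \circ \Phi$ for every integer $k$, so $\Phi$ restricts to a bijection between $\Row^k$-fixed points in $\CTam$ and $\Krew^k$-fixed points in $\NC(B_n)$. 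Applying the AST cyclic sieving result to $(\NC(B_n),\cC_{4n},\Cat_{B_n}(q))$ then gives
\[
\bigl|\{x\in\CTam \mid \Row^k(x)=x\}\bigr| \;=\; \bigl|\{\rho\in \NC(B_n)\mid \Krew^k(\rho)=\rho\}\bigr| \;=\; \Cat_{B_n}\bigl(e^{2\pi i k/(4n)}\bigr),
\]
which is the desired cyclic sieving phenomenon.

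The substantive work lies in the two ingredients we are quoting---\cref{prop:PKrewRow} (whose proof uses the TIBIT of $\Row(\preceq)$ to rule out the ``crossing'' configurations between $\cA(\preceq)$ and $\cA(\Row(\preceq))$) and the Armstrong--Stump--Thomas theorem---so no genuine obstacle remains here; the proof is purely an assembly argument.
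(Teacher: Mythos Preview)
Your proof is correct and follows essentially the same approach as the paper: combine \cref{prop:PKrewRow} with \eqref{eq:RKrew} to transport the Armstrong--Stump--Thomas cyclic sieving result from $\NC(B_n)$ to $\CTam$ via the bijection $\uw\circ\cP$. You have simply made explicit the intertwining argument that the paper leaves implicit.
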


\begin{remark}\label{rem:actually_2n}
\cref{thm:CSP_cyclic} tells us that the order of rowmotion on $\CTam$ divides $4n$. However, the true order is actually $2n$. This is due to the fact that the order of Kreweras complement on $\NC(B_n)$ is $2n$. Indeed, $\Krew^{2n}$ acts on a type $B_n$ noncrossing partition by rotating the convex hull diagram by $180^\circ$, which leaves the diagram invariant. 
\end{remark} 

\subsection{Rowmotion on the Affine Tamari Lattice} 

If $L$ is a finite lattice and $\equiv$ is a lattice congruence on $L$, then we can consider the map $\pi_\equiv^\downarrow\colon L\to L$ that sends each element of $L$ to the minimum element of its congruence class. The image $\pi_\equiv^\downarrow(L)$ is a lattice quotient of $L$, and one can compute rowmotion on $\pi_\equiv^\downarrow(L)$ in terms of rowmotion on $L$. Namely,  
\[\Row_{\pi_\equiv^\downarrow(L)}=\pi_\equiv^\downarrow\circ\Row_L.\] 

To understand how rowmotion acts on the affine Tamari lattice, we use the fact that $\ATam$ is a lattice quotient of $\CTam$. More precisely, \cref{thm:affine_quotient} tells us that ${\pispine^\downarrow\colon\TIBIT\to\ATIBIT}$ is a surjective lattice homomorphism. We wish to define a related map \[\pi^\downarrow_{\NC}\colon\TINC(\ZZ)\to\TINC(\ZZ).\] Consider a TINCP $\rho$. If there does not exist an integer $i_0$ such that $i_0+n\ZZ$ is a block of $\rho$, then we simply define $\pi^\downarrow_{\NC}(\rho)=\rho$. Now suppose there is an integer $i_0$ such that $i_0+n\ZZ$ is a block of $\rho$. Let $\rho'$ be the TINCP obtained from $\rho$ by replacing the block $i_0+n\ZZ$ with the infinite collection of singleton blocks $\{i_0+kn\}$ for all $k\in\ZZ$. Now let $\pi^\downarrow_{\NC}(\rho)$ be the finest coarsening of $\rho'$ in which $i_0+kn$ is in the same block as $i_0+(k+1)n-1$ for each $k\in\ZZ$. Less formally, we obtain $\pi^\downarrow_{\NC}$ from $\rho$ by deleting the block $i_0+n\ZZ$ and then adding each integer of the form $i_0+kn$ into the block containing $i_0+(k+1)n-1$. (See \cref{fig:piNC}.)  

We have a bijection $\TIBIT\to\TINC(\ZZ)$ given by $T\mapsto\cP(\po_T)$. Note that $\cP(\po_T)$ is the finest partition of $\ZZ$ in which every $a$ is in the same block as $b$ whenever the vertex $\T_a$ is a right child of the vertex $\T_b$ in $T$. One can readily check that 
\begin{equation}\label{eq:poPpiNC}
\cP(\po_{\pispine^\downarrow(T)})=\pi^\downarrow_{\NC}(\cP(\po_T))
\end{equation} for every TIBIT $T$. 

\begin{example}
Let $n=5$. Let $T$ be the TIBIT on the right of \cref{fig:pispine}. Then $\pispine^\downarrow(T)$ is the TIBIT on the left of the same figure. The TINCPs $\cP(\po_T)$ and $\cP(\po_{\pispine^\downarrow(T)})$ are shown on the top and bottom, respectively, of \cref{fig:piNC}. This illustrates \eqref{eq:poPpiNC}. 
\end{example} 

\begin{figure}[ht]
\begin{center}{\includegraphics[width=13.806cm]{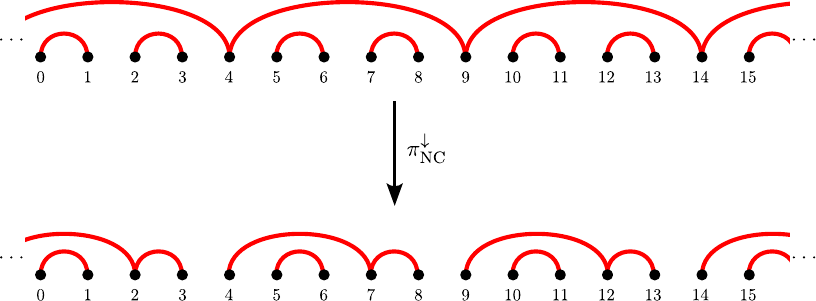}}
\end{center}
\caption{Applying $\pi^\downarrow_{\NC}$ to a TINCP with $n=5$. }   
\label{fig:piNC} 
\end{figure} 

\cref{prop:PKrewRow} and \eqref{eq:poPpiNC} together yield the following result. 

\begin{proposition}\label{prop:ATamKrew}
For every $\preceq$ in $\ATam_{312}$, we have 
\[\cP(\Row_{\ATam_{312}}(\preceq))=\pi^\downarrow_{\NC}(\Krew(\cP(\preceq))).\] 
\end{proposition}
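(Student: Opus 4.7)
The plan is to reduce the statement to a short chain of three previously established facts. The first is the general identity stated at the beginning of the subsection: for any lattice congruence $\equiv$ on a finite lattice $L$, one has $\Row_{\pi_\equiv^\downarrow(L)} = \pi_\equiv^\downarrow\circ \Row_L$. By \cref{thm:affine_quotient}, the map $\pispine^\downarrow$ realizes $\ATam$ as a lattice quotient of $\CTam$, so applying this principle yields
\[ \Row_{\ATam_{312}}(\preceq)\, =\, \pispine^\downarrow\bigl(\Row_{\CTam_{312}}(\preceq)\bigr) \]
for every $\preceq \in \ATam_{312}$, where we view $\preceq$ as sitting inside $\CTam_{312}$ (it is the minimum representative of its $\pispine^\downarrow$-fiber because it is real).

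Next I would apply $\cP$ to both sides and use equation \eqref{eq:poPpiNC}, which, after transport through the bijection $T\leftrightarrow \po_T$ of \cref{prop:312TIBIT_bijection}, says exactly that $\cP\circ\pispine^\downarrow = \pi^\downarrow_{\NC}\circ \cP$ as maps on $\CTam_{312}$. This gives
\[ \cP(\Row_{\ATam_{312}}(\preceq))\, =\, \pi^\downarrow_{\NC}\bigl(\cP(\Row_{\CTam_{312}}(\preceq))\bigr). \]
Finally I would invoke \cref{prop:PKrewRow} to rewrite the inner expression as $\Krew(\cP(\preceq))$, yielding the desired formula.

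The only point that needs a sentence of justification is that \eqref{eq:poPpiNC}, phrased as a statement about TIBITs $T$, really does translate into the claim $\cP\circ\pispine^\downarrow = \pi^\downarrow_{\NC}\circ\cP$ on $\CTam_{312}$; this is immediate by setting $T = \sT(\preceq)$ and using $\po_{\sT(\preceq)} = \preceq$ from \cref{lem:TIBIT312lift}. There is no substantive obstacle here — the proof is a three-step diagram chase assembling the quotient-of-rowmotion identity, \eqref{eq:poPpiNC}, and \cref{prop:PKrewRow}.
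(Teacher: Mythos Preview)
Your proposal is correct and follows exactly the route the paper takes: the paper simply says that \cref{prop:PKrewRow} and \eqref{eq:poPpiNC} together yield the result, with the quotient-rowmotion identity $\Row_{\pi_\equiv^\downarrow(L)}=\pi_\equiv^\downarrow\circ\Row_L$ stated just before the proposition as the connecting glue. Your write-up spells out the diagram chase in more detail than the paper does, including the (correct) remark that one must transport $\pispine^\downarrow$ through the bijection $T\leftrightarrow\po_T$ to interpret it on $\CTam_{312}$, but the substance is identical.
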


According to \cref{prop:ATamKrew}, rowmotion on the affine Tamari lattice is dynamically equivalent to the map $\pi^\downarrow_{\NC}\circ\Krew\colon\pi^\downarrow_{\NC}(\TINC(\ZZ))\to\pi^\downarrow_{\NC}(\TINC(\ZZ))$. Thus, our goal in the remainder of this section is to describe the orbit structure of the latter map. 

For $i\in\ZZ$, let $\Xi_i$ be the set of TINCPs $\rho$ such that 
$\rho$ has no infinite blocks and such that $i$ is in the same block as $i+n-1$. Note that the sets $\Xi_0,\Xi_1,\ldots,\Xi_{n-1}$ are disjoint. Let $\Xi=\Xi_0\sqcup\Xi_1\sqcup\cdots\sqcup\Xi_{n-1}$. Let $\Xi'$ be the set of TINCPs that contain a block of the form $i_0+n\ZZ$ for some integer $i_0$. Note that $\pi^\downarrow_{\NC}(\TINC(\ZZ))=\TINC(\ZZ)\setminus\Xi'$. It is straightforward to check that $\Krew$ restricts to a bijection from $\Xi$ to $\Xi'$ and also restricts to a bijection from $\Xi'$ to $\Xi$. Thus, $\Krew$ is also a bijection from $\TINC(\ZZ)\setminus(\Xi\sqcup\Xi')$ to itself. Let us also observe that the map $\Krew^2\colon\TINC(\ZZ)\to\TINC(\ZZ)$ acts by simply shifting each TINCP by one step. More precisely, we have $i\sim_{\rho} j$ if and only if $i-1\sim_{\Krew^2(\rho)}j-1$ for all $i,j\in\ZZ$. This implies that $\Krew^2$ restricts to a bijection from $\Xi_i$ to $\Xi_{i-1}$ for each $i\in\ZZ$. 

\begin{lemma}\label{lem:CatAn-22n}
The map $\Krew\colon(\Xi\sqcup\Xi')\to(\Xi\sqcup\Xi')$ has exactly $\Cat_{A_{n-2}}$ orbits, each of which has size $2n$. 
\end{lemma}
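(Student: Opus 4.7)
The plan is to show that every $\Krew$-orbit on $\Xi\sqcup\Xi'$ has size exactly $2n$ and then to establish $|\Xi\sqcup\Xi'|=2n\cdot\Cat_{A_{n-2}}$, which together give the claim. That every orbit has size dividing $2n$ is immediate from the remarks just before the lemma: $\Krew^2$ acts by shifting a TINCP one step, so $\Krew^{2n}$ shifts by $n$ and is the identity on $\TINC(\ZZ)$. So the substantive content is ruling out strictly shorter orbits and then counting.

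For the orbit-size lower bound, I would argue by contradiction. Suppose $\Krew^{2k}(\rho)=\rho$ for some $\rho\in\Xi\sqcup\Xi'$ and some $1\leq k\leq n-1$. Since $\Krew$ exchanges $\Xi$ and $\Xi'$, after replacing $\rho$ by $\Krew(\rho)$ if needed, we may assume $\rho\in\Xi_i$. The hypothesis means $\rho$ is invariant under the shift $x\mapsto x+k$. Let $B$ be the block of $\rho$ containing $i$ and $i+n-1$. Because $\rho\in\Xi$ has no infinite blocks, $B$ is finite, and a nonempty finite subset of $\ZZ$ cannot equal its translate by a nonzero integer, so $B+k$ is a block distinct from $B$. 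But then $B$ contains $i<i+n-1$ while $B+k$ contains $i+k<i+k+n-1$, and $0<k<n$ gives the interleaving
\[i<i+k<i+n-1<i+k+n-1,\]
which is a crossing between two distinct blocks, contradicting that $\rho$ is noncrossing.

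For the enumeration, I would first combine the two bijections established before the lemma: $\Krew\colon\Xi\to\Xi'$ gives $|\Xi|=|\Xi'|$, and $\Krew^{2}\colon\Xi_i\to\Xi_{i-1}$ gives $|\Xi_i|=|\Xi_0|$ for every $i$, so $|\Xi\sqcup\Xi'|=2n\cdot|\Xi_0|$. The problem then reduces to showing $|\Xi_0|=\Cat_{A_{n-2}}$. For this I would prove the structural lemma that no block of a TINCP in $\Xi$ contains two integers $a<b$ with $b-a\geq n$: if $b-a=n$ then $a\in B$ and $a\in B+n$ force $B=B+n$, making $B$ infinite; and if $b-a>n$ then $B+n$ is a distinct block and the ordering $a<a+n<b<b+n$ is a crossing. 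With this in hand, for $\rho\in\Xi_0$ the block $B_0$ containing $0$ and $n-1$ is forced into $\{0,1,\ldots,n-1\}$, and by non-crossing the remaining blocks meeting $\{1,\ldots,n-2\}$ are confined to that interval and form an ordinary noncrossing set partition of $\{1,\ldots,n-2\}\setminus B_0$. This identifies $\Xi_0$ with the set of noncrossing partitions of $\{0,1,\ldots,n-1\}$ in which $0$ and $n-1$ lie in a common block, and the standard bijection (delete $n-1$ from its block) identifies the latter with $\NC(\{0,1,\ldots,n-2\})$, which is enumerated by $\Cat_{A_{n-2}}$.

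The main obstacle will be the crossing argument in the orbit-size step: one must keep the indices straight so that the four integers $i,i+k,i+n-1,i+k+n-1$ genuinely interleave across two blocks. Once this is done, the rest is bookkeeping around the diameter bound on blocks of TINCPs in $\Xi$ and a familiar Catalan identity.
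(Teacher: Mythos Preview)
Your proposal is correct and follows the same two-step outline as the paper (orbit size $=2n$, then $|\Xi_0|=\Cat_{A_{n-2}}$), but you work harder than necessary in the first step. The paper simply observes that since $\Krew$ swaps $\Xi$ and $\Xi'$, and $\Krew^2$ sends $\Xi_i$ to $\Xi_{i-1}$, while $\Xi_0,\ldots,\Xi_{n-1}$ are pairwise disjoint (stated just before the lemma), every orbit must cycle through all $n$ classes $\Xi_i$ and hence have size exactly $2n$; no crossing computation is needed. Your direct interleaving argument also works, though you should note that the boundary case $k=n-1$ gives $i+k=i+n-1\in B\cap(B+k)$, forcing $B=B+k$, rather than a strict interleaving. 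For the enumeration, your bijection and the paper's $\beta$ are essentially the same: the paper restricts $\rho\in\Xi_0$ to $\{1,\ldots,n-1\}$, and your structural lemma (blocks have diameter at most $n-1$, so every block meeting $\{0,\ldots,n-1\}$ is contained there) is precisely what justifies that this restriction is a bijection, which the paper leaves implicit.
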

\begin{proof}
Because $\Krew$ maps $\Xi$ to $\Xi'$ and vice versa and $\Krew^2$ maps $\Xi_i$ to $\Xi_{i-1}$ for each $i$, the orbits of $\Krew\colon(\Xi\sqcup\Xi')\to(\Xi\sqcup\Xi')$ must all have size $2n$. Moreover, every such orbit contains a unique element of $\Xi_0$. There is a natural bijection $\beta\colon\Xi_0\to\NC(A_{n-2})$ defined so that $i\sim_{\rho}j$ if and only if $i\sim_{\beta(\rho)}j$ for all $i,j\in[n-1]$. Hence, the number of orbits of $\Krew\colon(\Xi\sqcup\Xi')\to(\Xi\sqcup\Xi')$ is the cardinality of $\NC(A_{n-2})$, which is well known to be $\Cat_{A_{n-2}}$.   
\end{proof}

\begin{proposition}\label{prop:KrewXiXi'}
There is an action of the cyclic group $\cC_{4n}=\langle g_{4n}\rangle$ on $\TINC(\ZZ)\setminus(\Xi\sqcup\Xi')$ satisfying ${g_{4n}\cdot\rho=\pi^\downarrow_{\NC}(\Krew(\rho))}$ for all $\rho\in\TINC(\ZZ)\setminus(\Xi\sqcup\Xi')$. Moreover, the triple 
\[\left(\TINC(\ZZ)\setminus(\Xi\sqcup\Xi'),\cC_{4n},\Cat_{B_n}(q)-[2n]_{q^2}\Cat_{A_{n-2}}\right)\] exhibits the cyclic sieving phenomenon. 
\end{proposition}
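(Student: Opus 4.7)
The plan is to recognize that on $\TINC(\ZZ)\setminus(\Xi\sqcup\Xi')$ the operator $\pi^\downarrow_{\NC}\circ\Krew$ coincides with $\Krew$, and then to deduce the cyclic sieving statement from \cref{thm:CSP_cyclic} together with the orbit description of \cref{lem:CatAn-22n}.

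First I would verify the claimed $\cC_{4n}$-action. As noted in the discussion preceding \cref{lem:CatAn-22n}, $\Krew$ restricts to a bijection from $\TINC(\ZZ)\setminus(\Xi\sqcup\Xi')$ to itself. The map $\pi^\downarrow_{\NC}$ acts as the identity on any TINCP with no block of the form $i_0+n\ZZ$; in particular it is the identity on $\TINC(\ZZ)\setminus\Xi'$. Hence on $\TINC(\ZZ)\setminus(\Xi\sqcup\Xi')$ we have $\pi^\downarrow_{\NC}\circ\Krew=\Krew$. Since $\Krew$ has order dividing $4n$ on $\TINC(\ZZ)$ (by \cref{thm:CSP_cyclic} and \cref{prop:PKrewRow}), the formula $g_{4n}\cdot\rho=\pi^\downarrow_{\NC}(\Krew(\rho))$ really defines a $\cC_{4n}$-action on $\TINC(\ZZ)\setminus(\Xi\sqcup\Xi')$.

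For the cyclic sieving statement, \cref{prop:PKrewRow} implies that the bijection $\cP\colon\CTam_{312}\to\TINC(\ZZ)$ conjugates $\Row$ to $\Krew$. Thus \cref{thm:CSP_cyclic} transports to the identity
\[|\{\rho\in\TINC(\ZZ)\mid\Krew^k(\rho)=\rho\}|=\Cat_{B_n}\!\bigl(e^{2\pi ik/(4n)}\bigr)\qquad\text{for all }k\in\ZZ.\]
It therefore suffices to show
\[|\{\rho\in\Xi\sqcup\Xi'\mid\Krew^k(\rho)=\rho\}|=\bigl([2n]_{q^2}\,\Cat_{A_{n-2}}\bigr)\bigr|_{q=e^{2\pi ik/(4n)}}.\]
By \cref{lem:CatAn-22n}, all $\Krew$-orbits on $\Xi\sqcup\Xi'$ have size exactly $2n$, so the left-hand side equals $2n\,\Cat_{A_{n-2}}$ when $2n\mid k$ and equals $0$ otherwise. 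On the right, at $q=e^{2\pi ik/(4n)}$ we have $q^2=e^{\pi ik/n}$ and $q^{4n}=1$; the $q$-integer $[2n]_{q^2}=\frac{q^{4n}-1}{q^2-1}$ evaluates to $2n$ precisely when $q^2=1$ (that is, $2n\mid k$) and to $0$ otherwise. The two counts agree.

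There is no serious obstacle here: the substantive combinatorial content is packaged into \cref{thm:CSP_cyclic} and \cref{lem:CatAn-22n}, and the proof amounts to decomposing $\TINC(\ZZ)=\bigl(\TINC(\ZZ)\setminus(\Xi\sqcup\Xi')\bigr)\sqcup(\Xi\sqcup\Xi')$, using the full CSP on the first piece, and identifying the fixed-point counts on the second piece with the polynomial $[2n]_{q^2}\,\Cat_{A_{n-2}}$ via the short root-of-unity calculation above.
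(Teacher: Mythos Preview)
Your proof is correct and follows essentially the same approach as the paper: the paper's proof simply observes that $\pi^\downarrow_{\NC}$ acts as the identity on $\TINC(\ZZ)\setminus(\Xi\sqcup\Xi')$ and then cites \cref{thm:CSP_cyclic} and \cref{lem:CatAn-22n}, leaving implicit precisely the subtraction and root-of-unity computation that you have written out in full.
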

\begin{proof}
We know that $\Krew$ is a bijection from $\TINC(\ZZ)\setminus(\Xi\sqcup\Xi')$ to itself. Because $\pi^\downarrow_{\NC}$ acts as the identity map on $\TINC(\ZZ)\setminus(\Xi\sqcup\Xi')$, the proof follows from \cref{thm:CSP_cyclic,lem:CatAn-22n}.  
\end{proof}

By \cref{prop:ATamKrew,prop:KrewXiXi'}, in order to understand the orbit structure of rowmotion on $\ATam$, we are left to understand the orbit structure of the map $\pi^\downarrow_{\NC}\circ\Krew\colon\Xi\to\Xi$. In what follows, let 
\[\epsilon(n)=\begin{cases}
    1 & \text{if $n$ is odd}; \\
    2 & \text{if $n$ is even}.
\end{cases}\]

\begin{lemma}\label{lem:Xi0CSP}
There is an action of the cyclic group $\cC_{2(n-1)}=\langle g_{2(n-1)}\rangle$ on $\Xi_0$ satisfying \[g_{2(n-1)}\cdot\rho=(\pi^\downarrow_{\NC}\circ\Krew^{-1})^n(\rho)\] for all $\rho\in\Xi_0$. Moreover, the triple 
\[\left(\Xi_0,\cC_{2(n-1)},\Cat_{A_{n-2}}(q^{\epsilon(n)})\right)\] exhibits the cyclic sieving phenomenon. 
\end{lemma}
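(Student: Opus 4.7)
The plan is to transport the prescribed action along the bijection $\beta\colon\Xi_0\to\NC(A_{n-2})$ from the proof of \cref{lem:CatAn-22n} and invoke the Reiner--Stanton--White cyclic sieving phenomenon for the Kreweras complement on type-$A$ noncrossing partitions.

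Set $\psi=\pi^\downarrow_{\NC}\circ\Krew^{-1}$. As observed before \cref{lem:CatAn-22n}, $\Krew^{-1}$ bijects $\Xi$ with $\Xi'$ and $\pi^\downarrow_{\NC}$ bijects $\Xi'$ with $\Xi$, so $\psi\colon\Xi\to\Xi$ is a bijection. A direct calculation gives the key index-shift formula $\psi(\Xi_i)=\Xi_{i-2\bmod n}$: for $\rho\in\Xi_i$, the block of $\rho$ containing $i$ and $i+n-1$ occupies the full fundamental interval, so the gap primed points sit at residue $i+n-1\equiv i-1\pmod n$; hence $\Krew(\rho)$ has an infinite block at residue $i-1$, and applying $\sigma_{-1}=\Krew^{-1}\circ\Krew$ shifts this to residue $i-2$, after which $\pi^\downarrow_{\NC}$ lands in $\Xi_{i-2\bmod n}$. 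Consequently, the minimal return period of $\psi$ to $\Xi_0$ is $n/\gcd(2,n)=n/\epsilon(n)$, and $\psi^n$ restricts to a bijection on $\Xi_0$.

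The heart of the argument is the identification
\[\beta\circ\psi^{n/\epsilon(n)}\circ\beta^{-1}=\Krew_A\]
on $\NC(A_{n-2})$, where $\Krew_A$ denotes the type-$A$ Kreweras complement. I would establish this by describing $\beta^{-1}(\pi)$ explicitly (adjoin $0$ to the block of $\pi\in\NC(A_{n-2})$ containing $n-1$ and extend translation-invariantly) and then tracing the distinguished ``long'' block---the block containing $i$ and $i+n-1$ in $\Xi_i$---through one full loop $\Xi_0\to\Xi_{-2}\to\cdots\to\Xi_0$ of $\psi$. Each step interleaves a Kreweras complement with the infinite-block resolution $\pi^\downarrow_{\NC}$; after one loop, comparing the induced partition on $\{1,\ldots,n-1\}$ before and after recovers precisely $\Krew_A$. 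The lemma then follows from $\psi^n=(\psi^{n/\epsilon(n)})^{\epsilon(n)}$ combined with the Reiner--Stanton--White cyclic sieving phenomenon for $(\NC(A_{n-2}),\cC_{2(n-1)},\Cat_{A_{n-2}}(q))$ under $\Krew_A$: for odd $n$ the identification is immediate with $\epsilon(n)=1$; for even $n$, the $\cC_{2(n-1)}$-action on $\Xi_0$ factors through $\cC_{n-1}$ via $\Krew_A^2$, and the standard restriction principle (a CSP triple $(X,\cC_m,F(q))$ restricts to $(X,\cC_k,F(q^{m/k}))$ for any divisor $k\mid m$) converts the sieving polynomial into $\Cat_{A_{n-2}}(q^2)$.

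The main obstacle is the central identification of $\psi^{n/\epsilon(n)}$ with $\Krew_A$. This requires a careful local analysis of how the Kreweras complement on $\TINC(\ZZ)$ interacts with the infinite-block resolution $\pi^\downarrow_{\NC}$, and how these interactions accumulate over $n/\epsilon(n)$ iterations of $\psi$ to descend to a single Kreweras complement on the type-$A$ quotient. The parity dichotomy encoded in $\epsilon(n)$ emerges naturally from the index-shift formula $\psi(\Xi_i)=\Xi_{i-2\bmod n}$, which has orbits of length $n$ when $n$ is odd and length $n/2$ when $n$ is even.
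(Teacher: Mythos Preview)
Your overall strategy---transport the action along $\beta\colon\Xi_0\to\NC(A_{n-2})$ and invoke the known type-$A$ cyclic sieving result---is exactly the paper's. But your index-shift formula is wrong, and this derails the rest of the argument.

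You claim $\psi(\Xi_i)=\Xi_{i-2\bmod n}$, but in fact $\psi=\pi^\downarrow_{\NC}\circ\Krew^{-1}$ maps each $\Xi_i$ to itself. Here is the correct computation. For $\rho\in\Xi_i$, every block of $\rho$ lies inside an interval $\{i+kn,\ldots,i+(k+1)n-1\}$, so in $\Krew(\rho)$ the points $(i-1)^\#,(i+n-1)^\#,(i+2n-1)^\#,\ldots$ can all be joined, yielding the infinite block $(i-1)+n\ZZ$. Since $\Krew^{-2}$ shifts labels up by one, $\Krew^{-1}(\rho)=\Krew^{-2}(\Krew(\rho))$ has its infinite block at $i+n\ZZ$. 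Applying $\pi^\downarrow_{\NC}$ then merges each $i+kn$ with the block of $i+(k+1)n-1$, producing an element of $\Xi_i$. (Your line ``applying $\sigma_{-1}=\Krew^{-1}\circ\Krew$ shifts this to residue $i-2$'' is where things go astray: $\Krew^{-1}\circ\Krew$ is the identity, and the actual shift from $\Krew^{-2}$ moves the infinite block from residue $i-1$ to residue $i$, not $i-2$.) You may be conflating $\psi$ with $\pi^\downarrow_{\NC}\circ\Krew$, which \emph{does} shift $\Xi_i$ to $\Xi_{i-1}$ and is used in the proof of \cref{prop:pi_Krew}.

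Once you know $\psi$ preserves $\Xi_0$, the key identity becomes the much simpler
\[
\beta\circ\psi\circ\beta^{-1}=\Krew_A^{-1},
\]
valid for a \emph{single} application of $\psi$ (this is \eqref{eq:betapiNCKrew} in the paper, illustrated in \cref{exam:betapiNCKrew}). Then $\psi^n$ corresponds to $\Krew_A^{-n}$, and since $\gcd(n,2(n-1))=\epsilon(n)$, the action of $\Krew_A^{-n}$ on $\NC(A_{n-2})$ has sieving polynomial $\Cat_{A_{n-2}}(q^{\epsilon(n)})$ by the standard reparametrization of the Armstrong--Stump--Thomas CSP. There is no need to track a loop through $\Xi_0\to\Xi_{-2}\to\cdots$ or to analyze the accumulated effect of $n/\epsilon(n)$ applications of $\psi$; the parity dichotomy in $\epsilon(n)$ arises purely from the arithmetic of $\gcd(n,2(n-1))$, not from any orbit structure on the index set.
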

\begin{proof}
We consider two actions of the cyclic group of order $2(n-1)$ on $\NC(A_{n-2})$; to distinguish the two actions, we consider a different (isomorphic) copy $\cC_{2(n-1)}'$ of this cyclic group with generator $g_{2(n-1)}'$. 
It is well known that there is an action of $\cC_{2(n-1)}'$ on $\NC(A_{n-2})$ given by $g_{2(n-1)}'\cdot\rho=\Krew^{-1}(\rho)$. The action of $\cC_{2(n-1)}$ on $\NC(A_{n-2})$ is given by $g_{2(n-1)}\cdot\rho=\Krew^{-n}(\rho)$. Armstrong, Stump, and Thomas \cite{AST} proved that the triple $(\NC(A_{n-2}),\cC_{2(n-1)}',\Cat_{A_{n-2}}(q))$ exhibits the cyclic sieving phenomenon. Since $\epsilon(n)=\gcd(n,2(n-1))$, it follows that the triple $(\NC(A_{n-2}),\cC_{2(n-1)},\Cat_{A_{n-2}}(q^{\epsilon(n)}))$ exhibits the cyclic sieving phenomenon. 

Now consider the bijection $\beta\colon\Xi_0\to\NC(A_{n-2})$ from the proof of \cref{lem:CatAn-22n}. It is straightforward to see that $\pi^\downarrow_{\NC}\circ\Krew^{-1}$ maps $\Xi_0$ to itself and that
\begin{equation}\label{eq:betapiNCKrew}\beta(\pi^\downarrow_{\NC}(\Krew^{-1}(\rho)))=\Krew^{-1}(\beta(\rho))
\end{equation} 
for all $\rho\in\Xi_0$ (see \cref{exam:betapiNCKrew}). This means that the maps \[(\pi^\downarrow_{\NC}\circ\Krew^{-1})^n\colon\Xi_0\to\Xi_0\quad\text{and}\quad\Krew^{-n}\colon\NC(A_{n-2})\to\NC(A_{n-2})\] are dynamically equivalent, so the desired instance of the cyclic sieving phenomenon follows. 
\end{proof} 

\begin{example}\label{exam:betapiNCKrew}
Let $n=8$. On the top of \cref{fig:betapiNCKrew} is a partition $\rho\in\Xi_0$, while on the bottom is the partition  $\pi^\downarrow_{\NC}(\Krew^{-1}(\rho))$. We have 
\[\beta(\rho)=\{\{1,2\},\{3,7\},\{4\},\{5,6\}\}\quad\text{and}\quad\beta(\pi^\downarrow_{\NC}(\Krew^{-1}(\rho)))=\{\{1,3\},\{2\},\{4,5,7\},\{6\}\};\] these are precisely the partitions shown in \cref{fig:type_A_noncrossing}. That figure illustrates the identity \eqref{eq:betapiNCKrew}.   
\end{example}

\begin{figure}[ht]
\begin{center}{\includegraphics[width=13.806cm]{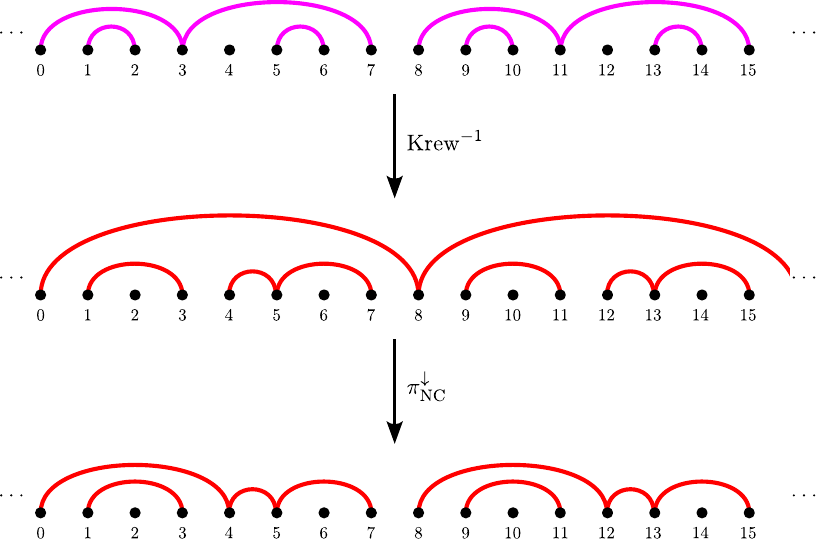}}
\end{center}
\caption{Applying $\pi^\downarrow_{\NC}\circ\Krew^{-1}$ to a partition in $\Xi_0$, where $n=8$. }   
\label{fig:betapiNCKrew}  
\end{figure} 

\begin{proposition}\label{prop:pi_Krew}
There is an action of the cyclic group $\cC_{2n(n-1)}=\langle g_{2n(n-1)}\rangle$ on $\Xi$ satisfying $g_{2n(n-1)}\cdot\rho=\pi^\downarrow_{\NC}(\Krew(\rho))$ for all $\rho\in\Xi$. Moreover, the triple 
\[\left(\Xi,\cC_{2n(n-1)},[n]_{q^{2(n-1)}}\Cat_{A_{n-2}}(q^{\epsilon(n)})\right)\] exhibits the cyclic sieving phenomenon. 
\end{proposition}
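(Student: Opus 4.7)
Write $f := \pi^\downarrow_{\NC} \circ \Krew$. The plan is to show that $f$ cyclically permutes the sets $\Xi_0, \ldots, \Xi_{n-1}$ with period $n$ and that $f^n$ restricts on $\Xi_0$ to the map $g_{2(n-1)}$ of \cref{lem:Xi0CSP}; from this, the claimed CSP on $\Xi$ will follow from a standard stacking principle combining the known CSP on $\Xi_0$ with the cyclic permutation of the $\Xi_i$'s.

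We first relate $f$ to the map $\tilde f := \pi^\downarrow_{\NC} \circ \Krew^{-1}$. Setting $\sigma := \Krew^2$ (the shift-by-$(-1)$ on TINCPs, with $\sigma^n = \mathrm{id}$ by translation invariance), we note that $\pi^\downarrow_{\NC}$ is translation-equivariant and hence commutes with $\sigma$. Since $\Krew^{2n} = \mathrm{id}$, we have $\Krew^{-1} = \Krew \circ \sigma^{n-1}$, so
\[ \tilde f = \sigma^{n-1} \circ f, \]
and raising to the $n$-th power will eliminate the shift factor to give $\tilde f^n = f^n$. By \cref{lem:Xi0CSP}, $\tilde f$ preserves $\Xi_0$; since $\tilde f$ commutes with $\sigma$ and $\Xi_j = \sigma^{-j}(\Xi_0)$, in fact $\tilde f$ preserves every $\Xi_j$. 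As $\sigma^{n-1}$ sends $\Xi_j$ to $\Xi_{j+1 \bmod n}$, the map $f = \sigma^{-(n-1)} \circ \tilde f$ sends $\Xi_i$ to $\Xi_{i-1 \bmod n}$, so $f$ cyclically permutes the $\Xi_i$'s with period $n$. Consequently $f^n|_{\Xi_0} = \tilde f^n|_{\Xi_0} = g_{2(n-1)}$ has order dividing $2(n-1)$, whence $f^{2n(n-1)} = \mathrm{id}$ and the $\cC_{2n(n-1)}$-action on $\Xi$ will be well defined.

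For the sieving polynomial, we will invoke the following standard stacking lemma: if $X = X_0 \sqcup \cdots \sqcup X_{k-1}$ and a bijection $g$ of $X$ cyclically permutes the $X_i$'s so that $g(X_i) = X_{(i+1) \bmod k}$, and if $(X_0, \langle g^k \rangle, F_0(q))$ exhibits the CSP with $g^k|_{X_0}$ of order dividing $m$, then $(X, \langle g \rangle, [k]_{q^m} F_0(q))$ exhibits the CSP. To verify this lemma, we will directly evaluate both sides at $q = \zeta^j$ for $\zeta$ a primitive $(km)$-th root of unity: $g^j$ has fixed points only when $k \mid j$, in which case the count equals $k \cdot |\mathrm{Fix}_{X_0}(g^j)|$; meanwhile $[k]_{q^m}|_{q = \zeta^j}$ equals $k$ when $k \mid j$ and vanishes otherwise, while $F_0(\zeta^j) = |\mathrm{Fix}_{X_0}(g^j)|$ in that case. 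Applying this lemma with $k = n$, $m = 2(n-1)$, and $F_0(q) = \Cat_{A_{n-2}}(q^{\epsilon(n)})$ (from \cref{lem:Xi0CSP}) will yield the sieving polynomial $[n]_{q^{2(n-1)}} \Cat_{A_{n-2}}(q^{\epsilon(n)})$, as claimed.

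The main obstacle will be the explicit verification of the stacking lemma via the root-of-unity computation above; the remainder reduces to bookkeeping around the translation equivariance of $\sigma$, $\Krew$, and $\pi^\downarrow_{\NC}$.
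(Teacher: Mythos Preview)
Your proposal is correct and follows essentially the same approach as the paper: both show that $f=\pi^\downarrow_{\NC}\circ\Krew$ cyclically permutes the $\Xi_i$'s, then use commutation of $\sigma=\Krew^2$ with $\pi^\downarrow_{\NC}$ together with $\sigma^n=\mathrm{id}$ to identify $f^n$ with $(\pi^\downarrow_{\NC}\circ\Krew^{-1})^n$ on $\Xi_0$, and finally invoke \cref{lem:Xi0CSP}. The only substantive difference is that you make explicit the ``stacking lemma'' that passes from the CSP on $\Xi_0$ to the CSP on $\Xi$, whereas the paper leaves this as an implicit orbit-size correspondence; one minor cosmetic point is that your stacking lemma is stated with $g(X_i)=X_{(i+1)\bmod k}$ while $f$ sends $\Xi_i$ to $\Xi_{i-1}$, but this is handled by a trivial relabeling.
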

\begin{proof}
For each $i\in\ZZ$, it is straightforward to check that  $\pi^\downarrow_{\NC}\circ\Krew$ is a bijection from $\Xi_{i}$ to $\Xi_{i-1}$. It follows that every orbit of $\pi^\downarrow_{\NC}\circ\Krew\colon\Xi\to\Xi$ has size divisible by $n$. Therefore, there is a one-to-one correspondence between orbits of $\pi^\downarrow_{\NC}\circ\Krew\colon \Xi\to\Xi$ and orbits of the map $(\pi^\downarrow_{\NC}\circ\Krew)^n\colon\Xi_0\to \Xi_0$, where an orbit of the latter map is $n$ times as large as the corresponding orbit of the former map. 
Because $\Krew^2$ acts by shifting a partition, it commutes with $\pi^\downarrow_{\NC}$. In addition, $\Krew^{-2n}$ is the identity map. Hence, 
\begin{align*}
(\pi^\downarrow_{\NC}\circ\Krew)^n&=\Krew^{-2n}\circ(\pi^\downarrow_{\NC}\circ\Krew)^n \\ 
&=(\pi^\downarrow_{\NC}\circ\Krew^{-1})^n.
\end{align*}
In light of \cref{lem:Xi0CSP}, this completes the proof. 
\end{proof} 

The map $\cP$ restricts to a bijection from $\ATam_{312}$ to $\TINC(\ZZ)\setminus\Xi'$, so we can combine \cref{prop:ATamKrew,prop:KrewXiXi',prop:pi_Krew} to deduce the following theorem, which gives a complete description of the orbit structure of rowmotion on the affine Tamari lattice. 

\begin{theorem}\label{thm:CSP_affine}
There is an action of the cyclic group $\cC_{4n(n-1)}=\langle g_{4n(n-1)}\rangle$ on the affine Tamari lattice $\ATam$ satisfying $g_{4n(n-1)}\cdot x=\Row(x)$ for all $x\in\ATam$. Moreover, the triple 
\[\left(\ATam,\cC_{4n(n-1)},\Cat_{B_n}(q^{n-1})-[2n]_{q^{2(n-1)}}\Cat_{A_{n-2}}+[n]_{q^{4(n-1)}}\Cat_{A_{n-2}}(q^{2\epsilon(n)})\right)\] exhibits the cyclic sieving phenomenon. 
\end{theorem}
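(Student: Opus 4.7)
The plan is to deduce the theorem directly by combining \cref{prop:ATamKrew,prop:KrewXiXi',prop:pi_Krew}, with a standard rescaling argument for cyclic sieving polynomials. First, I would use the fact that $\cP$ restricts to a bijection $\ATam_{312} \to \TINC(\ZZ)\setminus \Xi'$ together with \cref{prop:ATamKrew} to transfer the problem: it suffices to prove the claimed cyclic sieving instance for the map $T := \pi^\downarrow_{\NC}\circ\Krew$ acting on $\TINC(\ZZ)\setminus\Xi'$.

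Next, I would decompose $\TINC(\ZZ)\setminus\Xi' = (\TINC(\ZZ)\setminus(\Xi\sqcup\Xi')) \sqcup \Xi$, noting that both summands are $T$-invariant. \cref{prop:KrewXiXi'} says $T$ has order dividing $4n$ on the first summand, and \cref{prop:pi_Krew} says $T$ has order dividing $2n(n-1)$ on the second. Since both $4n$ and $2n(n-1)$ divide $4n(n-1)$, the cyclic group $\cC_{4n(n-1)}$ acts on the disjoint union with generator $g_{4n(n-1)}$ acting as $T$, which transports via $\cP^{-1}$ to the desired $\cC_{4n(n-1)}$-action on $\ATam$ with $g_{4n(n-1)}\cdot x = \Row(x)$.

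For the sieving polynomial, I would use the elementary principle that if $(X,\cC_m,F(q))$ exhibits CSP, then lifting the action through the quotient map $\cC_{m\ell}\twoheadrightarrow \cC_m$ yields a CSP triple $(X,\cC_{m\ell},F(q^\ell))$, since $\zeta_{m\ell}^{j\ell} = \zeta_m^j$ governs the fixed-point counts. Applying this with $\ell=n-1$ to the polynomial $\Cat_{B_n}(q)-[2n]_{q^2}\Cat_{A_{n-2}}$ from \cref{prop:KrewXiXi'}, and with $\ell = 2$ to the polynomial $[n]_{q^{2(n-1)}}\Cat_{A_{n-2}}(q^{\epsilon(n)})$ from \cref{prop:pi_Krew}, gives sieving polynomials on each summand valid for the $\cC_{4n(n-1)}$-action. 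Adding them (since fixed-point counts add over a disjoint union of invariant subsets) yields exactly the polynomial in the theorem statement.

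There is no significant obstacle here: all the substantive work was done in establishing \cref{prop:KrewXiXi'} and \cref{prop:pi_Krew}. The only point requiring care is the bookkeeping of the rescaling exponents $n-1$ and $2$, and checking that the resulting substitutions $q\mapsto q^{n-1}$ in $[2n]_{q^2}$ (giving $[2n]_{q^{2(n-1)}}$) and $q\mapsto q^2$ in $[n]_{q^{2(n-1)}}\Cat_{A_{n-2}}(q^{\epsilon(n)})$ (giving $[n]_{q^{4(n-1)}}\Cat_{A_{n-2}}(q^{2\epsilon(n)})$) match the formula in the theorem; this is a direct verification.
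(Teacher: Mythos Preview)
Your proposal is correct and follows exactly the approach indicated in the paper, which simply states that the theorem follows by combining \cref{prop:ATamKrew,prop:KrewXiXi',prop:pi_Krew} via the bijection $\cP\colon\ATam_{312}\to\TINC(\ZZ)\setminus\Xi'$. You have supplied the routine details (the $T$-invariance of the two pieces, the divisibility of $4n$ and $2n(n-1)$ into $4n(n-1)$, and the $q\mapsto q^\ell$ rescaling principle for CSP under inflation of the cyclic group) that the paper leaves implicit.
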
 

\begin{remark}\label{rem:actually_2nn-1}
\cref{thm:CSP_affine} tells us that the order of rowmotion on $\CTam$ divides $4n(n-1)$. However, the true order is actually $2n(n-1)$ when $n\geq 4$ and is $6$ when $n=3$. This is due to \cref{rem:actually_2n} and our proof of \cref{thm:CSP_affine}. 
\end{remark}

\section{Future Work}\label{sec:future} 
Tamari lattices are extremely rich mathematical objects, and while we have discussed analogues of several of their properties for cyclic and affine Tamari lattices, there are still numerous directions left unexplored. 

\subsection{Associahedra}
Let $\widetilde U$ denote the real vector space freely generated by vectors $\tbe_1,\ldots,\tbe_n,\delta$. We define $\tbe_j$ for all $j\in\ZZ$ by taking $\tbe_{j+n}=\tbe_j-\delta$. Let $\widetilde V$ be the subspace of $U$ spanned by $\{\tbe_i-\tbe_{i+1}\mid i\in\ZZ\}$. 
For $i<j$, let \[ \widetilde{\Delta}_{ij} \coloneqq \operatorname{conv}(0, \tbe_i-\tbe_j, \tbe_{i+1}-\tbe_j, \tbe_{i+2}-\tbe_j,\ldots, \tbe_{j-1}-\tbe_j), \] where $\operatorname{conv}$ denotes convex hull. We remark that each polytope $\widetilde{\Delta}_{ij}$ is an example of a \emph{shard polytope} of type $\widetilde A_{n-1}$ \cite{PPR}. 

\begin{definition}
  The \dfn{cyclic Tamari associahedron} is the Minkowski sum
    \[ \sum_{1\leq i \leq n} \sum_{1\leq \ell \leq n} \widetilde{\Delta}_{i,i+\ell}. \]  
    The \dfn{affine Tamari associahedron} is the Minkowski sum
    \[ \sum_{1\leq i \leq n} \sum_{1\leq \ell \leq n-1} \widetilde{\Delta}_{i,i+\ell}. \]
\end{definition}

\begin{remark}
The cyclic Tamari associahedron should not be confused with the $n$-dimensional \emph{cyclohedron}, which is the generalized associahedron of type $B_n$ \cite{Chapoton}. Although these two polytopes have the same number of vertices, they are not isomorphic in general. For example, the Hasse diagram in \cref{fig:CTam3_TIBITs} is not isomorphic to the $1$-skeleton of the $3$-dimensional cyclohedron.  
\end{remark}

The cyclic and affine Tamari associahedra are special examples of the polytopes constructed by Fei in \cite[Proposition~9.5]{Fei}. It follows from Fei's work that the Hasse diagram of the cyclic (respectively, affine) Tamari lattice is isomorphic to the $1$-skeleton of the cyclic (respectively, affine) Tamari associahedron.

It would be very interesting to further understand the geometric properties of the cyclic and affine Tamari associahedra. 

\subsection{The Cyclic Tamari Lattice via Triangulations}
In \cref{sec:triangulations}, we showed how to describe the Hasse diagram of $\ATam$ using triangulations of type $D_n$. We believe it should also be possible to describe the Hasse diagram of $\CTam$ using centrally symmetric triangulations of type $D_{2n}$. 

\subsection{Maximal Green Sequences}
Numerical evidence suggests that the number of minimum-length maximal green sequences of $A_\cQ$ (i.e., the number of maximal chains of $\ATam$ of length $2n-2$) is $n!$. 

\subsection{Generalizations}
There are now several well studied generalizations of Tamari lattices such as $m$-Tamari lattices \cite{Bousquet, Bousquet2, DefantLin}, $\nu$-Tamari lattices \cite{vonBell,CeballosGeometry,PrevilleViennot}, and $m$-eralized Tamari lattices \cite{WhyTheFuss}. It could be fruitful to consider analogues of these lattices in our setting of cyclic and affine Tamari lattices. For example, is there a meaningful notion of the ``$m$-eralized cyclic Tamari lattice''? 

One way to construct the $\nu$-Tamari lattices is as intervals in Tamari lattices. Let $\mathsf{Tam}$ denote the $n$-th Tamari lattice, and let $\mathbb{A}_{\mathsf{Tam}}$ denote the collection of subsets of the set of atoms of $\mathsf{Tam}$. There is a map $@\colon\mathsf{Tam}\to\mathbb{A}_{\mathsf{Tam}}$ defined by $@(x)=\{a\in\mathbb{A}_{\mathsf{Tam}}\mid a\leq x\}$. The fibers of $@$ are $\nu$-Tamari lattices. There are obvious analogues of the map $@$ for $\CTam$ and $\ATam$; perhaps studying the fibers of these maps is worthwhile. 

\subsection{The Shard Intersection Order}
Let $L$ be a finite semidistributive lattice. For each $w\in L$, let $\Phi(w)=\{\mathfrak{j}(x\lessdot y)\mid \mathsf{Pop}(w)\leq x\lessdot y\leq w\}$, where $\mathsf{Pop}(x)=\bigwedge(\{u\in L\mid u\lessdot w\}\cup\{w\})$. The \dfn{shard intersection order} is the partial order $\leq_{\mathrm{shard}}$ on $L$ defined so that $u\leq_{\mathrm{shard}}w$ if and only if $\Phi(u)\subseteq\Phi(w)$. 
The shard intersection order was first defined by Reading for posets of regions of simplicial hyperplane arrangements \cite{ReadingShard}, where he proved that it is in fact a lattice with a geometric interpretation in terms of intersections of polyhedral cones called \emph{shards}. It was later generalized (under different names) to broader families of lattices (including semidistributive lattices) in \cite{DefantSemidistrim, Muhle}. The shard intersection order of the Tamari lattice is isomorphic to the lattice of type $A_{n-1}$ noncrossing partitions under reverse-refinement order. It would be interesting to study the shard intersection orders of $\CTam$ and $\ATam$.   

\subsection{Linear Intervals} 
Some recent articles have studied linear intervals in the Tamari lattice and its generalization \cite{CC, Cheneviere}. Studying linear intervals in the cyclic and affine Tamari lattices could be similarly fruitful.  

\subsection{Pop-Stack Operators}
It could be interesting to study the dynamics of the pop-stack operators of cyclic and affine Tamari lattices, as in \cite{AjranDefant, BarnardPop, ChoiSun, DefantCoxeterPop, DefantMeeting, Hong}. 

\section*{Acknowledgments}
Grant Barkley was partially supported by NSF grant DMS-1854512. Colin Defant was supported by the National Science Foundation under Award No.\ 2201907 and by a Benjamin Peirce Fellowship at Harvard University. We thank Andrew Sack for describing his lattice construction to us, and we thank Nicholas Williams for a helpful discussion. Grant Barkley would also like to thank Nathan Reading and David Speyer for helpful conversations related to this work.



\end{document}